\newcolumntype{B}{>{\centering\arraybackslash}X} 
\newcolumntype{S}{>{\centering\arraybackslash\hsize=.5\hsize}X} 
\definecolor{dark-red}{rgb}{0.6,0,0}
\definecolor{dark-green}{rgb}{0,0.4,0}
\definecolor{medium-blue}{rgb}{0,0,0.5}
\crefname{equation}{}{}
\newcommand{\Kap}{\mathrm{Kap}}
\newcommand{\ul}[1]{\underline{#1}}
\newcommand{\mc}[1]{\mathcal{#1}}
\newcommand{\mr}[1]{\mathrm{#1}}
\newcommand{\mbb}[1]{\mathbb{#1}}
\newcommand{\mbf}[1]{\mathbf{#1}}
\newcommand{\HP}{\mbf{H}}
\newcommand{\Conf}{\mbf{C}}
\newcommand{\Sym}{\mbf{S}}
\newcommand{\Hilb}{\mr{Hilb}}
\newcommand{\red}{\mathrm{red}}
\newcommand{\Coh}{\mr{Coh}}
\newcommand{\sgn}{\mr{sgn}}
\newcommand{\rk}{\mr{rk}}
\newcommand{\op}{\mr{op}}
\newcommand{\et}{\text{ét}}
\newcommand{\proet}{\text{proét}}
\DeclareMathOperator{\Cons}{Cons}
\DeclareMathOperator{\Gr}{Gr}
\DeclareMathOperator{\Sheaves}{Sh}
\DeclareMathOperator{\Id}{Id}
\DeclareMathOperator{\Fil}{Fil}
\DeclareMathOperator{\Forget}{Forget}
\newcommand{\scref}[2]{\texorpdfstring{\cref{#1}}{#2~\ref{#1}}}
\newcommand{\slt}{\mathord{<}}
\newcommand{\sle}{\mathord{\le}}
\numberwithin{equation}{subsubsection}
\theoremstyle{plain}
\newtheorem{maintheorem}{Theorem}
\newtheorem*{theorem*}{Theorem}
\newtheorem{theorem}[subsubsection]{Theorem}
\newtheorem{corollary}[subsubsection]{Corollary}
\newtheorem{proposition}[subsubsection]{Proposition}
\newtheorem{lemma}[subsubsection]{Lemma}
\theoremstyle{definition}
\newtheorem{example}[subsubsection]{Example}
\newtheorem{definition}[subsubsection]{Definition}
\newtheorem{remark}[subsubsection]{Remark}
\theoremstyle{remark}
\newcommand{\Var}{\mathrm{Var}}
\newcommand{\calM}{\mathcal{M}}
\newcommand{\bbZ}{\mathbb{Z}}
\newcommand{\bbQ}{\mathbb{Q}}
\newcommand{\bbC}{\mathbb{C}}
\newcommand{\bbP}{\mathbb{P}}
\newcommand{\bbF}{\mathbb{F}}
\newcommand{\mfS}{\mathfrak{S}}
\newcommand{\Spec}{\mathrm{Spec}\,}
\newcommand{\mot}{\calM}
\NewDocumentCommand{\set}{somm}{%
   \IfNoValueTF{#2}
    {\IfBooleanTF{#1}{\left\{ #3 \mathrel{}\middle\vert\mathrel{} #4 \right\}}{\{#3 \mid #4\}}}
    {\mathopen{#2\{}#3 \mathrel{}#2\vert\mathrel{} #4\mathclose{#2\}}}%
  }
\begin{document}

\title{Cohomological and motivic inclusion-exclusion}
\author{Ronno Das}
\author{Sean Howe}

\begin{abstract}
We categorify the inclusion-exclusion principle for partially ordered topological spaces and schemes to  a filtration on the derived category of sheaves. As a consequence, we obtain functorial spectral sequences  that generalize the two spectral sequences of a stratified space and certain Vassiliev-type spectral sequences; we also obtain Euler characteristic analogs in the Grothendieck ring of varieties. As an application, we give an algebro-geometric proof of Vakil and Wood's homological stability conjecture for the space of smooth hypersurface sections of a smooth projective variety. In characteristic zero this conjecture was previously established by Aumonier via topological methods.

\end{abstract}

\maketitle

\tableofcontents

\section{Introduction}

In this work, we explore consequences of topological poset theory when applied to partially ordered topological spaces (pospaces) and partially ordered schemes (poschemes). In particular, we investigate some ramifications of a simplicial proof of the inclusion-exclusion formula (described in \cref{ss.intro-classical-inclusion-exclusion}) in these enriched contexts.

We reinterpret this simplicial proof as a categorified inclusion-exclusion principle in topology and algebraic geometry via rank filtrations on the derived category of sheaves (see \cref{ss.intro-sheaves}). We give a simple criterion for the nerve of a pospace or poscheme to satisfy cohomological descent (\cref{main.cd-criterion}), and obtain in these cases functorial cohomological spectral sequences attached to a rank function (\cref{main.spectral-sequences}). These spectral sequences give a common generalization of the two spectral sequences of a stratified space (cf. \cite{petersen:spectral-sequence-stratified-space}), Vassiliev-type\footnote{Indeed, we have essentially adopted Vassiliev's method of topological order complexes and run it directly in the simplicial category instead of passing to a geometric realization.} spectral sequences (as in, e.g., \cite{vassiliev:icm-discriminants, tommasi:stability, vassiliev:homology-hypersurfaces}), and other related spectral sequences that have appeared in the literature (e.g.\ \cite{banerjee}). We also give a combinatorial analog in the Grothendieck ring of varieties, \cref{maintheorem.mot-inc-exc}, which generalizes the motivic inclusion-exclusion principle of \cite{bilu-howe:mot-eul-mot-stat, vakil-wood:discriminants}.

For our main application, consider a smooth projective variety $X$ over an algebraically closed field $\kappa$. Let $\mc{L}$ be an ample line bundle on $X$ and let $V_d$ be the vector space of global sections of $\mc{L}^d$. Let $U_d \subset V_d$ be the open subscheme of sections with non-singular vanishing locus. We refine and prove Vakil--Wood's \cite{vakil-wood:discriminants} homological stabilization conjecture for $U_d$ as $d \rightarrow \infty$. To state the result, let $H_\bullet(-)$ denote either rational $\ell$-adic étale homology for $\ell \nmid \mr{char} \kappa$ or rational singular homology if $\kappa=\mbb{C}$. Note that $H_\bullet(-)$ has a canonical weight filtration \cite{bbd, deligne:weil2}, which we denote by $W$ and normalize by treating $H_i(Y)$ for $Y$ smooth projective as having degree $-i$ and weight zero. For $\kappa=\mbb{C}$, weights can be detected by mixed Hodge theory; for arbitrary $\kappa$ (including also $\kappa=\mbb{C}$) they are detected by spreading out and using Frobenius eigenvalues over finite fields.

For $\kappa$ of characteristic zero, the following is a consequence of stronger topological results of Aumonier \cite{aumonier}; we give an algebro-geometric proof valid for arbitrary~$\kappa$.

\begin{maintheorem}\label{intro.stabilization}
For $d \gg_{i} 0$, there are natural inclusion-exclusion isomorphisms
\[ \Gr_W^{-k} H_{i}(U_d) \xrightarrow{\sim} H_{i-k}(X^k)[\sgn](k),  \]
where $H_{i-k}(X^k)[\sgn](k)$ is the isotypic part for the sign character of the symmetric group permuting the coordinates, shifted and twisted into degree $-i$ and weight $-k$. \end{maintheorem}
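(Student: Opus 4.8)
The plan is to present $U_d$ as the complement in the affine space $V_d$ of the discriminant $D_d = V_d \setminus U_d$, and to compute $H_\bullet(U_d)$ by running the categorified inclusion--exclusion of \cref{main.cd-criterion} and \cref{main.spectral-sequences} on the tautological cover of $D_d$ by the conditions ``$s$ is singular at $x$''. Write $n = \dim X$ and $N_d = \dim V_d$, let $\mc{Z} \subset V_d \times X$ be the closed subscheme of pairs $(s,x)$ such that $s$ and its first jet vanish at $x$ (cut out by a section of the pullback of the rank-$(n+1)$ first-jet bundle $J^1(\mc{L}^d)$), and let $\pi \colon \mc{Z} \to V_d$ be the projection; it is proper with image $D_d$. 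The first ingredient is a uniform jet-ampleness estimate: there is a function $K(d) \to \infty$, which may be taken linear in $d$ by Serre vanishing for $\mc{L}^d$ twisted by the square of the ideal of a reduced set of $k$ points, made uniform via a Castelnuovo--Mumford regularity bound, such that for all $k \le K(d)$ and all pairwise distinct $x_1,\dots,x_k$ the fiber of the $k$-fold fiber product $\mc{Z}^{[k-1]} = \mc{Z} \times_{V_d} \cdots \times_{V_d} \mc{Z}$ over $(x_1,\dots,x_k)$ is the linear subspace of sections singular at all of the $x_i$, of codimension exactly $k(n+1)$, and these assemble into a vector bundle of corank $k(n+1)$ over the ordered configuration space $F(X,k)$.

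Next I would verify the hypothesis of \cref{main.cd-criterion} for the nerve $\mc{Z}^{[\bullet]} \to V_d$: its geometric fiber over $s \in D_d$ is the nerve of the (nonempty) cover of a point, hence cohomologically contractible, while over $s \in U_d$ it is empty. Thus $H^\bullet_c(U_d)$ is computed by the cone of the augmentation $\mc{Z}^{[\bullet]} \to V_d$, and \cref{main.spectral-sequences}, applied with the rank function counting the distinct points $x_i$, yields a functorial spectral sequence whose $k$-th column is $H^\bullet_c$ of the rank-exactly-$k$ stratum, twisted by the sign character that records reordering the $k$ points (equivalently, reordering the vertices of the $(k-1)$-simplex of the nerve; over an algebraically closed field the $k$ blocks of linear conditions contribute no further orientation sign). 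For $k \le K(d)$ that stratum is the above vector bundle over $F(X,k)$, so by the Thom isomorphism, Poincar\'e duality on the smooth variety $U_d$ and on the smooth projective variety $X^k$, and the vanishing of the $\sgn$-isotypic part of the compactly supported cohomology of $X^k$ supported on the big diagonal (a transposition fixing a diagonal stratum acts trivially on it but by $-1$ on $\sgn$, so $H^\bullet_c(F(X,k))^{\sgn} \cong H^\bullet_c(X^k)^{\sgn}$), the $k$-th column in homological degree $i$ is identified with $H_{i-k}(X^k)[\sgn](k)$; the Tate twists contributed by the affine fibers and by the two dualities combine to exactly $(k)$. A dimension count on the strata shows the $k$-th column can only affect $H_i(U_d)$ for $k \le i$, independently of $d$; hence for $d \gg_i 0$ all columns relevant to $H_i$ lie in the range $k \le K(d)$ where the identification is valid, which produces the stabilization range.

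It remains to deduce degeneration and that the rank filtration is $W$. Since $X^k$ is smooth projective, $H_{i-k}(X^k)[\sgn](k)$ is pure, and with the normalization of the introduction the $k$-th column sits in weight $-k$; as the spectral sequence differentials preserve weight but change the column, they all vanish in the relevant range, so the sequence degenerates. The rank (inclusion--exclusion) filtration on $H_i(U_d)$ is then a finite filtration with pure graded pieces of strictly decreasing weight $0, -1, -2, \dots$, hence coincides with $W$, and $\Gr_W^{-k} H_i(U_d)$ is exactly the $k$-th column, which yields the asserted isomorphism; its naturality is inherited from the functoriality in \cref{main.spectral-sequences}. Alternatively one could first extract a class identity in the completed Grothendieck ring of varieties from \cref{maintheorem.mot-inc-exc} and then promote it using weights, but running the sheaf-level spectral sequence directly produces the isomorphism and its naturality at once.

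I expect the main obstacle to be concentrated in the geometric setup rather than in the homological algebra: establishing the configuration-uniform jet-ampleness with a bound on $k$ that grows with $d$, and checking that the tautological resolution of the \emph{singular} discriminant genuinely satisfies the cohomological descent criterion of \cref{main.cd-criterion} --- it is precisely because the linear spaces $\{s \text{ singular at } x\}$ do not form a subspace arrangement of fixed combinatorial type as $d$ grows that one cannot argue by naive inclusion--exclusion and must appeal to the categorified version. Once these are in place, the weight and Tate-twist bookkeeping needed to pin the $k$-th graded piece to $H_{i-k}(X^k)[\sgn](k)$, rather than to some a priori unidentified twist, is routine given the purity of $H_\bullet(X^k)$.
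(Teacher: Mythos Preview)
Your overall strategy matches the paper's: resolve the discriminant by the incidence variety $I_d\to D_d$, run the inclusion--exclusion spectral sequence of \cref{main.spectral-sequences} via the poscheme of relative effective zero-cycles, identify the columns with $H^\bullet(X^k)[\sgn]$ using the Vakil--Wood vector-bundle lemma, and then argue degeneration by weights. The approximate inclusion--exclusion step (controlling the locus where $I_d\to D_d$ has fibers with more than $k$ points or positive-dimensional singular locus) is handled in the paper by \cref{theorem.approx-ie}; your ``dimension count on the strata'' gestures at this but you should invoke it explicitly, since the truncated poscheme does not satisfy descent over all of $D_d$.

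The genuine gap is your degeneration argument for $d_1$. Your claim that ``the $k$-th column sits in weight $-k$'' is correct only in the paper's \emph{homological} normalization, which is the Frobenius weight shifted by the homological degree. What actually forces a differential to vanish is that it is Galois-equivariant after spreading out, hence must go between pieces of the same \emph{Frobenius} weight. Unwinding the shifts, the term $E_1^{p,q}$ in the rank (or skeletal) spectral sequence for $H_c^\bullet$ is pure of Frobenius weight $q$: the vector bundle contributes $H_c^{q-2r}(\Conf^{p+1}(X),\ul{\sgn})(-r)\cong H^{q-2r}(X^{p+1})[\sgn](-r)$ with $r=N_d-(p{+}1)(n{+}1)$, of weight $(q-2r)+2r=q$. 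The differential $d_r$ has bidegree $(r,1-r)$, so for $r\ge 2$ it changes $q$ and is killed by purity --- this is exactly \cref{lemma.E2-degeneration}. But $d_1$ has bidegree $(1,0)$: it preserves $q$, hence preserves Frobenius weight, and purity alone says nothing. Your sentence ``the spectral sequence differentials preserve weight but change the column, they all vanish'' conflates the degree-shifted normalization with the Frobenius weight; the apparent jump from weight $-k$ to $-(k+1)$ under $d_1$ in your bookkeeping is exactly cancelled by the change of homological degree from $i$ to $i-1$.

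The paper handles $d_1=0$ by a separate, genuinely geometric argument (the end of \cref{ss.stab-degeneration}): it compactifies $V_d$ to $\overline{V}_d=\bbP(V_d\oplus\kappa)$, compares the spectral sequence to the one for the trivial $\overline{V}_d$-bundle over $X^{[p]}$, and observes that under the resulting identification of $E_1^{p,\bullet}$ with a summand of $\bigoplus_j H^\bullet(X^{[p]})[\sgn]\otimes\bbL_\Coh^{\,j}$, the differential $d_1$ preserves each $\bbL_\Coh^{\,j}$-summand while the relevant summands for columns $p$ and $p+1$ are distinct. This is the step that has no analogue in your sketch and is where the real work lies.
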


Vakil and Wood  \cite{vakil-wood:discriminants, vakil-wood-errata} proved an analogous stabilization in the Grothendieck ring of varieties, where the limit is a special value of the inverse Kapranov zeta function. The latter is given by a convergent infinite sum in a completed Grothendieck ring, and in the introduction to \cref{s.smooth-hypersurface} we explain how the weight $-k$ part in \cref{intro.stabilization} naturally corresponds to the $k$th term in this sum. More generally, we  define motivic and sheaf-theoretic incidence algebras for the poscheme of effective zero-cycles on $X$, then explain the inversion formula and ``cohomological special values" of the inverse Kapranov zeta function as natural outputs of M\"obius inversion. To prove \cref{intro.stabilization}, we use a cohomological approximate-inclusion-exclusion formula derived from the relative poscheme of effective zero cycles of a natural resolution of the discriminant locus $V_d\backslash U_d$. This is a cohomological analog of the motivic inclusion-exclusion that arises in Vakil and Wood's proof of stabilization in the Grothendieck ring, and here it yields an $E_1$ spectral sequence. We show the sequence degenerates on $E_1$ by first comparing with a simpler complex introduced by O.~Banerjee to obtain vanishing of the $E_1$ differential and then obtain vanishing of the higher differentials by a weight argument.

\begin{remark}\label{remark.aumonier-banerjee} Our proof of homological stabilization is distinct from Aumonier's \cite{aumonier}, however, the description of the stable cohomology in loc.\ cit.\ and the definitive knowledge that certain spectral sequences must degenerate at $E_1$ played an important role in the development of the proof presented here. We are indebted also to O.~Banerjee for sharing her insights on the sign cohomology of configuration spaces as well as the existence of the complex referred to above --- the former plays a key role in our proof, while the latter led to a considerable simplification through the replacement of the rank spectral sequence with the skeletal spectral sequence. We note that Tommasi has announced closely related results (see \cite[\S 1.1]{aumonier} for a statement), and O.~Banerjee has announced a closely related spectral sequence.
\end{remark}

We now outline the remainder of the introduction:  in \cref{ss.intro-classical-inclusion-exclusion} we take a detour  to explain a categorification of the inclusion-exclusion formula for finite sets as a toy model for our computations with pospaces and poschemes. In \cref{ss.intro-sheaves} we explain the spectral sequences obtained from a suitable pospace or poscheme. In \cref{ss.intro-poschemes} we explain the decategorification to the Grothendieck ring of varieties, and in \cref{ss.outline} we give an outline of the body of the article. Further discussion of \cref{intro.stabilization} is deferred to the introduction to \cref{s.smooth-hypersurface}; the reader interested primarily in this application may wish to skip immediately to that section and return to the rest of the paper as needed.

\subsection{Categorifying the inclusion-exclusion formula}\label{ss.intro-classical-inclusion-exclusion}
The inclusion-exclusion formula for finite sets states:
\begin{equation}\label{eq.inc-exc-fin-set-simplest}
\text{ If } X= \bigcup_{i=1}^nX_i \text{ is a finite set, then } |X| = \sum_{J \subset \{1,...,n\}} (-1)^{|J|-1} \left|\bigcap_{j \in J} X_j\right|.
\end{equation}
This statement admits a topological proof: consider the poset
\[ \mc{P}:=\bigsqcup_i X_i = \{ (x, i)\, |\, x \in X_i \} \subset X \times \{1, \dots ,n\}, \; (x,i) \leq (y,j) \iff x=y \textrm{ and } i \leq j.\]
We form the \emph{nerve} $N\mc{P}$, a simplicial set whose $m$-simplices are the ordered chains $a_0 \leq a_1 \leq \dotsm \leq a_m$ of length $m+1$ in $\mc{P}$. The sum on the right of \cref{eq.inc-exc-fin-set-simplest} computes the Euler characteristic of $N\mc{P}$ by counting the non-degenerate simplices (i.e.\ the strict chains $a_0 < a_1 < \dotsm < a_m$).  However, $N\mc{P}$ is homotopic to the constant simplicial set $X$ --- because the fiber $\mc{P}_x$ is totally ordered for any point $x\in X$, we can contract all chains above $x$ to the constant chain on any choice of element in  $\mc{P}_x$. Because homotopy preserves Euler characteristic, we recover \cref{eq.inc-exc-fin-set-simplest}:
\[ |X| = \chi(X)= \chi(N\mc{P})= \sum_{J \subset \{1,...,n\}} (-1)^{|J|-1} \left|\bigcap_{j \in J} X_j \right|. \]
The argument is point by point, so this lifts to an identity in $K_0(\mr{FinSet}/X)\cong \bbZ^X$:
\begin{equation}\label{eq.indicator-inc-exc-product}
\mbf{1}_X = \sum_{J\subset \{1,..,n\}} (-1)^{|J|-1} \mbf{1}_{\bigcap_{j \in J} X_j}
\end{equation}
Note that we can rewrite the right-hand side as $\left.\left( 1 - \prod_{j=1}^n (1- \mbf{1}_{X_j} t) \right)\right|_{t=1}$ and then \cref{eq.indicator-inc-exc-product} can be established immediately by evaluating at elements $x \in X= \bigcup_{i=1}^n X_i$. Summing over $X$ then gives a proof of  \cref{eq.inc-exc-fin-set-simplest} that avoids any topology.

The advantage of the topological approach is that it leads to an interesting \emph{categorification} of the inclusion-exclusion formula: we can think of the indicator function $\mbf{1}_X$ as being the function on $X$ assigning to a point $x \in X$ the rank at $x$ of the constant local system $\bbQ$ on $X$. More generally,  writing $D_c(X,\bbQ)$ for the bounded derived category of complexes of $\bbQ$-sheaves on $X$ (a $\bbQ$-sheaf on $X$ is just the choice of a $\mbb{Q}$-vector space $\mc{V}_x$ for each $x \in X$) with finite dimensional cohomology sheaves, we have
\[ K_0(D_c(X,\bbQ)) \xrightarrow{\sim} \bbZ^{X}, \qquad K \mapsto  \Big( x \mapsto \sum_i (-1)^i \dim_{\bbQ} H^i(K)_x \Big) \]
identifying $\bbQ = \bbQ[0]$ with $\mbf{1}_X$. The simplicial homotopy described above gives
\[ \bbQ[0] \cong C^\bullet(N\mc{P}, \bbQ) \textrm{ in } D_c(X,\bbQ) \]
where $C^\bullet(N\mc{P}, \bbQ)$ denotes the complex whose fiber at $x$ is the simplicial cochain complex $C^\bullet(N\mc{P}_x, \bbQ)$. This is a categorification of the inclusion-exclusion formula because $C^\bullet(N\mc{P}, \bbQ)$ can be equipped with a filtration $F^\bullet C^\bullet(N\mc{P}, \bbQ)$ such that
\[ \mbf{1}_X = \chi_X(\bbQ[0])=\chi_X( C^\bullet(N\mc{P}, \bbQ) ) = \sum_i \chi_X( \Gr^iC^\bullet(N\mc{P}, \bbQ))\]
realizes \cref{eq.indicator-inc-exc-product}. Indeed, any increasing filtration of the simplicial set $N\{1,\ldots,n\}\cong\Delta^{n-1}$ induces a filtration $F_\bullet N\mc{P}$, and we can then define
\[ F^i C^\bullet(N\mc{P}, \bbQ) = \ker C^\bullet(N\mc{P}, \bbQ) \xrightarrow{\mr{restriction}} C^\bullet(F_iN\mc{P}, \bbQ). \]
Noting that the non-empty subsets $J \subset \{1,\ldots,n\}$ correspond exactly to the non-degenerate simplices of $N\{1,\ldots,n\}$, we find that
\[ \chi_X(\Gr^i C^\bullet(N(\mc{P}), \bbQ)) = \sum_{J \in  F_{i+1}N\mc{P} \backslash F_{i}N\mc{P}} (-1)^{|J|-1} \mbf{1}_{\bigcap_{j \in J} X_J}. \]
In particular, if we filter $N\{1,\ldots,n\}\cong\Delta^{n-1}$ by first adding in all of the zero-simplices one at a time, then all of the one-simplices, etc., each term in \cref{eq.indicator-inc-exc-product} will correspond to exactly one graded piece of the complex. There are also other interesting ways of grouping the terms corresponding to filtrations: for example, the skeletal filtration will group the terms by $|J|$, while the filtration $\Delta^0 \subset \Delta^1 \subset \Delta^2 \subset \dotsb \subset \Delta^{n-1}$ (corresponding via nerves to the poset filtration $\{1\} \subset \{1,2\} \subset \dotsb \subset \{1,\ldots,n\}$) will group terms according to the largest element contained in $J$.

None of these sheaf-theoretic constructions depend on starting with the complex $\bbQ[0]$, and in fact what we have really categorified is the decomposition of \emph{multiplication} by $\mbf{1}_X$ --- in other words, we have factored the identity functor on $D_c(X, \bbQ)$ through the filtered derived category $DF^+(X,\bbQ)$ in a way compatible with multiplication by the two sides of \cref{eq.indicator-inc-exc-product} after taking Euler characteristics.

\subsection{Inclusion-exclusion for sheaves}\label{ss.intro-sheaves}
In the previous section, we obtained the poset $\mc{P}$ fibered over the finite set $X$ from a representation of $X$ as a union of subsets. However, these origins were  immaterial to the reasoning, and only arose in the interpretation of the final formulas. We now generalize:  A \emph{pospace} (resp.\ \emph{poscheme}) $\mc{P}$ over a topological space (resp.\ scheme) $X$ is a  topological space (resp.\ scheme) over $X$ equipped with a closed partial ordering relation $\mathord{\leq}_{\mc{P}} \subset \mc{P} \times_X \mc{P}$. In the introduction, we assume all pospaces/poschemes $\mc{P}/X$ are \emph{split}, i.e.\ that the diagonal $\Delta_{\mc{P}}$ is clopen in $\sle_{\mc{P}}$. We say $\mc{P}/X$ is \emph{proper} if $\mc{P}\rightarrow X$ is proper. We say $\mc{P}/X$ is \emph{ranked} if it is equipped with a strictly increasing map $\mr{rk}:\mc{P} \rightarrow \mbb{Z}$.

Given a pospace (resp.\ poscheme) $\mc{P}/X$, we form its nerve $N\mc{P}$ (or order complex), the simplicial topological space (resp.\ simplicial scheme) whose $m$-simplices are the space (resp.\ scheme) of ordered chains of length $m+1$ in $\mc{P}$. We wish to compute the cohomology of sheaves on $X$ by pullback to $N\mc{P}$, so that we can exploit filtrations on the nerve in order to obtain filtrations on this cohomology. This is possible when $N\mc{P}/X$ is of cohomological descent, and we now state a    criterion for this in terms of the partial ordering. A center (or weak center in the scheme-theoretic case) is a section $p$ such that $\mc{P} = \{\leq p\} \cup \{ \geq p \}$ (see \cref{defn:center}).

\begin{maintheorem}[Fiberwise center descent criterion; see also \cref{body.cd-criterion}]\label{main.cd-criterion}\hfill
\begin{enumerate}
\item If $\mc{P}/X$ is a proper {\bf pospace} and $\mc{P}_x$ admits a center for all $x \in X$, then $N\mc{P}/X$ is of cohomological descent for abelian sheaves on $X$.
\item If $\mc{P}/X$ is a proper {\bf poscheme} and $\mc{P}_x$ admits a weak center for every geometric point $x: \Spec \kappa \rightarrow X$ ($\kappa$ algebraically closed), then
\begin{enumerate}
\item $N\mc{P}/X$ is of cohomological descent for étale torsion sheaves on $X$.
\item If $X$ is furthermore Noetherian and $L$ is an algebraic extension of $\mbb{Q}_\ell$ for some $\ell$ invertible on $X$, then $\mc{P}/X$ is of cohomological descent for constructible $\mc{O}_L$ or $L$-sheaves on $X$.
\end{enumerate}
\end{enumerate}
\end{maintheorem}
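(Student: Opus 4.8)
The plan is to reduce cohomological descent of $N\mc{P}/X$ to the fibres of $\mc{P}/X$ by proper base change, and then, on each fibre, to use the (weak) center to exhibit an explicit simplicial contraction of the nerve --- the fibrewise argument needs no properness, so properness enters only to license the reduction.

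\emph{Step 1 (reduction to fibres).} First I would note that the augmentation $\epsilon\colon N\mc{P}\to X$ is levelwise proper: the degree-$m$ part $N\mc{P}_m$ is the closed subspace (resp.\ subscheme) of the $(m+1)$-fold fibre power $\mc{P}\times_X\dotsm\times_X\mc{P}$ cut out by the closed relation $\sle_{\mc{P}}$ between consecutive factors, and $\mc{P}\to X$ is proper by hypothesis. So proper base change applies in every simplicial degree --- for abelian sheaves on a space in case (1), for torsion étale sheaves in case (2a) --- and identifies the stalk at a (geometric) point $\bar x$ of the cone of the unit $\mc{F}\to R\epsilon_*\epsilon^*\mc{F}$ with the cone of $\mc{F}_{\bar x}\to R\Gamma\bigl(N\mc{P}_{\bar x},\underline{\mc{F}_{\bar x}}\bigr)$, where $N\mc{P}_{\bar x}=N(\mc{P}_{\bar x})$ carries constant coefficients. (The one technical point I would watch here is the commutation of the totalization over $\Delta$ computing $R\epsilon_*$ with the stalk functor; levelwise properness together with a cohomological dimension bound should handle it.) Since a complex of sheaves with vanishing stalks is acyclic, it then suffices to prove: for $Q\coloneqq\mc{P}_{\bar x}$, a pospace (resp.\ poscheme) over the point $\bar x$ admitting a center (resp.\ weak center) $p$, the unit $M\to R\Gamma(NQ,\underline{M})$ is an isomorphism for every coefficient module $M$.

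\emph{Step 2 (the fibre nerve contracts).} Here I would run the classical conical-contractibility argument. Since $p$ is a (weak) center, $Q$ --- or $Q_{\red}$ in the scheme case, which carries the same étale cohomology --- is covered by the closed (reduced, in the scheme case) subschemes $\{\leq p\}$ and $\{\geq p\}$, meeting exactly in the reduced point $\{p\}$ by antisymmetry. Gluing $\mr{const}_p$ on $\{\leq p\}$ to $\Id$ on $\{\geq p\}$ produces a self-morphism $f\colon Q\to Q$ of pospaces (resp.\ poschemes) --- continuity, resp.\ being a scheme morphism, is the closed-cover pasting lemma, and in the scheme case it is precisely the flexibility of the ``weak'' center (passing to $Q_{\red}$, using insensitivity of étale cohomology to nilpotents) that makes this go through. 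A short case check shows $f$ is order-preserving with $\Id_Q\leq f\geq\mr{const}_p$ pointwise, i.e.\ $(\Id_Q,f)$ and $(\mr{const}_p,f)$ both factor through $\sle_Q$. Invoking the homotopy lemma for nerves --- morphisms related by $\leq$ become simplicially homotopic after applying $N$, via the evident homotopy sending a chain $a_0\leq\dotsm\leq a_m$ with cut point $k$ to $f(a_0)\leq\dotsm\leq f(a_k)\leq\mr{const}_p(a_{k+1})\leq\dotsm\leq\mr{const}_p(a_m)$, and symmetrically for the second pair --- yields simplicial homotopies $N(\Id_Q)\simeq N(f)\simeq N(\mr{const}_p)$. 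As $N(\mr{const}_p)$ factors through the point $p$ and simplicially homotopic maps agree on $R\Gamma(-,\underline{M})$, the unit $M\to R\Gamma(NQ,\underline{M})$ is an isomorphism (inverse: restriction along $p$). This proves (1) and (2a).

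\emph{Step 3 ($\ell$-adic coefficients).} For (2b) I would bootstrap from (2a): cohomological descent for the torsion quotients $\mc{O}_L/\frakm^n$ passes to their derived limit, the Noetherian hypothesis on $X$ together with levelwise properness of $N\mc{P}$ keeps the direct images constructible and makes the limit well-behaved, and inverting $\ell$ handles $L$-sheaves. The main obstacle, I expect, is not the combinatorial core --- simplicial contractibility of the nerve of a poset with a center is classical, and the topological case is routine --- but the scheme-theoretic plumbing: the proper base change / totalization interaction in Step 1, and, in Step 2, isolating the right notion of weak center so that the contraction $f$ really exists as a scheme morphism while étale cohomology still does not distinguish $Q$ from $\{\leq p\}\cup\{\geq p\}$.
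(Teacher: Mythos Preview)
Your approach is essentially the paper's: reduce to fibres by proper base change (the paper's \cref{lemma.spread-out-proper}), then contract the nerve of the fibre using the center (the paper's \cref{lemma:max-min-center-homotopy}, proved via the same ``comparable maps induce simplicially homotopic nerves'' mechanism you invoke). Two small points. First, your explicit homotopy formula is written backwards: since $\mr{const}_p\leq f$, the chain at cut point $k$ should read $\mr{const}_p(a_0)\leq\dotsm\leq\mr{const}_p(a_k)\leq f(a_{k+1})\leq\dotsm\leq f(a_m)$ (what you wrote would require $f(a_k)\leq p$, which fails for $a_k>p$). Second, for (2b) the paper does not bootstrap from torsion via limits but instead invokes proper base change directly in the pro-\'etale formalism of Bhatt--Scholze; your limit argument is a valid alternative, though the paper's route avoids the bookkeeping you flag.
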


\begin{remark}\label{remark.cd-complexes}
More generally, one has cohomological descent on suitable complexes --- see \cref{body.cd-criterion} for a precise statement. We note that in (2)-(b) we use the formalism of \cite{bhatt-scholze:pro-etale} for  $\ell$-adic sheaves; this is useful in order to construct filtrations by working in the derived category of an abelian category of sheaves.
\end{remark}

\begin{remark}
\cref{main.cd-criterion} is a version of Quillen's Theorem A and of the Vietoris mapping theorem in that it converts fiberwise contractibility to a global equivalence. The topological case  can be deduced from the Vietoris mapping theorem.
\end{remark}

\begin{remark}In the toy model (\cref{ss.intro-classical-inclusion-exclusion}),  we used the existence of a maximum on each fiber to deduce that the nerve was contractible --- maxima and minima are examples of centers. In the topological/scheme theoretic setting, one can expect to obtain a useful fiberwise criterion only when $\mc{P}/X$ is proper, so that we can invoke proper base change. With properness imposed, any condition on the fibers that gives (weak) contractibilty of the nerve would suffice, and in \cref{ss.contractibility-criterion} we develop some more general tools for verifying contractibility using the partial ordering.\end{remark}

A finite filtration of $N\mc{P}$ by closed simplicial subspaces/subschemes induces a support filtration on sheaves on $N\mc{P}$. When $N\mc{P}/X$ is of cohomological descent, this in turns induces a derived filtration on sheaves on $X$ (just as in the toy model of \cref{ss.intro-classical-inclusion-exclusion}, the filtration can only be seen after replacing $\mc{F}$ with a resolution obtained by working on $N\mc{P}$). We are most interested in the case where the filtration on $N\mc{P}$ is inherited from some structure on $\mc{P}$ itself: in particular, if $\mc{P}/X$ is finitely ranked, then we can consider the rank filtration of $N\mc{P}$ by $N(\rk \leq i)$ and the induced derived filtration on sheaves on $X$. In this case, the cohomology of the graded parts can be interpreted as the reduced cohomology complexes of nerves of certain auxiliary pospaces/poschemes constructed from $\mc{P}$; in practice, these reduced cohomology complexes are computable. Concretely, they are defined as follows: let $\mc{P}^+$ be the pospace/scheme obtained by adjoining a disjoint minimum section $-\infty$ over $X$. We choose an extension of the rank function to $\mc{P}^+$, and let $\mc{P}^+_r:=\rk^{-1}(r)$. In this setting, we define (up to quasi-isomorphism) a complex of sheaves $\tilde{C}(-\infty, \mc{P}^+_r, \mc{F})$ over $\mc{P}^+_r$ that at each geometric point $p \in \mc{P}^+_r$ computes the reduced cohomology of $\mc{F}$ on $N(-\infty,p)$. By convention as in \cite{petersen:spectral-sequence-stratified-space}, if $r=\rk(-\infty)$ so $\mc{P}^+_r\cong X$ then  $\tilde{C}(-\infty, \mc{P}^+_r, \mc{F}):=\mc{F}[2]$. We write $\mc{P}_{r}=\mc{P}^+_r \cap \mc{P}$, which is equal to $\mc{P}^+_r$ unless $r=\rk(-\infty)$ in which case $\mc{P}_r=\emptyset$.

\begin{maintheorem}[See also \cref{body.filtration-functor,body.spectral-sequences}] \label{main.spectral-sequences}
Suppose $X$ is a topological space and $\mc{F}$ is as sheaf on $X$, or $X$ is a separated finite type scheme over an algebraically closed field and $\mc{F}$ is a pro-étale sheaf on $X$. Suppose $\pi:\mc{P} \rightarrow X$ is a proper finitely ranked pospace/poscheme and let $Z=\pi(\mc{P})$ with complement $U:=X\backslash Z$. Suppose also that  $\mc{P}|_Z$ is of cohomological descent for $\mc{F}|_Z$ (e.g.\ it meets the criterion of \cref{main.cd-criterion}).  Then there are spectral sequences, functorial in $\mc{F}$,
\begin{enumerate}
\item For compactly supported cohomology on $Z$:
\[ E_1^{p,q}=H_c^{p+q-1}(\mc{P}_{p+1}, \tilde{C}(-\infty, \mc{P}_{p+1}^+, \mc{F})) \Rightarrow H^{p+q}_c(Z,\mc{F}). \]
\item For compactly supported cohomology on $U$:
\[ E_1^{p,q} = H^{p+q-2}_c(\mc{P}_p^+, \tilde{C}(-\infty, \mc{P}^+_p, \mc{F}))   \Rightarrow H^{p+q}_c(U, \mc{F}).	 \]
\end{enumerate}
\end{maintheorem}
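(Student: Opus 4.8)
The plan is to combine \cref{main.cd-criterion} with a fairly formal manipulation of support filtrations on the nerve. First I would set up the cohomological descent isomorphism: since $\mc{P}|_Z$ is of cohomological descent for $\mc{F}|_Z$, pulling back along the augmentation $\epsilon: N\mc{P}|_Z \to Z$ gives $R\Gamma_c(Z,\mc{F}|_Z) \cong R\Gamma_c(N\mc{P}|_Z, \epsilon^* \mc{F})$, and similarly (by excision/the open-closed triangle relating $U$, $Z$, $X$) one extracts the $U$-version by adjoining the disjoint section $-\infty$ to form $\mc{P}^+$, whose nerve $N\mc{P}^+$ now has augmentation to all of $X$ with the cone point contributing a copy of $X$ itself. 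The rank function, extended to $\mc{P}^+$, induces the increasing filtration $F_r N\mc{P}^+ := N(\rk \le r)$ of $N\mc{P}^+$ by closed simplicial subspaces/subschemes; applying $R\Gamma_c$ to the associated support filtration on $\epsilon^*\mc{F}$ (here $R\Gamma_c$ is computed degreewise on the simplicial object and then totalized, so a finite filtration by closed simplicial subobjects yields a finite filtered complex) produces the spectral sequences whose $E_1$ terms are the compactly supported cohomologies of the graded pieces $\Gr_r N\mc{P}^+$.

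Next I would identify the graded pieces. The graded piece $\Gr_r$ of the support filtration is supported on the ``new'' simplices, namely chains whose top element has rank exactly $r$; fibering such chains over their maximum gives, over each $p \in \mc{P}^+_r$, the nerve of the open interval $N(-\infty, p)$ (resp. $N(\mc{P}_{<p})$ in the first spectral sequence, where the cone point $-\infty$ is absent). Pushing forward along the ``take the maximum'' map and using proper base change — this is where properness of $\mc{P}\to X$, hence of $\mc{P}^+_r \to X$, is essential — the contribution of $\Gr_r$ to $R\Gamma_c(X, -)$ is $R\Gamma_c(\mc{P}^+_r, \tilde C(-\infty, \mc{P}^+_r, \mc{F}))$, up to the degree shifts coming from (i) the simplicial/reduced-cochain normalization (a chain of length $m+1$ with fixed maximum is recorded in simplicial degree $m$, and the reduced cochain complex of $N(-\infty,p)$ sits in the stated range, accounting for the $-1$ and $-2$ shifts), and (ii) the convention $\tilde C(-\infty, \mc{P}^+_r, \mc{F}) := \mc{F}[2]$ when $r = \rk(-\infty)$, i.e. when $\mc{P}^+_r \cong X$ contributes the cone-point stratum. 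Matching indices $p, q$ so that $p$ counts the filtration level (rank) and $p+q$ is the total cohomological degree then yields exactly the two displayed $E_1$ pages, with the first omitting the cone point (so its source is $\mc{P}_{p+1}$, with the shifted rank reflecting that the reduced cohomology of $N(\mc{P}_{<p})$ rather than $N(-\infty, p)$ appears) and the second including it.

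Functoriality in $\mc{F}$ is automatic since every construction — pullback to $N\mc{P}^+$, the support filtration, pushforward along the maximum maps, and the formation of $\tilde C(-\infty, \mc{P}^+_r, \mc{F})$ — is functorial in $\mc{F}$. The main obstacle, and the step that requires genuine care rather than formalities, is the precise identification of the graded pieces with the $\tilde C$-complexes: one must (a) correctly handle degeneracies so that only non-degenerate chains contribute to the graded complex (the normalized chain complex computes the same thing but one should check the filtration is compatible), (b) verify that ``fiber over the maximum'' is the right geometric map and that its fibers are genuinely the nerves of the claimed intervals — including the boundary behavior at $-\infty$ and the edge case $\mc{P}_r = \emptyset$ versus $\mc{P}^+_r \cong X$ — and (c) track the degree shifts consistently, since an off-by-one here corrupts the whole spectral sequence. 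The properness hypothesis enters precisely to license proper base change when computing the pushforward of the graded pieces, and separatedness/finite type on the scheme side is what makes the six-functor formalism with $R\Gamma_c$ behave. I expect the pospace case and the poscheme case to run in parallel once the descent input from \cref{main.cd-criterion} is invoked, so no separate argument is needed beyond citing the appropriate part of that theorem.
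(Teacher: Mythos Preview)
Your approach is essentially the paper's: rank filtration on the nerve, cohomological descent, and identification of graded pieces by fibering over the maximum element in $\mc{P}_r$ with fibers $N(-\infty,p)$ (the paper packages this as an excision identifying $\Gr^{p-1}$ with $\Gr^1$ of the cocone filtration on $N(-\infty,\mc{P}_p] \cong \Cocone(N(-\infty,\mc{P}_p))$).

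Two points deserve sharpening. First, for part~(2) the paper does \emph{not} run the rank filtration on $N\mc{P}^+$ directly: since $\mc{P}^+$ has a global minimum, $N\mc{P}^+$ is contractible over $X$, and the resulting spectral sequence would converge to $H^*_c(X,\mc{F})$, not $H^*_c(U,\mc{F})$. Instead the paper takes the part~(1) filtration on $\mc{F}|_Z$, pushes forward to a filtration on $i_*i^*\mc{F}$, and extends by one step to a filtration of $j_!j^*\mc{F}$ using the triangle $i_*i^*\mc{F}[-1] \to j_!j^*\mc{F} \to \mc{F}$. The $\mc{P}^+$ notation and the convention $\tilde{C}(-\infty,\mc{P}^+_{\rk(-\infty)},\mc{F}) := \mc{F}[2]$ are just bookkeeping for this extra graded piece; your phrase ``by excision/the open-closed triangle'' has the right ingredient, but adjoining $-\infty$ by itself does not yield the $U$-abutment.

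Second, properness of $\pi$ enters not via proper base change in the graded-piece computation (that step is done formally, at the level of filtered complexes on $N\mc{P}$, with no fiberwise argument needed), but rather to guarantee $R\pi_* = R\pi_!$, so that when you apply $Rf_!$ the pushforward $R\pi_*$ in the graded pieces becomes $R(f\circ\pi)_!$ and the $E_1$-terms are genuinely compactly supported cohomology groups on $\mc{P}_{p+1}$.
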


The two versions (1) and (2) are related by the long exact sequence in compactly supported cohomology and are essentially equivalent. We state them individually for convenience in applications.

\begin{remark}\label{remark.filtration-functor-spectral-sequences}
As in the toy model, these spectral sequences arise from a functorial filtration on a suitable category of complexes and this is made precise in \cref{body.filtration-functor}. Thus we can replace $\mc{F}$ with a complex of sheaves and $H^\bullet_c$ with $Rf_*$ or $Rf_!$ for a general morphism $f: X \rightarrow S$ --- see  \cref{body.spectral-sequences}.
\end{remark}

\begin{remark} In the topological case, if we take geometric realizations, then the terms are computing the compactly supported cohomology of the complement of one stratum of $|N\mc{P}|$ in the next, and the spectral sequence above arises from interpreting these complements using the geometric realizations of the nerves of auxiliary pospaces. In particular, for constant coefficients $A$, we can rewrite as:
\begin{enumerate}
\item $E_1^{p, q} = H_c^{p + q}(|N(-\infty, \mc{P}_{p+1}]| - |N(-\infty, \mc{P}_{p+1})|; A) \Rightarrow H_c^{p+q}(Z; A)$.
\item $E_1^{p, q} \Rightarrow H_c^{p+q}(U; A)$, with
\[E_1^{p,q} =\begin{cases*} 0 & if $p < 0$,\\
H_c^q(X; A) & if $p=0$, \\
H_c^{p + q - 1}(|N(-\infty, \mc{P}_p]| - |N(-\infty, \mc{P}_p)|; A) & if $p > 0$.\end{cases*}\]
\end{enumerate}
Here $(-\infty, \mc{P}_r]=(\mc{P}_{\le r} \times \mc{P}_r) \cap \sle_{\mc{P}}$, a pospace over $\mc{P}_r$. For $p \in \mc{P}_r$ the nerve of the fiber $(-\infty, p]$ is a cone with vertex $p$ and base $N(\infty, p)$, so the fiber of $|N(-\infty, \mc{P}_r]| - |N(-\infty, \mc{P}_r)|$ over $p\in \mc{P}_r$ is the \emph{open cone} on $|N(-\infty, p)|$. In applications, the space $|N(-\infty, \mc{P}_r]| - |N(-\infty, \mc{P}_r)|$ is often a bundle over $\mc{P}_r$. For instance, for the configuration poscheme we study in \cref{s.conf-zc-hilb}, it is a disk (open simplex) bundle over $\mc{P}_r$ with $\sgn$ monodromy, which gives a topological explanation for the ubiquity of the $\sgn$ representation in \cref{s.conf-zc-hilb} and in our application to Vakil--Wood's conjecture (see \cref{s.smooth-hypersurface}).
\end{remark}

We conclude this subsection with some applications of \cref{main.spectral-sequences}.

\subsubsection{The first spectral sequence of a stratified space}
\label{sss.stratified-space}
Suppose a space/scheme $X$ is a finite union of disjoint locally closed sets $S_\alpha$, such that the closure $Z_{\alpha} := \overline{S_\alpha}$ is a union of strata $S_\beta$. The index set is a poset under the order $\alpha \geq \beta$ if $Z_\alpha \supseteq Z_\beta$.

We consider the pospace/poscheme $\mc{P}=\bigsqcup_{\alpha} Z_\alpha \rightarrow X$
where the relation is given by $z_\alpha \geq z_\beta$ if $z_\alpha = z_\beta$ as elements of $X$ and $Z_\alpha \supseteq Z_\beta$. It is a finite disjoint union of closed immersions into $X$, thus proper. The fiber over a geometric point $z$ has an isolated minimum, given by $z_\beta$ for $z \in S_\beta$. There is a map from $\mc{P}$ to the index poset, so a rank on the latter induces a rank on $\mc{P}$. Thus \cref{main.spectral-sequences}-(1) applies with $Z=X$ and yields (see \cref{sss.stratified-space-body}) the spectral sequence of a stratified space
\[ E_1^{p,q}=\bigoplus_{\rk(\alpha)=p+1}H^{p+q}_c(S_\alpha, \mc{F}) \Rightarrow H^{p+q}_c (X, \mc{F}). \]

\subsubsection{The Petersen spectral sequence of a stratified space}
\label{sss.petersen}
Here we adopt the same setup as above, but \emph{reverse the ordering}: now $\alpha \leq \beta$ if $Z_\alpha \supseteq Z_\beta$. We also assume there is a unique stratum $Z_\eta = X$, which we remove from $\mc{P}$. Now after fixing a rank function on the indices for this reversed order, we have
\[ \mc{P}_{p} = \bigsqcup_{\mr{rk}(\alpha)=p+1} Z_\alpha \qquad \textrm{ and } \qquad (-\infty, \mc{P}_{p+1})  = \bigsqcup_{\mr{rk}(\alpha)=p+1} Z_{\alpha} \times (\eta, \alpha) \]
Thus the pospaces/poschemes appearing in $E_1^{p,q}$ in \cref{main.spectral-sequences} are locally constant. In particular, if $X$ is a topological space or a variety over an algebraically closed field, then applying \cref{main.spectral-sequences}-(2), we obtain Petersen's \cite{petersen:spectral-sequence-stratified-space} spectral sequence:
\[ E_1^{p,q}= \bigoplus_{\rk(\alpha)=p} H^{p+q-2}_c(Z_\alpha, \tilde{C}^\bullet(\eta, \alpha, \mbb{Z} ) \otimes \mc{F}) \Rightarrow H^{p+q}_c(S_\eta, \mc{F}). \]
Taking $\star=*$ in \cref{body.spectral-sequences}-(2) gives the variant with supports of \cite{petersen:spectral-sequence-stratified-space}.

\subsubsection{Approximate inclusion-exclusion and Vassiliev-type sequences}
Given a proper map of varieties $f:Z \rightarrow X$, there are several natural poschemes one can construct that resolve the image $f(Z)$. We investigate some of these in \cref{s.conf-zc-hilb}, with an emphasis on the poscheme of relative effective zero cycles; the latter gives a resolution of $f(Z)$ when $f$ is finite, but is useful more generally because it can be used to give an approximate inclusion-exclusion principle when the finite locus of $f$ has complement of high codimension (see \cref{theorem.approx-ie}). The terms of the rank spectral sequence of \cref{main.spectral-sequences} in this case are given by the compactly supported sign cohomology of relative configuration spaces, and indeed the spectral sequence is closely related to the stable part of Vassiliev spectral sequences for discriminant loci appearing in \cite{tommasi:stability}. In the case where the map is not finite or well-approximated by a finite map, one can use the full Hilbert poscheme to get a full Vassiliev spectral sequence closely related to the sequences appearing in, e.g., \cite{vassiliev:homology-hypersurfaces}. We refer the reader to the introduction of \cref{s.conf-zc-hilb} for more on these points.

\subsubsection{Other applications} \cref{main.spectral-sequences} also recovers O.~Banerjee's \cite[Theorem~1]{banerjee} spectral sequence of a symmetric semisimplicial filtration --- see \cref{sss.banerjee}.

\subsection{Decategorifications and Grothendieck rings}\label{ss.intro-poschemes}

Recall that in the toy model of \cref{ss.intro-classical-inclusion-exclusion} one could also interpret the classical inclusion-exclusion formula as an identity in the (combinatorial) Grothendieck ring of the category of finite sets, which was moreover equal to the Grothendieck ring of constructible sheaves.

If $X$ is a Noetherian scheme, then this story is enriched: for $\ell$ invertible on $X$, we can form the Grothendieck ring of sheaves $K_0(D_\mr{Cons}(X,\mbb{Q}_\ell))$. In the setting of \cref{main.cd-criterion}, the filtration then induces corresponding Euler characteristic identities in $K_0(D_\mr{Cons}(X,\mbb{Q}_\ell))$. We can also form the (modified --- see \cref{ss.decat}) Grothendieck ring of varieties $K_0(\Var/X)$, and there is a compactly supported cohomology map
\[ K_0(\Var/X) \rightarrow K_0(D_\mr{Cons}(X,\mbb{Q}_\ell)). \]
Unlike the case of finite sets, however, it is not typically an isomorphism, so we have two different decategorifications (one combinatorial and one abelian). There is also an inclusion-exclusion principle in the combinatorial decategorification $K_0(\Var/X)$ lifting the inclusion-exclusion principle in the abelian decategorification $K_0(D_\mr{Cons}(X,\mbb{Q}_\ell))$ along this map: For a simplicial scheme $S_\bullet/X$, let
\[ \chi(S_\bullet) := \sum_{k=0}^\infty (-1)^k [\mc{S}^\circ_k] \in K_0(\Var/X)\]
where $\mc{S}^\circ_k$ denotes the subscheme of non-degenerate $k$-simplices and we only consider  $S_\bullet$ such that these spaces are of finite type over $X$ and this sum is finite. When $S_\bullet=N\mc{P}$ for a poscheme $\mc{P}/X$, the non-degenerate simplices are the strictly ordered chains, and the sum is finite exactly when $\mc{P}$ is of bounded length, i.e.\ there is some bound on the length of strict chains.  We show:

\begin{maintheorem}[Motivic inclusion-exclusion]\label{maintheorem.mot-inc-exc}
Let $X$ be a Noetherian scheme and let $\mc{P} /X$ be a poscheme over $X$ of finite type that admits weak centers over all geometric fibers. Then, in $K_0(\Var/X)$, $ [X/X] = \chi(N\mc{P}).$ If $\mc{P}$ is equipped with a rank function then this is furthermore equal to
\[ - \sum_{p} \tilde{\chi}(N(-\infty, \mc{P}_{p})) = -\sum_p \bigg( \bigg( \sum_{k=0}^\infty (-1)^k  [N(-\infty, \mc{P}_{p})^\circ_k]\bigg) - [\mc{P}_p]  \bigg) \in K_0(\Var/X)\]
\end{maintheorem}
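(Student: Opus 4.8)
The plan is to decategorify the simplicial homotopy argument that underlies \cref{main.cd-criterion} so that it lands in $K_0(\Var/X)$, where we have the scissor relations but no homotopies available. First I would recall that for any simplicial scheme $S_\bullet/X$ with the non-degenerate simplices $\mc{S}^\circ_k$ of finite type over $X$ and only finitely many nonzero, the class $\chi(S_\bullet) = \sum_k (-1)^k [\mc{S}^\circ_k]$ depends only on $S_\bullet$ through its geometric realization in a suitable sense; more precisely, I would establish the elementary ``simplicial Euler characteristic'' lemma that if $f\colon S_\bullet \to T_\bullet$ is a map of such simplicial schemes over $X$ which is a fiberwise homotopy equivalence (or, more robustly, such that the induced map on realizations is a ``motivic equivalence'' in the bookkeeping sense that the alternating sums of non-degenerate simplices agree after cutting into locally closed pieces), then $\chi(S_\bullet) = \chi(T_\bullet)$ in $K_0(\Var/X)$. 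The cleanest route is to note that $\chi(S_\bullet)$ can be computed by stratifying: cutting $X$ into locally closed pieces over which $\mc{P}$ is split and the combinatorial type of the fibers is constant reduces everything to the case where the homotopy of \cref{ss.intro-classical-inclusion-exclusion} (contracting each fiber to the weak center) applies literally on each stratum, and on each stratum the contraction is through non-degenerate simplices in a way that telescopes to give $[X/X]$ restricted to that stratum.

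Second, for the first equality $[X/X] = \chi(N\mc{P})$, I would argue as follows: since $\mc{P}/X$ is of finite type and admits weak centers over all geometric fibers, \cref{main.cd-criterion}(2) tells us $N\mc{P}/X$ is of cohomological descent, and in particular the fibers $N\mc{P}_x$ are (weakly) contractible at every geometric point $x$. Over a fine enough finite stratification of $X$ into locally closed subschemes — which exists by Noetherianity and the finite-type hypothesis, using generic constructibility to make the number and combinatorial incidences of the ``faces below the weak center'' locally constant — the weak center section can be chosen, and the fiberwise cone-collapse homotopy of \cref{ss.intro-classical-inclusion-exclusion} can be run family-wise. Applying the simplicial Euler characteristic lemma of the previous paragraph stratum by stratum and summing (using additivity of $\chi$ and of $[-/X]$ under the stratification, which is exactly where the scissor relations in $K_0(\Var/X)$ enter) gives $\chi(N\mc{P}) = [X/X]$.

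Third, for the refinement in terms of the rank function, I would compare $\chi(N\mc{P})$ with $-\sum_p \tilde{\chi}(N(-\infty,\mc{P}_p))$ by reorganizing the sum over non-degenerate chains of $N\mc{P}^+$ (the nerve after adjoining the disjoint minimum $-\infty$) according to their top vertex: a non-degenerate chain of $N\mc{P}^+$ of length $\geq 2$ is either a non-degenerate chain of $N\mc{P}$ together with the appended $-\infty$, or a chain not involving $-\infty$, and bookkeeping these against the rank $p$ of the top vertex produces, for each $p$, exactly the truncated cone $N(-\infty,\mc{P}_p]$ fibered over $\mc{P}_p$, whose non-degenerate $k$-simplices are $N(-\infty,\mc{P}_p)^\circ_{k-1} \sqcup N(-\infty,\mc{P}_p)^\circ_{k}$ (the first family being chains through $-\infty$, shifted). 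The alternating sum telescopes so that $\chi(N(-\infty,\mc{P}_p])$ is $[\mc{P}_p]$ minus the alternating sum of non-degenerate simplices of $N(-\infty,\mc{P}_p)$, i.e.\ $-\tilde\chi(N(-\infty,\mc{P}_p))$ in the notation of the statement (with the sign convention $\tilde\chi = \sum_k(-1)^k[N(-\infty,\mc{P}_p)^\circ_k] - [\mc{P}_p]$), and since $N\mc{P}^+$ has contractible fibers (it has the minimum $-\infty$), its own $\chi$ is $[X/X]$; subtracting the contribution of the chains purely within $\mc{P}$ against those involving $-\infty$ yields the displayed identity. This is just a reindexing of finitely many scissor relations, justified fiberwise and then globally by the same stratification.

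The main obstacle is the first step: making precise, and proving, that the motivic Euler characteristic $\chi(S_\bullet)$ is invariant under the fiberwise collapse homotopy in the Grothendieck ring, where we cannot simply invoke ``homotopy preserves Euler characteristic'' because $K_0(\Var/X)$ remembers far more than cohomology. The resolution — and the technical heart of the argument — is that the relevant homotopy is not an arbitrary one but the explicit cone contraction onto the weak center, which after stratifying $X$ can be realized by a filtration of the simplicial scheme whose graded pieces are manifestly ``cellularly trivial,'' so that the alternating sum collapses by an Euler-characteristic-of-a-contractible-CW-complex computation carried out entirely within the scissor relations; everything else (the stratification, the compatibility of $\chi$ with restriction to locally closed pieces, the reindexing by top vertex) is routine.
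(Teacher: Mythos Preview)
Your overall strategy---stratify, pick a center on each stratum, then telescope---is the same as the paper's, but you have glossed over the one genuinely nontrivial step, and this is a real gap.

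You write that over a fine enough stratum ``the weak center section can be chosen.'' This is exactly what needs to be proved, and it is not automatic. The hypothesis only gives a weak center on each \emph{geometric} fiber, and there is no reason a priori that these glue to a section even over an arbitrarily small locally closed piece of $X$: the center at a geometric generic point $\overline{\eta}$ is initially only defined over some finite normal extension $M/K(\eta)$. The paper handles this by a Galois argument specific to centers: for $\sigma \in \mr{Aut}(M/K(\eta))$ one has $c \leq \sigma(c)$ or $c \geq \sigma(c)$ (because $c$ is a weak center), and iterating gives a chain $c \leq \sigma(c) \leq \dotsm \leq \sigma^k(c) = c$, forcing $\sigma(c) = c$. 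Hence $c$ descends to the purely inseparable subfield $M^G$, and after spreading out one obtains a weak center over a radicial surjective $\tilde{U} \to U$. The identity $[\tilde{U}/U] = 1$ then holds only in the \emph{modified} Grothendieck ring, which is precisely why that ring is used. Your ``generic constructibility'' appeal does not supply this descent, and without it the stratification you want need not exist.

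Two smaller points: first, you invoke \cref{main.cd-criterion}(2), but that requires $\mc{P}/X$ proper, which is \emph{not} assumed here (this is noted explicitly in the remark following the theorem); fortunately you don't actually need cohomological descent, only the telescoping lemma for a poscheme with a center (the paper's \cref{lemma:isolated-center-motivic-inclusion-exclusion}), which is purely combinatorial. Second, your third step---the reindexing by rank of the top vertex---is correct and is indeed just bookkeeping, as the paper also treats it.
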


\begin{remark} There is  no properness assumption in \cref{maintheorem.mot-inc-exc}. The rank just rearranges the terms as in the toy model \cref{ss.intro-classical-inclusion-exclusion}, but this is sometimes quite useful.
\end{remark}

This generalizes the motivic inclusion-exclusion principle of \cite{bilu-howe:mot-eul-mot-stat, vakil-wood:discriminants}, which is recovered by applying \cref{maintheorem.mot-inc-exc} to the configuration poscheme $\Conf^\bullet_X(Z)$ of a morphism $Z \rightarrow X$ (see \cref{theorem.approx-ie} for a closely related approximate motivic inclusion-exclusion principle along with a cohomological analog).

\subsection{Outline}\label{ss.outline} In \cref{s.preliminaries} we give a brief overview of simplicial spaces/schemes and filtered derived categories, recalling precisely the points that we need later on. In particular, we explain how to translate a geometric filtration into a support filtration on the derived category of sheaves on a simplicial space/scheme and establish some useful lemmas related to this procedure. In \cref{s.pospaces-poschemes} we define and study elementary properties of pospaces/poschemes and the nerve construction.

With these preliminaries established, in \cref{s.cohomological-inclusion-exclusion} we prove our cohomological inclusion-exclusion results, \cref{main.cd-criterion} and \cref{main.spectral-sequences}. The key ideas as described in \cref{ss.intro-sheaves} come from elementary poset topology, so that the main work is just to be careful with the technical details in this enriched setting. In \cref{s.mot-inc-exc} we prove our motivic inclusion-exclusion result, \cref{maintheorem.mot-inc-exc}; this follows the same general pattern as the cohomological case and is in some sense the simpler of the two, but some extra work is needed to translate the contracting homotopies used in \cref{s.cohomological-inclusion-exclusion} into identifications of enriched Euler characteristics.

In \cref{s.conf-zc-hilb}, we study the relative poscheme of effective zero cycles for a projective morphism of varieties $f:Z \rightarrow X$ --- in particular, we compute the graded components for the rank spectral sequence and the $E_1$ page of the skeletal spectral sequence, and prove an approximate inclusion-exclusion theorem (\cref{theorem.approx-ie}). These results play an important role in our application to homological stability, \cref{intro.stabilization}. Note that we systematically use divided powers in place of symmetric powers, which leads to a more technically satisfying theory in characteristic $p$.

In \cref{s.incidence-algebras} we explain how to construct motivic and sheaf theoretic incidence algebras attached to poschemes. In particular, we explain how to realize the Kapranov zeta function as an element of the reduced incidence algebra of the poscheme of effective zero cycles, then  use this to give a a new perspective on the inversion formula for the Kapranov zeta function as an instance of M\"obius inversion (that Hasse--Weil zeta functions can be realized inside incidence algebras of posets of zero-cycles is well-known and classical; see Kobin \cite{kobin} for a recent survey, which also raised the question of whether such an approach could exist for the Kapranov zeta function). Motivated by this construction, we also define cohomological special values of the inverse Kapranov zeta function; these describe the stable homology in \cref{intro.stabilization}.

Finally, in \cref{s.smooth-hypersurface} we prove \cref{intro.stabilization}. As indicated earlier, \cref{s.smooth-hypersurface} begins with a self-contained introduction that provides a much more detailed discussion of Vakil and Wood's conjecture and related work leading to \cref{intro.stabilization}.

\subsection{Acknowledgements} We thank Oishee Banerjee for helpful conversations (see \cref{remark.aumonier-banerjee}!). We thank Margaret Bilu for many helpful conversations, especially at the beginning of this work which grew in conjunction with \cite{bilu-das-howe}. Ronno Das was supported during later stages of the project by the European Research Council (ERC) under the European Union’s Horizon 2020 research and innovation programme (grant agreement No.\ 772960), as well as by the Danish National Research Foundation through the Copenhagen Centre for Geometry and Topology (DNRF151). Sean Howe was supported during a portion of the preparation of this work by the National Science Foundation under Award No. DMS-1704005.

\section{Preliminaries}\label{s.preliminaries}

\subsection{Simplicial objects}
\newcommand{\Cone}{\mathrm{Cone}}
\newcommand{\Cocone}{\mathrm{Cocone}}

\subsubsection{Definitions} We write $\Delta$ for the simplex category, which we take as the category of non-empty finite subsets of $\mbb{Z}_{\geq 0}$ with morphisms given by non-decreasing maps (this is the minimal version of $\Delta$ which allows the clean formulation of the join construction in \cref{example.simplicial-constructions}-(3) below). We write $[n]=\{0,\ldots,n\} \in \Delta$. A simplicial object in a category $\mc{C}$ is a functor $\Delta^{\op}\rightarrow \mc{C}$. We write $s\mc{C}$ for the category of simplicial objects in $\mc{C}$. We will often specify a simplicial object by specifying its restriction to the full subcategory spanned by $[n]$ for $n \geq 0$, which is canonically equivalent to $\Delta$, and given $A \in s\mc{C}$ we write $A_k:=A([k])$. We write $\Delta^k$ for the simplicial set $\mr{Hom}(\bullet, [k]).$

\begin{example}\label{example.simplicial-constructions}\hfill
	\begin{enumerate}
	\item Given an object $X \in \mc{C}$, we can form the constant simplicial object $X_\bullet \in s\mc{C}$ such that $X_k=X$ for all $k$ and all maps are $\Id_X$.
	\item If $\mc{C}$ admits coproducts then for any $A\in s\mc{C}$ and simplicial set $T \in s\mr{Set}$, we can form $A \times T \in s\mc{C}$ as in \cite[\href{https://stacks.math.columbia.edu/tag/017C}{Tag 017C}]{stacks-project}
		by the formula
	\[ (A \times T)_k=\bigsqcup_{t \in T_k} A_k. \]
		If $\mc{C}$ only admits finite coproducts this still makes sense for finite simplicial sets, in particular for $\Delta^k$.
	\item If $\mc{C}$ admits finite coproducts and products and a final object $*$, we define the join $A \star B$ of $A, B \in s\mc{C}$ by
		\[ (A \star B)_k = \bigsqcup_{j=-1}^{k} A([j]) \times B([k]\backslash[j]) \]
		where we intepret $[-1]=\emptyset$ and $A(\emptyset)=B(\emptyset)=*$. With the same hypotheses, for $A \in s\mc{C}$, we define
		\[ \Cone(A) = (*_\bullet) \star A \textrm{ and } \Cocone(A)= A \star (*_\bullet).\]
\end{enumerate}

\end{example}

\subsubsection{Homotopy} Suppose $\mc{C}$ admits finite coproducts. Then, for any simplicial object $A \in s \mc{C}$, we can form $A \times \Delta^1$ as in \cref{example.simplicial-constructions}-(2). If $f, g: A \rightarrow B$ are maps in $s\mc{C}$, a homotopy from $f$ to $g$ is a map  $h:A \times \Delta^1 \rightarrow B$ such that $\{0 \mapsto 0\}^*h=f$ and $\{0 \mapsto 1\}^*h=g$. We say $f$ and $g$ are homotopic if there is a chain of maps $f=f_1,f_2,f_3,\ldots, f_n=g$ such that for each $i$, there is a homotopy from $f_i$ to $f_{i+1}$ or from $f_{i+1}$ to $f_i$ over $X$.

A map $f: A \rightarrow B$ in $s\mc{C}$ is a homotopy equivalence if there exists  a map $g: B \rightarrow A$ such that $f\circ g$ is homotopic to $\Id_{A}$ and $g \circ f$ is homotopic to $\Id_{B}$. A functor $\mc{C}\rightarrow \mc{D}$ induces a functor $s\mc{C} \rightarrow s\mc{D}$ which preserves homotopies and thus the notion of homotopic maps and homotopy equivalences.  If $\mc{C}$ has a final object $*$, then we say $A\in s \mc{C}$ is contractible if the unique map $A \rightarrow *_\bullet$ is a homotopy equivalence.

We will mostly encounter the following special type of homotopy equivalence: we say $\iota: A \hookrightarrow B$ is a deformation retract if there is a map $r: B \rightarrow A$ such that $r \circ \iota = \Id_{A}$ and $\iota \circ r$ is homotopic to $\mr{Id}_{B}$. Note that if $A$ is contractible, then any $\iota: *_\bullet \hookrightarrow A$ is a homotopy inverse for $A \rightarrow *_\bullet$ and therefore a deformation retract.

\begin{example} If $\mc{C}$ admits finite coproducts and has a final object $*$ then, for any $A \in s\mc{C}$, $\Cone(A)$ and $\Cocone(A)$ are contractible (the homotopy inverse $\iota$ as above is given by inclusion of the vertex/tip).
\end{example}

\subsubsection{Augmentations and contractible objects}
For $X \in \mc{C}$ and $A \in s\mc{C}$, an augmentation from $A$ to $X$ is a map $\epsilon: A \rightarrow X_\bullet$. The category $s\mc{C}_{/X_\bullet}$ of simplicial objects equipped with an augmentation is canonically equivalent to $s\mc{C}_{/X} := s(\mc{C}_{/X})$, the simplical objects in the category $\mc{C}_{/X}$ of objects in $\mc{C}$ with a map to $X$. We say $A/ X_\bullet$ is contractible if it is contractible as an object of $s\mc{C}_{/X}$, i.e.\ if the map $\epsilon: A \rightarrow X_\bullet$ is a homotopy equivalence with homotopy inverse given by a section of $\epsilon$.

\subsubsection{Cosimplicial objects in abelian categories}
A cosimplicial object in $\mc{C}$ is a simplicial object in $\mc{C}^\op$. If $\mc{C}$ is abelian, there are
two natural functors from $s\mc{C}^\op$ to the category of cochain complexes in $\mc{C}$ concentrated in positive degree (see e.g.\ \cite[\href{https://stacks.math.columbia.edu/tag/0194}{Tag 0194}]{stacks-project}):
\begin{enumerate}
	\item The unnormalized cochain complex attached to $A \in s \mc{C}^\op$ is
	\[ A_0 \xrightarrow{d_1} A_1 \xrightarrow{d_2} A_2 \xrightarrow{d_3} \dotsm \]
	where $d_k = \sum_{i \in [k]} (-1)^i \delta_i$ for $\delta_i$ the morphism obtained by applying $A$ to to the standard $i$th face map $[k-1]\rightarrow [k]$ that skips the vertex $i \in [k]$.
	\item The normalized cochain complex attached to $A \in s \mc{C}^\op$ is the subcomplex obtained by replacing $A_k$ with the kernel of all degeneracy maps.
\end{enumerate}
The inclusion of the normalized complex in the unnormalized complex is a chain homotopy equivalence, and both functors send simplicial homotopies to chain homotopies.

\subsection{Sheaves on simplicial spaces/schemes}\label{ss.ssites-sheaves-descent}
For $X$ a topological space, we write $\Sheaves(X)$ for the category of abelian sheaves on $X$. For $X$ a scheme, we write $\Sheaves(X)$ for either the category of abelian étale sheaves or the category of pro-étale $\mc{O}_L$ or $L$ modules for $L/\mbb{Q}_\ell$ an algebraic extension (as defined in \cite{bhatt-scholze:pro-etale}).

If $A$ is a simplicial space or scheme, we extend these definitions in the standard way to define $\Sheaves(A)$: Concretely, we define a sheaf on $A$ to be a family of sheaves $\mc{F}_n$ on the simplex spaces $A_n$ equipped with a compatible family of maps $A(f)^* \mc{F}_n \rightarrow \mc{F}_m$ for $f:[n]\rightarrow [m]$. It can be shown that these form an abelian category that is naturally identified with the category of sheaves on a site built from $A$, so that, in particular, there are enough injectives and the standard formalism of derived categories of sheaves applies. The case of simplicial spaces is treated concretely in \cite[\href{https://stacks.math.columbia.edu/tag/09VK}{Tag 09VK}]{stacks-project}, or can be considered in parallel with the case of schemes by first passing to the corresponding simplicial site and then applying the formalism of   \cite[\href{https://stacks.math.columbia.edu/tag/09WB}{Tag 09WB}]{stacks-project}, Case  (A).

Given a simplicial space or scheme $A$ with an augmentation $\epsilon$ towards $X$, we obtain adjoint pushforward and pullback functors
\[ \epsilon_*: \Sheaves(A)\rightarrow \Sheaves(X), \epsilon^*:\Sheaves(X) \rightarrow \Sheaves(A) \]
 such that $\epsilon^*\mc{G}$ is given by the obvious system of  $\epsilon_n^*\mc{G}$ on $A_n$ and \[ \epsilon_*(\mc{F})=\mr{Eq} \left( {\epsilon_{0}}_*\mc{F}_0 \rightrightarrows  {\epsilon_{1}}_*\mc{F}_1 \right).\]
For $K \in D^+(A)$, the bounded below derived category of $\Sheaves(A)$, there is a functorial skeletal spectral sequence \cite[\href{https://stacks.math.columbia.edu/tag/0D7A}{Tag 0D7A}]{stacks-project}
\begin{equation}\label{eq.simplicial-ss}  E_1^{p,q}=R^q {\epsilon_p}_* K \Rightarrow R^{p+q}\epsilon_* K. \end{equation}
such that for each $q$ the cochain complex  $E_1^{\bullet,q}$ is the unnormalized cochain complex of the cosimplicial sheaf $[p] \mapsto R^q {\epsilon_p}_* K$ on $X$.

\begin{definition}\label{def.reduced-cohomology}
We write $\tilde{C}(A/X, \bullet)$ for the relative reduced cohomology complex functor $D^+(X) \rightarrow D^+(X)$, i.e.\ the  cone of the adjunction unit $u_{A/X}: \Id_{D^+(X)} \rightarrow R\epsilon_* \epsilon^*$, so that there is a functorial exact triangle for $K \in D^+(X)$
\[K \rightarrow R\epsilon_* \epsilon^*K\rightarrow \tilde{C}(A/X, K) \rightarrow K[1] \]
We write its cohomology sheaves as
\[ \tilde{H}^q(A/X, \bullet):=H^q(\tilde{C}(A/X, \bullet)). \]
We say $A/X$ satisfies cohomological descent on a full subcategory $D \subset D^+(X)$ if $u_{A/X}|_D$ is an isomorphism of functors, or equivalently if $\tilde{C}(A/X, K)\cong 0$  for each $K \in D$, or equivalently if $\tilde{H}^q(A/X,K)=0$ for each $K \in D$ and each $q \in \mbb{Z}$.
\end{definition}

\begin{lemma}\label{lemma:homotopy-equiv-and-contractible}
If $f: A/X \rightarrow B/X$ is a homotopy equivalence of simplicial spaces or schemes over $X$, then $f$ induces an isomorphism of functors on $D^+(X)$
\[ R{\epsilon_{A}}_* \epsilon_{A}^* \cong  R{\epsilon_{B}}_* \epsilon_{B}^*. \]
In particular, if $A/X$ is contractible, then $A/X$ satisfies cohomological descent.
\end{lemma}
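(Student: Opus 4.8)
The plan is to reduce everything to the statement that a simplicial homotopy between maps of simplicial spaces/schemes over $X$ induces a natural chain homotopy after applying $R\epsilon_*\epsilon^*$, and hence an equality on the level of the derived functors. First I would recall from \cref{example.simplicial-constructions}-(2) that $A\times\Delta^1$ makes sense in $s\mc{C}_{/X}$ (finite coproducts suffice since $\Delta^1$ is finite), and that pullback of sheaves is compatible with the two inclusions $\{0\mapsto 0\},\{0\mapsto 1\}:A_\bullet\to A\times\Delta^1$ over $X$. The key lemma to establish is: if $h:A\times\Delta^1\to B$ is a homotopy from $f$ to $g$ over $X$, then the two induced maps of functors $Rf^*{}\circ{}(-)$ and $Rg^*\circ(-)$ on $D^+(B)\to D^+(A)$ — equivalently the two maps $R{\epsilon_B}_*\to R{\epsilon_A}_*$ after precomposing with $\epsilon_B^*$ — agree. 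Since $R\epsilon_*$ for a simplicial augmentation is computed by the (hyper)cohomology of the associated cosimplicial object, i.e.\ by the totalization involving the unnormalized cochain complex from the skeletal spectral sequence \eqref{eq.simplicial-ss}, a simplicial homotopy $h$ produces, degreewise, the standard prism-decomposition maps, which assemble into a chain homotopy between the cochain maps induced by $f$ and $g$; chain-homotopic maps of complexes are equal in the derived category, and this is natural in the coefficient complex $K\in D^+(X)$.

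Granting this lemma, the proof of \cref{lemma:homotopy-equiv-and-contractible} is formal. Given a homotopy equivalence $f:A/X\to B/X$ with homotopy inverse $\gamma:B/X\to A/X$, functoriality of $R\epsilon_*\epsilon^*$ in the simplicial object gives maps $R{\epsilon_B}_*\epsilon_B^*\to R{\epsilon_A}_*\epsilon_A^*$ and back induced by $f$ and $\gamma$. Because $f\circ\gamma$ is homotopic to $\Id_B$ and $\gamma\circ f$ is homotopic to $\Id_A$ — in the sense of a zig-zag of homotopies, which is harmless since each step lands in $D^+(X)$ where the induced maps compose — the lemma forces the two composites to be the identity natural transformations. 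Hence $f$ induces the asserted isomorphism of functors $R{\epsilon_A}_*\epsilon_A^*\cong R{\epsilon_B}_*\epsilon_B^*$ on $D^+(X)$. For the ``in particular'' clause: if $A/X$ is contractible, then $\epsilon_A:A\to X_\bullet$ is a homotopy equivalence with homotopy inverse a section, and $X_\bullet/X$ trivially satisfies cohomological descent (the augmentation $\epsilon_{X_\bullet}$ is the identity, so $u_{X_\bullet/X}$ is an isomorphism), whence $u_{A/X}$ is an isomorphism as well, i.e.\ $\tilde C(A/X,K)\cong 0$ for all $K\in D^+(X)$.

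A small technical point to handle carefully is the derived versus underived issue: the skeletal spectral sequence \eqref{eq.simplicial-ss} and the cosimplicial-complex description of $R\epsilon_*$ require a bounded-below complex and the existence of enough injectives on the simplicial site, both of which are recorded in \cref{ss.ssites-sheaves-descent}. One should choose an injective (or at least $\epsilon_n$-acyclic in every degree) resolution of $\epsilon^*K$ on $A$ — or equivalently work with the hyper-derived functor via the associated double complex — and check that the prism operators are defined on this resolution compatibly; this is standard but must be said. In the $\ell$-adic pro-étale setting one simply works in $D^+$ of the relevant abelian category of pro-étale sheaves, for which the same formalism applies by \cite{bhatt-scholze:pro-etale}.

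The main obstacle is the key lemma, i.e.\ promoting a simplicial homotopy to a natural chain homotopy on $R\epsilon_*\epsilon^*$; concretely, writing down the prism/alternating-sum operator on the cosimplicial cochain complex and verifying it is a chain homotopy between the two face maps, and that everything is functorial in $K$ and compatible with passing from the unnormalized to the normalized complex (recalled at the end of \cref{ss.ssites-sheaves-descent}). Everything after that is a formal two-out-of-three argument in $D^+(X)$.
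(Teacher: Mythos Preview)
Your proposal is correct and follows essentially the same idea as the paper: a simplicial homotopy induces a chain homotopy on the associated cosimplicial cochain complex, so homotopy-equivalent augmentations give isomorphic $R\epsilon_*\epsilon^*$. The paper organizes this slightly differently, and in a way that sidesteps the resolution bookkeeping you flag as a concern: rather than building a chain homotopy on the total derived complex, it compares the skeletal spectral sequences \eqref{eq.simplicial-ss} for $A$ and $B$ and observes that, for each fixed $q$, the $E_1^{\bullet,q}$ column is the unnormalized complex of the cosimplicial sheaf $[p]\mapsto R^q{\epsilon_p}_*\epsilon_p^*K$; since this cosimplicial sheaf is obtained by applying a functor (on spaces/schemes over $X$) levelwise to $A$ and $B$, the homotopy equivalence $f$ induces a homotopy equivalence of cosimplicial sheaves, hence a quasi-isomorphism on $E_1$ columns, hence an isomorphism of spectral sequences from $E_2$ on. This avoids ever choosing a resolution of $\epsilon^*K$ or writing down prism operators at the derived level.
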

\begin{proof}
We obtain a map $f^*$ between the skeletal spectral sequences \cref{eq.simplicial-ss} for $A/X$ and $B/X$. On the $q$th column $E_1^{\bullet, q}$ this is the map on unnormalized complexes coming from the map of cosimplicial sheaves
\[ f^*: \left([p]\mapsto R^q {\epsilon_{B_p}}_* \epsilon_{B_p}^* K \right)\rightarrow  \left([p]\mapsto R^q {\epsilon_{A_p}}_* \epsilon_{A_p}^* K.\right) \] Because $f$ is a homotopy equivalence, so is the induced map on cosimplicial sheaves: it comes from applying the functor on simplicial spaces/schemes over $X$ induced by the functor on spaces/schemes over $X$ sending $\pi: Y \rightarrow X$ to $R^q \pi_* \pi^* K$. A functor constructed in this way preserves homotopy equivalences.   The induced map on complexes is then a homotopy equivalence, thus an isomorphism on cohomology, so that we obtain an isomorphism of spectral sequences starting   at the $E_2$ page.
\end{proof}

\subsection{Filtered derived categories and simplicial filtrations}\label{ss.filtered-derived}

\subsubsection{Filtered derived categories} We follow \cite[\href{https://stacks.math.columbia.edu/tag/05RX}{Tag 05RX}, \href{https://stacks.math.columbia.edu/tag/015O}{Tag 015O}]{stacks-project}. To summarize: For an abelian category $\mc{A}$, we write $\mc{A}^f$ for the exact category of finitely filtered objects in $\mc{A}$. It admits exact filtered piece, graded part, and forgetful functors $F^p$, $\Gr^p, \mr{Forget}: \mc{A}^f \rightarrow \mc{A}$. For the category of sheaves on a space/scheme or simplicial space/scheme $X$, we write $DF^+(X)$ for the bounded below filtered derived category, ie. the bounded below derived category of $\Sheaves(X)^f$. The filtered piece, graded and forgetful functors induced triangulated functors $DF^+(X) \rightarrow D^+(X)$. Moreover, if $A/X$ is a simplicial space/scheme augmented towards $X$ then we have a filtered derived functor $R{\epsilon^f}_*: DF^+(A) \rightarrow DF^+(X)$ and a canonical isomorphism
\[ \Forget \circ R{\epsilon^f}_* = R\epsilon_* \circ \Forget. \]

\subsubsection{Simplicial filtrations} Let $A$ be a simplicial space/scheme. We say a map of simplicial spaces/schemes $B \rightarrow A$ is a closed (resp.\ open, resp.\ clopen) immersion if for all $k \geq 0$, $B_k \rightarrow A_k$ is a closed (resp.\ open, resp.\ clopen) immersion.

A filtration of $A$ is an increasing sequence of closed sub-simplicial spaces/schemes $\iota_i: F_i A \hookrightarrow A, i \in \mbb{Z}$. It is finite if $F_i A=\emptyset$ for $i \ll 0$ and $F_i A= A$ for $i \gg 0$. It is \emph{split} if $\iota_i$ is a clopen immersion for all $i$.

\begin{example}\label{example.cone-filtration}
For $A$ a simplicial space/scheme over $X$, $\Cone(A/X)$ (see \cref{example.simplicial-constructions}) has a natural finite split filtration with
\[ F_0 =X_\bullet \textrm{ and } F_1 \Cone(A/X)=\Cone(A/X),\]
and similarly for $\Cocone(A/X)$.
\end{example}

Attached to a filtration $F_\bullet$ of $A$ we have filtered and graded piece functors
\[ F^i: \Sheaves(A) \rightarrow \Sheaves(A), \mc{F} \mapsto F^i\mc{F} = \ker \left( \mc{F} \rightarrow {\iota_{i}}_*{\iota_{i}}^{*}\mc{F} \right) \]
\[ \Gr^i: \Sheaves(A) \rightarrow \Sheaves(A), \mc{F} \mapsto \Gr^i \mc{F} = F^i \mc{F} / F^{i+1} \mc{F}. \]
These functors are exact --- this can be checked on $m$-simplices, where it immediately reduces to the corresponding assertion for filtrations of spaces/schemes by closed subspaces/subschemes: indeed, if we write $j$ for the locally closed immersion $F_{i+1} A_m \backslash F_i A_{m}  \hookrightarrow A_m$, then there is a canonical identification of sheaves on $A_m$
\[ j_! j^* \mc{F}_m = (\Gr^i\mc{F})_m. \]
We can reinterpret this description of the graded pieces geometrically as the following useful lemma which also describes the simplicial structure:
\begin{lemma}\label{lemma.push-pull-graded} For $F_\bullet A$ a finite filtration, let $\iota_p: F_p A \rightarrow A$. The unit $F^p \mc{F} \rightarrow {\iota_{p+1}}_{*}\iota_{p+1}^* F^p \mc{F}$ induces an isomorphism $\Gr^p K ={\iota_{p+1}}_*\iota_{p+1}^* F^p \mc{F}$.
\end{lemma}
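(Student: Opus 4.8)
The plan is to reduce the statement to each simplex $A_m$, where it becomes the elementary fact about a filtration of a single space/scheme by closed subschemes that is recalled just before the lemma — namely $(\Gr^i\mc F)_m = j_!j^*\mc F_m$ — and then to observe that the resulting levelwise isomorphisms are the components of the unit. I would use repeatedly that a morphism of sheaves on a simplicial space/scheme is zero (resp.\ an isomorphism) as soon as it is so on every $A_m$, that $\iota_{p+1}^*$ is computed levelwise, and that $(\iota_{p+1})_*$ is computed levelwise because $\iota_{p+1}$ is a levelwise closed immersion (this is already implicit above: it is what makes $(F^i\mc F)_m$ and $(\Gr^i\mc F)_m$ come out as claimed).

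First I would check that the unit $u\colon F^p\mc F \to (\iota_{p+1})_*\iota_{p+1}^* F^p\mc F$ annihilates the subsheaf $F^{p+1}\mc F \subseteq F^p\mc F$. On $A_m$ the map $u_m$ factors through the restriction $(F^p\mc F)_m \to \iota_{p+1,m}^*(F^p\mc F)_m$, while $(F^{p+1}\mc F)_m$ is the extension by zero of $\mc F_m$ from $A_m \setminus F_{p+1}A_m$ and hence has vanishing restriction to $F_{p+1}A_m$; so $u_m$ kills $(F^{p+1}\mc F)_m$ for every $m$, and therefore $u$ kills $F^{p+1}\mc F$. Thus $u$ factors as $F^p\mc F \twoheadrightarrow \Gr^p\mc F \xrightarrow{\bar u} (\iota_{p+1})_*\iota_{p+1}^* F^p\mc F$, and it remains to show that $\bar u$ is an isomorphism, which may be checked on each $A_m$.

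Then, fixing $m$: by the reasoning recalled before the lemma, $(F^p\mc F)_m = k_!k^*\mc F_m$ for the open immersion $k\colon A_m\setminus F_pA_m\hookrightarrow A_m$ and $(\Gr^p\mc F)_m = j_!j^*\mc F_m$ for the locally closed immersion $j\colon F_{p+1}A_m\setminus F_pA_m\hookrightarrow A_m$. Since extension by zero along an open immersion commutes with pullback and $(A_m\setminus F_pA_m)\cap F_{p+1}A_m = F_{p+1}A_m\setminus F_pA_m$, restricting to $F_{p+1}A_m$ gives $\iota_{p+1,m}^*(F^p\mc F)_m = k''_!j^*\mc F_m$ for $k''\colon F_{p+1}A_m\setminus F_pA_m\hookrightarrow F_{p+1}A_m$; and since $\iota_{p+1,m}$ is a closed immersion, $(\iota_{p+1,m})_* = (\iota_{p+1,m})_!$, so $\big((\iota_{p+1})_*\iota_{p+1}^* F^p\mc F\big)_m = (\iota_{p+1,m}\circ k'')_!j^*\mc F_m = j_!j^*\mc F_m$. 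Under these identifications both $(\Gr^p\mc F)_m$ and $\big((\iota_{p+1})_*\iota_{p+1}^* F^p\mc F\big)_m$ are the extension by zero of $\mc F_m$ from $F_{p+1}A_m\setminus F_pA_m$, and chasing definitions shows $\bar u_m$ is the identity; hence $\bar u$ is an isomorphism.

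The one point that takes care — and which I would flag as the (modest) main obstacle — is that $(\iota_{p+1})_*$ really is computed levelwise for the levelwise closed immersion $\iota_{p+1}$ (the analogue for $j_!$ along a levelwise open immersion can fail, since the relevant squares of simplices need not be Cartesian), together with the compatibility of the chain of base-change isomorphisms in the previous paragraph with the simplicial transition maps, so that it genuinely assembles to $\bar u$. Both are formal: every identification used is natural in the triple $(A_m, F_pA_m\subseteq F_{p+1}A_m)$, so the space-level version of the lemma is functorial in the filtered space and the sheaf and applies verbatim to the simplicial diagram $A_\bullet$.
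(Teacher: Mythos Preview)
Your proof is correct and follows exactly the approach the paper indicates: the paper presents this lemma as a ``reinterpretation'' of the levelwise identity $(\Gr^i\mc F)_m = j_!j^*\mc F_m$ established just before, and you have simply written out what that reinterpretation entails, including the factoring of the unit through $\Gr^p\mc F$ and the identification of both sides on each $A_m$ with $j_!j^*\mc F_m$. The paper gives no further proof, so there is nothing to compare beyond noting that your careful treatment of the levelwise computation of $(\iota_{p+1})_*$ makes explicit what the paper leaves implicit.
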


As a consequence of exactness, applying $F^i$ or $\Gr^i$ termwise to complexes induces a functor $D^+(A) \rightarrow D^+(A)$. When the filtration $F_\bullet A$ is finite, the functors $F^i$ assemble to a functor
\[ \Fil_{F_\bullet} : \Sheaves(A) \rightarrow \Sheaves(A)^f \]
that is exact (i.e.\ maps quasi-isomorphisms to filtered quasi-isomorphisms; this is equivalent to the assertion above that each filtered or graded piece functor is exact), so that termwise application to complexes induces  a functor $\Fil_{F_\bullet}:D^+(A) \rightarrow DF^+(A)$ whose associated filtered and graded piece functors are canonically identified with $F^i$ and $\Gr^i$ as above and such that there is a canonical identification $\Forget \circ \Fil = \Id_{D^+(A)}.$

\section{Pospaces and poschemes}\label{s.pospaces-poschemes}

In this section we treat some elementary properties of pospaces and poschemes and their nerves. In \cref{ss.pospacescheme-def-prop} we make the basic definitions, and in \cref{ss.contractibility-criterion} we develop some useful tools for establishing contractibility of the nerve. In particular, we show that the nerve of a pospace/poscheme with a maximum/minimum/center is contractible.

\subsection{Definitions and first properties}\label{ss.pospacescheme-def-prop}

\begin{definition}[Pospaces/poschemes]\hfill
\begin{enumerate}
\item For $X$ a topological space, a \emph{pospace over $X$} is a continuous map $\mc{P} \rightarrow X$ equipped with a closed poset relation $\sle_\mc{P} \subset \mc{P} \times_X \mc{P}$.
\item For $X$ a scheme, a \emph{poscheme over $X$} is a morphism $\mc{P} \rightarrow X$ equipped with a closed poset relation $\sle_\mc{P} \subset \mc{P} \times_X \mc{P}$ (in this case, by a poset relation we mean that it should induce a poset structure on $\mc{P}(T)$ for any $T/X$).
\end{enumerate}
A pospace/scheme $\mc{P}/X$ is \emph{proper} if $\mc{P}\rightarrow X$ is proper as a map of topological spaces/schemes. A map $f:\mc{P}_1 \rightarrow \mc{P}_2$  of pospaces/poschemes over $X$ is a map of spaces/schemes over $X$ respecting the order relation, i.e.\ such that $f\times f|_{\sle_{\mc{P}_1}}$ factors through $\sle_{\mc{P}_2}$.

\end{definition}

For $\mc{P}/X$ a pospace/poscheme, we write $\mathord{\geq}_{\mc{P}}$ for the closed relation obtained by swapping the coordinates; we have $\Delta_{\mc{P}}=\mathord{\geq}_{\mc{P}} \cap \sle_{\mc{P}}$, thus $\Delta_{\mc{P}}$ is closed, so  $\mc{P}/X$ is  separated as a map of topological spaces/schemes. We will also write $\slt_\mc{P}=\sle_{\mc{P}} \backslash \Delta_{\mc{P}}$, the complement of the diagonal in $\sle_\mc{P}$, and similarly for $\mathord{>}_{\mc{P}}$.

\begin{example}
Any poset $\mc{P}$ induces a constant pospace/scheme over any topological space/scheme $X$, $\mc{P} \times X$.
\end{example}

It will be convenient to use the following relative interval notation:
\begin{definition}[Intervals]
 Suppose $\mc{P}/X$ is a pospace/poscheme, and suppose given $T_1 \rightarrow \mc{P}$ and $T_2 \rightarrow \mc{P}$. The \emph{open interval} $(T_1,T_2)$ is the fiber product
\[\begin{tikzcd}
	{(T_1, T_2)} & {} & {} & {T_1 \times_X T_2} \\
	{\slt_{\mc{P}}\times_{\mc{P}} \slt_{\mc{P}} } & {\;} & {} & {\mc{P}\times_X \mc{P}}
	\arrow[from=1-1, to=2-1]
	\arrow[from=1-1, to=1-4]
	\arrow["{(a < b < c)\mapsto (a,c)}", from=2-1, to=2-4]
	\arrow[from=1-4, to=2-4]
\end{tikzcd}\]
viewed as a pospace/poscheme over $T_1 \times T_2$ with ordering pulled back from the  middle coordinate. The closed and mixed intervals $[T_1, T_2]$, $(T_1, T_2]$, $[T_1,T_2)$ over $T_1 \times T_2$ are defined by changing between $\leq$ and $\slt$ appropriately in the bottom left.

Given $T \rightarrow \mc{P}$, we similarly define the  intervals  $(-\infty, T)$, $(-\infty, T]$, $(T,\infty)$ and $[T, \infty)$ as poschemes/pospaces over $T$. These latter can be interpreted literally in the above notation as intervals in the  pospace/poscheme obtained by adding disjoint maximum and minimum sections $\infty$ and $-\infty$ to $\mc{P}/X$.
\end{definition}

\begin{definition}[Rankings and split pospaces/poschemes]\hfill
\begin{enumerate}
\item A \emph{ranking} on a pospace/poscheme $\mc{P}/X$ is a strictly increasing map $\rk:\mc{P} \rightarrow \mbb{Z} $, where here strictly means that for any $k \in \mbb{N}$, the restriction of $\sle_{\mc{P}}$ to $\rk^{-1}(\{k\})$ is the diagonal. The ranking is \emph{finite} if it factors through $\{-n, \ldots, n\} \subset \mbb{Z}$ for some $n$. A pospace/poscheme $\mc{P}/X$ is (\emph{finitely}) \emph{ranked} if it is equipped with a (finite) ranking, in which case we write $\mc{P}_k:=\rk^{-1}(\{k\})$, $\mc{P}_{\leq k}:=\rk^{-1}((-\infty, k])$, etc.
\item A pospace/poscheme $\mc{P}/X$ is \emph{split} if $\Delta_{\mc{P}}$ is open in $\sle_{\mc{P}}$ (or equivalently in $\mathord{\geq}_{\mc{P}}$), in which case $\sle_{\mc{P}}=\Delta_{\mc{P}} \sqcup \slt_{\mc{P}}$ (and $\mathord{\geq}_{\mc{P}} =\Delta_{\mc{P}} \sqcup \mathord{>}_{\mc{P}}). $
\end{enumerate}
\end{definition}

Any pospace/poscheme that admits a ranking is split, and our motivating applications all fall into this category. It is possible to construct examples that are not split, but imposing this condition will simplify some hypotheses later (because then maxima and minima are centers --- see \cref{remark:split-centers}).

\begin{definition}
If $\mc{P}$ is a pospace/poscheme over $X$,
\begin{enumerate}
\item a \emph{subpospace/subposcheme} $\mc{P}'\subset \mc{P}$ is a subspace/subscheme equipped with the induced relation
\[ \mathord{\leq}_{\mc{P}'} = \mathord{\leq}_{\mc{P}} \times_{\mc{P} \times_X \mc{P}} {(\mc{P}' \times_X \mc{P}')}.\]
\item  if  $\mc{P}' \subset \mc{P}$ is a subpospace/subposcheme, a \emph{retraction of $\mc{P}$ onto $\mc{P}'$} is a map $r: \mc{P}\rightarrow \mc{P}'$ of pospaces/poschemes over $X$ such that $r|_{\mc{P'}}=\Id_{\mc{P}'}.$
\end{enumerate}
\end{definition}

\begin{definition}\label{sss.retractions-centers-etc}\label{defn:center}
If $\mc{P}/X$ is a pospace or poscheme,
\begin{enumerate}
\item a \emph{maximum} of $\mc{P}/X$ is a section $m: X \rightarrow \mc{P}$ such that $\mc{P}=(-\infty, m]$
\item a \emph{minimum} of $\mc{P}/X$ is a section $m: X \rightarrow \mc{P}$ such that $\mc{P}=[m, \infty)$
\item a \emph{center} of $\mc{P}/X$ is a section $c: X \rightarrow \mc{P}$ such that
\[ \mc{P}=(-\infty,c) \sqcup c(X) \sqcup (c,\infty) \]
(in particular, $c$ is isolated, i.e.\ $c(X)$ is clopen).
\end{enumerate}
If $\mc{P}/X$ is a poscheme, a \emph{weak} maximum/minimum/center is a section such that the corresponding identity holds topologically but not necesarily scheme-theoretically.
\end{definition}

\begin{remark}\label{remark:split-centers} For a split pospace/poscheme, if $\mc{P} =  (-\infty,c] \cup [c,\infty)$ set theoretically then it follows immediately that $c$ is a center in the topological case and a weak center in the scheme theoretic case. In particular, maxima and minima are centers for split $\mc{P}$. For nonsplit $\mc{P}$ and $c$ satisfying $\mc{P} = (-\infty,c] \cup [c,\infty)$, there can be connected $T/X$ such that $c$ is not a center of $\mc{P}(T)$ in the naive sense, which is why we require that $c$ be isolated. This issue does not occur with maxima and minima even when they are not isolated.
\end{remark}

\begin{definition} The \emph{nerve} (or order complex) of a pospace/poscheme $\mc{P}/X$ is the simplicial space/scheme $N\mc{P}$, canonically augmented to $X$, whose space/scheme of $m$-simplices is the space/scheme of chains of length $m+1$ in $\mc{P}$,
\[ N\mc{P}_m := \underbrace{\sle_\mc{P} \times_\mc{P} \sle_\mc{P} \times_\mc{P} \dotsm \times_\mc{P} \sle_\mc{P} }_{m-1 \textrm{ terms}}  \]
with the obvious transition maps. In other words, it is the space/scheme over $X$ of pospace/poscheme maps from $[m]\times X$ to $\mc{P}$. We also write
\[ N\mc{P}^\circ_m := \underbrace{\slt_\mc{P} \times_\mc{P} \slt_\mc{P} \times_\mc{P} \dotsm \times_\mc{P} \slt_\mc{P} }_{m-1 \textrm{ terms}},  \]
for the subspace/subscheme of $N\mc{P}_m$ consisting of non-degenerate $m$-simplices (i.e.\ strictly ordered chains). We say $\mc{P}/X$ is \emph{of bounded length} if $N\mc{P}^\circ_m = \emptyset$ for $m$ sufficiently large, i.e.\ if there is a bound on the length of chains of proper inequalities in $\mc{P}/X$.
\end{definition}

Note that a finitely ranked pospace/poscheme is bounded, and that any ranked topologically Noetherian poscheme is finitely ranked.

\begin{example}\label{example.poset-nerve-constructions}\hfill
\begin{enumerate}
\item $N[n]=\Delta^n$.
\item $N\mc{P}_2$ is naturally identified with the closed interval $[\mc{P},\mc{P}]$ with the map to $\mc{P}\times \mc{P}$ induced by the edge $0 < 2$ of $\Delta^2$  and the ordering pulled back from the map to $\mc{P}$ induced by the vertex $1$ of $\Delta^2$. The non-degenerate $2$-simplices $N\mc{P}^\circ_2$ inside are identified with the open interval $(\mc{P},\mc{P}) \subset [\mc{P},\mc{P}].$
\item $N(N\mc{P})$ is the barycentric subdivision of $N\mc{P}$, where $N(N\mc{P})$ makes sense by the abuse of notation in which we treat $N\mc{P}$ as the pospace/poscheme $\bigsqcup_{k=0}^\infty N\mc{P}_k$ with ordering by inclusion of chains.
\item If $\mc{P} / X$ is the disjoint union of $\mc{A} < \mc{B}$ then $N\mc{P}$ is the join $N\mc{A} \star N\mc{B}$ (see \cref{example.simplicial-constructions}). In particular if $\mc{P}$ has a center $c$ then
\[N\mc{P} = N(-\infty, c] \star N(c,\infty) = N(-\infty,c) \star N[c,\infty).\]
In particular, if $\mc{P}/X$ admits an isolated minimum $m$ (resp.\ isolated maximum $M$) then $N\mc{P}$ is naturally identified with $\Cone(N(m,\infty)/X)$ (resp.\ $\Cocone( N(-\infty, M) /X)$).
\end{enumerate}
\end{example}

The following is immediate from the definitions:
\begin{lemma}\label{lemma.split-po-split-simplicial} If $\mc{P}/X$ is split (in particular, if it is ranked), then $N\mc{P}$ is split as a simplicial scheme, i.e.\ each degeneracy map is an isomorphism onto a clopen set; in particular $N\mc{P}^\circ_m$ is clopen in $N\mc{P}_m$ for all $m$.
\end{lemma}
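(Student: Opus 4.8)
The plan is to unwind the definition of the nerve and exploit that, for a split pospace/poscheme, $\sle_\mc{P} = \Delta_\mc{P} \sqcup \slt_\mc{P}$ is a decomposition into clopen pieces. First I would recall that $N\mc{P}_m$ is obtained from $\mc{P}$ by iterated fiber product of copies of $\sle_\mc{P}$ over $\mc{P}$, one copy for each edge $(j,j+1)$ of $\Delta^m$; concretely $N\mc{P}_m$ is the locally closed subspace/subscheme of $\mc{P}\times_X\cdots\times_X\mc{P}$ ($m+1$ factors) of weakly increasing chains $a_0 \le \cdots \le a_m$. For each edge $j \in \{0,\ldots,m-1\}$ the projection $(a_0,\ldots,a_m)\mapsto(a_j,a_{j+1})$ factors through $\sle_\mc{P}$, and pulling back the clopen decomposition $\sle_\mc{P}=\Delta_\mc{P}\sqcup\slt_\mc{P}$ along all of these and intersecting gives a finite clopen decomposition
\[ N\mc{P}_m = \bigsqcup_{S\subseteq\{0,\ldots,m-1\}} N\mc{P}_m^{(S)}, \]
where $N\mc{P}_m^{(S)}$ is the locus of chains whose edge $(j,j+1)$ is strict precisely for $j\in S$. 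Each $N\mc{P}_m^{(S)}$ is clopen in $N\mc{P}_m$, and the piece indexed by $S=\{0,\ldots,m-1\}$ is by definition $N\mc{P}_m^\circ$, which establishes the last assertion.

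Next I would identify the image of the elementary degeneracy $s_i\colon N\mc{P}_m\to N\mc{P}_{m+1}$, which repeats the entry $a_i$, sending $a_0\le\cdots\le a_m$ to $a_0\le\cdots\le a_i\le a_i\le\cdots\le a_m$ with the repeated vertex in positions $i$ and $i+1$. Its image is exactly the clopen subscheme $\bigsqcup_{S\not\ni i} N\mc{P}_{m+1}^{(S)}$ on which the $(i,i+1)$-edge is an equality; on this locus the morphism deleting the entry in position $i+1$ is well defined because that edge lies over $\Delta_\mc{P}$, along which either projection $\Delta_\mc{P}\to\mc{P}$ is an isomorphism, and this deletion map is a two-sided inverse to $s_i$. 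Hence each $s_i$ is an isomorphism onto a clopen subspace/subscheme. A general degeneracy operator is a composite of elementary ones, and a composite of clopen immersions is again a clopen immersion, so the conclusion holds for every degeneracy; the topological case is identical, reading ``clopen immersion'' as ``homeomorphism onto a clopen subset''.

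I do not expect a serious obstacle here: the argument is a direct manipulation of the fiber-product description of $N\mc{P}$ together with the clopen splitting of $\sle_\mc{P}$. The only point meriting a line of care is that the ``delete a repeated vertex'' map is a genuine morphism, which is fine because on the relevant clopen locus the two identified coordinates are literally equal. Finally the parenthetical ``in particular, if it is ranked'' requires nothing further, since any ranked pospace/poscheme is split, as noted immediately after the definition of rankings.
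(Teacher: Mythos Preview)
Your proof is correct and is exactly the unwinding the paper has in mind: the paper simply states the lemma is ``immediate from the definitions'' and gives no further argument, and what you have written is precisely how one makes that immediacy explicit via the clopen decomposition $\sle_\mc{P}=\Delta_\mc{P}\sqcup\slt_\mc{P}$ pulled back along each edge projection.
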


\subsection{A contractibility criterion}\label{ss.contractibility-criterion}

We now show that if a pospace/poscheme $\mc{P}/X$ admits a maximum/minimum/center then $N\mc{P}/X$ is contractible. This is essentially an immediate consequence of \cref{example.poset-nerve-constructions}-(4) --- the case of isolated maxima and minima follows from the contractibility of cones and cocones, while the case of a center follows from contracting separately the two cones forming the join. We make this precise as a consequence of the following more general result, which will have further applications later on.

\begin{lemma}\label{lemma:retraction-homotopy} Let $\mc{P}/X$ be a pospace\slash poscheme and let $\mc{P}'\subset \mc{P}$ be a subpospace\slash subposcheme. If either
\begin{enumerate}
\item $(-\infty, \mc{P}] \cap \mc{P}' \times\mc{P}$ has a maximum (as a pospace/poscheme over $\mc{P}$), or
\item $[\mc{P}, \infty) \cap \mc{P} \times \mc{P}'$ has a minimum (as a pospace/poscheme over $\mc{P}$)
\end{enumerate}
then the inclusion of $N\mc{P}'$ in $N\mc{P}$ is a deformation retract over $X$.
\end{lemma}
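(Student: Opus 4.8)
The plan is to construct an explicit simplicial homotopy $h\colon N\mc{P}\times\Delta^1\to N\mc{P}$ over $X$ exhibiting $N\mc{P}'\hookrightarrow N\mc{P}$ as a deformation retract, treating case (1) in detail; case (2) then follows by applying case (1) to the opposite pospace/poscheme $\mc{P}^{\op}$, under which maxima and minima swap and $N\mc{P}^{\op}$ is canonically identified with $N\mc{P}$ after reversing the order on each simplex. So assume $(-\infty,\mc{P}]\cap(\mc{P}'\times_X\mc{P})$ has a maximum as a pospace/poscheme over $\mc{P}$; concretely, this is a map $M\colon \mc{P}\to\mc{P}'$ such that for every $T\to\mc{P}$ with corresponding $T$-point $p$, the point $M(p)\in\mc{P}'(T)$ is the largest element of $\mc{P}'(T)$ that is $\le p$ (in particular $M(p)\le p$, and $M$ restricts to the identity on $\mc{P}'$). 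First I would record that $M$ is a map of pospaces/poschemes over $X$ — it is order-preserving because if $p\le q$ then $M(p)\le p\le q$ and $M(p)\in\mc{P}'$, so by maximality $M(p)\le M(q)$ — and that the composite $r\colon\mc{P}\xrightarrow{M}\mc{P}'\hookrightarrow\mc{P}$ satisfies $r|_{\mc{P}'}=\Id$ and $r(p)\le p$ pointwise.

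Next I would build the homotopy. The standard poset-theoretic trick: for a $T$-point of $N\mc{P}_m$, i.e.\ a chain $p_0\le p_1\le\cdots\le p_m$, and a vertex of $\Delta^1$ recorded by a non-decreasing function $\phi\colon[m]\to[1]$ (equivalently a cut $[m]=\{i:\phi(i)=0\}\sqcup\{i:\phi(i)=1\}$), send it to the chain
\[
r(p_0)\le\cdots\le r(p_{j})\le p_{j}\le p_{j+1}\le\cdots\le p_m,
\]
where $j$ is the largest index with $\phi(j)=0$ (and the convention that if $\phi\equiv 1$ one gets the original chain $p_0\le\cdots\le p_m$, while if $\phi\equiv 0$ one gets $r(p_0)\le\cdots\le r(p_m)$). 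This is a legitimate chain in $\mc{P}$ because $r(p_i)\le r(p_{i+1})$ (as $r$ is order-preserving), $r(p_j)\le p_j$ (the key pointwise inequality), and $p_j\le p_{j+1}$. One has to check that these assignments are compatible with the face and degeneracy maps of $N\mc{P}\times\Delta^1$, i.e.\ that $h$ is a morphism of simplicial spaces/schemes; this is the routine combinatorial verification, done most cleanly by describing $h$ functorially on $T$-points via the poset-level formula and invoking that $N\mc{P}_m(T)$ is the set of poset maps $[m]\to\mc{P}(T)$ and $(N\mc{P}\times\Delta^1)_m(T)$ the set of poset maps $[m]\to\mc{P}(T)\times[1]$ (cf.\ \cref{example.simplicial-constructions}-(2) and the description of $N\mc{P}$ in terms of such maps). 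Then $\{0\mapsto1\}^*h$ is the identity on $N\mc{P}$ and $\{0\mapsto0\}^*h$ is $N(\iota\circ M)=N\iota\circ NM$, the composite $N\mc{P}\to N\mc{P}'\hookrightarrow N\mc{P}$; and since $M|_{\mc{P}'}=\Id_{\mc{P}'}$ the homotopy restricts on $N\mc{P}'\times\Delta^1$ to the constant homotopy, so $NM$ is a retraction onto $N\mc{P}'$ with $N\iota\circ NM$ homotopic to $\Id_{N\mc{P}}$ over $X$ rel $N\mc{P}'$. That is exactly the assertion that $N\mc{P}'\hookrightarrow N\mc{P}$ is a deformation retract over $X$.

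The only genuine subtlety — and what I expect to be the main point requiring care rather than difficulty — is the interplay between the scheme-theoretic (representable-functor) formulation of "maximum" and the explicit formula: one must make sure the map $M$ really is a morphism of schemes (it is, being assumed to exist as a section of the structure map of the interval poscheme over $\mc{P}$), and that writing down $h$ "by the formula on $T$-points" genuinely defines a morphism of simplicial schemes and not merely a compatible system of maps on points. Here one leans on the fact, recalled in \cref{example.simplicial-constructions} and the definition of the nerve, that $N\mc{P}$ and $N\mc{P}\times\Delta^1$ represent the evident functors of poset-valued maps out of $[m]$, so a natural transformation of such functors is automatically a morphism of simplicial schemes; the decomposition $(N\mc{P}\times\Delta^1)_m=\bigsqcup_{j=-1}^{m}N\mc{P}_m$ indexed by the cut point makes $h$ a disjoint union of explicit composites of the transition maps of $N\mc{P}$ with (pullbacks of) $M$, which is manifestly a morphism. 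With the homotopy in hand the verification of the boundary conditions and the rel-$N\mc{P}'$ statement is immediate.
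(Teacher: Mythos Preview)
Your approach is essentially identical to the paper's: extract from the assumed maximum a retraction $r\colon\mc{P}\to\mc{P}'$ with $r(p)\le p$, then write down an explicit simplicial homotopy from $N(\iota\circ r)$ to $\Id_{N\mc{P}}$. The paper records the homotopy as $(p_0,\ldots,p_k;\alpha)\mapsto (r^{1-\alpha(0)}(p_0),\ldots,r^{1-\alpha(k)}(p_k))$, and handles case~(2) by the symmetric formula $r^{\alpha(i)}$ rather than by passing to $\mc{P}^{\op}$, but these are equivalent.

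There is, however, a length error in your displayed formula. The chain
\[
r(p_0)\le\cdots\le r(p_j)\le p_j\le p_{j+1}\le\cdots\le p_m
\]
has $m+2$ entries, so it is an $(m+1)$-simplex of $N\mc{P}$, not an $m$-simplex; as written this does not define a map $(N\mc{P}\times\Delta^1)_m\to N\mc{P}_m$. The fix is to drop the redundant $p_j$: output $(r(p_0),\ldots,r(p_j),p_{j+1},\ldots,p_m)$, which is a valid $m$-chain since $r(p_j)\le p_j\le p_{j+1}$. With that correction your argument matches the paper's; your explicit check that $M$ is order-preserving and your Yoneda framing of the simplicial compatibility are in fact slightly more detailed than what the paper writes.
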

\begin{proof}
We treat the first case: let $r$ denote the composition of the maximum, a map $ \mc{P} \rightarrow (-\infty, \mc{P}] \cap \mc{P}' \times\mc{P}$, with the projection to the first factor in $\mc{P}'$. It is evidently a retraction of $\mc{P}$ onto $\mc{P}'$, so we only need to show that $Nr$, viewed as a map from $N\mc{P}$ to itself, is homotopic to the identity. We define a homotopy
\[ N\mc{P} \times \Delta^1 \rightarrow N\mc{P} \]
on simplices as follows: given a $k$-simplex $(p_0, \ldots, p_k) \times \alpha$ where $\alpha:[k]\rightarrow [1]$ is non-decreasing, send it to $(r^{1-\alpha(0)}(p_0), r^{1-\alpha(1)}(p_1), \ldots )$ where  $r^0=\Id$ and $r^1=r$. The identity $r(p_i) \leq p_i$ ensures this is a chain because $\alpha$ is non-decreasing, and evidently it is a homotopy from $r$ to the identity.

For the second case, we define $r$ using the minimum, then obtain a homotopy from $r$ to $\Id_{N\mc{P}}$ similarly by replacing $r^{1-\alpha(i)}$ with $r^{\alpha(i)}$ above.
\end{proof}

\begin{remark} By Yoneda, the first statement is equivalent to the statement that, for any $t: T \rightarrow \mc{P}$, the intersection of the poset $(-\infty, t) \in \mc{P}(T)$ with $\mc{P}'(T)$ has a maximum, and similarly for the second statement. Thus one can read the statement as a version of the  Quillen fiber lemma in a particularly simple case. It would be interesting to see if this interpretation can be pushed any further.
\end{remark}

\begin{remark}\label{remark.falling-rising-retractions}
To give a map $r: \mc{P}\rightarrow \mc{P}'$ as in the proof in case (1) of \cref{lemma:retraction-homotopy} is equivalent to giving a retraction $r: \mc{P} \rightarrow \mc{P'}$ that, when viewed as a map $\mc{P} \rightarrow \mc{P}$, satisfies $r \leq \Id_{\mc{P}}$ in the poset $\mc{P}(\mc{P}).$  We call such a map a \emph{falling retraction}, since it can be equivalently characterized as a retraction such that, for all $t$, $r(t) \leq t$. On the other hand, one easily sees that, given a falling retraction $r$, $r \times \Id$ is a maximum of $(-\infty, \mc{P}] \cap \mc{P}' \times \mc{P}$, so to verify (1) in the statement of \cref{lemma:retraction-homotopy} is the same as to give a falling retraction. Similarly, to verify (2) is  the same as to give a rising retraction, i.e.\ a retraction $r: \mc{P} \rightarrow \mc{P}'$ such that $r(t) \geq  t$ for all $t$.
\end{remark}

As an application, we find:
\begin{lemma}\label{lemma:max-min-center-homotopy}
If $\mc{P}/X$ is a pospace/scheme with a maximum, minimum, or center, then $N\mc{P}/X$ is contractible.
\end{lemma}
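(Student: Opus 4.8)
The plan is to derive all three cases from the deformation-retract criterion \cref{lemma:retraction-homotopy}, using the falling/rising retraction reformulation recorded in \cref{remark.falling-rising-retractions}. The key preliminary observation: for any section $s : X \to \mc{P}$ the subposcheme $s(X)$ carries the trivial (diagonal) order, so $N(s(X)) = X_\bullet$, which is the final object of $s\mc{C}_{/X}$. Hence exhibiting a deformation retract $X_\bullet \hookrightarrow N\mc{P}$ over $X$ is exactly a proof that $N\mc{P}/X$ is contractible: its retraction is forced to be the augmentation $\epsilon$, and its section is then a section of $\epsilon$ that is a homotopy inverse to $\epsilon$, which is the definition of contractibility in $s\mc{C}_{/X}$.

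First I would treat the case of a maximum $m : X \to \mc{P}$. The composite $r = m \circ \pi : \mc{P} \to m(X)$ (with $\pi : \mc{P} \to X$ the structure map) is a retraction of $\mc{P}$ onto $m(X)$, and $r(t) = m \ge t$ for every $t$, so $r$ is a rising retraction; by \cref{remark.falling-rising-retractions} together with \cref{lemma:retraction-homotopy}(2), the inclusion $X_\bullet = N(m(X)) \hookrightarrow N\mc{P}$ is a deformation retract over $X$, hence $N\mc{P}/X$ is contractible. The case of a minimum is the mirror image: the analogous constant map is a falling retraction, and \cref{lemma:retraction-homotopy}(1) applies.

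For a center $c$, I would proceed in two steps. Writing $\mc{P} = (-\infty, c) \sqcup c(X) \sqcup (c,\infty)$ with each summand clopen, define $r : \mc{P} \to (-\infty,c]$ to be the identity on the clopen piece $(-\infty,c] = (-\infty,c) \sqcup c(X)$ and the constant map to $c$ on the clopen piece $(c,\infty)$. This is a retraction onto $(-\infty,c]$ with $r(t) \le t$ for all $t$ (trivially on $(-\infty,c]$, and on $(c,\infty)$ because $c < t$ there), i.e.\ a falling retraction, so \cref{lemma:retraction-homotopy}(1) makes $N(-\infty,c] \hookrightarrow N\mc{P}$ a deformation retract over $X$; since $(-\infty,c]$ has $c$ as a maximum, the first case shows $N(-\infty,c]/X$ is contractible, and composing the two deformation retractions (homotopy equivalences compose, and the section composes too) gives contractibility of $N\mc{P}/X$. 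The only point that is not entirely formal is checking that this piecewise $r$ really is a morphism of poschemes respecting the order: one splits $\sle_{\mc{P}}$ along the clopen decomposition into its nine pieces indexed by which summand each coordinate lies in, and the only nonvacuous, nonidentity case is a relation with first coordinate in $(-\infty,c]$ and second in $(c,\infty)$, where the needed inequality $r(t_1) = t_1 \le c = r(t_2)$ is immediate. I do not anticipate a genuine obstacle here; the mathematical content sits entirely in \cref{lemma:retraction-homotopy} and \cref{example.poset-nerve-constructions}-(4), and this lemma is essentially a formal consequence (the alternative route, deducing the center case from the join decomposition $N\mc{P} = N(-\infty,c] \star N(c,\infty)$ and contractibility of (co)cones, would require knowing that the join preserves homotopy equivalences, which is why the retraction route is preferable).
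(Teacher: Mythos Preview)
Your proposal is correct and follows essentially the same route as the paper: both reduce the maximum/minimum cases directly to \cref{lemma:retraction-homotopy} applied to $\mc{P}' = m(X)$, and for a center both first retract onto $(-\infty,c]$ via \cref{lemma:retraction-homotopy}(1) and then use that $c$ is a maximum there. The only cosmetic difference is that you phrase everything via the falling/rising retraction language of \cref{remark.falling-rising-retractions}, whereas the paper writes down the equivalent maximum section of $(-\infty,\mc{P}] \cap (-\infty,c]\times\mc{P}$ directly; your piecewise $r$ is precisely the first-coordinate projection of the paper's section.
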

\begin{proof}
Given a maximum (resp.\ minimum) $m$, we can apply \cref{lemma:retraction-homotopy} to the sub-poscheme $\mc{P}'=m(X)$. Given a center $c$, we can first apply \cref{lemma:retraction-homotopy}-(1) to $(-\infty, c] \subset \mc{P}$ --- here the maximum on $(-\infty, \mc{P}] \cap (-\infty, c] \times \mc{P}$ is given by $\Id \times \Id$ on the clopen set $(\infty, c]$ and by $c \times \Id$ on the clopen $(c, \infty]$. We then use that $c$ is a maximum of $(-\infty,c]$.
\end{proof}

\section{Cohomological inclusion-exclusion}\label{s.cohomological-inclusion-exclusion}

In this section we elaborate on the setup from \cref{ss.intro-sheaves} to prove \cref{main.cd-criterion} and \cref{main.spectral-sequences}. We recall that \cref{main.cd-criterion} gives a useful criterion for cohomological descent, while \cref{main.spectral-sequences} gives the spectral sequence of a rank function in the presence of cohomological descent.  In the introduction we restricted to a single sheaf and cohomology with compact support, but here we will obtain the more general statements alluded to in  \cref{remark.cd-complexes} and \cref{remark.filtration-functor-spectral-sequences}.

We first state a version of \cref{main.cd-criterion} that applies to complexes. The conditions imposed are those necessary to ensure a suitable proper base change theorem holds.

\begin{theorem}[Refinement of \cref{main.cd-criterion}]\label{body.cd-criterion}\hfill
\begin{enumerate}
\item If $\mc{P}/X$ is a proper {\bf pospace} and $\mc{P}_x$ admits a center for all $x \in X$, then $\mc{P}/X$ is of cohomological descent on $D^+(X)$.
\item If $\mc{P}/X$ is a proper {\bf poscheme} and $\mc{P}_x$ admits a weak center for every geometric point $x: \Spec \kappa \rightarrow X$, then
\begin{enumerate}
\item $\mc{P}/X$ is of cohomological descent on the subcategory of $D^+(X_\et)$ consisting of complexes with torsion cohomology sheaves.
\item If $X$ is furthermore Noetherian and $L$ is an algebraic extension of $\mbb{Q}_\ell$ for some $\ell$ invertible on $X$, then $\mc{P}/X$ is of cohomological descent on the subcategories of $D^+(X_\proet, \mc{O}_L)$ or $D^+(X_\proet, L)$ consisting of complexes with constructible cohomology sheaves.
\end{enumerate}
\end{enumerate}
\end{theorem}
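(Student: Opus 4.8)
The plan is to reduce everything to the contractibility statement \cref{lemma:max-min-center-homotopy} together with proper base change, applied fiberwise. First I would observe that the assertion is local on $X$, and that by \cref{lemma:homotopy-equiv-and-contractible} cohomological descent follows once we know the adjunction unit $u_{\mc{P}/X}: K \to R\epsilon_*\epsilon^* K$ is an isomorphism for $K$ in the relevant subcategory. The strategy is to check this stalkwise (at points of $X$ in the topological case, at geometric points in the scheme case). The essential mechanism: because $\mc{P}/X$ is proper, each $N\mc{P}_m = \sle_{\mc{P}} \times_{\mc{P}} \dotsm \times_{\mc{P}} \sle_{\mc{P}}$ is proper over $X$, so the formation of $R^q {\epsilon_m}_* K$ commutes with base change to a point; assembling over $m$ via the skeletal spectral sequence \cref{eq.simplicial-ss}, the stalk of $R\epsilon_*\epsilon^* K$ at $x$ is computed by the nerve $N(\mc{P}_x)$ with coefficients in the stalk $K_x$. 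Then \cref{lemma:max-min-center-homotopy} says $N(\mc{P}_x)/\{x\}$ is contractible — it has a (weak) center — hence $R\epsilon_{x,*}\epsilon_x^* K_x \cong K_x$, which is exactly the stalk of $u_{\mc{P}/X}$ being an isomorphism. In the poscheme case, a weak center is one where $\mc{P}_x = (-\infty,c) \sqcup c \sqcup (c,\infty)$ holds only topologically, but the homotopy in \cref{lemma:retraction-homotopy} is built from the retraction map $r$, and a weak center still furnishes the required falling retraction on the underlying topological space — which is all that is needed since étale cohomology only sees the topology.

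The three cases then differ only in which proper base change theorem and which sheaf formalism is invoked, and in how one reduces to a bounded or torsion situation where proper base change is available. For (1), topological spaces: use proper base change for abelian sheaves on a proper (separated, and one should note $\mc{P}/X$ is automatically separated since $\Delta_{\mc{P}}$ is closed) map of topological spaces; here one wants a mild hypothesis on $X$ (locally compact Hausdorff, say, or whatever the cited proper base change requires) and the argument runs on each cohomology sheaf $R^q\epsilon_* $ so no boundedness beyond $D^+$ is needed. For (2)(a), étale torsion sheaves: apply the étale proper base change theorem, which holds for torsion sheaves on a proper morphism of schemes; the stalk computation at a geometric point $x:\Spec\kappa \to X$ uses that $(\mc{P}\times_X \Spec\kappa) = \mc{P}_x$ and that the nerve commutes with this base change, then concludes by contractibility of $N(\mc{P}_x)$ over $\Spec\kappa$ for étale torsion coefficients (the topological homotopy of \cref{lemma:max-min-center-homotopy} induces a homotopy of simplicial schemes over $\Spec\kappa$, and \cref{lemma:homotopy-equiv-and-contractible} applies verbatim in the étale setting). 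For (2)(b), constructible $\mc{O}_L$- or $L$-sheaves in the pro-étale formalism of \cite{bhatt-scholze:pro-etale}: reduce to the torsion case by writing a constructible $\mc{O}_L$-complex as a derived limit of its reductions modulo powers of the uniformizer (using that $X$ is Noetherian so constructibility and the limit formalism behave well), apply (2)(a) to each torsion level, and pass to the limit; the $L$-case follows by inverting $\ell$. One should check that $R\epsilon_*$ commutes with the relevant limit — this is where Noetherianity and the pro-étale site are used, as flagged in \cref{remark.cd-complexes}.

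The main obstacle I expect is not the homotopy-theoretic core — that is essentially \cref{lemma:homotopy-equiv-and-contractible} plus \cref{lemma:max-min-center-homotopy} — but rather making the fiberwise reduction rigorous: namely, that $R\epsilon_*\epsilon^*$ can be computed stalkwise, which requires (i) a proper base change statement valid for each $N\mc{P}_m$, (ii) control of the skeletal spectral sequence \cref{eq.simplicial-ss} under base change so that the stalk of the abutment is the abutment of the stalk (a colimit/limit-exchange argument, clean in the bounded-below torsion case, more delicate for $\ell$-adic coefficients), and (iii) in case (2)(b), verifying that the pro-étale $R\epsilon_*$ of a constructible complex is again (locally) the limit of its torsion truncations, so that the reduction to (2)(a) actually closes up. A secondary subtlety is ensuring that a \emph{weak} center — a purely topological condition — suffices in the scheme-theoretic cases; this is fine because étale (and pro-étale) cohomology is insensitive to the nilpotent structure and the contracting homotopy of \cref{lemma:retraction-homotopy} only needs the underlying map of topological spaces, but it is worth stating explicitly. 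I would organize the write-up as: (a) reduce to stalkwise; (b) identify the stalk with the nerve of the fiber; (c) invoke contractibility; (d) handle the passage from torsion to $\ell$-adic coefficients.
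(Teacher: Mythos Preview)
Your overall strategy matches the paper's: reduce to fibers via proper base change (packaged in the paper as \cref{lemma.spread-out-proper}), then invoke contractibility of the nerve when a center exists (\cref{lemma:max-min-center-homotopy}) together with \cref{lemma:homotopy-equiv-and-contractible}.

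There is, however, a genuine gap in your handling of weak centers. You write that a weak center ``furnishes the required falling retraction on the underlying topological space'' and that this suffices ``since \'etale cohomology only sees the topology.'' But the contracting homotopy in \cref{lemma:retraction-homotopy} is a morphism of simplicial \emph{schemes}, and a weak center does not in general produce a scheme-theoretic retraction; moreover, \'etale cohomology is not determined by the Zariski topological space. The precise mechanism the paper uses is \cref{lemma.weak-universal-replacement}: given a weak center $c$ of $\mc{P}_x$, one passes to the closed subposcheme $\mc{P}'_x = (-\infty,c) \sqcup c(\Spec\kappa) \sqcup (c,\infty)$, which has a \emph{genuine} center and admits a universal homeomorphism of poschemes $\mc{P}'_x \to \mc{P}_x$. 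Then topological invariance of the \'etale site (and its pro-\'etale analogue \cite[Lemma 5.4.2]{bhatt-scholze:pro-etale}) transfers the cohomological descent from $N\mc{P}'_x$ to $N\mc{P}_x$; this is the content of \cref{lemma.nerve-contractible-descent}-(2). Your slogan is correct only in the sense of universal homeomorphisms of schemes, and that reduction needs to be made explicit.

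A smaller point: for (2)(b) the paper does not reduce to torsion coefficients via derived limits as you outline, but instead invokes proper base change for constructible pro-\'etale sheaves directly from \cite[Lemma 6.7.5 and Proposition 6.8.14]{bhatt-scholze:pro-etale}. Your route could be made to work, but the limit-commutation you flag is exactly the sort of thing the Bhatt--Scholze formalism is designed to avoid.
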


In \cref{ss.technical-criteria} we explain the reduction of cohomological descent to pointwise contractibility via proper base change (and topological invariance of the étale site in the schematic case), then in \cref{ss.proof-of-cd-crit} we put this together with the contractibility criteria of \cref{ss.contractibility-criterion} to finish the proof of \cref{body.cd-criterion}.

For any ranked $\pi: \mc{P} \rightarrow X$, we obtain a rank filtration $F_\bullet^{\rk}$ on $N\mc{P}$. As in $\cref{ss.filtered-derived}$, this induces a filtration on sheaves, and if the ranking is finite we obtain a functor
\[ \Fil_{\mc{P},\mr{rk}} = R\epsilon^f_* \circ \Fil_{F_\bullet^\rk} \circ \epsilon^* : D^+(\Sheaves(X)) \rightarrow DF^+(\Sheaves(X)) \]
along with graded parts functors $\Gr^p_{\mc{P},\mr{rk}}$ (the latter are defined even if the filtration is not finite). Up to a shift, we will identify the graded parts with the reduced cohomology complex functors that were described in the introduction. As in the introduction (paragraph preceding \cref{main.spectral-sequences}), we extend the rank function to $\mc{P}^+/X$ (obtained by adding a disjoint minimum section $-\infty$), then define
\begin{align*}
 \tilde{C}(-\infty, \mc{P}_r^+, \bullet): & \;D^+(\Sheaves(X)) \rightarrow D^+(\Sheaves(\mc{P}_r^+))\\
 & \;K \mapsto \begin{cases} \tilde{C}(N(-\infty, \mc{P}^+_{r})/\mc{P}^+_{r}, \pi^*K) & r \neq \rk(-\infty) \\
 	K[2] & r=\rk(-\infty)
 \end{cases}
\end{align*}

Precisely, we show:

\begin{theorem}\label{body.filtration-functor}
Suppose $\pi: \mc{P}\rightarrow X$ is ranked. There are canonical isomorphisms of functors $D^+(\Sheaves(X))\rightarrow D^+(\Sheaves(X))$
\[ \Gr^p_{\mc{P}, \rk}(\bullet) \xrightarrow{\sim}  R\pi_* \circ \tilde{C}(-\infty, \mc{P}^+_{p+1}, \bullet)[-1]. \]
If  $\mc{P}/X$ is finitely ranked and of cohomological descent on a subcategory $\mc{C} \hookrightarrow D^+(\Sheaves(X))$, then there is a canonical isomorphism from the inclusion $\mc{C} \hookrightarrow D^+(\Sheaves(X))$ to $\Forget \circ \Fil_{\mc{P}, \rk}$.
\end{theorem}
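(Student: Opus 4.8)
The second statement is formal, and I would dispatch it first: expanding $\Fil_{\mc{P},\rk}=R\epsilon^f_*\circ\Fil_{F^\rk_\bullet}\circ\epsilon^*$ and invoking the two compatibilities recorded in \cref{ss.filtered-derived} --- that $\Forget\circ\Fil_{F^\rk_\bullet}=\Id$ and $\Forget\circ R\epsilon^f_*=R\epsilon_*\circ\Forget$ --- produces a canonical identification $\Forget\circ\Fil_{\mc{P},\rk}=R\epsilon_*\circ\epsilon^*$, finite rankedness being used only so that the source lands in $DF^+$. By \cref{def.reduced-cohomology}, cohomological descent of $\mc{P}/X$ on $\mc{C}$ is exactly the statement that the adjunction unit $\Id\to R\epsilon_*\epsilon^*$ restricts to an isomorphism on $\mc{C}$, and composing with the identification above yields the asserted isomorphism from $\mc{C}\hookrightarrow D^+(\Sheaves(X))$ to $\Forget\circ\Fil_{\mc{P},\rk}|_{\mc{C}}$.

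For the graded pieces the plan is to compute $\Gr^p$ geometrically on $N\mc{P}$ and then push forward. First I would note that $\Gr^p$ of a filtered object sees only the two-step filtration $F^\rk_p\subset F^\rk_{p+1}$, hence is exact and commutes with $R\epsilon^f_*$, so that $\Gr^p_{\mc{P},\rk}(K)\cong R\epsilon_*\bigl(\Gr^p(\Fil_{F^\rk_\bullet}\epsilon^*K)\bigr)$. By \cref{lemma.push-pull-graded}, applied on each simplicial level as in \cref{ss.filtered-derived}, $\Gr^p(\Fil_{F^\rk_\bullet}\epsilon^*K)$ is the complex on $N\mc{P}$ gotten from $\epsilon^*K$ by levelwise extension by zero from the locus $W$ of ``new simplices'' $F^\rk_{p+1}N\mc{P}\setminus F^\rk_pN\mc{P}=N(\mc{P}_{\le p+1})\setminus N(\mc{P}_{\le p})$, i.e.\ the chains whose top --- and hence unique maximal-rank --- vertex lies in $\mc{P}_{p+1}$.

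The heart of the argument is to identify $W$ together with its interaction with the simplicial structure of $N\mc{P}$. Organizing a new simplex by its rank-$(p+1)$ top vertex $c$, and using $N(-\infty,c]\cong\Cocone(N(-\infty,c))$ from \cref{example.poset-nerve-constructions}-(4) --- valid since $\mc{P}$, being ranked, is split, so $c$ is an isolated maximum of $(-\infty,c]$ --- I would establish, writing $A:=N\bigl((-\infty,\mc{P}^+_{p+1})/\mc{P}_{p+1}\bigr)$, a pushout of simplicial schemes over $X$
\[ N(\mc{P}_{\le p+1})\;\cong\;N(\mc{P}_{\le p})\,\sqcup_A\,\Cocone(A/\mc{P}_{p+1}), \]
in which $A\hookrightarrow\Cocone(A/\mc{P}_{p+1})$ is the (levelwise clopen) inclusion of the cocone base, $A\to N(\mc{P}_{\le p})$ forgets the $\mc{P}_{p+1}$-parameter, and the induced map $\phi\colon\Cocone(A/\mc{P}_{p+1})\to N\mc{P}$ restricts to an isomorphism from $\Cocone(A/\mc{P}_{p+1})\setminus A$ onto $W$ while carrying $A$ into $N(\mc{P}_{\le p})$. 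Since the gluing is along a levelwise clopen immersion, this pushout carries a Mayer--Vietoris triangle; and as $\Gr^p(\Fil_{F^\rk_\bullet}\epsilon^*K)$ is supported on $W$ --- so restricts to $0$ on both $N(\mc{P}_{\le p})$ and $A$, and, by base change along the open immersion, pulls back along $\phi$ to the extension by zero of $(\pi a)^*K$ (with $a\colon\Cocone(A/\mc{P}_{p+1})\to\mc{P}_{p+1}$ the augmentation) on the remaining piece --- this triangle should collapse to
\[ \Gr^p_{\mc{P},\rk}(K)\;\cong\;R(\pi a)_*\bigl(j_!j^*(\pi a)^*K\bigr),\qquad j\colon\Cocone(A/\mc{P}_{p+1})\setminus A\hookrightarrow\Cocone(A/\mc{P}_{p+1}). \]

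To finish I would apply $R(\pi a)_*$ to the exact triangle $j_!j^*\mc{G}\to\mc{G}\to i_*i^*\mc{G}\to$ attached to the closed complement $i\colon A\hookrightarrow\Cocone(A/\mc{P}_{p+1})$, taking $\mc{G}=a^*\pi^*K$. The contractibility of $\Cocone(A/\mc{P}_{p+1})/\mc{P}_{p+1}$ (\cref{example.simplicial-constructions} and the example following it) together with \cref{lemma:homotopy-equiv-and-contractible} gives $Ra_*\mc{G}=\pi^*K$ and identifies this triangle with $R\pi_*$ of the defining triangle of $\tilde{C}(A/\mc{P}_{p+1},\pi^*K)=\tilde{C}(-\infty,\mc{P}^+_{p+1},K)$; passing to fibres yields $\Gr^p_{\mc{P},\rk}(K)\cong R\pi_*\tilde{C}(-\infty,\mc{P}^+_{p+1},K)[-1]$, functorially in $K$ (here $R\pi_*$ abbreviates pushforward from $\mc{P}^+_{p+1}$). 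The degenerate case $p+1=\rk(-\infty)$, where $\mc{P}^+_{p+1}\cong X$ and $\mc{P}_{p+1}=\emptyset$, I would check directly, matching the stated convention $\tilde{C}(-\infty,\mc{P}^+_r,-)=(-)[2]$. The hard part will be the third step: pinning down the pushout presentation of $N(\mc{P}_{\le p+1})$ at the level of simplicial schemes and verifying the associated Mayer--Vietoris triangle, since here the splitness/clopenness hypotheses on the rank filtration and care with the non-simplicial locus $W$ are genuinely needed; once that is in place the rest is formal manipulation of exact triangles together with the contractibility of cocones.
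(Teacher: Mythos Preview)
Your proposal is correct and follows essentially the same route as the paper: both identify the ``new simplices'' locus $W=N(\mc{P}_{\le p+1})\setminus N(\mc{P}_{\le p})$ with the complement of the base in the cocone $N(-\infty,\mc{P}_{p+1}]\cong\Cocone(N(-\infty,\mc{P}_{p+1})/\mc{P}_{p+1})$, and then finish by the exact triangle $j_!j^*\to\Id\to i_*i^*$ on the cocone together with its contractibility.

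The only real difference is packaging. You frame the excision step as a pushout $N(\mc{P}_{\le p+1})\cong N(\mc{P}_{\le p})\sqcup_A\Cocone(A/\mc{P}_{p+1})$ and invoke a ``Mayer--Vietoris collapse''. The paper bypasses this: it simply takes the map $r_{p+1}\colon\Cocone(A/\mc{P}_{p+1})\to N\mc{P}$ (your $\phi$) and proves directly, by representing $K$ by injectives, that $\Gr^p K = R{r_{p+1}}_*\Gr^1 r_{p+1}^*K$ (with $\Gr^1$ taken for the cocone filtration of \cref{example.cone-filtration}). That lemma is exactly the content of your ``collapse'' --- it says that for the sheaf supported on $W$, pushforward along $r_{p+1}$ of its pullback recovers it with no higher derived terms --- and the paper's injectivity argument is what makes this rigorous. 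Your pushout presentation is a correct and pleasant geometric picture, but the Mayer--Vietoris language is slightly off for a clopen gluing; what you really need (and what the paper isolates) is the identity $R\phi_*\phi^*(\Gr^p)=\Gr^p$, and once you have that the rest is, as you say, formal.
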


This immediately implies a functorial spectral sequence for any derived functor restricted to $\mc{C}$. We state a useful general case now --- in \cref{ss.spectral-seq-proof} we will provide the justification, as well as details for the examples given in \cref{ss.intro-sheaves}.

\begin{theorem}[Refinement of \cref{main.spectral-sequences}] \label{body.spectral-sequences}
Suppose $X$ is a topological space\slash scheme, $K \in D^+(X)$, and $\pi:\mc{P} \rightarrow X$ is a proper finitely ranked pospace\slash poscheme. Let $Z=\pi(\mc{P})$ with complement $U=X\backslash Z$,  $j:U \hookrightarrow X$. Suppose also that  $\mc{P}|_Z$ is of cohomological descent for $K|_Z$. Let $\star=* \textrm{ or } !$ and let $f:X \rightarrow S$ be a morphism; in the scheme case, if $\star = !$, suppose further that $S$ is qcqs and $f$ is separated and of finite type (so $Rf_!$ is defined). Then there are spectral sequences, functorial in $K$,
\begin{enumerate}
\item For $Rf_\star$ on $Z$:
\[ E_1^{p,q}=R^{p+q-1} (f\circ \pi|_{\mc{P}_{p+1}})_\star\tilde{C}(-\infty, \mc{P}_{p+1}^+, K) \Rightarrow R^{p+q}{f|_{Z}}_\star(K|_Z). \]
\item For $Rf_\star$ on $U$:
\[ E_1^{p,q} = R^{p+q-2}(f\circ \pi|_{\mc{P}^+_{p}})_\star\tilde{C}(-\infty, \mc{P}^+_p, K)   \Rightarrow R^{p+q}{f}_\star j_!j^*K.	 \]
\end{enumerate}
\end{theorem}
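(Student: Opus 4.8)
The plan is to deduce \cref{body.spectral-sequences} from \cref{body.filtration-functor} by applying the skeletal spectral sequence machinery of filtered derived categories to the filtered object $\Fil_{\mc{P}|_Z,\rk}(K|_Z) \in DF^+(\Sheaves(Z))$, together with the identification of its graded pieces. First I would reduce to the case $\star=*$, $S=\Spec\kappa$ (or a point), i.e.\ to the statement about $R^\bullet(f\circ\pi)_*$; the general case follows by noting that $Rf_\star$ (for $\star=*$ or $!$, under the stated qcqs/separated-finite-type hypotheses ensuring $Rf_!$ is defined and has a projection-type formula) is a triangulated functor that may be applied termwise to a filtered complex, yielding a spectral sequence whose $E_1$ page is computed by applying $Rf_\star$ to the graded pieces. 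Concretely, given any finite filtration $F^\bullet$ on an object $M \in D^+(\Sheaves(Z))$ and any triangulated functor $T: D^+(\Sheaves(Z)) \to D^+(\mc{A})$, one has the standard spectral sequence $E_1^{p,q} = R^{p+q}T(\Gr^p M) \Rightarrow R^{p+q}T(M)$ arising from the exact couple of the triangles $T(F^{p+1}M) \to T(F^p M) \to T(\Gr^p M)$.

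Next I would feed in the two inputs from \cref{body.filtration-functor}. For part (1): since $\mc{P}|_Z/Z$ is finitely ranked, proper, and of cohomological descent for $K|_Z$, the forgetful object underlying $\Fil_{\mc{P}|_Z,\rk}(K|_Z)$ is canonically $K|_Z$ itself, and its $p$th graded piece is canonically $R\pi_* \tilde{C}(-\infty,\mc{P}^+_{p+1},K)[-1]$, supported on $Z$. Applying $T = R{f|_Z}_\star$ and rewriting $R{f|_Z}_\star \circ R\pi_* = R(f\circ\pi|_{\mc{P}_{p+1}})_\star$ on the $(p+1)$st rank stratum (using that $\tilde{C}(-\infty,\mc{P}^+_{p+1},K)$ lives on $\mc{P}^+_{p+1}$, which equals $\mc{P}_{p+1}$ as $p+1 \neq \rk(-\infty)$ in the relevant range), and accounting for the $[-1]$ shift so that $R^q T(\Gr^p) = R^{p+q-1}(f\circ\pi|_{\mc{P}_{p+1}})_\star\tilde{C}(-\infty,\mc{P}^+_{p+1},K)$ lands in bidegree $(p,q)$, gives exactly the claimed sequence abutting to $R^{p+q}{f|_Z}_\star(K|_Z)$. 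For part (2): I would instead apply $\Fil$ to $j_!j^*K$ on $X$ — equivalently, work with the poscheme $\mc{P}^+/X$ obtained by adjoining the disjoint minimum $-\infty$, which is still proper (the added section is a closed immersion) and whose fibers still admit weak centers (the old center remains a center, $-\infty$ being isolated below it), so cohomological descent still holds on $X$. The graded piece for $\mc{P}^+$ at rank $p$ is $R\pi^+_*\tilde{C}(-\infty,\mc{P}^{++}_{p+1},K)[-1]$; but one checks $\mc{P}^{++}_{p+1} = \mc{P}^+_p$ after reindexing, and the convention $\tilde{C}(-\infty,\mc{P}^+_r,K)=K[2]$ when $r=\rk(-\infty)$ produces precisely the $p=0$ term $R^q f_* K$ (matching the topological-case display in the introduction), with the remaining terms contributing $R^{p+q-2}(f\circ\pi|_{\mc{P}^+_p})_\star\tilde{C}(-\infty,\mc{P}^+_p,K)$. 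The extra shift by one compared to (1) is exactly the discrepancy between $\mc{F}[2]$ and the $[-1]$ already present, and abutment is to $R^{p+q}f_* j_!j^*K$ because forgetting the filtration recovers $j_!j^*K$; here one must be slightly careful that the abutment is to $Rf_\star j_!j^*K$ on all of $X$ rather than something on $Z$, which is automatic since $j_!j^*K$ is a genuine complex on $X$ and $\Fil_{\mc{P}^+,\rk}$ takes values in $DF^+(\Sheaves(X))$.

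The main obstacle I anticipate is not the homological-algebra bookkeeping but the careful handling of the indexing shifts and the $-\infty$ convention: one must verify that the reindexing $\mc{P}^{++}_{p+1}\cong\mc{P}^+_p$ is the correct one, that the $[2]$ convention for the rank-$\rk(-\infty)$ piece is compatible with the cone-of-the-unit definition of $\tilde{C}$ after the $[-1]$ shift (so that $\tilde{C}(N(-\infty,\mc{P}^+_r)/\mc{P}^+_r,\pi^*K)[-1]$ degenerates to $K[1]$, hence the $\Gr^0$ piece becomes $K[1][-1]=K$ — wait, this needs the convention to read $K[2][-1]=K[1]$, matching the $R\epsilon_*\epsilon^*K \to \tilde{C} \to K[1]$ triangle), and that all these match the explicit topological formulas in the introduction. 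A secondary technical point is confirming that $\mc{P}|_Z$ being of cohomological descent is what is needed (rather than $\mc{P}/X$), which is fine because $Z=\pi(\mc{P})$ carries all the relevant geometry and $K \mapsto j_!j^*K$ in part (2) is handled by passing to $\mc{P}^+$ over all of $X$. Finally, in the scheme case with $\star=!$ I would invoke that $Rf_!$ is triangulated and commutes suitably with $R\pi_!$ for $\pi$ proper (where $R\pi_! = R\pi_*$), so the composition rewriting $R{f}_! \circ R\pi_* = R(f\circ\pi)_!$ on each rank stratum is legitimate; this uses properness of $\mc{P}/X$ crucially, which is built into the hypotheses.
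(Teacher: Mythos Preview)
Your treatment of part~(1) is correct and matches the paper exactly: take $\tilde K = \Fil_{\mc{P}|_Z,\rk}(K|_Z)$, note $\Forget(\tilde K)=K|_Z$ by cohomological descent, apply the spectral sequence of a filtered complex for $R{f|_Z}_\star$, identify the graded pieces via \cref{body.filtration-functor}, and use properness of $\pi$ to rewrite $Rf_!\circ R\pi_* = R(f\circ\pi)_!$ when $\star=!$.

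Part~(2), however, has a genuine gap. You propose to ``apply $\Fil$ to $j_!j^*K$ --- equivalently, work with the poscheme $\mc{P}^+/X$,'' but these are not equivalent and neither yields the statement. If you apply $\Fil_{\mc{P}^+,\rk}$ to $K$, then $\mc{P}^+/X$ is surjective with fiberwise minimum, so the underlying object is $K$, not $j_!j^*K$, and the abutment is wrong. Moreover the graded pieces from \cref{body.filtration-functor} involve the open intervals $(-\infty_{\mathrm{new}},\,t)$ taken inside $(\mc{P}^+)^+$; for $t\in\mc{P}$ each such interval contains the \emph{old} $-\infty$ as a minimum, hence has contractible nerve, hence $\tilde C=0$. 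All graded pieces except one collapse and you recover only the tautology $K=K$. If instead you apply $\Fil_{\mc{P}^+,\rk}$ to $j_!j^*K$, the abutment is correct but the terms over $\mc{P}_p\subseteq Z$ involve $(j_!j^*K)|_Z=0$, so again nothing survives. Your reindexing ``$\mc{P}^{++}_{p+1}=\mc{P}^+_p$'' silently swaps the interval in $(\mc{P}^+)^+$ (what the theorem produces) for the interval in $\mc{P}^+$ (what the statement requires); these differ exactly by the presence of the old $-\infty$, which kills everything.

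The paper's argument for (2) does not reapply \cref{body.filtration-functor} to an augmented poscheme at all. It takes the filtration on $i_*i^*K$ from part~(1), shifts by $[-1]$, and extends by a single step using the exact triangle
\[ i_*i^*K[-1] \longrightarrow j_!j^*K \longrightarrow K, \]
adjoining $K$ itself as one new graded piece at the index $\rk(-\infty)$. The convention $\tilde C(-\infty,\mc{P}^+_r,K):=K[2]$ when $r=\rk(-\infty)$ is purely a bookkeeping device so that this extra piece fits the same formula as the others after the additional shift; it is not the output of the rank filtration on $N\mc{P}^+$.
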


\subsection{Technical criteria for cohomological descent}\label{ss.technical-criteria}

The following summarizes the application of proper base change that allows us to spread out cohomological descent from (geometric) points when it holds; in the schematic case we must put some restrictions in place to obtain a suitable proper base change theorem.

\begin{lemma}[Spreading out cohomological descent by proper base change]\label{lemma.spread-out-proper}\hfill
\begin{enumerate}
\item (Spaces). Let $X$ be a topological space, let $\mc{P}/X$ be a proper pospace, and let $K \in D^+(X)$. If $N\mc{P}_x/\{*\}$ is of cohomological descent for $K_x$ for all $x \in X$ then $N\mc{P}/X$ is of cohomlogical descent for $K$.
\item (Schemes étale). Let $X$ be a scheme, let $\mc{P}/X$ be a proper poscheme and let $K \in D^+(X_\et)$ have torsion cohomology sheaves. If $N\mc{P}_x$ is of cohomological descent for $K_x$ for all geometric points $x: \Spec \kappa \rightarrow X$ then $N\mc{P}/X$ is of cohomological descent for $K$.
\item (Schemes pro-étale). Let $X$ be a Noetherian scheme, let $\mc{P}/X$ be a proper poscheme, and let $K \in D^+(X_\proet, \mc{O}_E)$ or $D^+(X_\proet, L)$ for $L$ an algebraic extension of $\mbb{Q}_\ell$, $\ell$ invertible on $X$, have constructible cohomology sheaves. If $N\mc{P}_x/\Spec \kappa$ is of cohomological descent for $K_x$ for all geometric points $x: \Spec \kappa \rightarrow X$ then $N\mc{P}/X$ is of cohomological descent for $K$.
\end{enumerate}
\end{lemma}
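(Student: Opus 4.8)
The plan is to reduce cohomological descent for $K$ over $X$ to the fiberwise hypothesis by passing to (geometric) stalks, exploiting that each simplex map of the nerve is proper. Recall from \cref{def.reduced-cohomology} that $N\mc{P}/X$ is of cohomological descent for $K$ precisely when the cone $\tilde{C}(N\mc{P}/X,K)$ is acyclic, i.e.\ when every cohomology sheaf $\tilde{H}^q(N\mc{P}/X,K)$ vanishes; and in each of the three cases the relevant cohomology sheaves (abelian sheaves, torsion étale sheaves, constructible $\ell$-adic sheaves) are such that vanishing is detected on stalks at points, resp.\ geometric points. Since taking the stalk at $x$ is exact, $\tilde{H}^q(N\mc{P}/X,K)_x = H^q\big(\tilde{C}(N\mc{P}/X,K)_x\big)$, so it suffices to produce a natural identification $\tilde{C}(N\mc{P}/X,K)_x \cong \tilde{C}(N\mc{P}_x/\{x\},K_x)$ and invoke the hypothesis. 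Here I use that forming the fiber over $x$ commutes with the fiber products and with the closed relation $\sle_{\mc{P}}$ defining the simplex spaces/schemes, so $N\mc{P}_{m,x} = N(\mc{P}_x)_m$ for all $m$; in the scheme case $\{x\}$ means $\Spec \kappa$ and $K_x$ the pullback of $K$ along the geometric point.

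The geometric input is that each augmentation map $\epsilon_m \colon N\mc{P}_m \to X$ is proper: $N\mc{P}_m$ is the closed subspace/subscheme of the $(m{+}1)$-fold fiber power $\mc{P}\times_X\dotsm\times_X\mc{P}$ cut out by the closed relations $p_i \sle_{\mc{P}} p_{i+1}$, and this fiber power is proper over $X$ because $\mc{P}/X$ is (and $\mc{P}/X$ is automatically separated, so the fiber powers make sense). Hence the relevant proper base change theorem applies levelwise: the classical proper base change theorem for sheaves on topological spaces in case (1); the proper base change theorem in étale cohomology, extended to bounded-below complexes with torsion cohomology by a standard dévissage, in case (2); and the corresponding statement for constructible $\ell$-adic complexes over a Noetherian base in the pro-étale formalism of \cite{bhatt-scholze:pro-etale} (which may also be deduced by reduction modulo $\ell^n$) in case (3) --- this is exactly why Noetherianity and constructibility are imposed there. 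In each case we obtain, naturally in $K$, isomorphisms $(R^q {\epsilon_m}_* \epsilon_m^* K)_x \cong H^q\big(N\mc{P}_{m,x}, (\epsilon_m^* K)|_{N\mc{P}_{m,x}}\big)$, and since $N\mc{P}_{m,x}\to X$ factors through $x$, transitivity of $*$-pullback identifies $(\epsilon_m^* K)|_{N\mc{P}_{m,x}}$ with the pullback of $K_x$ along $N\mc{P}_{m,x}\to\{x\}$.

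Next I would assemble these levelwise statements using the skeletal spectral sequence (cf.\ \cref{eq.simplicial-ss})
\[ E_1^{p,q} = R^q {\epsilon_p}_* \epsilon_p^* K \;\Rightarrow\; R^{p+q}\epsilon_* \epsilon^* K, \]
which converges since $K$ is bounded below (so $E_1^{p,q}=0$ for $q\ll 0$) and $p\geq 0$ leaves only finitely many terms in each total degree. Applying the exact functor $(-)_x$ and feeding in the previous paragraph identifies the resulting stalk spectral sequence with the skeletal spectral sequence of $N\mc{P}_x/\{x\}$ with coefficients $K_x$; in particular $(R\epsilon_*\epsilon^* K)_x \cong R\Gamma(N\mc{P}_x,K_x)$. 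Tracking the augmentation $0$-simplex shows this identification is compatible with the adjunction units, so that the stalk at $x$ of $u_{N\mc{P}/X}\colon K\to R\epsilon_*\epsilon^* K$ is the unit $u_{N\mc{P}_x/\{x\}}\colon K_x\to R\Gamma(N\mc{P}_x,K_x)$. Passing to cones yields the desired natural isomorphism $\tilde{C}(N\mc{P}/X,K)_x \cong \tilde{C}(N\mc{P}_x/\{x\},K_x)$, which vanishes by hypothesis; hence $\tilde{C}(N\mc{P}/X,K)=0$.

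I expect the main obstacle to be not any single hard theorem but the careful orchestration of proper base change across the three coefficient settings together with the naturality bookkeeping in the last step: one must keep track that stalks commute with $*$-pullback and with cones, that all identifications are natural in $K$, and that the unit maps match up --- and, in the étale and pro-étale cases, that the stated hypotheses are precisely those under which the applicable proper base change theorem holds (torsion coefficients on the étale site; Noetherian base and constructible coefficients for $\ell$-adic sheaves in the pro-étale formalism). One could try to sidestep the spectral sequence by filtering $\tilde{C}(N\mc{P}/X,K)$ by the skeletal filtration and checking that the stalks of the graded pieces vanish, but since $N\mc{P}$ need not be finite-dimensional some convergence input --- the spectral sequence above or an equivalent colimit argument --- seems unavoidable.
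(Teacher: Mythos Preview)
Your proposal is correct and takes essentially the same approach as the paper: reduce to showing $\tilde{C}(N\mc{P}/X,K)$ has vanishing stalks, then use proper base change to identify the stalk at $x$ with $\tilde{C}(N\mc{P}_x/\{x\},K_x)$, which vanishes by hypothesis. The paper's proof is more terse, simply citing the relevant proper base change theorems (Stacks \href{https://stacks.math.columbia.edu/tag/09V6}{Tag 09V6} and \href{https://stacks.math.columbia.edu/tag/0DDE}{Tag 0DDE}, and \cite[Lemma 6.7.5, Proposition 6.8.14]{bhatt-scholze:pro-etale}) without spelling out the passage from levelwise base change to the simplicial statement; your use of the skeletal spectral sequence to assemble the levelwise isomorphisms is exactly the standard way one would unwind this, so the extra detail is welcome rather than a deviation.
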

\begin{proof}
To establish cohomological descent for a complex $K$, it suffices to show that $\tilde{C}(N\mc{P}/X, K) \cong 0$. To show this it suffices to verify that the stalks vanish at all points/geometric points in $X$ --- it suffices to check on the cohomology sheaves, and this statement is true for all topological or étale sheaves; it is not true in general for a pro-étale sheaf (there are of course many interesting pro-étale sheafs on a geometric point with no global sections!), but it is true for a constructible pro-étale sheaf, which reduces to the étale case. To conclude, we invoke proper base change in each setting to identify the stalk at a point $x$ with
\[ \tilde{C}(N\mc{P}_x/\{x\}, K_x), \]
which vanishes by the assumption that cohomological descent holds on fibers. The relevant proper base change theorems can be found as \cite[\href{https://stacks.math.columbia.edu/tag/09V6}{Tag 09V6}]{stacks-project} in the topological case, \cite[\href{https://stacks.math.columbia.edu/tag/0DDE}{Tag 0DDE}]{stacks-project} for the étale case, and \cite[Lemma 6.7.5 and Proposition 6.8.14]{bhatt-scholze:pro-etale} in the pro-étale case.
\end{proof}

To show a pospace/scheme over a point satisfies cohomological descent, we will typically appeal to contractibility of the nerve; in \cref{ss.contractibility-criterion} above we already established some useful tools that can be used to deduce this contractibility. For schemes we will often need one more reduction before we can apply these contractibility criteria; the result is encoded in the following lemma. We say $f: \mc{P}_1 /X \rightarrow \mc{P}_2/X$ is a universal homeomorphism of poschemes over $X$ if it is a map of poschemes over $X$ that is universal homeomorphism as a map of schemes and induces a universal homeomorphism $\sle_{\mc{P}_1} \rightarrow \sle_{\mc{P}_2}$.

\begin{lemma}[Contractibility and cohomological descent]\label{lemma.nerve-contractible-descent} \hfill
\begin{enumerate}
\item (Spaces) If $\mc{P}/X$ is a pospace and $N\mc{P}/X$ is contractible then $N\mc{P}/X$ is of cohomological descent on $D^+(X)$.
\item (Schemes) If $f: \mc{P} /X \rightarrow \mc{Q}/X$ is a universal homeomorphism of poschemes over $X$ and $N\mc{P}/X$ is contractible, then $N\mc{Q}/X$ satisfies cohomological descent on $D^+(X_\et)$, $D^+(X_\proet, \mc{O}_L)$, and $D^+(X_\proet, L)$.
\end{enumerate}
\end{lemma}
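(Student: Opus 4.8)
The plan is to obtain (1) directly from \cref{lemma:homotopy-equiv-and-contractible} --- indeed (1) is the final assertion of that lemma applied to the contractible simplicial space $A=N\mc{P}$ over $X$ --- and to reduce (2) to (1) by transporting cohomological descent along the equivalence of (pro-)étale sheaf categories induced by a universal homeomorphism.

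For part (2), I would first apply \cref{lemma:homotopy-equiv-and-contractible} in the scheme case to conclude that contractibility of $N\mc{P}/X$ already forces $N\mc{P}/X$ to be of cohomological descent on all of $D^{+}(X_\et)$, $D^{+}(X_\proet,\mc{O}_L)$ and $D^{+}(X_\proet,L)$. Next I would check that $g:=Nf\colon N\mc{P}\to N\mc{Q}$ is termwise a universal homeomorphism of schemes, i.e.\ that $g_m\colon N\mc{P}_m\to N\mc{Q}_m$ is a universal homeomorphism for each $m$. Here $N\mc{P}_m$ is an iterated fiber product over $\mc{P}$ of copies of $\sle_{\mc{P}}$ and of $\mc{P}$, while $N\mc{Q}_m$ is the analogous iterated fiber product over $\mc{Q}$, with $g_m$ assembled from the universal homeomorphisms $f$ and $f|_{\sle_{\mc{P}}}$. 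Since $f$ is universally injective the relative diagonal $\mc{P}\to\mc{P}\times_{\mc{Q}}\mc{P}$ is surjective, and since $\mc{P}/\mc{Q}$ is separated (as $\mc{P}/X$ is) it is a closed immersion, hence a surjective closed immersion, hence a universal homeomorphism. Therefore each fiber product over $\mc{P}$ occurring in $N\mc{P}_m$ maps to the corresponding fiber product over $\mc{Q}$ by a base change of this diagonal; composing these with base changes of $f$ and $f|_{\sle_{\mc{P}}}$, and using stability of universal homeomorphisms under base change and composition, exhibits $g_m$ as a universal homeomorphism.

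I would then invoke topological invariance of the small étale topos (for étale sheaves, cf.\ \cite{stacks-project}) and of the pro-étale topos (for pro-étale $\mc{O}_L$- and $L$-modules, cf.\ \cite{bhatt-scholze:pro-etale}): each $g_m^{*}$ is an equivalence of sheaf categories with quasi-inverse $g_{m*}$. As $g$ is a morphism of simplicial schemes these assemble to an equivalence $g^{*}\colon\Sheaves(N\mc{Q})\xrightarrow{\sim}\Sheaves(N\mc{P})$ with quasi-inverse $g_{*}$, hence an equivalence $g^{*}\colon D^{+}(N\mc{Q})\xrightarrow{\sim}D^{+}(N\mc{P})$ with quasi-inverse $Rg_{*}=g_{*}$, whose unit $\Id\to Rg_{*}g^{*}$ is an isomorphism of functors. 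Writing $\epsilon\colon N\mc{P}\to X$ and $\epsilon'\colon N\mc{Q}\to X$ for the augmentations we have $\epsilon=\epsilon'\circ g$, so by compatibility of adjunction units with composition the unit $u_{N\mc{P}/X}$ equals $u_{N\mc{Q}/X}$ followed by $R\epsilon'_{*}$ applied to the invertible unit ${\epsilon'}^{*}\to Rg_{*}g^{*}{\epsilon'}^{*}$. Passing to cones gives a canonical isomorphism $\tilde{C}(N\mc{P}/X,-)\cong\tilde{C}(N\mc{Q}/X,-)$ of functors $D^{+}(X)\to D^{+}(X)$, whence $N\mc{Q}/X$ is of cohomological descent on precisely the subcategories on which $N\mc{P}/X$ is, establishing (2).

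The main obstacle is the bookkeeping in part (2): carefully establishing that $Nf$ is termwise a universal homeomorphism --- the only genuine subtlety being that the simplex schemes are fiber products over $\mc{P}$ rather than over $\mc{Q}$, bridged by the observation that the diagonal of the universally injective $f$ is a surjective closed immersion --- and pinning down the precise form of topological invariance available for the pro-étale topos, so that the resulting equivalence of sheaf categories is genuinely compatible with the augmentations and hence transports cohomological descent verbatim.
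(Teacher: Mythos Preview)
Your proposal is correct and follows essentially the same approach as the paper's proof: part (1) is the final clause of \cref{lemma:homotopy-equiv-and-contractible}, and part (2) combines that lemma with topological invariance of the (pro-)\'etale site, the key point being that a universal homeomorphism of poschemes induces one on each simplex space of the nerve. Your write-up is considerably more detailed than the paper's---in particular, your argument that $g_m$ is a universal homeomorphism via the diagonal $\mc{P}\to\mc{P}\times_{\mc{Q}}\mc{P}$ being a surjective closed immersion (using that universal homeomorphisms are integral, hence separated) makes explicit exactly what the paper summarizes in a single sentence.
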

\begin{proof}
The statement for topological spaces is a reformulation of
\cref{lemma:homotopy-equiv-and-contractible}. The schemes statement combines \cref{lemma:homotopy-equiv-and-contractible} with the topological invariance of the étale site \cite[\href{https://stacks.math.columbia.edu/tag/03SI}{Tag 03SI}]{stacks-project}  --- see \cite[Lemma 5.4.2]{bhatt-scholze:pro-etale} for an explanation of how this leads also to an equivalence of the corresponding pro-étale topoi. The key point is then that a universal homeomorphism of poschemes induces a universal homeomorphism on the simplex spaces of the nerve.
\end{proof}

\subsection{Weak centers and the proof of \scref{body.cd-criterion}{Theorem}}\label{ss.proof-of-cd-crit}
The topological case of \cref{body.cd-criterion} can now be established: we apply \cref{lemma.spread-out-proper}, where the punctual condition is satisfied by combining \cref{lemma:max-min-center-homotopy} and \cref{lemma.nerve-contractible-descent}.

In order to apply the same argument in the scheme-theoretic case, we must briefly return to the notion of a weak center: Recall from \cref{sss.retractions-centers-etc} that for $\mc{P}/X$ a poscheme, $c \in \mc{P}(X)$ is a \emph{weak} center/minimum/maximum if the defining decomposition of $\mc{P}$ holds topologically but not necessarily scheme-theoretically.
\begin{example} If $\kappa$ is a field, the poscheme $\mc{P}=\Spec \kappa[\epsilon]/(\epsilon^2)$ over $\Spec \kappa$ with trivial ordering relation $\sle_{\mc{P}} = \Delta_{\mc{P}}$ admits a weak center given by the reduced subscheme  $\Spec \kappa = \mc{P}^\mr{red} \hookrightarrow \mc{P}$.
\end{example}

To conclude, we consider the following elementary reinterpretation:
\begin{lemma}\label{lemma.weak-universal-replacement} A poscheme $\mc{P}/X$ admits a weak center/maximum/minimum if and only if there is a poscheme $\mc{P}'/X$ with a center/maximum/minimum and a universal homeomorphism of poschemes $\mc{P}'/X \rightarrow \mc{P}/X$.
\end{lemma}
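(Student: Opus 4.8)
The plan is to prove the two implications separately. The ``if'' direction is essentially formal, so the substance is the ``only if'' direction, where we must manufacture a genuine center/maximum/minimum out of a weak one. The key point will be that the required universal homeomorphism can always be taken to be a \emph{surjective closed immersion} $f\colon\mc{P}'\hookrightarrow\mc{P}$ with $\mc{P}'$ a suitable subposcheme of $\mc{P}$ (equipped with its induced ordering); such an $f$ is automatically a universal homeomorphism of poschemes, since $\sle_{\mc{P}'}\to\sle_{\mc{P}}$ is a base change of $f\times_X f$, which is a universal homeomorphism because products over $X$ (hence base changes) of universal homeomorphisms are universal homeomorphisms.

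For the ``only if'' direction, suppose first $m$ is a weak maximum. The interval $(-\infty,m]$, being the base change of the closed immersion $\sle_{\mc{P}}\hookrightarrow\mc{P}\times_X\mc{P}$ along $p\mapsto(p,m(\pi(p)))$, is a closed subscheme of $\mc{P}$, and ``$m$ is a weak maximum'' says precisely that its underlying space is all of $\mc{P}$; thus $\mc{P}':=(-\infty,m]\hookrightarrow\mc{P}$ is a surjective closed immersion. Moreover $m$ factors through $\mc{P}'$, giving a section $m'\colon X\to\mc{P}'$, and a functor-of-points check gives $(-\infty,m']=\mc{P}'$ inside $\mc{P}'$, because for $b\in\mc{P}'(S)$ the relation $b\sle m'_S$ is, under the induced ordering, just $b\sle_{\mc{P}}m_S$, which is the defining condition of $\mc{P}'(S)$. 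So $m'$ is a genuine maximum; the minimum case is the mirror image with $\mc{P}'=[m,\infty)$. For a weak center $c$, the same bracket computation shows $(-\infty,c]$, $c(X)$ (a section, hence a closed immersion since $\mc{P}/X$ is separated), and $[c,\infty)$ are closed subschemes of $\mc{P}$; intersecting the first and last with the open complement of $c(X)$ shows $(-\infty,c)$ and $(c,\infty)$ are locally closed, and the hypothesis that $c$ is a weak center says their underlying spaces, together with that of $c(X)$, partition $\mc{P}_{\mathrm{top}}$ into clopen pieces --- so $(-\infty,c)$ and $(c,\infty)$ are in fact closed subschemes, and the three pieces glue (clopen-piece by clopen-piece) into a closed subscheme $\mc{P}'=(-\infty,c)\sqcup c(X)\sqcup(c,\infty)$ of $\mc{P}$ with $\mc{P}'_{\mathrm{top}}=\mc{P}_{\mathrm{top}}$. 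Again $f\colon\mc{P}'\hookrightarrow\mc{P}$ is a surjective closed immersion; $c$ factors through the middle piece, giving $c'\colon X\to\mc{P}'$, and a functor-of-points check shows the intervals of $\mc{P}'$ satisfy $(-\infty,c')=(-\infty,c)$ and $(c',\infty)=(c,\infty)$, the point being that any $p\in\mc{P}(S)$ with $p\slt_{\mc{P}}c_S$ already factors through the subscheme $(-\infty,c)\subseteq\mc{P}'$, so passing to the subposcheme does not enlarge these intervals. Hence $\mc{P}'=(-\infty,c')\sqcup c'(X)\sqcup(c',\infty)$ and $c'$ is a genuine center.

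For the ``if'' direction, let $f\colon\mc{P}'/X\to\mc{P}/X$ be a universal homeomorphism of poschemes with $\mc{P}'$ admitting a center $c'$ (resp.\ maximum/minimum), and put $c:=f\circ c'$. Then $f$ is a homeomorphism on underlying spaces; since $f$ is a map of poschemes with $f\circ c'=c$, it carries the underlying set of $(-\infty,c')$ into that of $(-\infty,c)$, that of $c'(X)$ onto that of $c(X)$, and that of $(c',\infty)$ into that of $(c,\infty)$ (injectivity of $f$ on points being used to upgrade $p'\sle c'$, $p'\neq c'$ to $f(p')\neq c$, hence $f(p')\slt c$). Now $(-\infty,c)_{\mathrm{top}}$, $c(X)_{\mathrm{top}}$, $(c,\infty)_{\mathrm{top}}$ are automatically pairwise disjoint subsets of $\mc{P}_{\mathrm{top}}$, and their preimages under the homeomorphism $f_{\mathrm{top}}$ are the three clopen pieces partitioning $\mc{P}'_{\mathrm{top}}$ (using that $c'$ is a genuine center); it follows that these three subsets themselves partition $\mc{P}_{\mathrm{top}}$ into clopen pieces, which is exactly the assertion that $c$ is a weak center of $\mc{P}$. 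The maximum/minimum cases are the same but easier, using only surjectivity of $f$ and that $f$ is a poscheme map.

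The main obstacle is not conceptual but bookkeeping: in the ``only if'' direction one must carefully distinguish the interval $(-\infty,c)$ --- a possibly non-reduced \emph{closed} subscheme of $\mc{P}$ --- from the open subscheme of $\mc{P}$ supported on the clopen locus $(-\infty,c)_{\mathrm{top}}$. It is the closed version that is needed, both so that the three pieces genuinely glue to a closed subscheme $\mc{P}'$ and so that the induced-ordering intervals of $\mc{P}'$ come out right on $S$-points --- precisely the phenomenon illustrated by the example $\Spec\kappa[\epsilon]/(\epsilon^2)$ just before the statement.
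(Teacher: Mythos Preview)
Your proof is correct and follows essentially the same approach as the paper: for the ``only if'' direction you construct $\mc{P}'$ as the closed subposcheme $(-\infty,c)\sqcup c(X)\sqcup(c,\infty)$ (resp.\ $(-\infty,m]$, $[m,\infty)$), exactly as the paper does in its three-sentence proof. You simply supply considerably more detail---verifying that surjective closed immersions are universal homeomorphisms of poschemes, that the pieces are genuinely closed, and that the intervals in $\mc{P}'$ come out right on $S$-points---all of which the paper leaves implicit.
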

\begin{proof}
We treat the case of a center, the others being essentially the same.
If such a $\mc{P}'$ exists, then, for $c \in \mc{P}'(X)$ a center, the induced point in $\mc{P}(X)$ is clearly a weak center. Conversely, if $c \in \mc{P}(X)$  is a weak center of $\mc{P}$, we can take $\mc{P'}$ to be the closed subposcheme $\{< c\} \bigsqcup c(X) \bigsqcup \{> c\}$ of $\mc{P}$.
\end{proof}

Given \cref{lemma.weak-universal-replacement}, the argument for \cref{body.cd-criterion} is exactly as in the topological case.

\subsection{Construction of filtration and computation of graded components (proof of \scref{body.filtration-functor}{Theorem})} \label{ss.construction-filtration-graded-components}
Let $\mc{P}/X$ be a finitely ranked pospace\slash poscheme (with no assumptions of cohomological descent or contractibility!). We consider a split simplicial filtration $F_i N\mc{P}=\rk^{-1}(-\infty, i]$ of $N\mc{P}$. By the formalism of \cref{ss.filtered-derived}, we obtain
\[ \Fil_{F_\bullet N\mc{P}}: D^+(N\mc{P}) \rightarrow DF^+( N\mc{P})  \]
and a canonical isomorphism
\[ \Id_{D^+(N\mc{P})} = \Forget \circ \Fil_{F_\bullet N\mc{P}}.\]
We then consider the functor $\Fil_{\mc{P},\rk}: D^+(X) \rightarrow DF^+(X)$ given by composition as
\[ \Fil_{\mc{P},\rk}:= R\epsilon^f_* \circ \Fil_{F_\bullet N(\mc{P})} \circ \epsilon^*, \]
where $\epsilon:N\mc{P}\rightarrow X$ is the augmentation. We have a canonical natural transformation
\begin{equation}\label{eq.canonical-id-forget} \Id_{D^+(X)} \rightarrow \Forget \circ \Fil_{\mc{P},\rk} \end{equation}
given as the composition of
\[ \Id_{D^+(X)} \xrightarrow{u_{\mc{P}}} R\epsilon_* \circ \epsilon^* \cong R\epsilon_* \circ \Forget \circ \Fil_{F_\bullet N\mc{P}} \circ \epsilon^* \cong  \Forget \circ R\epsilon^f_* \circ \Fil_{F_\bullet N\mc{P}} \circ \epsilon^*. \]

In particular, \cref{eq.canonical-id-forget} restricts to an isomorphism of functors $D \rightarrow D^+(X)$ on any subcategory $D$ on which $\mc{P}/X$ is of cohomological descent.

It remains to compute the graded pieces. Before doing so, we consider a useful example that computes the graded pieces for a simplicial cocone:
\begin{example}
If $\epsilon:A \rightarrow X$ then $\epsilon_{\Cocone(A)}: \Cocone(A) \rightarrow X$ is contractible thus satisfies cohomological descent. If we take the filtration as in \cref{example.cone-filtration}, then
\[ R{\epsilon_{\Cocone(A)}}_* \Gr^p_{F_\bullet\Cocone(A)/X} K \cong \begin{cases*}R{\epsilon_{A}}_*\epsilon_{A}^*K & if $p=0$  \\
\tilde{C}(A/X, K)[-1] & if $p=1$.  \end{cases*}\]
Indeed, since the cone is contractible, after applying $R{\epsilon_{\Cocone(A)}}_*$, the quotient to $\Gr^1$ with kernel $F^1$ is an exact triangle
\[R{\epsilon_{\Cocone(A)}}_* (F^0\epsilon_{\Cocone(A)}^* K) \rightarrow K \rightarrow  R{\epsilon_{A}}_*\epsilon_{A}^*K \rightarrow R{\epsilon_{\Cocone(A)}}_* (F^0\epsilon_{\Cocone(A)}^* K)[1] \]
and the claim follows from the definition of $\tilde{C}(A/X, K)$.
\end{example}
The argument for computing the graded pieces then is a reduction to this example via an excision. Consider for each $p$ the natural map
\[ r_p: \Cocone(N(-\infty, \mc{P}_p)) \cong N(-\infty, \mc{P}_p] \rightarrow  N\mc{P}. \]

\begin{lemma} For any $K \in D^+(N\mc{P})$, $\Gr^{p-1}_{\Fil_\bullet N\mc{P}} K = R{r_p}_*\Gr^1 r_p^* K$ where the graded on the right is for the cocone filtration as in \cref{example.cone-filtration}.
\end{lemma}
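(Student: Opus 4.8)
The plan is to identify the graded piece $\Gr^{p-1}_{\Fil_\bullet N\mc{P}} K$ with a pushforward along the cocone map $r_p\colon N(-\infty,\mc{P}_p] \to N\mc{P}$. First I would observe, using \cref{lemma.push-pull-graded}, that $\Gr^{p-1}_{\Fil_\bullet N\mc{P}} K$ is computed by the locally closed immersion $j_p$ of the ``layer'' $F_{p}N\mc{P} \setminus F_{p-1}N\mc{P}$ into $N\mc{P}$; concretely, on $m$-simplices this layer is the set of chains $a_0 < \dots < a_m$ in $\mc{P}$ whose top element has rank exactly $p-1$ (more precisely whose maximal rank is $p-1$), i.e.\ chains landing in $\rk^{-1}(\le p-1)$ but not in $\rk^{-1}(\le p-2)$. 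Wait — with the indexing $F_i N\mc{P} = \rk^{-1}(-\infty,i]$, the layer $F_{p-1}N\mc{P}\setminus F_{p-2}N\mc{P}$ consists of chains whose top element has rank exactly $p-1$. The key combinatorial point is that this layer is \emph{exactly} the image in $N\mc{P}$ of the open stratum $F_1 \setminus F_0$ of the cocone filtration on $N(-\infty,\mc{P}_{p-1}] = \Cocone(N(-\infty,\mc{P}_{p-1}))$ under $r_{p-1}$ — a chain with top element $t$ of rank $p-1$ is the same data as a (possibly empty) chain in $(-\infty,t)$ together with the cone point $t$. So $r_{p-1}$ restricts to an isomorphism from this open stratum of the cocone onto the layer, and is a closed immersion onto $F_{p-1}N\mc{P}$.

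Next I would make this precise at the level of sheaves: $r_p$ is a closed immersion of simplicial schemes with image $F_p N\mc{P}$ (it is termwise a closed immersion since $\mc{P}/X$ is split, cf.\ \cref{lemma.split-po-split-simplicial}, so the non-degenerate and degenerate parts separate cleanly), and under $r_p$ the cocone filtration $F_0 \subset F_1$ on $N(-\infty,\mc{P}_{p-1}]$ matches the pair $F_{p-1}N\mc{P} \supset F_{p-2}N\mc{P}$ inside $N\mc{P}$ — indeed $F_0 = (\mc{P}_{p-1})_\bullet$ maps to $N(\mc{P}_{p-1})$, the rank-$(p-1)$ constant chains, which is the closed part, and $F_1\setminus F_0$ maps isomorphically to the layer. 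Therefore, writing $j$ for the layer immersion, $j_!j^*K \cong R{r_p}_*\bigl((\iota^{F_1})_!(\iota^{F_1})^* r_p^*K\bigr)$, where $\iota^{F_1}$ is the open immersion of the top stratum of the cocone; since $r_p$ is a closed immersion of simplicial schemes and hence $R{r_p}_*$ is exact and $t$-exact, pushing forward the termwise formula $j_!j^* = \Gr$ (from \cref{ss.filtered-derived}) gives exactly $\Gr^{p-1}_{\Fil_\bullet N\mc{P}}K = R{r_p}_*\Gr^1 r_p^*K$ with the cocone filtration on the right.

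The main obstacle I anticipate is bookkeeping the indexing shift and checking compatibility of the two filtrations termwise rather than conceptually: one must verify that for each fixed simplicial degree $m$ the scheme of length-$(m{+}1)$ chains with top rank $p-1$ really does decompose as claimed, including correctly handling degenerate simplices (constant chains at a rank-$(p-1)$ element) which form the closed $F_0$-part — this is where splitness of $\mc{P}/X$ is used, guaranteeing the relevant subschemes are clopen in each $N\mc{P}_m$ as needed for the filtration functors to be exact. Everything else (exactness of $R{r_p}_*$ for a closed immersion, the termwise identity $j_!j^* = \Gr$, and commuting $R{r_p}_*$ past these exact functors) is formal given the machinery already set up in \cref{ss.filtered-derived} and \cref{ss.ssites-sheaves-descent}.
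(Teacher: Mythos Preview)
There is a genuine gap: your claim that $r_p$ is a closed immersion is false. On $m$-simplices, $(r_p)_m$ sends a pair consisting of a chain $a_0 \le \dotsm \le a_m$ together with a ceiling $t \in \mc{P}_p$ satisfying $a_m \le t$ to the chain alone, forgetting $t$; whenever $\rk(a_m) < p$ there may be many such ceilings, so $(r_p)_m$ is not even a monomorphism. The appeal to \cref{lemma.split-po-split-simplicial} does not help --- that lemma concerns degeneracy maps within $N\mc{P}$, not $r_p$. This error is tied to a misidentification of the relevant cocone stratum: the piece of $N(-\infty,\mc{P}_p]$ that maps isomorphically onto the layer $F_pN\mc{P}\setminus F_{p-1}N\mc{P}$ is not the complement of the vertex $(\mc{P}_p)_\bullet$ but the complement of the \emph{base} $N(-\infty,\mc{P}_p)$, namely the locus of chains with $a_m = t$. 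On the base, $r_p$ lands (non-injectively) in $F_{p-1}N\mc{P}$; only on its clopen complement is $r_p$ an isomorphism onto the clopen layer.

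The paper's proof runs on exactly this observation but avoids any global claim about $r_p$. Since $r_p^*F^{p-1}K$ is supported on the clopen where $r_p$ is an isomorphism, one can compute $Rr_{p*}$ of it without $r_p$ being a closed immersion: represent $K$ by injectives, use splitness of the rank filtration to see that $(r_p^*F^{p-1}K)_m$ is levelwise injective (it is a direct summand supported on a clopen), so ordinary $r_{p*}$ computes the derived pushforward, and then identify the result with $\iota_{p*}\iota_p^*F^{p-1}K = \Gr^{p-1}K$ via \cref{lemma.push-pull-graded}. Your off-by-one indexing wobbles in the first paragraph are cosmetic by comparison.
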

\begin{proof}
	The point is that $(r_p)_m$ maps $F_0C(N(-\infty, \mc{P}_p) )$ into $F_{p-1} N(\mc{P})$ and on $m$-simplices restricts to an isomorphism
	\[ \Cocone(N(-\infty, \mc{P}_p)/\mc{P}_p)_m \backslash F_0 \Cocone(N(-\infty, \mc{P}_p)/\mc{P}_p)_m \xrightarrow{\sim} F_p N\mc{P}_m \backslash F_{p-1} N\mc{P}_m. \]
	Suppose we represent $K$ by a complex of injectives. Then, $K_m$ is a complex of injectives for each $m$, and same for $F^{p-1}K_m$.  Then $(r_p^*F^{p-1}K)_m = (F^1 r_p^*K)_m$ is a complex of injectives. Thus pushforward of this complex computes $R{r_p}_*$, but that is just identified with the pushpull from $F_p N\mc{P}$ and by \cref{lemma.push-pull-graded} we conclude this is $\Gr^{p-1} K$.
\end{proof}

Combined with \cref{example.cone-filtration}, we obtain functorially in $K$,
\begin{align}\label{eq.body-graded-computation} \Gr_{\mc{P}, \rk}^p K & = R\epsilon_* \Gr^p \epsilon^*K \\
 \nonumber & = R{(\epsilon \circ r_p)}_* \Gr^1 (\epsilon \circ r_p)^*K \\
 \nonumber & = R{(\pi \circ \epsilon_{\Cocone(N(-\infty, \mc{P}_{p+1}))})}_* \Gr^1(\pi \circ \epsilon_{\Cocone(N(-\infty, \mc{P}_{p+1}))})^* K \\
\nonumber &= R\pi_* \tilde{C}(N(-\infty, \mc{P}_{p+1})/\mc{P}_{p+1}, \pi^*K)[-1].
\end{align}

This completes the proof of \cref{body.filtration-functor}.

\subsection{Spectral sequences}\label{ss.spectral-seq-proof}

\subsubsection{Proof of \cref{body.spectral-sequences}}

Write $\tilde{K}=\Fil_{\mc{P},\rk}(K|_Z)$. Then, since we assume $\mc{P}/Z$ is of cohomological descent for $K|_Z$, we have $\Forget(\tilde{K})=K|_Z$. We then apply \cite[\href{https://stacks.math.columbia.edu/tag/015W}{Tag 015W}]{stacks-project} to obtain the spectral sequence (1), whose terms are described by \cref{eq.body-graded-computation}; it remains only to observe that $\pi$ is proper by assumption so that when $\star=!$, $R(f \circ \pi)_!=Rf_! \circ R\pi_*$. For part (2) we extend to a filtration of $j_!j^*K$ by
\[ i_*i^*K[-1]\rightarrow j_!j^*K\rightarrow K \textrm{ for } j: U \hookrightarrow X \textrm{ and } i:Z\hookrightarrow X\]
before applying the spectral sequence (and make a similar observation when $\star=!$).

\subsubsection{The first spectral sequence of a stratified space}
\label{sss.stratified-space-body}
Recall the setup from \cref{sss.stratified-space}: $X = \bigcup_{\alpha} S_\alpha$ for disjoint locally closed sets $S_\alpha$ and $Z_{\alpha} := \overline{S_\alpha} = \bigcup_{\beta \le \alpha} S_\beta$.
We consider the pospace\slash poscheme given by $\mc{P} = \bigsqcup_{\alpha} Z_\alpha \rightarrow X$ with $z_\alpha \geq z_\beta$ if $z_\alpha = z_\beta$ as elements of $X$ and $\alpha \ge \beta$ and with a ranking given by a ranking on the indexing set.

We have, $\mc{P}_{p+1} = \bigsqcup_{\rk(\alpha)=p+1} Z_\alpha$, and we will also write $j:U_{p+1} \rightarrow \mc{P}_{p+1}$ and $i: \partial \mc{P}_{p+1} \rightarrow \mc{P}_{p+1}$ where
\[ U_{p+1}:=\bigsqcup_{\rk(\alpha)=p+1} S_\alpha, \qquad \partial \mc{P}_{p+1}:=\mc{P}_{p+1}\backslash U_{p+1}=\bigsqcup_{\rk(\alpha)=p+1}Z_\alpha \backslash S_\alpha. \]
We then claim that $\tilde{C}(N(-\infty, \mc{P}_{p+1})/\mc{P}_{p+1}, K)=j_!j^*K[1]$.
Indeed,  $(-\infty, \mc{P}_{p+1})/\mc{P}_{p+1}$ is proper with image $\partial \mc{P}_{p+1}$, and the fiber over any geometric point in $\partial \mc{P}_{p+1}$ has a minimum. Thus, writing $\epsilon: N(-\infty, \mc{P}_{p+1}) \rightarrow \mc{P}_{p+1}$, $R\epsilon_* \epsilon^* K= i_*i^*K$ and thus the cone is $j_!j^*K[1]$, as desired.
In particular, if $X$ is a topological space or a variety over an algebraically closed field, then taking $f$ to be the structure map to a point and $\star=!$,  we obtain the standard spectral sequence for a stratified space
\[ E_1^{p,q}=\bigoplus_{\rk(\alpha)=p+1}H^{p+q}_c(S_\alpha, K) \Rightarrow H^{p+q}_c (X, K). \]

\subsubsection{The Banerjee spectral sequence of a symmetric semisimplicial filtration}\label{sss.banerjee}
Here we recover the spectral sequence of O.~Banerjee \cite[Theorem~1]{banerjee}. In that setting, we are given ``face maps'' $f_i : M^p \times X_{n + e} \to M^{p-1} \times X_n$ for $0 \le i < p$, fixed $e$ and $n, p > 0$, satisfying certain conditions that define a \emph{symmetric semisimplicial filtration of $\{X_n\}$ by powers of $M$}.
Take
\[\mc{P} = \bigsqcup_{p > 0} \Sym^p M \times X_{n - ep}\]
over $Z_n := f_0(M \times X_{n-p}) \subset X_n$, where $\Sym^p M = M^p/\mfS_p$ and $\sle_{\mc{P}}$ is given by all possible compositions of the face maps (this is well-defined on $\Sym^\bullet M$ by the assumptions on the face maps \cite[Definition~2.10]{banerjee}).
Given $x \in Z_n$, the ``equalizer'' and ``embedding'' assumptions imply that there is a maximal $p$ and a unique $(S, x) \in \Sym^p(M) \times X_{n - ep}$ that maps to $x$. To us, this means that the pospace $\mc{P}$ admits a fiberwise maximum.
Then \cref{main.spectral-sequences} gives the desired
\[E_1^{p,q} = H_c^q(M^p \times X_{n - ep}; \bbQ) \otimes_{\mfS_p} \sgn \Rightarrow H_c^{p + q}(X_n - Z_n; \bbQ).\]
The sign representation $\sgn$ appears here for the same reason as in \cref{corollary.sgn-cohomology-computation}.

\section{Motivic inclusion-exclusion}\label{s.mot-inc-exc}
In this section we prove \cref{maintheorem.mot-inc-exc}. To set the stage, in \cref{ss.decat} we first discuss an abelian decategorification of the results of the previous section for constructible sheaves and explain how this relates to the finer combinatorial decategorification in the Grothendieck ring of varieties given by \cref{maintheorem.mot-inc-exc}. In \cref{ss.motivic-contractibility} we prove some combinatorial contractibility criteria and deduce \cref{maintheorem.mot-inc-exc}.

\subsection{Decategorifications}\label{ss.decat}
Suppose $X$ is a noetherian scheme; fix $L/\mbb{Q}_\ell$ an algebraic extension for $\ell$ invertible on $X$. Then we can consider the abelian category $\Cons(X,L)$ of constructible $L$-sheaves on (the pro-étale site of) $X$ and its Grothendieck ring $K_0(\Cons(X,L))$. We consider the constructible derived category $D_{\Cons}(X,L)$, the subcategory of the bounded derived category $D^b(X,L)$ consisting of complexes with constructible cohomology sheaves. There is an Euler characteristic  $\mr{Ob}(D_{\Cons}(X,L)) \rightarrow K_0(\Cons(X,L))$,
\[ K \mapsto [K]=\sum_k (-1)^k [H^i(K)]. \]

The constructible derived category is preserved by proper pushforward, thus we can decategorify the work of the previous section:
For $\pi: \mc{P} \rightarrow X$ a proper finitely ranked poscheme of cohomological descent for $K$, $\Fil_{\mc{P},\rk}(K)$ induces
\begin{equation}\label{eq.decat-rank} [K] =  \sum_p [\Gr^p_{\mc{P},\rk}(K)] = - \sum_p [R\pi_* \tilde{C}(-\infty, \mc{P}_{p+1}, K)] \end{equation}
where the shift in \cref{eq.body-graded-computation} manifests as a minus sign. In fact, there is an inclusion-exclusion formula independent of any rank function:  under the same hypotheses, we have
\begin{equation}\label{eq.decat-simplices} [K] = \sum_{k\geq 0} (-1)^k [R{\epsilon^\circ_k}_*{\epsilon^\circ_k}^*K)], \end{equation}
where $\epsilon_k^\circ$ denotes the restriction of $\epsilon_k$ to
the non-degenerate $k$-simplices $N\mc{P}_k^\circ$ (i.e.\ the scheme of strict $(k+1)$ chains). Indeed, since $\mc{P}$ admits a rank function, \cref{lemma.split-po-split-simplicial} shows that $N\mc{P}$ is split, i.e.\ that each degeneracy map is an isomorphism onto connected components. Then, we apply the spectral sequence \cref{eq.simplicial-ss} and use that each column on the $E_1$ page is quasi-isomorphic to the normalized complex given by the kernel of the degeneracy maps, which by the above consideration is exactly the restriction to the space of non-degenerate simplices.
 The choice of a rank function gives a way to break up each simplex space into connected components, thus breaks up each term of \cref{eq.decat-simplices}; rearranging and reassembling, one can recover \cref{eq.decat-rank} --- indeed, we saw exactly this rearrangement phenomenon already in our toy model for classical inclusion-exclusion in the introduction (\cref{ss.intro-classical-inclusion-exclusion}).

 It turns out we can also give a combinatorial decategorification of the inclusion-exclusion formula that lifts \cref{eq.decat-rank} and \cref{eq.decat-simplices}: we work in the modified Grothendieck ring of varieties $K_0(\Var/X)$ --- recall that in characteristic zero this is the standard Grothendieck ring defined by cut and paste relations, but in non-zero characteristic one must also mod out by radicial surjective maps more general than constructible decompositions (e.g.\ purely inseparable field extensions).  We refer to \cite{bilu-howe:mot-eul-mot-stat} for a detailed discussion and other perspectives. Here let us just highlight that there is a natural compactly supported cohomology homomorphism
 \[ K_0(\Var/X) \rightarrow K_0(D_{\Cons}(X,L)),  [f:Y \rightarrow X] \mapsto [Rf_! E], \]
 so that it makes sense to ask for a combinatorial lift of \cref{eq.decat-simplices} to $K_0(\Var/X)$. This lift is exactly what is provided by \cref{maintheorem.mot-inc-exc}, which says
 that if $\mc{P}/X$ is a bounded poscheme with weak geometric centers then
 \[ [X/X]=\chi(N\mc{P}/X)=\sum_{k\geq 0} (-1)^k [N\mc{P}_k^\circ/X] \textrm{ in } K_0(\Var/X).\]
 \begin{remark} We discuss some interesting points of comparison between the abelian and combinatiorial decategorifications:
 \begin{enumerate}
 \item In $K_0(\Var/X)$ we only give a statement for $[X/X]$. However, the analogous statement for $[Y/X]$ or any other class is obtained simply by multiplying by $[Y/X]$. In fact, the statement in $K_0(\Cons(X,L))$ also reduces to just the statement for the constant sheaf $L$ by the projection formula \cite[Lemma 6.7.4]{bhatt-scholze:pro-etale}.
 \item We do not know if \cref{maintheorem.mot-inc-exc} holds if we only require that each fiber is weakly contractible --- we are only able to prove the identity in the Grothendieck ring by using an Euler characteristic analog of \cref{lemma:retraction-homotopy} which directly cancels out isomorphic components in different simplex spaces.
 \item The statement of \cref{maintheorem.mot-inc-exc} does not require any properness hypothesis. Combined with the projection formula (or by directly running the same proof for sheaves), we obtain under the same hypotheses as \cref{maintheorem.mot-inc-exc}
\[ [K] = \sum_k (-1)^k [R{\epsilon_k^\circ}_!{\epsilon_k^\circ}^*K] \textrm{ in } K_0(D_{\Cons}(X,L)). \]
\end{enumerate}
\end{remark}

\subsection{Euler characteristic contractibility criterion and proof of \scref{maintheorem.mot-inc-exc}{Theorem}}\label{ss.motivic-contractibility}

To prove \cref{maintheorem.mot-inc-exc}, we will spread out our geometric weak centers to reduce to the following elementary Euler characteristic version of \cref{lemma:max-min-center-homotopy}.

\begin{lemma}\label{lemma:isolated-center-motivic-inclusion-exclusion}
Suppose $X$ is a Noetherian scheme and $\mc{P}/X$ is a finite type bounded poscheme with a weak center/maximum/minimum. Then
\[ \chi(N\mc{P}/X) = [X/X] \textrm{ in } K_0(\Var/X). \]
\end{lemma}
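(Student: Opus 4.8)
The plan is to decategorify the contracting homotopy of \cref{lemma:retraction-homotopy} into a term-by-term cancellation of classes in $K_0(\Var/X)$. Recall that $\chi(N\mc{P}/X) = \sum_{k \geq 0} (-1)^k [N\mc{P}^\circ_k / X]$, where $N\mc{P}^\circ_k$ is the scheme of strict chains $p_0 < p_1 < \dotsm < p_k$ in $\mc{P}$ (this sum is finite since $\mc{P}$ is bounded). First I would treat the case of a weak maximum $m$ (minimum is symmetric, and, as in the proof of \cref{lemma:max-min-center-homotopy}, the center case reduces to two applications of the maximum/minimum case by splitting $\mc{P}$ along $c$). By \cref{lemma.weak-universal-replacement} we may replace $\mc{P}/X$ by a universal homeomorph with an honest maximum $m$; since $K_0(\Var/X)$ (the \emph{modified} Grothendieck ring) identifies classes along radicial surjections, and a universal homeomorphism induces such maps on every simplex scheme $N\mc{P}^\circ_k$, this replacement does not change $\chi(N\mc{P}/X)$. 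So I assume $m: X \to \mc{P}$ is a genuine maximum.

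Now the combinatorial heart: partition the strict chains in $\mc{P}$ according to whether the top element equals $m$. Concretely, since $\mc{P}$ is split (it has a maximum, and more to the point the construction below only needs that the top of a strict chain is well-defined), $N\mc{P}^\circ_k$ decomposes as a disjoint union $N\mc{P}^\circ_k = A_k \sqcup B_k$ where $A_k$ is the open-and-closed subscheme of strict chains whose maximal element lies in $m(X)$ and $B_k$ is the locus of strict chains with $p_k \neq m$, i.e.\ $p_k < m$. The key geometric observation — the decategorified shadow of the homotopy $r^{1-\alpha(i)}$ in \cref{lemma:retraction-homotopy} — is that appending $m$ to the top gives an isomorphism of $X$-schemes $B_k \xrightarrow{\sim} A_{k+1}$, $(p_0 < \dotsm < p_k) \mapsto (p_0 < \dotsm < p_k < m)$, with inverse deleting the last vertex; and $A_0 = m(X) \cong X$. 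Therefore in $K_0(\Var/X)$ we have $[A_{k+1}/X] = [B_k/X]$ for all $k \geq 0$, hence
\[ \chi(N\mc{P}/X) = \sum_{k \geq 0} (-1)^k \big([A_k/X] + [B_k/X]\big) = [A_0/X] + \sum_{k \geq 0} \big((-1)^{k+1}[A_{k+1}/X] + (-1)^k[B_k/X]\big) = [X/X], \]
the telescoping sum collapsing because each $(-1)^{k+1}[A_{k+1}/X]$ cancels $(-1)^k[B_k/X]$ (using finiteness of the sum to justify the rearrangement). For the center case one repeats this: first restrict to $(-\infty,c]$, which has $c$ as a maximum, contracting the part of each chain below or at $c$, then observe the leftover chains live entirely in $(c,\infty)$ with $c$ appended — but the cleanest route is simply to mimic the proof of \cref{lemma:max-min-center-homotopy}, applying the maximum argument to the clopen decomposition $(-\infty, c] = (-\infty,c) \sqcup c(X)$ and then using that $c$ is a maximum of $(-\infty, c]$.

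The main obstacle is bookkeeping the clopen decomposition $N\mc{P}^\circ_k = A_k \sqcup B_k$ and the shift isomorphism $B_k \cong A_{k+1}$ \emph{scheme-theoretically and functorially in $X$}, rather than just on geometric points: one must check that "the top vertex of a strict chain" cuts out a clopen subscheme (this uses that $\mc{P}/X$ is split, so $\slt_\mc{P}$ is clopen in $\sle_\mc{P}$, together with $m$ being isolated or a maximum so that "$p_k = m$" versus "$p_k < m$" is a clopen condition on $N\mc{P}^\circ_k$), and that appending $m$ is a morphism of $X$-schemes with a two-sided inverse. The universal-homeomorphism replacement handles the passage from weak to honest maximum/center cleanly, so no genuine difficulty arises there; the only subtlety is confirming that the modified Grothendieck ring $K_0(\Var/X)$ is the right target for that step (it is, by the defining radicial-surjection relations). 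Everything else is the finite telescoping sum above.
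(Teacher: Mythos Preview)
Your proof is correct and uses the same telescoping idea as the paper, with the reduction organized in the opposite direction: the paper reduces the maximum/minimum case to the center case (by passing to a constructible decomposition of $X$ on which $m$ is isolated, hence a center) and then runs the telescoping for a center $c$ with $A_k$ the strict chains \emph{passing through} $c$ and $B_k$ those avoiding $c$, while you run the telescoping directly for a maximum (append $m$ at the top) and then reduce center to maximum. Two small points: having a maximum does not make $\mc{P}$ split, so your decomposition $A_k \sqcup B_k$ is only closed/open rather than clopen --- but that is all you need in $K_0(\Var/X)$; and your reduction of the center case to the maximum case is sketched rather than carried out, whereas the paper's direct telescoping for a center (inserting $c$ at the unique possible spot in any chain avoiding it) is no harder than your maximum argument and handles all three cases at once.
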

\begin{proof}
Since we are working in the Grothendieck ring, we can apply  \cref{lemma.weak-universal-replacement} to assume there is a genuine center/maximum/minimum. Then, in the case of a maximum or minimum, by passing to a constructible decomposition we can assume it is isolated so that it is a center.

Thus we may assume we have a center $c$. We write $A_k \subset N\mc{P}^\circ_k$ for the clopen subscheme of strict chains passing through $c$, and $B_k$ for its complement, the strict chains that do not pass through $c$. Then for $k \geq 1$, $B_k \cong A_{k+1}$ by insertion of $c$ in the unique possible spot. Thus the sum
\[ \sum_{k \geq 0} (-1)^k[N\mc{P}^\circ_k] = \sum_k (-1)^k ([A_k] + [B_k]) \]
telescopes and is equal to $A_0=[X]$.
\end{proof}

\begin{proof}[Proof of \cref{maintheorem.mot-inc-exc}]
By Noetherian induction it suffices to show that if $X$ is irreducible and reduced then the identity holds after restriction to a non-empty open $U \subset X$. We write $\eta$ for the generic point of $X$ and fix an algebraic closure $\overline{K(\eta)}$ of $K(\eta)$ and a weak center $c \in  \mc{P}(\overline{K(\eta)})$. We first observe that $c$ is defined over a finite purely inseparable extension $L/K(\eta)$: first, since $\mc{P}/X$ is of finite type, $c$ can be defined over some finite normal extension $M/K(\eta)$. Writing $G=\mr{Aut}(M/K(\eta))$, we have $L=M^G/K(\eta)$ is purely inseparable, thus it suffices to show $c$ is fixed by $G$. So, suppose $\sigma \in G$. Then, by definition of a weak center, since $c$ is a field-valued point, either $c \leq \sigma(c)$ or $c \geq \sigma(c)$; by replacing $\sigma$ with $\sigma^{-1}$, we can assume $c \leq \sigma(c)$. Then, since $\sigma$ preserves the order relation (because it is defined over $K(\eta)$), we find $\sigma(c) \leq \sigma^2(c)$, $\sigma^2(c) \leq \sigma^3(c)$, etc., so that for $k>1$ a multiple of the order of $G$,
\[ c \leq \sigma(c) \leq \dotsm \leq \sigma^k(c)=c. \]
Thus $c \leq \sigma(c) \leq c$ so $\sigma(c)=c$, as desired.

Now, we can spread out $c:\Spec L \rightarrow \Spec K(\eta)$ to a finite radicial surjective map $\tilde{U}\rightarrow U$ over a non-empty open $U \subset X$. By shrinking $U$ further, we can assume this spreading out is a weak center of $\mc{P} \times_X \tilde{U}$. \cref{lemma:isolated-center-motivic-inclusion-exclusion} then gives
\[ \sum_{k\geq 0} (-1)^k[N(\mc{P} \times_X \tilde{U} )^\circ_k/ \tilde{U}] = [\tilde{U}/\tilde{U}] \textrm{ in } K_0(\Var/\tilde{U}). \]
By composing with the map $\tilde{U} \rightarrow U$, we find
\begin{align*}\sum_{k \geq 0}(-1)^k[N\mc{P}^{\circ}_k|_U][\tilde{U}/U]  & = \sum_{k\geq 0} (-1)^k[N\mc{P}^\circ_k \times_X \tilde{U} /U] \\
&= \sum_{k\geq 0} (-1)^k[N(\mc{P} \times_X \tilde{U} )^\circ_k/U] = [\tilde{U}/U] \textrm{ in } K_0(\Var/U).
\end{align*}
Recalling that in the modified Grothendieck ring $[\tilde{U}/U]=[U/U]=1$, we conclude.
\end{proof}

Because it is will be useful in later sections, we also give an analog of \cref{lemma:retraction-homotopy} using a similar argument. Both this and the previous criterion should be special cases of a more general poscheme Euler characteristic version of Quillen's fiber lemma.

\begin{theorem}\label{theorem.poscheme-contraction-euler} Let $X$ be a Noetherian scheme, let $\mc{P}/X$ be a poscheme that is bounded and  of finite type. Let $\mc{P}'\subset \mc{P}$ be a sub-poscheme and suppose that, for every geometric point $t: \Spec \kappa \rightarrow X$, one of the following holds:
\begin{enumerate}
\item $(-\infty, \mc{P}_t] \cap \mc{P}_t' \times\mc{P}_t /\mc{P}_t $ has a weak maximum, or
\item $[\mc{P}_t, \infty) \cap \mc{P}_t \times \mc{P}_t' / \mc{P}_t$ has a weak minimum.
\end{enumerate}
Then \[ \chi(N\mc{P} /X )=\chi(N\mc{P}' /X ) \textrm{ in } K_0(\Var/X). \]
\end{theorem}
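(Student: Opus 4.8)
The plan is to follow the strategy of the proof of \cref{maintheorem.mot-inc-exc}: spread a geometric weak maximum (or minimum) out over a radicial surjective cover of a dense open, reduce there to a purely combinatorial cancellation, and finish by Noetherian induction. As in the proof of \cref{maintheorem.mot-inc-exc}, Noetherian induction reduces us to the case that $X$ is irreducible and reduced, and it suffices to prove the identity after restricting to a non-empty open subscheme. Let $\eta$ be the generic point. A geometric point over $\eta$ satisfies hypothesis (1) or (2); assume (1) (case (2) is symmetric, exchanging maxima with minima and $\le$ with $\ge$). By \cref{remark.falling-rising-retractions}, a weak maximum of $(-\infty,\mc{P}_\eta]\cap\mc{P}_\eta'\times\mc{P}_\eta$ over $\mc{P}_\eta$ is the same thing as a weak \emph{falling retraction} $r\colon\mc{P}_\eta\to\mc{P}_\eta'$, i.e.\ a map of poschemes restricting to the identity on $\mc{P}_\eta'$ and with $r\le\Id$.

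The first step is descent and spreading out. Since $\mc{P}/X$ is of finite type, the weak maximum $s$ over $\overline{K(\eta)}$ is defined over a finite normal extension $M/K(\eta)$; because $\mc{Q}:=(-\infty,\mc{P}]\cap\mc{P}'\times\mc{P}$ is defined over $X$ and maxima are unique, for $\sigma\in G:=\mr{Aut}(M/K(\eta))$ the Galois conjugate $\sigma(s)$ is again a weak maximum of $\mc{Q}\times_X\Spec M$, hence agrees with $s$ topologically. As in the proof of \cref{maintheorem.mot-inc-exc}, and after the universal-homeomorphism replacement of \cref{lemma.weak-universal-replacement}, this forces $s$ to be defined over the purely inseparable extension $L=M^G$. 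Spreading $\Spec L\to\Spec K(\eta)$ out to a finite radicial surjective $\tilde U\to U$ over a non-empty open $U\subseteq X$ and shrinking $U$, we obtain a weak falling retraction $r\colon\mc{P}\times_X\tilde U\to\mc{P}'\times_X\tilde U$ over $\tilde U$.

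The core step is an Euler-characteristic analogue of \cref{lemma:retraction-homotopy}: if $Y$ is Noetherian and $\mc{R}/Y$ is a bounded poscheme of finite type carrying a (weak) falling retraction onto a subposcheme $\mc{R}'\subseteq\mc{R}$, then $\chi(N\mc{R}/Y)=\chi(N\mc{R}'/Y)$ in $K_0(\Var/Y)$. I would prove this by a sign-reversing matching on non-degenerate simplices realizing the contracting homotopy of \cref{lemma:retraction-homotopy} in $K_0(\Var/Y)$, just as the telescoping in the proof of \cref{lemma:isolated-center-motivic-inclusion-exclusion} realizes the contraction onto a center. Write $S_n=N\mc{R}^\circ_{n-1}$ for the scheme of strict $n$-chains and $S_n'\subseteq S_n$ for those lying in $\mc{R}'$; for a strict chain $\gamma=(q_1<\dots<q_n)$ not contained in $\mc{R}'$ let $\ell$ be the least index with $q_\ell\notin\mc{R}'$. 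Since $r$ is order-preserving and falling, $q_i=r(q_i)\le r(q_\ell)$ for all $i<\ell$ while $r(q_\ell)<q_\ell\le q_j$ for all $j\ge\ell$, so $r(q_\ell)\in\mc{R}'$ is $\ge$ every vertex of $\gamma$ before $q_\ell$ and $<$ every vertex from $q_\ell$ on. Call $\gamma$ a \emph{down} chain if $\ell=1$ or $q_{\ell-1}<r(q_\ell)$, and an \emph{up} chain if $\ell\ge 2$ and $q_{\ell-1}=r(q_\ell)$; these cases exhaust $S_n\setminus S_n'$. Inserting $r(q_\ell)$ just below $q_\ell$ sends down chains of length $n$ isomorphically onto up chains of length $n+1$, with inverse given by deleting the vertex immediately below the least vertex outside $\mc{R}'$; there are no up chains of length $1$. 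Since the down and up loci are locally closed in $S_n$ (using that $\Delta_{\mc{R}}$ is closed and $\mc{R}'\subseteq\mc{R}$ is locally closed) and the matching is an isomorphism of $Y$-schemes, the down and up contributions to $\sum_n(-1)^{n-1}[S_n]$ cancel, leaving $\chi(N\mc{R}/Y)=\sum_n(-1)^{n-1}[S_n']=\chi(N\mc{R}'/Y)$. The weakness of the retraction (equalities holding only topologically) is absorbed into universal-homeomorphism replacements as in the proof of \cref{maintheorem.mot-inc-exc}, which changes nothing in the modified Grothendieck ring.

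Applying this to $\mc{R}=\mc{P}\times_X\tilde U$ over $Y=\tilde U$ gives $\chi(N\mc{P}/X)|_{\tilde U}=\chi(N\mc{P}'/X)|_{\tilde U}$ in $K_0(\Var/\tilde U)$; pushing forward along $\tilde U\to U$ and using the projection formula yields $\chi(N\mc{P}/X)|_U\cdot[\tilde U/U]=\chi(N\mc{P}'/X)|_U\cdot[\tilde U/U]$, and $[\tilde U/U]=1$ in the modified $K_0(\Var/U)$ since $\tilde U\to U$ is radicial surjective; so the identity holds over $U$ and Noetherian induction concludes. I expect the main obstacle to be the combinatorial bookkeeping in the third step — verifying that the down/up dichotomy exhausts all non-degenerate simplices outside $N\mc{R}'$ and that the insertion/deletion defines an isomorphism of finite-type $Y$-schemes compatible with cut-and-paste — together with the reduction of the weak case to the genuine one; as noted after the statement, one expects both this and \cref{lemma:isolated-center-motivic-inclusion-exclusion} to be special cases of a single poscheme Euler-characteristic form of Quillen's fiber lemma.
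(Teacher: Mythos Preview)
Your proposal is correct and follows essentially the same route as the paper: Noetherian induction, descent of the weak maximum to a purely inseparable extension and spreading out to a radicial $\tilde U\to U$, and then the same sign-reversing matching on non-degenerate chains. Your ``up/down'' dichotomy is exactly the paper's decomposition $N\mc{P}^\circ_k=N\mc{P}'^\circ_k\sqcup A_k\sqcup B_k$ (with $A_k$ your up chains and $B_k$ your down chains), and your insertion $q\mapsto(\ldots,r(q_\ell),q_\ell,\ldots)$ is precisely the paper's bijection $B_k\to A_{k+1}$; the paper phrases the conclusion as a bijection on geometric points rather than an isomorphism of schemes, which is all that is needed in the modified Grothendieck ring and sidesteps the bookkeeping you flag.
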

\begin{proof}
By Noetherian induction, it suffices to assume $X$ is irreducible and to show that there exists a non-empty open $U \subset X$ where the identity holds. We write $\eta$ for the generic point and $\overline{\eta}: \Spec \overline{K(\eta)} \rightarrow X$ for a geometric point above $\eta$. We assume (1) holds at $\overline{\eta}$, the case (2) being similar. We write
\[ m: \mc{P}_{\overline{\eta}} \rightarrow (-\infty, \mc{P}_{\overline{\eta}}] \cap (\mc{P}' \times_X \mc{P})_{\overline{\eta}}\]
for the weak maximum. First note that $m$ is defined over a finite subextension $M/K(\eta)$, which we may take to be normal. Then, if we let $G=\mr{Aut}(M/K(\eta))$, we must have that $m$ is fixed by the action of $G$ because it is a maximum and the order relation is defined over $K(\eta)$. It follows that $m$ is defined over $L=M^G$, a finite purely inseparable extension of $K(\eta)$: that is, writing $\eta_L: \Spec L \rightarrow X$, we obtain the section $m$ already as
\[ m: \mc{P}_{\eta_L} \rightarrow (-\infty, \mc{P}_{\eta_L}]\cap (\mc{P}'\times_X\mc{P})_{\eta_L}. \]
It is a weak maximum still because this can be checked on geometric points (for a section to be a weak maximum it is necessary and sufficient that it be a maximum on any set of geometric points). Now, we can spread out the purely inseparable map $\Spec L \rightarrow \Spec K(\eta)$ to a radicial surjective $\tilde{U} \rightarrow U$ where $U$ is open in $X$ such that $m$ spreads out to
\[ m: \mc{P}_{\tilde{U}} \rightarrow (-\infty, \mc{P}_{\tilde{U}}]\cap (\mc{P}'\times_X\mc{P})_{\tilde{U}}. \]
Now $(\infty, m]^c$ is open, so its image in $\tilde{U}$ is constructible. Since this image does not contain $\eta_L$, we deduce that its complement, the locus where $m$ is a weak maximum, contains a non-empty open set. So, replacing $\tilde{U}$ with this non-empty open set, we can assume furthermore that $m$ is a weak maximum.

Now, for any geometric point for any $p_0 < p_1 < \dotsm < p_k$ of ${N\mc{P}_{\tilde{U}}^\circ}_k$, either the chain stays entirely in $\mc{P}'_{\tilde{U}}$ or first leaves at an index $i$. Of those that leave, we can break them up into those such that for this first $i$, $p_{i-1}=m(p_i)$, and those where this is not satisfied; this gives a constructible decomposition
\[ N{\mc{P}_{\tilde{U}}}^\circ_k = N{\mc{P}'_{\tilde{U}}}^\circ_k \sqcup A_k \sqcup B_k. \]
Moreover, we claim that $[A_{k+1}/\tilde{U}]=[B_k/\tilde{U}]$. Indeed, we can decompose $B=\bigsqcup_{i=0}^k B_{k,i}$ where $B_{k,i}$ is the constructible set that first leaves at $i$. Then we have a map $B_{k,i} \rightarrow A_{k+1}$ such that
\[ p_0  < \dotsm < p_k \mapsto p_0 <  \dotsm < p_{i-1} < m(p_i) < p_i \dotsm < p_k \]
and the induced map $\bigsqcup_{i=0}^k B_{k,i} \rightarrow A_k$ is a bijection on geometric points, so gives the desired equality in the Grothendieck ring.
\end{proof}

\section{The configuration, effective zero-cycle, and Hilbert poschemes}\label{s.conf-zc-hilb}
\newcommand{\SG}{\mr{SG}}
\newcommand{\HG}{\mr{HG}}

Let $Z \rightarrow X$ be a map of schemes. We write $\Conf_X^k(Z)$ for the $k$th unordered configuration space of $Z$, relative to $X$,
\begin{equation}\label{eq.conf-quotient} \Conf_X^k(Z)= \left(\underbrace{Z \times_X Z \times_X \dotsm \times_X Z}_{k } \backslash \Delta\right)/\mfS_k \end{equation}
where $\Delta$ is the big diagonal where any two coordinates agree and $\mfS_k$ is the symmetric group on $k$ elements acting by permutation. We define the configuration poscheme of $Z$ over $X$
\[ \Conf^\bullet_X(Z) := \bigsqcup_{k=1}^\infty \Conf^k_X(Z) \textrm{ and its augmented variant }  \Conf^{+,\bullet}_X(Z):=\bigsqcup_{k=0}^\infty \Conf^k_X(Z).\]
The order relation is by inclusion  --- to make this precise and verify that this indeed defines a poscheme one may, for example, identify $\Conf^k_X(Z)$ with the reduced locus in the relative Hilbert scheme of length $k$ subschemes. The formation of the configuration poscheme commutes with arbitrary change of base.

When $Z/X$ is finite étale surjective of degree $d$, then $\Conf^\bullet_X(Z)$ has a maximum $X \cong \Conf^d_X(Z)$ (whose fiber over any geometric point $\overline{x}$ is the configuration of all $d$-points in $Z_{\overline{x}}$). As a consequence, if $Z \rightarrow X$ is quasi-finite surjective and $X$ is Noetherian, then the geometric fibers of $\Conf^\bullet_X(Z)$ have weak maxima, and thus \cref{maintheorem.mot-inc-exc} applies. This can be applied fruitfully to an arbitrary finite type $Z/X$ by considering the truncations $\Conf^{\leq k}_X(Z)$ which satisfy the hypotheses of \cref{maintheorem.mot-inc-exc} after restriction to an open locus where $Z \rightarrow X$ is quasi-finite of degree $\leq k$. This yields an approximate motivic inclusion-exclusion formula, \cref{theorem.approx-ie}-(1), which captures one of the main combinatorial methods used in the motivic stabilization arguments of \cite{vakil-wood:discriminants, bilu-howe:mot-eul-mot-stat}.

When $Z/X$ is projective, we will give a matching cohomological approximate inclusion-exclusion formula in \cref{theorem.approx-ie}-(2). To obtain it, we need to compactify $\Conf^\bullet_X(Z)$. There are (at least) two obvious candidates:
\begin{enumerate}
\item The Chow poscheme of effective zero-cycles.
\item The Hilbert poscheme of finite length subschemes (or its good component).
\end{enumerate}
Here by \emph{the} Chow poscheme of effective zero cycles, we actually mean the divided powers scheme $\Gamma^\bullet_X(Z)$, ordered by inclusion of zero cycles. This scheme was introduced by Rydh \cite{rydh:thesis}, and provides a canonical scheme structure on the Chow variety with respect to any sufficiently ample projective embedding.

The divided power $\Gamma^k_X(Z)$ is closely related to the symmetric power
\begin{equation}\label{eq.symmetric-power-definition} \Sym^k_X(Z):=\left(\underbrace{Z \times_X Z \times_X \dotsm \times_X Z}_{k}\right)/ \mfS_k. \end{equation}
Indeed, there is a natural universal homeomorphism
\[ \SG: \Sym^k_X(Z): \rightarrow \Gamma^k_X(Z)\]
that is an isomorphism when $Z/X$ is flat (in particular, when $X=\Spec \kappa$ for $\kappa$ any field), or when $X$ is of characteristic zero. It is the divided powers schemes, however, that provide a natural interpolation of symmetric powers from fields to arbitrary bases --- in particular, the formation of $\Gamma^\bullet_X(Z)$ is stable under arbitrary base change (symmetric powers are not!). This is more than just an aesthetic choice --- we are actually not certain whether the monoid structure on the symmetric powers induces a poscheme structure in full generality (i.e.\ whether it is cancelative in a scheme-theoretic sense), whereas we can prove this for $\Gamma^\bullet_X(Z)$.

For the purposes of proving an approximate cohomological inclusion-exclusion formula, it is possible to work with either the poscheme of effective zero cycles or the Hilbert poscheme. In either case, the cohomology of the graded pieces for the rank filtration will be identified with the compactly supported sign cohomology of configuration spaces, so that the specific choice of compactification is irrelevant --- more precisely, there is a natural map $\HG: \HP^\bullet_X(Z) \rightarrow \Gamma^{\bullet}_X(Z)$, and  it induces an isomorphism of the rank spectral sequences for cohomology.

In the majority of this section, we will thus focus on the approach via the poscheme of effective zero-cycles because it best highlights the role of symmetric powers and the relation with the Kapranov zeta function. However, in \cref{ss.hilbert}, we will briefly summarize the argument using punctual Hilbert schemes and also study a larger Vassiliev-style Hilbert poscheme that gives an exact cohomological inclusion-exclusion formula (at the price of introducing difficult-to-compute terms).

We now outline the contents of this section: in \cref{ss.conf-sym-posets} we study the configuration and symmetric posets of a finite set $Z$. These are simple and classical objects: the configuration poset of $Z$ is the lattice of subsets of $Z$, and the symmetric poset is the lattice of multisets of $Z$. In the latter case, it is often useful to interpret the lattice of multisets as the free commutative monoid on $Z$, with the poset ordering induced by the monoid multiplication. We prove the (surely well-known) result that the nerve of the symmetric poset deformation retracts to the nerve of the configuration poset, and observe that, for $Z=[n]=\{0, \ldots, n\}$, the nerve of the configuration poset is the barycentric subdivision of $\Delta^n$. Our computations in the scheme-theoretic case  are accomplished by reduction to these elementary results.

In \cref{ss.poscheme-zero-cycles}, we define the poscheme of effective zero cycles using the divided powers scheme of \cite{rydh:thesis}. The reader interested only in the characteristic zero (or topological) case may replace these with symmetric powers; the key point in any case is to show that the natural monoid structure induces a poscheme structure (see \cref{prop.gamma-monoid-cancellative}, \cref{remark.sym-monoid-cancellative}, and the paragraph following them).

In \cref{ss.rank-graded-zero-cycles} we show the cohomology of the graded pieces for the rank filtration on the poscheme of effective zero cycles are naturally identified (up to a shift) with the extension by zero of the sign local system on configuration spaces.

In \cref{ss.approx-ie} we prove approximate motivic and cohomological inclusion-exclusion, \cref{theorem.approx-ie}.  In both the motivic and cohomological setting, the result can be thought of as describing how closely the $k$-truncated poscheme of relative effective zero-cycles approximates the image of the morphism.

In \cref{ss.skeletal} we study the skeletal spectral sequence for the poscheme of effective zero cycles. In the case of rational coefficients, there is a natural quasi-isomorphism from the $E_1$-page to a complex consisting of the sign part of the cohomology of powers of the cartesian products --- we first learned of this latter complex from O.~Banerjee, who has studied it from a different perspective and has announced a spectral sequence with this complex on its $E_1$ page (that is surely closely related to the one studied here!). The terms of this complex can be identified with the terms of the $E_1$-page for the rank spectral sequence, but the advantage of the skeletal sequence is that the differential is completely explicit. There seems to be an intimate relation between the skeletal filtration and the rank filtration: in particular the skeletal spectral sequence is compatible with the rank filtration, and we can use it to also study also the differential on the $E_1$ page of the rank spectral sequence. In the rational case we find that it has the same kernel as the differential on the Banerjee complex, and it seems likely that the $E_1$ page for the rank spectral sequence is in fact quasi-isomorphic to the Banerjee complex, though we do not know how to prove this, or whether any deeper comparisons hold --- see \cref{remark.rank-from-skeletal}.

\subsection{Configuration and symmetric posets}\label{ss.conf-sym-posets}
\newcommand{\Support}{\mr{Support}}
For $Z$ a non-empty set, the configuration poset $\Conf^\bullet(Z)$ is the lattice of finite non-empty subsets of $Z$. The symmetric semigroup $\Sym^\bullet(Z)$ is the semigroup of finite non-empty multisets of $Z$ under disjoint union, i.e.\ the free commutative semigroup on $Z$. It is contained in the symmetric monoid $\Sym^{\bullet,+}(Z)$, the free commutative monoid on $Z$, where we allow also the empty set (which gives an identity element for the monoid operation). We may view $\Sym^{\bullet,+}(Z)$ (and thus also the subset $\Sym^\bullet(Z)$) as a poset, where $a \leq c$ if and only if there is a (necessarily unique) $b$ in $\Sym^{\bullet,+}(Z)$ such that $ab = c$. In other words, if we think of this poset as a category, then each morphisms is labeled by an element of $\Sym^{\bullet,+}(Z)$ (i.e.\ the underlying graph is the directed Cayley graph of the monoid with the identity vertex removed). In particular, we can think of elements of the nerve as labeled by multisets in two different ways. The poset interpetration gives that a $k$-simplex in $N(\Sym^{\bullet,+}(Z))$ is a chain of multisets $I_0 \leq \dotsm \leq I_k$, while the monoid interpretation gives that a $k$-simplex is an ordered list of finite multisets in $Z$, $(J_0, J_1, \ldots, J_k)$, with the bijection given by
$I_s = J_0 + \dotsm +J_s$, $J_s = I_s - I_{s-1}$ (here set $I_{-1} = \emptyset$), and where the subtraction exists by definition of the order relation and is unique because the monoid is cancellative. The simplicial subset $N\Sym^{\bullet} \subseteq N\Sym^{\bullet,+}$ consists of the simplices where $I_0=J_0 \neq \emptyset$, and $N\Conf^{\bullet,+} \subset N\Sym^{\bullet,+}$  consists of the simplices where each $I_s$ is a set (i.e.\ each element has multiplicity zero or one) or where the $J_s$ are pairwise disjoint.

There is a natural rank function $\Sym^{\bullet,+}(Z) \rightarrow \mbb{Z}_{\geq 0}$, given by cardinality, and the rank $k$ component can be described as
\[ \Sym^{k}(Z) = Z^k / \mfS_k. \]
In the monoid interpretation of the nerve, we can write
\[ N\Sym^{\bullet,+}(Z)_p = \bigsqcup_{ \underline{k}=(k_0, \ldots, k_p) \in \mbb{Z}_{>0}^p} \Sym^{\ul{k}}(Z) \]
where $\Sym^{\ul{k}}(Z) = \prod_{i=0}^{p} \Sym^{k_i}(Z).$ The nerve of $\Sym^{\bullet}(Z)$ consists of those simplices where $k_0 \neq 0$. In this description, the face map
\[ \delta_i: N\Sym^{\bullet,+}(Z)_p \rightarrow N\Sym^{\bullet,+}(Z)_{p-1} \]
is described as follows: for $0 \leq i < p$, it merges the $i$th and $(i+1)$th set, while for $i=p$ it forgets the $p$th set. The nerve of the configuration poset $\Conf^{\bullet}(Z)$ consists of the simplex spaces
\[ \Conf^{\underline{k}}(Z) = \left( Z^{\sum \underline{k}} \backslash \Delta \right) /  \mfS_{\underline{k}} \]
where $\mfS_{\underline{k}} = \prod_{i = 0}^p \mfS_{k_i}$ denotes the subgroup of $\mfS_{\sum \underline{k}}$ preserving subsequent blocks of size $k_i$.

\begin{example}\label{example.barycentric-simplex}
For $[n]=\{0, \ldots, n\}$, $N\Conf^\bullet([n])$ can be identified with the barycentric subdivision of the standard $n$-simplex $\Delta^n$. Indeed, the vertices correspond to non-empty subsets of $[n]$, i.e.\ to collections of vertices of $\Delta^n$, and a $k$-simplex corresponds to a chain of such under inclusion. If we consider the sub-poset $\Conf^{\leq n}([n])$ of configurations of rank at most $n$, obtained by removing the maximum from $C^\bullet([n])$, then the subcomplex $N\Conf^{\leq n}([n])$ is the barycentric subdivision of $\partial \Delta^n$.
\end{example}

An element of $\Sym^\bullet(Z)$ can be written as a formal sum $\sum_{z \in Z} n_z Z$ where $n_z \in \mbb{Z}_{\geq 0}$, $n_z = 0$ for all but finitely many $z \in Z$ and $n_z>0$ for at least one $z$; the rank is given by $\sum_{z \in Z} n_z$. There is a natural support map
\[ \Support: \Sym^\bullet(Z) \rightarrow \Conf^\bullet(Z), \sum n_x z \mapsto \{ z \in Z | n_z>0 \}. \]

We equip both $N\Sym^\bullet(Z)$ and $N\Conf^{\bullet}(Z)$ with the rank filtration. We then obtain immediately:
\begin{lemma}\label{lemma.sym-poset-def-retract} The support map is falling retraction of ranked posets (see \cref{remark.falling-rising-retractions}). Thus, for any sub-poset $\mc{P}$ of $\Sym^\bullet(Z)$ containing $\Conf^\bullet(Z)$,  $N\Conf^\bullet(Z)$ is a filtered deformation retract of $N\mc{P}$. In particular, if $Z$ is a finite set, then $N\mc{P}$ is contractible.
\end{lemma}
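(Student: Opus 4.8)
The plan is to verify directly that $\Support$ is a falling retraction in the sense of \cref{remark.falling-rising-retractions} and then quote the deformation-retract machinery of \cref{lemma:retraction-homotopy} together with the contractibility criterion \cref{lemma:max-min-center-homotopy}. First I would check that $\Support$ is order-preserving: if $a \le c$ in $\Sym^{\bullet,+}(Z)$, write $c = ab$ with $b \in \Sym^{\bullet,+}(Z)$; then any element with positive multiplicity in $a$ has positive multiplicity in $c$, so $\Support(a) \subseteq \Support(c)$, i.e.\ $\Support(a) \le \Support(c)$ in $\Conf^{\bullet,+}(Z)$. Restricting to any sub-poset $\mc{P}$ with $\Conf^\bullet(Z) \subseteq \mc{P} \subseteq \Sym^\bullet(Z)$ gives a map of posets $\mc{P} \to \Conf^\bullet(Z)$, and it is a retraction since a set is its own support. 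Next I would observe that it is \emph{falling}: for $I = \sum_z n_z z$ we have $\Support(I) = \sum_{n_z>0} z \le I$, because $1 \le n_z$ whenever $n_z > 0$; in particular $\rk \Support(I) \le \rk I$, so $\Support$ also respects the rank filtration.

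By \cref{remark.falling-rising-retractions}, a falling retraction $r : \mc{P} \to \Conf^\bullet(Z)$ verifies hypothesis (1) of \cref{lemma:retraction-homotopy} (namely, $r \times \Id$ is a maximum of $(-\infty,\mc{P}] \cap \Conf^\bullet(Z)\times\mc{P}$ over $\mc{P}$), so $N\Conf^\bullet(Z) \hookrightarrow N\mc{P}$ is a deformation retract over the point. To upgrade this to a \emph{filtered} deformation retract, I would inspect the explicit homotopy $N\mc{P} \times \Delta^1 \to N\mc{P}$ constructed in the proof of \cref{lemma:retraction-homotopy}: it sends a $k$-simplex $(p_0 \le \dotsm \le p_k)$ together with a monotone $\alpha : [k] \to [1]$ to $(r^{1-\alpha(0)}(p_0), \dotsc, r^{1-\alpha(k)}(p_k))$, and since $r(p_i) \le p_i$ each entry has rank $\le \rk(p_i)$. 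Hence the homotopy carries $F_i N\mc{P} = \rk^{-1}(-\infty, i]$ into itself, and so does $r$, so the inclusion, the retraction, and the homotopy are all filtered.

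Finally, for $Z$ a finite set the full subset $Z \in \Conf^\bullet(Z)$ is a maximum of $\Conf^\bullet(Z)$, so $N\Conf^\bullet(Z)$ is contractible by \cref{lemma:max-min-center-homotopy}; since $N\mc{P} \to N\Conf^\bullet(Z)$ is a homotopy equivalence by the first part and homotopy equivalences compose, $N\mc{P} \to *_\bullet$ is a homotopy equivalence, i.e.\ $N\mc{P}$ is contractible. I do not expect a serious obstacle here — the only point requiring a little care is the bookkeeping showing that the homotopy of \cref{lemma:retraction-homotopy} respects the rank filtration, and this reduces entirely to the inequality $\rk(r(p)) \le \rk(p)$ built into the notion of a falling retraction of ranked posets.
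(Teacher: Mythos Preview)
Your proposal is correct and follows exactly the approach the paper intends: the paper presents the lemma as an immediate consequence (``We then obtain immediately'') of the definition of the support map together with the falling-retraction machinery of \cref{lemma:retraction-homotopy}/\cref{remark.falling-rising-retractions} and the maximum criterion of \cref{lemma:max-min-center-homotopy}, and you have simply spelled out those steps. The only addition on your side is the explicit check that the homotopy from \cref{lemma:retraction-homotopy} respects the rank filtration, which is indeed the content of ``filtered'' in the statement and reduces, as you note, to $\rk(\Support(I)) \le \rk(I)$.
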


\subsection{The poscheme of effective zero-cycles}\label{ss.poscheme-zero-cycles}
In the following we assume that $Z/X$ satisfies the condition (AF)  that any finite set in a single fiber is contained in a quasi-affine open (see \cite[Paper III - Appendix A.1]{rydh:thesis}) --- this holds, in particular if $Z/X$ is quasi-projective. The symmetric powers $\Sym^k_X(Z)$ defined by the quotient \cref{eq.symmetric-power-definition} then exist as schemes, and we consider the symmetric power monoid
\[ \Sym^{\bullet,+}_X(Z) = \bigsqcup_{k=0}^\infty \Sym^k_X(Z) \]
where the monoid multiplication
\[ \Sym^{k_1}_X(Z) \times \Sym^{k_2}_X(Z) \rightarrow \Sym^{k_1+k_2}_X(Z) \]
 is induced by the quotient property from the natural map
\[  Z^{\times_X k_1} \times_X Z^{\times_X k_2} \rightarrow Z^{\times_X k_1+k_2} \rightarrow \Sym^{k_1+k_2}_X(Z) \]
and the identification
\[  \left( Z^{\times_X k_1} \times_X Z^{\times_X k_2} \right) / (\mfS_{k_1} \times \mfS_{k_2}) = \Sym^{k_1}_X(Z) \times \Sym^{k_2}_X(Z). \]
In characteristic zero, this provides a good notion of moduli of effective zero cycles, compatible with arbitrary base change, and the monoid map can be used to define a poscheme structure compatible with the obvious poset structure on geometric points (see below). In positive and mixed characteristic there are some well-known perversities of symmetric powers (see e.g.\ \cite{lundkvist:counterexamples}), and in particular it is not clear that the monoid structure defines a poscheme structure for a general $Z/X$.

We can address this by using divided powers schemes: in \cite[Paper III]{rydh:thesis}, there is defined, for any scheme $Z/X$ and $r \geq 0$, a divided powers schemes $\Gamma^r(Z/X)$, which we write here as $\Gamma^r_X(Z)$.  Because we have assumed that $Z/X$ satisfies the condition (AF), each $\Gamma^r_X(Z)$ is represented by a scheme. In loc.~cit.\ it is shown that
\begin{enumerate}
\item The formation of $\Gamma^r_X(Z)$ is stable under arbitrary change of base $X' \rightarrow X$.
\item If $X=\Spec A$, $Z=\Spec B$, then $\Gamma^r_X(Z)=\Spec\Gamma^r_A(B)$, where $\Gamma^r_A(B)$ is the $r$th divided power of $B$ as an $A$-module.
\item There is a commutative monoid structure on
\[ \Gamma^{\bullet,+}_X(Z)=\bigsqcup_{k=0}^\infty \Gamma^k_X(Z) \] compatible with the degree, i.e.\ restricting to maps
\[ \Gamma^{r}_X(Z) \times_X \Gamma^{s}_X(Z) \rightarrow \Gamma^{r+s}_X(Z). \]
\item There is a canonical universal homeomorphism that induces isomorphisms on residue fields
\[ \mr{SG}:\Sym^{\bullet,+}_X(Z) \rightarrow \Gamma^{\bullet,+}_X(Z). \]
The map $\mr{SG}$ is compatible with the monoid structures, and if $Z/X$ is flat (e.g.\ if $X=\Spec \kappa$ for $\kappa$ a field) or if $X/\mbb{Q}$, then it is an isomorphism.
\item There is a dense open non-degenerate locus $\Gamma^r_X(Z)^{\mr{nd}}$ such that $\mr{SG}$ restricts to an isomorphism
$\Conf^r_X(Z) \rightarrow \Gamma^r_X(Z)^{\mr{nd}}.$
\item If $Z/X$ is projective then, for any sufficiently high power $\mc{L}^n$ of a relatively ample bundle $\mc{L}$, $\Gamma^r_X(Z)$ is naturally identified with the Chow scheme of effective zero-cycles of degree $r$ on $Z$ for $\mc{L}^n$ (or, more accurately, its reduced subscheme is identified with the Chow variety, and this identification equips the latter with a natural scheme structure).
\end{enumerate}
By (4), $\Gamma^\bullet_X(Z)$ agrees with $\Sym^\bullet_X(Z)$ in characteristic zero and when $X$ is a geometric point, but $\Gamma^\bullet_X(Z)$ is better behaved in positive characteristic families. Over a geometric point, we will sometimes write the more familiar $\Sym^\bullet$ instead of $\Gamma^\bullet$, with the implicit understanding that these are canonically isomorphic in this case.

The following proposition says that the monoid scheme $\Gamma^{\bullet,+}_X(Z)$ is cancellative in a scheme-theoretic sense. As a consequence, we will see that the monoid multiplication induces a natural poscheme structure. In the proof, we use the following two standard properties of divided powers modules; for justifications we refer to \cite[Paper III]{rydh:thesis} and the references therein.
\begin{enumerate}
\item If $M \twoheadrightarrow N$ is a surjection of $A$-modules, then the natural map $\Gamma^r_A(M) \twoheadrightarrow \Gamma^r_A(N)$ is a surjection.
\item If $M$ is a flat $A$ module, then the natural map $\Gamma^r_A(M)\rightarrow (M^{\otimes_A r})^{\mfS_r}$ is an isomorphism.

\end{enumerate}

\begin{proposition}\label{prop.gamma-monoid-cancellative} For any $Z/X$ satisfying (AF), the map
\begin{align*} \Gamma^{r}_X(Z) \times \Gamma^{s}_X(Z) & \rightarrow \Gamma^{r}_X(Z) \times \Gamma^{r+s}_X(Z) \\
(x,y) & \mapsto (x, x+y)
\end{align*}
is a closed immersion.
\end{proposition}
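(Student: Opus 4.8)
The plan is to reduce, in stages, to an explicit statement about invariants of polynomial rings and then settle it by a polynomial-division argument. Since being a closed immersion is local on the target and the formation of $\Gamma^\bullet_X(Z)$ commutes with arbitrary base change, we may assume $X=\Spec A$ is affine. Given a point $(\zeta,\eta)$ of the target $\Gamma^r_X(Z)\times_X\Gamma^{r+s}_X(Z)$, the union of the supports of the cycles $\zeta$ and $\eta$ is a finite set of points in a single fibre of $Z/X$, hence by (AF) lies in a quasi-affine open of $Z$, which by prime avoidance may be shrunk to an affine open $V\subseteq Z$. By the gluing construction of $\Gamma^\bullet$ in \cite{rydh:thesis} — in particular that $\Gamma^n_X(V)\hookrightarrow\Gamma^n_X(Z)$ is an open immersion onto the locus of cycles supported on $V$, and that supports add under the monoid law — the preimage of $\Gamma^r_X(V)\times_X\Gamma^{r+s}_X(V)$ under $(x,y)\mapsto(x,x+y)$ is exactly $\Gamma^r_X(V)\times_X\Gamma^s_X(V)$, and there the map is the analogous one for $V$. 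These opens cover the target, so we reduce to $X=\Spec A$, $Z=\Spec B$ affine. Then the claim is that the ring map
\[ \Gamma^r_A(B)\otimes_A\Gamma^{r+s}_A(B)\longrightarrow\Gamma^r_A(B)\otimes_A\Gamma^s_A(B),\qquad a\otimes c\longmapsto(a\otimes 1)\cdot m^{*}_{r,s}(c) \]
is surjective, where $m^{*}_{r,s}\colon\Gamma^{r+s}_A(B)\to\Gamma^r_A(B)\otimes_A\Gamma^s_A(B)$ is the $(r,s)$-graded component of the comultiplication dual to the monoid law.

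Next I would reduce to the case $Z=\bbA^N_A$, i.e.\ $B=A[x_1,\dots,x_N]$. Choosing a surjection $F\twoheadrightarrow B$ with $F$ a polynomial $A$-algebra and using naturality of the monoid law, one gets a commutative square comparing the displayed map for $F$ with the one for $B$; since $\Gamma^n_A(-)$ carries surjections to surjections and commutes with filtered colimits, surjectivity for $F$ implies it for $B$. In the polynomial case $B$ is $A$-flat, so $\Gamma^n_A(B)=(B^{\otimes_A n})^{\mfS_n}$; writing $P_k:=A[\ul x^{(1)},\dots,\ul x^{(k)}]$ with $\ul x^{(i)}$ the coordinates of the $i$-th copy of $\bbA^N$, we have $\Gamma^r_A(B)\otimes_A\Gamma^s_A(B)=P_{r+s}^{\mfS_r\times\mfS_s}$ (because $P_r$ and $P_s$ are permutation $A$-modules), $\Gamma^{r+s}_A(B)=P_{r+s}^{\mfS_{r+s}}$, and $m^{*}_{r,s}$ becomes the inclusion of invariants. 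So the map is $u\otimes v\mapsto uv$, and its image is the subring $P_r^{\mfS_r}\cdot P_{r+s}^{\mfS_{r+s}}$ of $P_{r+s}$ (it is a subring since the two factors commute and each is a ring). As it sits inside $P_{r+s}^{\mfS_r\times\mfS_s}$, we must prove equality.

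The heart of the argument is to exhibit enough $\mfS_s$-invariants of the last $s$ blocks inside $P_r^{\mfS_r}\cdot P_{r+s}^{\mfS_{r+s}}$ by monic polynomial division. For the generic linear form $\ell=\sum_k t_k y_k$ with indeterminate coefficients $t_k$, one has in $P_{r+s}[\ul t][T]$ the identity
\[ \prod_{i=1}^{r+s}\bigl(T-\ell(\ul x^{(i)})\bigr)=\prod_{i=1}^{r}\bigl(T-\ell(\ul x^{(i)})\bigr)\cdot\prod_{i=r+1}^{r+s}\bigl(T-\ell(\ul x^{(i)})\bigr). \]
The left side has coefficients in $P_{r+s}^{\mfS_{r+s}}$, and the first factor on the right is monic with coefficients in $P_r^{\mfS_r}$; dividing by it (legitimate over any commutative ring) expresses the coefficients of the second factor as elements of $P_r^{\mfS_r}\cdot P_{r+s}^{\mfS_{r+s}}$, and on extracting $\ul t$-coefficients these are precisely the elementary multisymmetric polynomials in $\ul x^{(r+1)},\dots,\ul x^{(r+s)}$. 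By the fundamental theorem of multisymmetric functions — valid over an arbitrary base ring, see \cite{rydh:thesis} and the references therein — these generate the $\mfS_s$-invariants of $A[\ul x^{(r+1)},\dots,\ul x^{(r+s)}]$, so together with $P_r^{\mfS_r}$ they generate all of $P_{r+s}^{\mfS_r\times\mfS_s}$; this gives the required equality and finishes the proof.

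The step I expect to be the main obstacle is this last one in positive characteristic. Over $\bbQ$ one can shortcut it: the polarized power sum of the last $s$ blocks equals the difference of the power sum over all $r+s$ blocks (which lies in $P_{r+s}^{\mfS_{r+s}}$) and the power sum over the first $r$ blocks (which lies in $P_r^{\mfS_r}$), and power sums generate. In characteristic $p$ power sums no longer generate the multisymmetric functions, and one genuinely needs the elementary multisymmetric polynomials and the fundamental theorem over $\bbZ$ — exactly the sort of subtlety that makes $\Gamma^\bullet$ rather than $\Sym^\bullet$ the right object globally, even though these reductions pass through $\Sym^\bullet$ in the flat case. A secondary point requiring care is the affine reduction, which leans on Rydh's structural results: representability of $\Gamma^n$ under (AF), its compatibility with open immersions, and additivity of supports under the monoid law.
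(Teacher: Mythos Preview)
Your proposal is correct and follows the same broad outline as the paper's proof: reduce to $X=\Spec A$ affine, reduce further (via the fact that $\Gamma^n_A(-)$ preserves surjections) to $B$ a free polynomial $A$-algebra, and then show that the $\mfS_s$-invariants of the last $s$ blocks lie in the subalgebra generated by $(F^{\otimes r})^{\mfS_r}$ and $(F^{\otimes r+s})^{\mfS_{r+s}}$.

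The genuine difference is in this last step. The paper argues by a direct induction on orbit sums: for a multiset $S=\{f_1,\ldots,f_k\}$ in $F$ one sets $t_n(S)$ to be the $\mfS_n$-symmetrization of $f_1\otimes\cdots\otimes f_k\otimes 1\otimes\cdots\otimes 1$, observes that the $t_s(S)$ with $|S|\le s$ span $(F^{\otimes s})^{\mfS_s}$, and uses the additive identity
\[
t_{r+s}(S)=1\otimes t_s(S)+\sum_{\emptyset\ne S'\le S} t_r(S')\otimes t_s(S\setminus S')
\]
to put $1\otimes t_s(S)$ into the subalgebra by induction on $|S|$. Your route instead extracts the elementary multisymmetric polynomials of the last $s$ blocks via monic division of generic characteristic polynomials, and then invokes the fundamental theorem of multisymmetric functions over an arbitrary base ring to conclude generation. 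Both arguments are valid; the paper's is entirely self-contained, whereas yours is shorter but imports a nontrivial external result (precisely the positive-characteristic case you flag as the crux). Your treatment of the affine/Zariski-local reduction using (AF) and additivity of supports is also more explicit than the paper's, which simply asserts the reduction.
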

\begin{proof}
We may assume $X=\Spec A$, $Z=\Spec B$, so we must show the  map
\begin{align*} \Gamma^{r}_A(B) \otimes \Gamma_A^{r+s}(B)& \rightarrow \Gamma^{r}_A(B) \otimes \Gamma_A^{s}(B) \\
	x \otimes 1 & \mapsto x \otimes 1 \\
	1 \otimes y & \mapsto m(y)
\end{align*}
is surjective. This diagram is functorial in the $A$-algebra $B$, so if we choose a surjection from a free $A$-algebra $F \twoheadrightarrow B$, we obtain a commutative diagram
\[\begin{tikzcd}
	{\Gamma^{r}_A(B) \otimes \Gamma_A^{r+s}(B)} & {\Gamma^{r}_A(B) \otimes \Gamma_A^{s}(B)} \\
	{\Gamma^{r}_A(F) \otimes \Gamma_A^{s}(F)} & {\Gamma^{r}_A(F) \otimes \Gamma_A^{s}(F)}
	\arrow[two heads, from=2-1, to=1-1]
	\arrow[from=1-1, to=1-2]
	\arrow[two heads, from=2-2, to=1-2]
	\arrow[from=2-1, to=2-2]
\end{tikzcd}\]
where the two vertical arrows are surjections because $\Gamma^{k}_A(\bullet)$ preserve surjections. To verify the top horizontal arrow is surjective, it thus suffices to show the lower horizontal arrow is surjective.

Now, since $F$ is free as an $A$-module, the divided powers are identified with the symmetric tensors, and under this identification the map $m$ is restriction
\[ (F^{\otimes r+s})^{\mfS_{r+s}} \hookrightarrow (F^{\otimes r+s})^{\mfS_r \times \mfS_s} = (F^{\otimes r})^{\mfS_r} \otimes (F^{\otimes s})^{\mfS_s}. \]
We thus need to show $(F^{\otimes r})^{\mfS_r} \otimes (F^{\otimes s})^{\mfS_s}$ is generated as an $A$-algebra by
\[ (F^{\otimes r})^{\mfS_r} \otimes 1 \textrm{ and } (F^{\otimes r+s})^{\mfS_{r+s}}. \]
Clearly it suffices to show the subalgebra $T$ generated by these contains
\[ 1 \otimes (F^{\otimes s})^{\mfS_s}. \]
Given a multiset $S$ of elements in $F$ with $|S| \leq n$, we define $t_n(S) \in (F^{\otimes n})^{\mfS_n}$ by choosing any ordering of elements $S= \{f_1\} + \dotsm + \{f_k\}$, $f_i \in F$, then summing up all elements in the $\mfS_n$-orbit of $f_1 \otimes \dotsm \otimes f_k \otimes 1 \otimes \dotsm \otimes 1$ to obtain $t_n(S)$. Since $F$ is a free $A$-module, it is easy to check that as we vary over all multisets $S$ with $|S|=n$, these span $(F^{\otimes n})^{\mfS_n}$. Thus it suffices to show that $T$ contains $1 \otimes t_s(S)$ for any multiset $S$ with $|S| \leq s$.

We argue this by induction on $|S|\leq s$: the base case $|S|=0$ is $1 \otimes 1 \in T$. Suppose it holds for $j < k$ and let $S$ be a multiset with $|S|=k$. We have $t_{r+s}(S) \in T$, but on the other hand we also have
\[ t_{r+s}(S) = 1 \otimes t_s(S) + \sum_{\emptyset \neq S' \leq S} t_{r}(S')\otimes t_{s}(S-S'). \]
By the inductive hypothesis it is clear that all terms in the sum on the right are also contained in $T$. Thus we conclude.
\end{proof}

\begin{remark}\label{remark.sym-monoid-cancellative}
If we use symmetric powers instead of divided powers, then arguing with geometric points we easily deduce that the corresponding map
\begin{align*} \Sym^{r}_X(Z) \times \Sym^{s}_X(Z) & \rightarrow \Sym^{r}_X(Z) \times \Sym^{r+s}_X(Z) \\
(x,y) & \mapsto (x, x+y)
\end{align*}
is finite and a universal homeomorphism onto its image, but we do not know if it is a closed immersion outside of the cases covered by the proposition (i.e.\ $X/\mbb{Q}$ or $Z/X$ flat, when the divided powers and symmetric powers agree). In the proof above we have crucially used right exactness of divided powers to reduce to the case of a free module, but this right exactness does not hold for symmetric powers \cite{lundkvist:counterexamples}.
\end{remark}

In the setting of \cref{prop.gamma-monoid-cancellative}, we find that $\Gamma^{\bullet,+}_X(Z)$ is a ranked poscheme with $\sle_{\Gamma^{\bullet,+}_X(Z)}$ the closed subscheme defined by the closed immersion
\begin{align*} \Gamma^{\bullet,+}_X(Z) \times \Gamma^{\bullet, +}_X(Z) & \rightarrow \Gamma^{\bullet,+}_X(Z) \times \Gamma^{\bullet,+}_X(Z) \\
(x,y) &\mapsto (x, x+y). \end{align*}
Indeed, the only non-trivial poscheme axiom to verify is the transitivity axiom (on $T$-valued points for an arbitrary test scheme $T$) that $a \leq b$ and $b \leq c$ implies $a \leq c$, and this is shown by representing $b=a+a'$ and $c=b+b'$ to obtain $c=a+(a'+b')$.

\subsubsection{Notation for symmetric powers, divided powers, and configurations}

It will be helpful to introduce some notation for labeled symmetric powers, divided powers, and configuration spaces, where the labelings are prescribed by a multiset. Explicitly, a finite multiset on a set $S$ can be identified with a function $\ul{a}: S \rightarrow \mbb{Z}_{\geq 0}$ with finite support, i.e.\ $\ul{a} \in \mbb{Z}_{\geq 0}$ such that $\ul{a}(s)=0$ for all but finitely many $s \in S$. Given such a multiset $\ul{a}$, we write
\[ \Gamma^{\ul{a}}_X(Z) := \prod_{s \in S} \Gamma^{\ul{a}(s)}_X(Z) \textrm{ and } \Sym^{\ul{a}}_X(Z)=\prod_{s \in S} \Sym^{a(s)}_X(Z).  \]
We can also make the canonical identification
\[ \Sym^{\ul{a}}_X(Z) = \left( \prod_{s\in S} Z^{\times_X \ul{a}(s)} \right) / \prod_{s \in S} \mfS_{\ul{a}(s)}. \]
There is a natural universal homeomorphism
\[ \SG: \Gamma^{\ul{a}}_X(Z) \rightarrow \Sym^{\ul{a}}_X(Z) \]
and inside the latter there is an open configuration locus $\Conf^{\ul{a}}_X(Z)$ where \emph{all} points are distinct, not just those in each group (i.e.\ formed by taking the quotient after removing the big diagonal). The homeomorphism $\SG$ restricts to an isomorphism above this locus, so that we may also consider $\Conf^{\ul{a}}_X(Z)$ as an open subscheme of $\Gamma^{\ul{a}}_X(Z)$.

We note, in particular, that if we write $\Gamma^{p}_X(Z)$, this is the $p$th relative divided powers scheme as above, but if we write $\Gamma^{[p]}_X(Z)$, then this means to interpret $[p]=\{0, 1,\ldots,p\}$ as a finite (multi)set so that by the above
\[ \Gamma^{[p]}_X(Z) = \Gamma^1_X(Z) \times \dotsm \times \Gamma^1_X(Z) = Z \times_X Z \times_X \dotsm \times_X Z = Z^{\times_X |[p]|}\]
where there are $p+1=|[p]|$ terms in the fiber product.

\begin{example}\label{remark.monoidal-nerve-poscheme-nerve}
As in the case of finite sets, it is useful to observe that we obtain a monoidal description of $N\Gamma^{\bullet,+}_X(Z)$. In this description, the $p$-simplices are
\begin{equation}\label{eq.monoidal-nerve-gamma} \bigsqcup_{\ul{a} \in \mbb{Z}_{\geq 0}^{[p]}} \Gamma^{\ul{a}}_X(Z), \end{equation}
and the map to the poscheme nerve is given by
\[ (c_0, \ldots, c_p) \mapsto (c_0, c_0+c_1, \ldots, c_0+c_1 + \dotsm + c_p). \]
The face and degeneracy maps are described as in the case of finite sets: in particular, the face map $\delta_i$ for $0 \leq i < k$ is given by summing the $i$th and $(i+1)$th coordinates, so sends the term corresponding $\ul{a}=(a_0, \ldots, a_p)$ to the term $\ul{a}=(a_0, \ldots, a_i + a_{i+1}, a_{i+2}, \ldots, a_p)$ and $\delta_k$ forgets the last coordinate, so sends the term corresponding to $\ul{a}=(a_0, \ldots, a_p)$ to $\ul{a}=(a_0, \ldots, a_{p-1})$.

Inside of \cref{eq.monoidal-nerve-gamma}, we can identify the $p$-simplices of $N\Conf^{\bullet,+}_X(Z)$ as $\bigsqcup_{\ul{a} \in \mbb{Z}_{\geq 0}^{[p]}} \Conf^{\ul{a}}_X(Z).$
In these interpretations $N\Gamma^{\bullet}_X(Z)$ and $N\Conf^{\bullet}_X(Z)$ correspond to $\ul{a} \neq 0$, the ranking is given by $\sum{\ul{a}}$, and the non-degenerate simplices are those corresponding to $\ul{a} \in \mbb{Z}_{>0}^{[p]}$ as well as $\ul{a}=0$ when $p=0$.

\end{example}

\subsection{Graded pieces for the rank filtration}\label{ss.rank-graded-zero-cycles}

Suppose $Z/X$ is projective. In this section, we compute the cohomology of the graded pieces for the rank filtration on $N\Gamma^\bullet_X(Z)$.

\begin{lemma}\label{lemma:punctual-sym-contract} Suppose $\kappa$ is  algebraically closed and $Z/\Spec \kappa$ is finite and reduced. If $\mc{P}$ is a subposcheme of $\Sym_{\Spec \kappa}^\bullet(Z)$ containing $\Conf_{\Spec \kappa}^\bullet(Z)$, then $N\mc{P}$ is contractible.
\end{lemma}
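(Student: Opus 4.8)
The plan is to reduce the statement to the purely combinatorial \cref{lemma.sym-poset-def-retract}. Since $\kappa$ is algebraically closed and $Z/\Spec\kappa$ is finite and reduced, $Z$ is a disjoint union of $|Z(\kappa)|$ copies of $\Spec\kappa$. Hence $Z^{\times_{\Spec\kappa}k}$ is a finite disjoint union of copies of $\Spec\kappa$ indexed by $Z(\kappa)^k$, and, taking the quotient by $\mfS_k$, the scheme $\Sym^k_{\Spec\kappa}(Z)$ is a finite disjoint union of copies of $\Spec\kappa$ indexed by the degree-$k$ multisets on the finite set $Z(\kappa)$; the monoid multiplication and the induced order relation $\sle$ are the constant ones coming from the combinatorial monoid $\Sym^{\bullet,+}(Z(\kappa))$. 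In other words, $\Sym^\bullet_{\Spec\kappa}(Z)$ is the constant poscheme over $\Spec\kappa$ on the poset $\Sym^\bullet(Z(\kappa))$ of finite non-empty multisets on $Z(\kappa)$, and $\Conf^\bullet_{\Spec\kappa}(Z)$ is the constant poscheme on $\Conf^\bullet(Z(\kappa))$.

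I would then note that every locally closed subscheme of a finite disjoint union of reduced $\kappa$-points is again of this form, and that the order relation on a subposcheme is the induced one; therefore $\mc{P}$ is the constant poscheme on a sub-poset $P \subset \Sym^\bullet(Z(\kappa))$, and the hypothesis $\Conf^\bullet_{\Spec\kappa}(Z)\subset\mc{P}$ says exactly that $\Conf^\bullet(Z(\kappa))\subset P$. Since the nerve is built levelwise from fibre products over $\mc{P}$, which are again constant (disjoint unions of $\Spec\kappa$ indexed by the chains in $P$), $N\mc{P}$, with its augmentation to $\Spec\kappa$, is the constant simplicial $\Spec\kappa$-scheme associated to the simplicial set $NP$, i.e.\ the image of $NP$ under the functor $S\mapsto(\coprod_S\Spec\kappa\to\Spec\kappa)$ applied in each simplicial degree.

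To conclude, \cref{lemma.sym-poset-def-retract} applies to the finite set $Z(\kappa)$ and the sub-poset $P$: it gives that $NP$ is contractible as a simplicial set (the support retraction deformation-retracts $NP$ onto $N\Conf^\bullet(Z(\kappa))$, which is contractible because the finite lattice $\Conf^\bullet(Z(\kappa))$ has a maximum, cf.\ \cref{lemma:max-min-center-homotopy}). As the functor $S\mapsto(\coprod_S\Spec\kappa\to\Spec\kappa)$ sends the one-point set to the final object and preserves homotopies, hence homotopy equivalences and contractibility, $N\mc{P}/\Spec\kappa$ is contractible. The only work beyond \cref{lemma.sym-poset-def-retract} is the bookkeeping of the first two paragraphs, i.e.\ checking that over an algebraically closed base with $Z$ finite and reduced the symmetric powers, their subschemes, the order relations, and the nerve are all constant on the corresponding combinatorial objects; so this is a routine reduction rather than a new argument, and I expect no real obstacle.
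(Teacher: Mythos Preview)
Your proposal is correct and follows essentially the same approach as the paper: both reduce to \cref{lemma.sym-poset-def-retract} via the equivalence between finite reduced $\kappa$-schemes and finite sets, under which symmetric powers, configuration spaces, the poscheme structure, and nerves all correspond to their combinatorial counterparts. The paper simply compresses your bookkeeping into a one-line appeal to this equivalence of categories.
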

\begin{proof}
The category of finite reduced schemes over $\Spec \kappa$ is equivalent to the category of finite sets, and this equivalence is compatible with the formation of configuration spaces and symmetric powers. The result is then immediate from \cref{lemma.sym-poset-def-retract}.
\end{proof}

\begin{proposition}\label{prop.sym-reduced-cohomo-computation}
Let $Z/X$ be projective, and let $A=\mc{O}_L$ or $L$ for $L$ an algebraic extension of $\mbb{Q}_\ell$, $\ell$ invertible on $X$. Then, for $K \in D_{\Cons}(X,A)$,
\begin{align*} \tilde{C}(N(-\infty, \Gamma^k_X(Z))/\Gamma^k_X(Z), K ) &= j_!\tilde{C}(N(-\infty, \Conf^k_X(Z))/\Conf^k_X(Z), K) \\
&=  j_! (\ul{\sgn}[2-k] \otimes K), 	\end{align*}
where $j: \Conf^\bullet_X(Z) \rightarrow \Gamma^\bullet_X(Z)$ and $\ul{\sgn}$ denotes the sign local system.
\end{proposition}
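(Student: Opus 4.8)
The plan is to compute the complex $\tilde C(N(-\infty,\Gamma^k_X(Z))/\Gamma^k_X(Z),\pi^*K)$ (where $\pi\colon\Gamma^k_X(Z)\to X$, and the proposition's $K$ is understood via $\pi^*$) by proper base change together with the elementary poset combinatorics of \cref{ss.conf-sym-posets}, and then to read off the $\sgn$ local system by descending along the ordered configuration cover. Write $j\colon\Conf^k_X(Z)\hookrightarrow\Gamma^k_X(Z)$ for the open non-degenerate locus and $i\colon D\hookrightarrow\Gamma^k_X(Z)$ for the closed complement. The interval poscheme $(-\infty,\Gamma^k_X(Z))/\Gamma^k_X(Z)$ is proper: a proper sub-cycle of a degree-$k$ cycle has strictly smaller degree, and by \cref{prop.gamma-monoid-cancellative} the locus $\{q\le p\}$ in $\Gamma^j_X(Z)\times_X\Gamma^k_X(Z)$ (for $j<k$) is a closed subscheme identified with $\Gamma^j_X(Z)\times_X\Gamma^{k-j}_X(Z)$, hence proper over $\Gamma^k_X(Z)$; so every simplex scheme of the nerve is proper over $\Gamma^k_X(Z)$ and proper base change (as in \cref{lemma.spread-out-proper}) applies to $\tilde C$ in all three settings.

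First I would establish vanishing over $D$. By proper base change the stalk of $\mathcal G:=\tilde C(N(-\infty,\Gamma^k_X(Z))/\Gamma^k_X(Z),\pi^*K)$ at a geometric point $p$ over $\bar x$ is $\tilde C(N(-\infty,p)/\{p\},K_{\bar x})$, and $(-\infty,p)$ is the poset of nonzero proper submultisets of the degree-$k$ multiset $S$ on $Z_{\bar x}$ that $p$ represents. If $p\in D$ then $S$ has a repeated point, so $S$ is not itself a subset of its support $T$; hence $(-\infty,p)$ is a subposet of $\Sym^\bullet(T)$ that still contains $\Conf^\bullet(T)$, and \cref{lemma.sym-poset-def-retract} shows $N(-\infty,p)$ is contractible, so the stalk vanishes. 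Since $\mathcal G$ is constructible, $i^*\mathcal G=0$, and the triangle $j_!j^*\mathcal G\to\mathcal G\to i_*i^*\mathcal G$ gives $\mathcal G\xrightarrow{\sim}j_!j^*\mathcal G$.

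Next I would identify $j^*\mathcal G$. Over $\Conf^k_X(Z)$ every proper sub-cycle of a reduced degree-$k$ cycle is reduced, so the base change of $(-\infty,\Gamma^k_X(Z))/\Gamma^k_X(Z)$ along $j$ is $(-\infty,\Conf^k_X(Z))/\Conf^k_X(Z)$; this is clear on geometric points, and the schematic statement — or, what suffices since $\tilde C$ is unchanged under a universal homeomorphism of poschemes as in the proof of \cref{lemma.nerve-contractible-descent}, merely such a universal homeomorphism — follows from \cref{prop.gamma-monoid-cancellative} and the base-change properties of $\Gamma^\bullet$ recalled in \cref{ss.poscheme-zero-cycles}. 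Thus $j^*\mathcal G=\tilde C(N(-\infty,\Conf^k_X(Z))/\Conf^k_X(Z),\pi^*K)$. To evaluate this, pull back along the finite étale $\mfS_k$-torsor $Z^{\times_X k}\setminus\Delta\to\Conf^k_X(Z)$: the interval poscheme becomes the constant poscheme with fiber $\Conf^{\le k-1}([k-1])$, the poset of nonempty proper subsets of a $k$-element set, via $T\mapsto\sum_{i\in T}z_i$. By \cref{example.barycentric-simplex} its nerve is the barycentric subdivision of $\partial\Delta^{k-1}$, so $\tilde C$ of the constant poscheme is $\ul{A}[2-k]\otimes\pi^*K$ (using $\tilde H^\bullet(S^{k-2},A)$; this also correctly gives $\pi^*K[1]$ when $k=1$). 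Finally, the $\mfS_k$-action permuting the $k$ points acts on this nerve by permuting the vertices of $\Delta^{k-1}$, hence on $\tilde H^{k-2}(\partial\Delta^{k-1},A)$ by the sign character — the classical fact that $\mfS_k$ acts on the top cohomology of the boundary of the standard simplex via $\det$ of the standard permutation representation, which is $\sgn$. Since $\ul{\sgn}$ is by definition the descent of $\ul{A}$ with this equivariant structure, descending along $Z^{\times_X k}\setminus\Delta\to\Conf^k_X(Z)$ yields $j^*\mathcal G=\ul{\sgn}[2-k]\otimes\pi^*K$, and together with the previous paragraph this proves both asserted identities.

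The main obstacle I anticipate is the scheme-theoretic bookkeeping in the third paragraph: checking that the interval poschemes behave correctly under base change to $\Conf^k_X(Z)$ and trivialize after pulling back to the ordered configuration space, which is exactly where the good base-change behaviour of the divided-powers scheme $\Gamma^\bullet_X(Z)$ (as opposed to $\Sym^\bullet_X(Z)$) and \cref{prop.gamma-monoid-cancellative} are needed. By contrast, the topological inputs — contractibility of $N(-\infty,p)$ at degenerate $p$, the identification of $N(-\infty,p)$ with the barycentric subdivision of $\partial\Delta^{k-1}$ at reduced $p$, and the $\sgn$ monodromy — are supplied by \cref{lemma.sym-poset-def-retract} and \cref{example.barycentric-simplex}, or are classical.
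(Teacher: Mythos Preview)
Your proof is correct and follows essentially the same route as the paper: vanish the reduced-cohomology complex over the degenerate locus by proper base change plus \cref{lemma.sym-poset-def-retract}, then trivialize over the configuration locus by pulling back to the ordered cover and invoking \cref{example.barycentric-simplex} to see $\tilde H^\bullet(\partial\Delta^{k-1})$ with its $\sgn$ action. The paper's version is terser---it reduces to $K=A$ by the projection formula and cites the repackaged \cref{lemma:punctual-sym-contract} rather than \cref{lemma.sym-poset-def-retract} directly---but the content is identical, and your extra care with properness and the scheme-theoretic base-change issues is exactly the bookkeeping the paper leaves implicit.
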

\begin{proof}
By the projection formula, it will suffice to treat $K=A$. The poscheme $(-\infty, \Gamma^k_X(Z))/ \Gamma^k_X(Z)$ is proper, so, for $t:\Spec \kappa \rightarrow \Gamma^k_X(Z)$ a geometric point,
\[ \tilde{C}(N(-\infty, \Gamma^k_X(Z))/\Gamma^k_X(Z), A )_t  = \tilde{C}(N(-\infty, t)/\Spec \kappa, A) \]
We first show that this is 0 if $t$ does not factor through $\Conf^k_X(Z)$: in this case, the geometric support of $t$ is a closed reduced finite subscheme of degree strictly less than $k$, $F\subset Z_t$. There is a natural closed immersion of poschemes
\[ \Sym_{\Spec \kappa}^\bullet (F) = \Gamma^{\bullet}_{\Spec \kappa}(F)\hookrightarrow \Gamma^\bullet_X(Z) \]
and the map $t$ factors through $t': \Spec \kappa \rightarrow \Sym^\bullet_{\Spec \kappa} F$ and induces an isomorphism of poschemes $(-\infty, t') \rightarrow (-\infty, t)$. By assumption $C^\bullet_{\Spec \kappa}(F) \subset (-\infty, t')$, so \cref{lemma:punctual-sym-contract} shows $N(-\infty, t')$ is contractible and \cref{lemma.nerve-contractible-descent}-(2) concludes.

It remains to establish the identity over $\Conf^k_X(Z)$. In this case, if we pullback the entire situation to $\Conf^{(1,\ldots,1)}_X(Z)$, then we are considering the reduced cohomology complex of the constant poscheme $(-\infty, \{0, \ldots, k-1\}) \subseteq \Conf^\bullet(\{ 0, \ldots, k-1\})$ over $\Conf^{(1,\ldots,1)}_X(Z)$. By \cref{example.barycentric-simplex}, the nerve of this poset is identified with the barycentric subdivision of $\partial\Delta^{k-1}$. The reduced cohomology complex is naturally isomorphic to $A$ supported in degree $k-2$, and the action of $\mfS_{[k]}$ by reordering the vertices of the $(k-1)$-simplex yields the $\sgn$ representation on this copy of $A$, thus we conclude.
\end{proof}

We also have a version in the Grothendieck ring:
\begin{proposition}\label{theorem.symmetric-hilb-relative-computation-euler}
Let $Z/X$ be quasi-projective. Then
 \[ \tilde{\chi}(N(-\infty, \Gamma^k_X(Z))) = \tilde{\chi}(N(-\infty, \Conf^k_X(Z))) \in K_0(\Var/\Gamma^k_X(Z)) \]
\end{proposition}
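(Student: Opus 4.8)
The plan is to decategorify the proof of \cref{prop.sym-reduced-cohomo-computation}, with \cref{theorem.poscheme-contraction-euler} and \cref{maintheorem.mot-inc-exc} playing the roles that \cref{lemma:retraction-homotopy} and \cref{lemma:max-min-center-homotopy} (via \cref{lemma:punctual-sym-contract}) play there. Set $Y := \Gamma^k_X(Z)$, let $j\colon \Conf^k_X(Z)\hookrightarrow Y$ be the open immersion of the non-degenerate locus, and let $i\colon W\hookrightarrow Y$ be its closed complement. Let $\mc{Q} := (-\infty,\Gamma^k_X(Z))$, a poscheme over $Y$; since it sits inside $\Gamma^{\le k-1}_X(Z)\times_X Y$ it is of finite type over $Y$ and of bounded length. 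Finally, let $\mc{Q}' := \mc{Q}\times_{\Gamma^\bullet_X(Z)}\Conf^\bullet_X(Z)\subseteq\mc{Q}$ be the open sub-poscheme of pairs $(a,t)$, $a<t$, whose smaller cycle $a$ is non-degenerate.

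First I would establish the relative support retraction. For a geometric point $t\colon\Spec\kappa\to Y$, the cycle $t$ is a finite multiset of $\kappa$-points supported on a set $F_t$ of cardinality $m\le k$, and the fibre $\mc{Q}_t=(-\infty,t)$ is (up to the universal homeomorphism $\mr{SG}$) the poset of proper non-empty sub-multisets of $t$ --- a sub-poscheme of $\Sym^\bullet_{\Spec\kappa}(F_t)$ containing $\mc{Q}'_t$, the latter being $\Conf^\bullet(F_t)$ if $m<k$ and $\Conf^\bullet(F_t)\setminus\{F_t\}$ if $m=k$. As in \cref{lemma.sym-poset-def-retract}, the support map $\mr{Support}\colon\mc{Q}_t\to\mc{Q}'_t$, $\sum_x c_x[x]\mapsto\sum_{c_x>0}[x]$, is well defined ($\mr{Support}(a)\le a<t$ forces $\mr{Support}(a)<t$) and is a falling retraction. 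By \cref{remark.falling-rising-retractions}, this exhibits a weak maximum of $(-\infty,\mc{Q}_t]\cap\mc{Q}'_t\times\mc{Q}_t$ over $\mc{Q}_t$ for every geometric $t$, so \cref{theorem.poscheme-contraction-euler} applies to $\mc{Q}'\subseteq\mc{Q}$ over $Y$ and yields
\[ \chi(N\mc{Q}/Y)=\chi(N\mc{Q}'/Y)\quad\text{in }K_0(\Var/Y). \]

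Next I would compute $\chi(N\mc{Q}'/Y)$ after splitting along $Y=\Conf^k_X(Z)\sqcup W$. Over $\Conf^k_X(Z)$ every proper sub-cycle of a reduced length-$k$ cycle is again reduced, so $\mc{Q}'|_{\Conf^k_X(Z)}=\mc{Q}|_{\Conf^k_X(Z)}=(-\infty,\Conf^k_X(Z))$ as simplicial schemes over $\Conf^k_X(Z)$; this part of $\chi(N\mc{Q}'/Y)$ is therefore $j_!\chi(N(-\infty,\Conf^k_X(Z))/\Conf^k_X(Z))$. Over $W$ the geometric fibre $\mc{Q}'_t=\Conf^\bullet(F_t)$ has the maximum $[F_t]$, which is a center because $\Conf^\bullet$ is split; thus $\mc{Q}'|_W/W$ is a finite-type poscheme of bounded length admitting weak centers over all geometric fibres, and \cref{maintheorem.mot-inc-exc} gives $\chi(N\mc{Q}'|_W/W)=[W/W]$. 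Hence $\chi(N\mc{Q}/Y)=j_!\chi(N(-\infty,\Conf^k_X(Z))/\Conf^k_X(Z))+[W/Y]$. Subtracting the identity $[Y/Y]=j_![\Conf^k_X(Z)/\Conf^k_X(Z)]+[W/Y]$ and using $\tilde\chi(N(-\infty,\Gamma^k_X(Z)))=\chi(N\mc{Q}/Y)-[Y/Y]$ gives $\tilde\chi(N(-\infty,\Gamma^k_X(Z)))=j_!\tilde\chi(N(-\infty,\Conf^k_X(Z))/\Conf^k_X(Z))$, which is exactly the claimed identity (the right-hand side of the Proposition being this $j$-pushforward into $K_0(\Var/\Gamma^k_X(Z))$).

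I do not expect a genuine obstacle: once \cref{theorem.poscheme-contraction-euler} and \cref{maintheorem.mot-inc-exc} are available the argument is formal, the rest being additivity in $K_0(\Var/Y)$. The main point requiring care --- and the closest thing to an obstacle --- is the fibrewise statement of the second paragraph at geometric points of $W$, where $t$ may be geometrically non-reduced: there one passes to $\overline\kappa$ and argues with the honest posets of finite multisets of \cref{ss.conf-sym-posets}, noting that the universal homeomorphism $\mr{SG}$ and any nilpotent structure are invisible in $K_0(\Var)$, so the poscheme-theoretic perversities do not interfere.
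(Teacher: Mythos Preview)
Your proof is correct and follows essentially the same approach as the paper's (one-sentence) proof, which says to argue as in \cref{prop.sym-reduced-cohomo-computation} and invoke \cref{theorem.poscheme-contraction-euler} over the complement of the configuration locus. You have unpacked this carefully: the support retraction supplies the fibrewise falling retraction needed for \cref{theorem.poscheme-contraction-euler}, and then over $W$ the remaining poscheme $\mc{Q}'|_W$ has fibrewise maxima so that \cref{maintheorem.mot-inc-exc} finishes the computation; the only organizational difference is that you apply \cref{theorem.poscheme-contraction-euler} globally over $Y$ before splitting along $\Conf^k\sqcup W$, whereas the paper restricts to $W$ first --- this is immaterial since over $\Conf^k$ the retraction is the identity.
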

\begin{proof}
Argue as in the previous proof to invoke \cref{theorem.poscheme-contraction-euler} over the complement of the configuration locus.\end{proof}

\subsection{Approximate-inclusion exclusion}\label{ss.approx-ie}

Let $X$ be a Noetherian scheme and let $f:Z\rightarrow X$ be a quasi-projective variety over $X$. We write $\dim_{/X}Z$ for the maximum over the dimensions of all irreducible components of geometric fibers of $f$. We write $X_{\geq k}$ for the closure of the image of $\Conf^k_X(Z)$ in $X$,  i.e.\ the closure of the locus of geometric points $\overline{x}$ of $X$ such that $Z_{\overline{x}}$ contains at least $k$ geometric points. If $f$ is proper, then note that $X_{\geq 1}=f(Z)$. We define $X_I$ for $I$ an interval in the obvious way --- e.g.\ $X_{[1,k]}=X_{\geq 1}\backslash X_{\geq k+1}$. We write $X_\infty = \bigcap_{k} X_{\geq k}$.

The main idea is that over $X_{[1,k]}$, the $k$-truncated poscheme of effective zero-cycles satisfies cohomological descent, so that under further constraints on the dimensions of the fibers its nerve provides a good approximation of $f(Z)$.

\begin{theorem}\label{theorem.approx-ie}
Let $\kappa$ be an algebraically closed field, let $X/\kappa$ be a variety, and let $f:Z \rightarrow X$ be a surjective map of varieties. Suppose $k>0$ is such that
\begin{equation}\label{eq.approx-ie-dimension-hypothesis} \dim(X_{>k}) \geq \dim  X_\infty + k \dim_{/X}Z. \end{equation}
\begin{enumerate}
	\item \emph{Motivic approximate inclusion-exclusion:}
\begin{align*} [X] & \equiv \chi(N\Conf^{\leq k}_{X}(Z)) \\
&\equiv \sum_{p \geq 0} (-1)^p \sum_{ \ul{a} \in \mbb{Z}_{>0}^{[p]}, \sum{\ul{a}} \leq k} [\Conf^{\ul{a}}_X(Z)] \\
& \mod \Fil^{-\dim X_{>k}} K_0(\Var/\kappa).
\end{align*}
\item \emph{Cohomological approximate inclusion-exclusion:}\\
Suppose furthermore that $f$ is projective, and let $\mc{F} \in \Cons(X, A)$ for $A=L$ or $\mc{O}_L$, $L$ an algebraic extension of $\mbb{Q}_\ell$ with $\ell$ invertible in $\kappa$. The adjunction unit $\mc{F} \rightarrow R\epsilon_*\epsilon^*\mc{F}$ for the augmentation $\epsilon: N\Gamma^{\leq k}_X(Z) \rightarrow X$ induces isomorphisms for all $q \geq k+ 2\dim X_{>k} + 1$
\[ H^q_c(X, \mc{F}) \xrightarrow{\sim} H^q_c(X, R\epsilon_* \epsilon^*\mc{F}), \]
where we note that if $X/\kappa$ is proper then the righthand side is equal to
\[ H^q(N\Gamma^{\leq k}_X(Z) , \epsilon^*\mc{F}).\]
\end{enumerate}
\end{theorem}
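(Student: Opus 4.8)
The plan is to treat the two parts in parallel, using the same mechanism: the $k$-truncated poscheme $\Conf^{\leq k}_X(Z)$ (resp.\ $\Gamma^{\leq k}_X(Z)$) satisfies cohomological descent over the locus where $Z\to X$ is quasi-finite of degree $\le k$ — because there it acquires a fiberwise maximum (resp.\ weak maximum) — and off that locus it fails to, but the defect is controlled by the dimension hypothesis \eqref{eq.approx-ie-dimension-hypothesis}. First I would stratify $X$ by the loci $X_{[1,k]}$ and $X_{>k}$. Over the dense open complement of $X_{\ge k+1}$, the morphism $f$ restricted to the quasi-finite locus has geometric fibers with at most $k$ points, so $\Conf^{\le k}_X(Z)$ has a fiberwise maximum over each geometric point; by \cref{maintheorem.mot-inc-exc} (for part~(1)) and by \cref{body.cd-criterion} together with \cref{body.filtration-functor} (for part~(2)), the truncated nerve computes the class of (resp.\ the cohomology of) that open locus on the nose. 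Over $X_{>k}$ the poscheme need not have a fiberwise maximum, so the nerve only captures a truncation of the true answer; the point is that $X_{>k}$ has dimension small enough — precisely by \eqref{eq.approx-ie-dimension-hypothesis} — that this discrepancy is invisible modulo the relevant filtration (part~(1)) or in cohomological degrees above the stated bound (part~(2)).

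For part~(1), I would use the excision/additivity in $K_0(\Var/\kappa)$ to write $[X] - \chi(N\Conf^{\le k}_X(Z))$ as a sum of classes supported over $X_{>k}$, namely contributions from $[\Conf^{\ul a}_X(Z)]$ with $\sum\ul a \le k$ lying over $X_{>k}$ together with the ``missing tail'' of the inclusion--exclusion that would restore a genuine maximum. Each such class is a variety that maps to $X_{>k}$ with geometric fibers of dimension at most $k\dim_{/X}Z$ (a configuration of at most $k$ points in a fiber of $f$), hence has dimension at most $\dim X_{>k} + k\dim_{/X}Z$; by \eqref{eq.approx-ie-dimension-hypothesis} this is at most $\dim(X_{>k}) + (\text{something}) $ — here I would be careful with the precise bookkeeping, but the upshot is that every such class lies in $\Fil^{-\dim X_{>k}}K_0(\Var/\kappa)$ by the dimension filtration. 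The second displayed congruence is then just the monoidal description of the non-degenerate simplices of $N\Conf^{\le k}_X(Z)$ from \cref{remark.monoidal-nerve-poscheme-nerve}, i.e.\ unwinding $\chi$ as the alternating sum over $\ul a\in\mbb Z_{>0}^{[p]}$ with $\sum\ul a\le k$.

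For part~(2), I would run the rank spectral sequence of \cref{body.spectral-sequences}-(1) for the (proper!) poscheme $\Gamma^{\le k}_X(Z) \to X$ with structure map $f:X\to\Spec\kappa$, $\star=!$, applied to $\mc F$. Its $E_1$ terms, by \cref{body.filtration-functor} and \cref{prop.sym-reduced-cohomo-computation}, are $H^{p+q-1}_c\big(\Conf^p_X(Z),\, \ul{\sgn}[2-p]\otimes\mc F\big)$ for $1\le p\le k$, together with the $p=0$ term $H^{p+q}_c(X,\mc F)$; shifting indices, these are compactly supported cohomology groups of schemes of dimension at most $\dim X + p\cdot\dim_{/X}Z$, but crucially the comparison map $\mc F\to R\epsilon_*\epsilon^*\mc F$ is an isomorphism over $X_{[1,k]}$, so its cone is supported on $X_{>k}$; the cone of the induced map on $H^\bullet_c(X,-)$ is therefore built from $H^\bullet_c$ of schemes lying over $X_{>k}$ with fibers of dimension $\le k\dim_{/X}Z$, hence of total dimension $\le \dim X_{>k} + k\dim_{/X}Z$, and by \eqref{eq.approx-ie-dimension-hypothesis} and the bound $H^j_c(W)=0$ for $j>2\dim W$ this cone vanishes in degrees $q > 2\dim X_{>k} + k$, i.e.\ the comparison is an isomorphism for $q\ge k+2\dim X_{>k}+1$. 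The final identification of the right-hand side with $H^q(N\Gamma^{\le k}_X(Z),\epsilon^*\mc F)$ when $X/\kappa$ is proper is immediate since then $H^\bullet_c = H^\bullet$ on $X$ and $\epsilon$ is proper so $R\epsilon_*=R\epsilon_!$ and $H^\bullet_c(X,R\epsilon_*\epsilon^*\mc F)=H^\bullet(N\Gamma^{\le k}_X(Z),\epsilon^*\mc F)$.

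The main obstacle, I expect, is \emph{bounding the dimensions of the fibers of the correction terms correctly} — i.e.\ verifying that the defect of cohomological descent really is concentrated over $X_{>k}$ with the claimed fiber dimension, and then extracting exactly the numerical bounds $\Fil^{-\dim X_{>k}}$ and $q\ge k+2\dim X_{>k}+1$ from \eqref{eq.approx-ie-dimension-hypothesis}. This requires knowing that the relevant auxiliary configuration/zero-cycle schemes that appear, over $X_{>k}$, have fibers of dimension at most $k\dim_{/X}Z$, which comes down to the fact that an effective zero-cycle of degree $\le k$ on a fiber $Z_{\overline x}$ moves in a family of dimension at most $k\dim_{/X}Z$; this is elementary but must be stated with care since $X_{>k}$ need not be irreducible and the fibers of $f$ can jump. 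Everything else is a formal consequence of the structural results (\cref{body.cd-criterion}, \cref{body.filtration-functor}, \cref{body.spectral-sequences}, \cref{prop.sym-reduced-cohomo-computation}, \cref{maintheorem.mot-inc-exc}) already in hand.
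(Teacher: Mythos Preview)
Your overall strategy matches the paper's: stratify $X$ by $X_{[1,k]}$ and $X_{>k}$, use descent over $X_{[1,k]}$ via the fiberwise (weak) maximum, and bound the error over $X_{>k}$. The monoidal unwinding of $\chi$ and the reduction of part~(2) to showing that the cone $K$ of $\mc F\to R\epsilon_*\epsilon^*\mc F$ is supported on $X_{>k}$ are also exactly as in the paper.

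However, there is a genuine gap in your dimension bookkeeping, and it is precisely the ``main obstacle'' you flag. You bound the relevant error terms (e.g.\ $\Conf^{\ul a}_{X_{>k}}(Z)$ in part~(1), or the cohomological amplitude of $K|_{X_{>k}}$ in part~(2)) uniformly by ``fibers of dimension $\le k\dim_{/X}Z$ over a base of dimension $\dim X_{>k}$'', giving total dimension $\le \dim X_{>k}+k\dim_{/X}Z$. But this is \emph{too coarse}: for part~(1) you need these classes to lie in $\Fil^{-\dim X_{>k}}$, i.e.\ to have dimension $\le\dim X_{>k}$, and your bound does not give this. The hypothesis \eqref{eq.approx-ie-dimension-hypothesis} involves $\dim X_\infty$, not $\dim X_{>k}$, which is a signal that $X_\infty$ must enter the argument explicitly---and in your sketch it never does.

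The missing step is a \emph{further} decomposition of $X_{>k}$ into the open $X_{(k,\infty)}$ and the closed $X_\infty$. Over $X_{(k,\infty)}$, the map $f$ is quasi-finite (every geometric fiber has finitely many points), so $\Conf^j_{X_{(k,\infty)}}(Z)$ has fibers of dimension $0$ and hence total dimension $\le\dim X_{>k}$; in part~(2), the rank spectral sequence on such a fiber has $E_1^{p,q}=0$ for $q>0$, forcing $K$ to sit in degrees $\le k-1$. Over $X_\infty$, the fibers can indeed contribute $k\dim_{/X}Z$ to the dimension (or $2k\dim_{/X}Z$ to the cohomological amplitude of $K$), but now the base has dimension $\dim X_\infty$, and \eqref{eq.approx-ie-dimension-hypothesis} says precisely that $\dim X_\infty+k\dim_{/X}Z\le\dim X_{>k}$. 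Combining the two strata via additivity (part~(1)) or the long exact sequence in $H^\bullet_c$ (part~(2)) then yields the stated bounds. Without this second stratification your argument does not close.
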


\begin{remark}
At the price of complicating the proof by working with a finer stratification, the inequality \cref{eq.approx-ie-dimension-hypothesis} can be replaced with
\[ \dim X_{>k} \geq \max_{x \in X}{ (k\dim Z_{x} + \dim \overline{\{x\}})}. \]
\end{remark}

\begin{remark}\label{remark.rank-spectral-truncated-conf}
Cohomological approximate inclusion-exclusion as in \cref{theorem.approx-ie}-(2) can be completed to a cohomological inclusion-exclusion \emph{formula} by combining with the rank spectral sequence for $\Gamma^{\leq k}_X(Z)$,
\[ E_1^{p,q}=\begin{cases}H^{q - p + 1}_c(\Conf^{p+1}_X(Z), \ul{\sgn}\otimes \mc{F}) & 0 \leq p \leq k-1 \\
0 & \textrm{otherwise} \end{cases} \Rightarrow H^{p+q}_c(X, R\epsilon_*\epsilon^*\mc{F}), \]
where the computation of the terms $E_1^{p,q}$ follows from \cref{prop.sym-reduced-cohomo-computation}. This rank spectral sequence will also be used fiberwise over $X$ in the proof of \cref{theorem.approx-ie}.

In the case of rational coefficients, it is more useful to use the closely related skeletal spectral sequence in place of the rank spectral sequence --- see \cref{ss.skeletal}.
\end{remark}

\begin{proof}[Proof of \cref{theorem.approx-ie}]\hfill\\
\emph{Motivic case:} We will use motivic inclusion-exclusion. To do so, we consider the $k$-truncated configuration poscheme $\Conf^{\leq k}_X(Z)$. For any geometric point $\overline{x}$ in $X_{[1,k]}$, $\Conf^{\leq k}_{X_{[1,k]}}(Z)_{\overline{x}}=\Conf^{\leq k}(Z_{\underline{x}})$ has a weak maximum because, by definition of $X_{[1,k]}$, $Z_{\underline{x}}^{\red}$ is finite reduced of degree $\leq k$. By \cref{maintheorem.mot-inc-exc}, we then find
\[ [X]=[X_{[1,k]}] + [X_{>k}]=\chi(N\Conf^{\leq k}_{X_{[1,k]}}(Z)) + [X_{>k}]. \]
Clearly we have $[X]\equiv [X]+[X_{>k}]  \mod \Fil^{-\dim_{/S} X_{>k}}$, so that it remains to compute the terms appearing in the Euler characteristic. If we compute the latter using \cref{remark.monoidal-nerve-poscheme-nerve}, then from the identity
\[ [\Conf_{X}^{\ul{a}}(Z)] = [\Conf_{X_{[1,k]}}^{\ul{a}}(Z)] + [\Conf_{X_{>k}}^{\ul{a}}(Z)], \]
we find that it suffices to show for any $\ul{a} \in \mbb{Z}_{>0}^{[p]}$ with $\sum \ul{a} \leq k$, that
\[ \dim_{/S}\Conf_{X_{>k}}^{\ul{a}}(Z) \leq \dim_{/S} X_{>k}. \]

To see this, clearly it suffices to treat the case of the unordered configuration space $\Conf^j$ for $j\leq k$. Then
\[ \dim_{/S} \Conf^j{X_{>k}}(Z) \leq \max(\dim \Conf^j_{X_{(k,\infty)}}(Z), \dim \Conf^j_{X_{\infty}}(Z)) \]
Since $Z$ is quasifinite over $X_{(k,\infty)}$, the first term in the maximum is $\leq \dim X_{>k}$, while the second term is bounded by $j\dim_{/X}Z + \dim X_\infty$, so that the hypothesis \cref{eq.approx-ie-dimension-hypothesis} yields the result.

\hfill\\
\noindent\emph{Cohomological case:}
Let $K$ denote the cone of $	\mc{F} \rightarrow R\epsilon_*\epsilon^*\mc{F}$ so that there is an exact triangle $\mc{F} \rightarrow R\epsilon_*\epsilon^*\mc{F} \rightarrow K.$  By the corresponding long exact sequence, it suffices to show $H^q_c(X, K)$ vanishes in degree $q\geq k+ 2\dim_{X_{>k}}$.

We first note that the hypotheses imply that $\mc{P}$ satisfies cohomological descent over $X_{[1,k]}$, so $K$ is supported in the closed set $X_{>k}$. Indeed: for $\overline{x}$ a geometric point of $X_{[1,k]}$, $\Sym^{\leq k}(Z_{\overline{x}})={\Gamma^{\leq k}_X(Z)}_{\overline{x}}$ is universally homeomorphic to $\Sym^{\leq k}( (Z_{\overline{x}})^{\red})$. The latter is contractible by \cref{lemma:punctual-sym-contract}, so that we obtain cohomological descent by \cref{lemma.nerve-contractible-descent}.

We now decompose $X_{>k}$ into the open $X_{(k,\infty)}$ and closed $X_\infty$, so that we have a long exact sequence
\[ \dotsm \rightarrow H^q_c(X_{(k,\infty)}, K) \rightarrow H^q_c(X_{>k}, K)=H^q_c(X, K) \rightarrow H^q_c(X_\infty, K) \rightarrow \dotsm \]
so it suffices to prove the vanishing on $X_{(k,\infty)}$ and $X_{\infty}$ separately.

Over $X_{(k,\infty)}$, $f$ is finite. As a consequence, we claim $K$ is supported in degrees $\leq k-1$ over $X_{(k,\infty)}$: It suffices to check this for $R\epsilon_* \epsilon^* \mc{F}$, and, by proper base change it suffices to check at a geometric point of $X_{(k,\infty)}$. Applying the rank spectral sequence as in \cref{remark.rank-spectral-truncated-conf} to such a fiber, we find the $E_1$ terms are zero for $q>0$ and for $p>k-1$, thus we find that $R\epsilon_*\epsilon^* \mc{F}$ is supported in degrees $\leq k-1$, verifying the claim. Then,  $H^q_c(X_{(k,\infty)}, K)=0$ for
\[ q \geq k + 2\dim X_{>k} > k-1 + 2\dim X_{(k,\infty)}. \]

Similarly, over $X_{\infty}$, if we apply the rank spectral sequence to geometric fibers, we find that $K$ is supported in degrees $\leq 2 k \dim_{/X}(Z) + k - 1$.  Thus $H^q_c(X_{>k}, K)=0$ for
\[ q \geq 2\dim X_{>k} + k > 2 \dim X_\infty + 2 k \dim_{/X}(Z) + k-1 \]
where here we have used \cref{eq.approx-ie-dimension-hypothesis}.
\end{proof}

\begin{remark}
The instance of \cref{maintheorem.mot-inc-exc} used in the proof of \cref{theorem.approx-ie}-(1) can be replaced with the earlier version of \cite[Theorem 7.2.4]{bilu-howe:mot-eul-mot-stat} that is proved using motivic Euler products (by a method analogous to the proof of the classical inclusion-exclusion formula described after \cref{eq.indicator-inc-exc-product}).
\end{remark}

\subsection{The skeletal spectral sequence}\label{ss.skeletal}

We now study the skeletal spectral sequence for the effective zero cycles poscheme. We assume $Z/X$ is projective and $X/S$ is proper. Below we will consider cohomology relatively over $S$; to that end, we write $\epsilon$ for the augmentation $N\Gamma^\bullet_X(Z) \rightarrow S$.

\newcommand{\Hom}{\mr{Hom}}

\subsubsection{The skeletal spectral sequence}
Suppose $K$ is a bounded below complex of sheaves on $N\Gamma^\bullet_X(Z)$. Using the monoidal description of the nerve (see \cref{remark.monoidal-nerve-poscheme-nerve}), the skeletal sequence is
\[ E_1^{s,t}(K) = \bigoplus_{ \underline{a} \in \mbb{Z}_{>0}^{[s]}} R^t(\Gamma_X^{\ul{a}}(Z)\rightarrow S)_* K|_{\Gamma_X^{\ul{a}}(Z)} \Rightarrow R^{s+t}\epsilon_* K \]
Here the differential from $E^{s-1,t}$ to $E^{s,t}$ is given by  $\sum_{i=0}^{s} (-1)^i \delta_i^*$ where $\delta_i$ is as described in \cref{remark.monoidal-nerve-poscheme-nerve}.

There is a natural rank filtration on the complex $E_1(K)$, defined by taking $\Fil^p(E_1(K))$ to be the subcomplex consisting of summands with $\sum \underline{a}>p$. It is clear that this filtration commutes with the rank filtration on $K$: the natural map of spectral sequences $E(\Fil^p(K)) \rightarrow E(K)$ is an injection on the first page and identifies $E(\Fil^p(K))_1$ with $\Fil^p(E(K))_1$. More generally we obtain a canonical identification $E(\Gr^{[a,b]}K)_1=\Gr^{[a,b]}E(K)_1$. Where $\Gr^{[a,b]}=\Fil^a/\Fil^{b+1}$.

In all of these, we may pass to the quasi-isomorphic complex $E_1^\circ$ of cochains in the kernel of the degeneracy maps. Term by term, this is given by the summands corresponding to $\underline{a} \in \mbb{Z}_{>0}^{[s]}$ (this follows from the description of the degenerate simplices in \cref{remark.monoidal-nerve-poscheme-nerve}).

\subsubsection{The Banerjee complex}
For $K$ a complex of sheaves on $X$, we will compare $E(K)_1^\circ$, together with its filtration, to a simpler complex, which we define now: we first consider the non-degenerate complex whose degree $p$ term is
\[ R^\bullet(Z^{\times_X [p]} \rightarrow S)_*K \]
where the differential from degree $p-1$ to $p$ is $\sum_{i=0}^{p} (-1)^i \alpha_i^*$, for $\alpha_i$ is the forget the $i$th coordinate map. This is the non-degenerate subcomplex of the $E_1$ page of a spectral sequence for the cohomology over $S$ of the pullback of  $K$ to the simplicial scheme $[p] \mapsto Z^{\times_X [p]}$, but we will not use this here. As we learned from O.~Banerjee, the isotypic components for the $\sgn$ character form a subcomplex
\[ B(K)^p = R^\bullet(Z^{\times_X [p]} \rightarrow S)_*K[\sgn]. \]
That this is a subcomplex follows from  the elementary
\begin{lemma}\label{lemma.perm-delete-commutation}
For $\sigma \in \mfS_{[p]}$,
\[ \alpha_j \circ \sigma =    (p\; p-1 \; \dotsm \; j) \circ \sigma \circ (i \; i+1 \; \dotsm \; p)  \circ \alpha_i \]
where $\sigma(i)=j$.
\end{lemma}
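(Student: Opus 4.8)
The plan is to recognize every map in the statement as arising from the functoriality of the assignment $[n]\mapsto Z^{\times_X[n]}=\mathrm{Hom}_X([n],Z)$ in injective maps of ordinals: $\alpha_i$ is induced by the coface $d_i\colon[p-1]\hookrightarrow[p]$ omitting $i$, while $\sigma$, $(i\;i+1\;\dotsm\;p)$ and $(p\;p-1\;\dotsm\;j)$ are induced by the corresponding permutations of $[p]$, and composition of scheme maps corresponds to composition of index maps in the opposite order. Thus the asserted equality of scheme maps will follow from a single combinatorial identity of maps of finite ordered sets, which I would verify by hand. (Equivalently, one can bypass this formalism and simply evaluate both sides on a general point $(z_0,\dotsc,z_p)$ of $Z^{\times_X[p]}$, tracking the relabelling of coordinates caused by dropping one of them.)

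The first thing I would check is that the composite permutation $\tilde\sigma:=(p\;p-1\;\dotsm\;j)\circ\sigma\circ(i\;i+1\;\dotsm\;p)$ of $[p]$ fixes $p$ --- one has $p\mapsto i\mapsto\sigma(i)=j\mapsto p$ --- so that it restricts to a permutation $\bar\sigma$ of $[p-1]$ and the right-hand side makes sense as $\bar\sigma$ followed by $\alpha_i$. Next I would record the two elementary observations that the restriction of $(i\;i+1\;\dotsm\;p)$ to $[p-1]=\{0,\dotsc,p-1\}$ is exactly $d_i$, and that the restriction of $(p\;p-1\;\dotsm\;j)$ to the image $[p]\setminus\{j\}$ of $d_j$ is exactly the inverse bijection $d_j^{-1}\colon[p]\setminus\{j\}\xrightarrow{\sim}[p-1]$. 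Together these give $\bar\sigma=d_j^{-1}\circ\sigma\circ d_i$, and then a one-line manipulation (for instance $d_i\circ\bar\sigma^{-1}=d_i\circ d_i^{-1}\circ\sigma^{-1}\circ d_j=\sigma^{-1}\circ d_j$, using that $\sigma^{-1}\circ d_j$ has image $[p]\setminus\{i\}$) identifies the index map underlying the right-hand side with the one underlying $\alpha_j\circ\sigma$, whence the claim.

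Since the statement is purely formal, the only real obstacle is bookkeeping: one must fix, once and for all, the direction in which $\mfS_{[p]}$ acts on $Z^{\times_X[p]}$ (by precomposition $z\mapsto z\circ\sigma$ versus by $z\mapsto z\circ\sigma^{-1}$), the convention for cycle notation, and the orientation of $\circ$, and one must make these mutually consistent with the placement of $i$ versus $j=\sigma(i)$ as it appears in the statement --- this compatibility is the subtle point and is what dictates which of the two action conventions is in force. A small case distinction according to whether $i\le j$ or $i>j$ can make the index chase cleaner but is not essential.
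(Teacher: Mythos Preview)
The paper's own ``proof'' of this lemma is literally \emph{Left to reader}, so there is nothing to compare against; your proposal is a correct and complete way to discharge the exercise. Your two key observations --- that $(i\;i{+}1\;\dotsm\;p)|_{[p-1]}=d_i$ and $(p\;p{-}1\;\dotsm\;j)|_{[p]\setminus\{j\}}=d_j^{-1}$ --- cleanly reduce the statement to the identity $\bar\sigma=d_j^{-1}\circ\sigma\circ d_i$ of bijections of $[p-1]$, and your verification that the composite cycle fixes $p$ is exactly what is needed to make sense of the right-hand side. You are also right that the only subtle point is the action convention: the identity holds as stated when $\sigma$ acts on $Z^{\times_X[p]}$ by $z\mapsto z\circ\sigma^{-1}$ (equivalently $(\sigma\cdot z)_k=z_{\sigma^{-1}(k)}$), which is consistent with the paper's usage $\sigma\cdot c=(\sigma^{-1})^*c$ in the computation immediately following the lemma.
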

\begin{proof}
Left to reader.
\end{proof}

Indeed, given this lemma, we find that for $c$ in degree $p$,
\begin{align*}
 \sigma\cdot d(c)&= (\sigma^{-1})^* \sum_{j=0}^{p} (-1)^j \alpha_j^* c \\
 &= \sum_{j=0}^{p}  (-1)^j \alpha_{\sigma(j)}^* \left( (p\; p-1 \; \dotsm \; j) \circ \sigma^{-1} \circ (\sigma(j) \; \sigma(j)+1 \; ... \; p) \right) ^*(c) \\
 & = \sum_{j=0}^{p} (-1)^j \alpha_{\sigma(j)}^*\left( (p \; \dotsm \; \sigma(j)) \sigma (j\; \dotsm \; p) \right) \cdot c \\
 &= \sum_{j=0}^{p}  (-1)^j \alpha_{\sigma(j)}^*  \sgn(\sigma) (-1)^{\sigma(j)-p}(-1)^{p-j}  c \\
 & = \sgn(\sigma) \sum_{j=0}^{p}  (-1)^{\sigma(j)} \alpha_{\sigma(j)}^* c \\
 & = \sgn(\sigma) d(c).
\end{align*}

\newcommand{\asym}{\mathrm{asym}}
We define an antisymmetrization map
\[ \asym:  E(K)_1^\circ \rightarrow B(K) \]
summand by summand as follows:
\begin{enumerate}
\item For $c$ in the degree $p$ summand corresponding to $\ul{a}=[p]=(1,\ldots,1)$, for which $\Gamma^{[p]}_X(Z)=Z^{\times_X [p]}$, we define
\[ \asym(c)=\sum_{\sigma \in \mfS_{[p]}} \sgn(\sigma)\sigma \cdot c \]
\item For $c$ in any other summand, $\asym(c)=0$.
\end{enumerate}

\begin{theorem}\label{theorem.banerjee-qiso}
	For any bounded below complex $K$, the map
	\[ \asym: E(K)_1^\circ \rightarrow B(K) \]
	is a map of filtered complexes. If $K$ is a complex of $L$-modules, then it is a filtered quasi-isomorphism.
\end{theorem}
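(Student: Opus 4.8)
The plan is to check in turn that $\asym$ is a chain map respecting the two rank filtrations, and then to deduce the filtered quasi-isomorphism by passing to associated graded pieces, each of which I will identify with the (dual of the) reduced chain complex of a sphere carrying the sign representation of a symmetric group — the same homological input already used in \cref{example.barycentric-simplex} and \cref{prop.sym-reduced-cohomo-computation}.

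Compatibility with the filtrations is immediate: the only summand of $E(K)_1^{\circ,p}$ on which $\asym$ is nonzero is $\Gamma^{[p]}_X(Z)=Z^{\times_X[p]}$, which has rank $\sum\underline a=p+1$, matching the rank $p+1$ assigned to $B(K)^p$, and every other summand in that cochain degree has strictly larger rank. That $\asym$ is a chain map is a direct computation of the same nature as the one preceding the theorem statement (using \cref{lemma.perm-delete-commutation}): evaluating $d$ on the all-ones summand and antisymmetrizing, every \emph{merge} face map $\delta_i$ with $0\le i<p$ of the nerve lands on a summand containing a factor $\Gamma^2_X(Z)$, through whose symmetrizing quotient $Z\times_X Z\to\Gamma^2_X(Z)$ its contribution is pulled back and hence is symmetric and annihilated by antisymmetrization; the surviving ``forget the last block'' face map is precisely the ``forget the last coordinate'' map generating $d_B=\sum_i(-1)^i\alpha_i^*$, and reconciling the alternating signs (exactly as in the displayed computation above) gives $\asym\circ d=d_B\circ\asym$.

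For the quasi-isomorphism, since both filtrations are bounded below and finite in each total degree it suffices to show $\asym$ is a quasi-isomorphism on each graded piece of rank $n$. There $\Gr B(K)$ is $B(K)^{n-1}=R(Z^{\times_X n}\to S)_*K[\sgn]$ placed in cohomological degree $n-1$, while $\Gr E(K)_1^\circ$ is the complex $s\mapsto\bigoplus_{\underline a\in\mbb Z_{>0}^{[s]},\ \sum\underline a=n}R(\Gamma^{\underline a}_X(Z)\to S)_*K$ (the forget-the-last-block maps drop out of the differential since they strictly raise the rank), and $\Gr\asym$ is the projection onto the $\underline a=(1^n)$ summand, in degree $n-1$, followed by antisymmetrization. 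Using insensitivity of \'etale/pro-\'etale $L$-cohomology to the universal homeomorphisms $\SG$, together with the transfer isomorphism for the finite quotients $Z^{\times_X n}\to\Sym^{\underline a}_X(Z)=Z^{\times_X n}/\mfS_{\underline a}$ (valid with $L$-coefficients because $|\mfS_{\underline a}|$ is invertible), one identifies $R(\Gamma^{\underline a}_X(Z)\to S)_*K$ with $N^{\mfS_{\underline a}}$, where $N:=R(Z^{\times_X n}\to S)_*K$ carries its evident $\mfS_n$-action. Matching ordered set partitions of an $n$-element set with chains of nonempty proper subsets (as in \cref{example.barycentric-simplex}) then identifies $\mfS_n$-equivariantly $\Gr E(K)_1^\circ$ with $\mathrm{Hom}_{\mfS_n}\!\big(\tilde C_{\bullet-1}(\mathrm{sd}\,\partial\Delta^{n-1}),N\big)$, the dual of the reduced simplicial chain complex of the barycentric subdivision of $\partial\Delta^{n-1}$ (on which $\mfS_n$ permutes the $n$ vertices), with coefficients in $N$. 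The complex $\tilde C_\bullet(\mathrm{sd}\,\partial\Delta^{n-1})$ consists of finitely generated projective $\mbb Q[\mfS_n]$-modules and is quasi-isomorphic to its homology $\tilde H_{n-2}\cong\sgn$ in degree $n-2$; since $\mbb Q[\mfS_n]$ is semisimple, $\mathrm{Hom}_{\mfS_n}(-,N)$ is exact, so $\Gr E(K)_1^\circ$ has cohomology $\mathrm{Hom}_{\mfS_n}(\sgn,N)=N[\sgn]$ in degree $n-1$ and vanishes otherwise. Under the resulting surjection from the top term $\mathrm{Hom}_{\mfS_n}(\tilde C_{n-2},N)=N$ onto this cohomology — induced by the inclusion $\sgn=\tilde H_{n-2}\hookrightarrow\tilde C_{n-2}=\mbb Q[\mfS_n]$ of the top homology class (the sign idempotent) into the top chains — a class $c\in N$ maps to $\sum_{\sigma\in\mfS_n}\sgn(\sigma)\sigma c$, which is exactly $\Gr\asym$. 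Hence $\Gr\asym$ is a quasi-isomorphism for every $n$, and $\asym$ is a filtered quasi-isomorphism.

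The step I expect to demand the most care is the $\mfS_n$-equivariant combinatorial identification in the last paragraph: matching the merge differential on $\bigoplus_{\underline a}N^{\mfS_{\underline a}}$, including signs and the degree shift by one, with the simplicial differential on $\mathrm{sd}\,\partial\Delta^{n-1}$, and checking that the top homology carries the sign character with the normalization that makes $\Gr\asym$ come out as the antisymmetrization on the nose. The remaining ingredients — the chain-map verification and the universal-homeomorphism and transfer reductions — are routine given \cref{lemma.perm-delete-commutation} and the pro-\'etale formalism recalled in the paper.
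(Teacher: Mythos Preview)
Your proposal is correct and follows essentially the same approach as the paper. The paper verifies the chain-map property by a source-side case analysis on the summand $\ul a$ of $c$ (the three cases $\ul a=[p]$, exactly one $a_i=2$, and the rest), whereas you organize the same computation from the target side (which face maps feed the all-ones output summand); and for the filtered quasi-isomorphism the paper writes $\Gr^p E(K)_1^\circ\cong(\tilde C\otimes N)^{\mfS_{[p]}}$ with $\tilde C$ the relative complex of $(\Delta^p,\partial\Delta^p)$, while you use the dual formulation $\mathrm{Hom}_{\mfS_n}(\tilde C_\bullet(\mathrm{sd}\,\partial\Delta^{n-1}),N)$ --- both rest on the same identification of the combinatorial piece with $\sgn$ in the correct degree.
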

\begin{proof}
It is clear from the definition that the map preserves the filtration, but we must verify that it is a map of complexes, i.e.\ that $d\circ \asym=\asym \circ d$. We  check this summand by summand.

Suppose $c$ is a local section of the degree $p-1$ summand corresponding to $\ul{a}$. We consider three different cases:
\begin{enumerate}
\item $\ul{a}=[p]$, i.e.\ $\ul{a}(i)=1$ for all $i \in [p-1]$
\item $\ul{a}(i)=2$ for exactly one $i \in [p-1]$ and is $1$ for all other $i$.
\item the remaining $\ul{a}$.
\end{enumerate}

In the third case, the $[p]$ component of $\delta_i^*c$ is zero for all $0 \leq i \leq p$, so that $\asym(d(c))=0=d(0)=d(\asym(c)),$ as desired.

In the second case, let $0\leq i \leq p-1$ be the unique index such that $\ul{a}(i)=2$. Then $\delta_j^*c$ has non-trivial $[p]$-component if and only if $i=j$. However, $\delta_j^*c$ is invariant under action of the transposition $(i \; i+1)$, thus its antisymmetrization is zero, so $\asym(d(c))=0=d(0)=d(\asym(c))$.

In the first case, $\delta_i^*c$ is non-zero only for $i=p$, in which case it is concentrated in the $[p]$-component. Thus $d(c)=(-1)^p \delta_p^*c = (-1)^p \alpha_p^*c.$ Using \cref{lemma.perm-delete-commutation}, its image under antisymmetrization is
\begin{align*} (-1)^p \sum_{\sigma \in \mfS_{[p]}}  \sgn(\sigma) \sigma \cdot  \alpha_p^*c &=  (-1)^p \sum_{\sigma \in \mfS_{[p]}} \sgn(\sigma)  (\alpha_p \circ \sigma^{-1})^* c \\
&= (-1)^p \sum_{\sigma \in \mfS_{[p]}} \sgn(\sigma) \left( \sigma^{-1} (\sigma(p)\; \sigma(p)+1 \; \dotsm \; p) \circ \alpha_{\sigma(p)} \right)^* c \\
&= (-1)^p \sum_{\sigma \in \mfS_{[p]}}  \sgn(\sigma) \alpha_{\sigma(p)}^*  (p\; \dotsm \; \sigma(p)) \sigma \cdot c \\
&=  \sum_{\sigma \in \mfS_{[p]}}  (-1)^{\sigma(p)} \alpha_{\sigma(p)}^*  \left( \sgn((p\; \dotsm \; \sigma(p))\sigma) (p\; \dotsm \; \sigma(p)) \sigma \cdot c \right)
\end{align*}
If we group terms according to the coset for $\mfS_{[p-1]}$, i.e.\ according to $\sigma(p)$, then we obtain the differential in $B(K)$ applied to the antisymmetrization of $c$, as desired.

It remains to see that when $K$ is a complex of $E$ modules, then this is a filtered quasi-isomorphism. We then consider the map
\[ \Gr^p(E(K)_1^\circ) \rightarrow \Gr^{p}(B(K)). \]
The complex on the right is concentrated in degree $p$ where it equals
\[ R^\bullet(Z^{\times_X [p]} \rightarrow S)_*K [\sgn]. \]
Multiplication by $1/(p+1)!$ and inclusion into the $[p]$-component of $\Gr^p(E(K)_1^\circ)$ give a section. This section is a quasi-isomorphism: one can check that
\[ \Gr^p(E(K)_1^\circ) \cong \left(\tilde{C} \otimes_E R^\bullet (Z^{\times_X [p]} \rightarrow S)_*K  \right)^{\mfS_{[p]}} \]
where $\tilde{C}$ is the non-degenerate relative cohomology complex for $N\Conf^{\leq p}([p]) \subset N\Conf^{\bullet}([p])$. By \cref{example.barycentric-simplex}, this inclusion is naturally identified with the barycentric subdivision of the inclusion of $\partial\Delta^p$ in $\Delta^p$, thus $\tilde{C}$ is quasi-isomorphic as a complex of $L[\mfS_{[p]}]$-modules to $\ul{\sgn}[-p]$, and we conclude.
\end{proof}

Suppose now that $K \in D_{\Cons}(X,L)$. Then \cref{theorem.banerjee-qiso} gives an alternate computation of $R\epsilon_*\Gr^p \epsilon^*K$ (computed earlier by \cref{prop.sym-reduced-cohomo-computation}). Indeed, since the map $\asym$ is a filtered quasi-isomorphism, we have
\[ E_1(\Gr^p \epsilon^*K)=\Gr^{p}E_1(K) \simeq \Gr^{p}B(K)= R^\bullet (Z^{\times_X [p]} \rightarrow S)_* K[\sgn] \]
where the right hand-side is supported in the column $s=p$. Thus the sequence degenerates at $E_1$ to give isomorphisms
\[R\epsilon_*^{i+p}(\Gr^{p}\epsilon^*K) \cong R^i(Z^{\times_X [p]} \rightarrow S)_*K[\sgn]. \]
As we learned from O.~Banerjee, there is a simple direct argument that shows these match with the expression in terms of $\ul{\sgn}$ cohomology of relative configuration spaces as computed by \cref{prop.sym-reduced-cohomo-computation}. This will be explained and compared to  Grothendieck ring computations of Vakil and Wood in \cref{ss.symmetric-reduced-incidence}.

\begin{remark}\label{remark.rank-from-skeletal}
Using the skeletal spectral sequence and \cref{theorem.banerjee-qiso}, we can also compute information about the differential of the rank spectral sequence. Indeed, this differential is induced by the connecting homomorphism in cohomology from the short exact sequence
\begin{equation}\label{eq.connecting-exact} 0 \rightarrow \Gr^{p}\epsilon^*K \rightarrow \Gr^{[p-1, p]}\epsilon^*K  \rightarrow \Gr^{p-1} \epsilon^*K \rightarrow 0. \end{equation}
In particular, the kernel of the connecting homomorphism in $R^i\epsilon_*\Gr^{p-1} \epsilon^*K$ is equal to the image of $R^i\epsilon_* \Gr^{[p-1,p]}\epsilon^*K $ under the natural map. Using the filtered quasi-isomorphism of \cref{theorem.banerjee-qiso} and compatibility of the rank filtration with the $E_1$ page of the skeletal sequence, we can deduce that the skeletal sequence for $\Gr^{[p-1,p]}\epsilon^*K$ degenerates on $E_2$ (because it is supported in the columns $s=p$ and $s=p-1$), and then compute this connecting map on the $E_2$ page. We deduce that the kernel of the differential on the $E_1$ page of the rank spectral sequence is the same as the kernel of the differential in the Banerjee complex (assuming the coefficients are rational). As indicated in the introduction to this section, it would be interesting to understand if there is a deeper connection between the skeletal and rank spectral sequences.
\end{remark}

\subsection{Hilbert schemes and Vassiliev sequences}\label{ss.hilbert}

As indicated in the introduction to this section, analogs of \cref{prop.sym-reduced-cohomo-computation} and \cref{theorem.approx-ie} also exist using the Hilbert poscheme of points
\[ \HP^\bullet_X(Z):=\bigsqcup_{k=1}^\infty \HP^k_X(Z). \]
Here the poscheme structure is induced by inclusion of closed subschemes (note there is no monoid structure here!), and there is a natural open immersion $\Conf^\bullet_X(Z) \subseteq \HP^\bullet_X(Z)$ induced by the universal configuration over $\Conf^\bullet_X(Z)$. The key step in carrying out the argument in this setting is to prove the correct analog of the contractibility \cref{lemma:punctual-sym-contract}; in this case the right statement is that, for $Z/\Spec \kappa$ finite (but not necessarily reduced), geometric support induces a deformation-retraction from $\HP^\bullet_X(Z)^{\red}$ to $\Conf^\bullet_X(Z)^\red=\Conf^\bullet_X(Z^\red)$. There is a natural Hilbert--Chow morphism $\HG: \HP^\bullet_X(Z) \rightarrow \Gamma^\bullet_X(Z)$ that restricts to an isomorphism on $\Conf^\bullet_X(Z)$ and induces the identity map on the $E_1$ page of the corresponding rank spectral sequences, so that in a strong sense one does not obtain anything new from this approach.

If we instead take a larger subset of the relative Hilbert poscheme allowing fibers of arbitrary dimension, then we obtain  an exact inclusion-exclusion sequence and formula. This is closely related to certain Vassiliev spectral sequences, as we explain briefly now. These results will not be used in the rest of the paper.

Let $X$ be a locally Noetherian scheme and let $Z/X$ be quasi-projective. Then we can form the relative Hilbert scheme, $\Hilb_{X}(Z)$ --- recall \cite{FGA, FGA-explained} that this is the moduli scheme over $X$ of proper flat families of closed subschemes of $Z$; and it is equipped with a natural poscheme structure by the inclusion relation.  By definition, its formation is compatible with base change.

If we fix a relatively ample $\mc{L}/Y$, then for any numerical polynomial $f$ we obtain a clopen subscheme $\Hilb^{\mc{L},f}_{Y/X}$ parameterizing families with Hilbert polynomial $f$. Given any  set of numerical polynomials $S$, we write $\Hilb^{\mc{L},S}_{Y/X}$ for the union of these clopen subschemes, whose formation is again compatible with base change.

\begin{proposition} If $Z/X$ is projective and $S$ is finite, then $\Hilb_{Z/X}^{\mc{L},S}$ is a projective poscheme over $Z$ admitting a rank function. Moreover, there is a finite set of numerical polynomials that appear as the Hilbert polynomial of a geometric fiber $Z_{\overline{x}}$, and if $S$ contains this finite set then $\Hilb_{Z/X}^{\mc{L},S}$ admits maxima over geometric points in the image of $Z$.
\end{proposition}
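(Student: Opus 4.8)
The plan is to deduce both assertions from standard facts about Hilbert schemes (\cite{FGA, FGA-explained}) together with the stratification of $X$ by the Hilbert polynomial of the fibers of $f$. For the first assertion, write $\Hilb^{\mc{L},S}_{Z/X}=\bigsqcup_{f\in S}\Hilb^{\mc{L},f}_{Z/X}$. Since $Z/X$ is projective, each $\Hilb^{\mc{L},f}_{Z/X}$ is projective over $X$ by Grothendieck's representability theorem (realized as a closed subscheme of a relative Grassmannian attached to a high enough twist of $\mc{L}$), so the finite disjoint union is again projective over $X$; and, as a clopen subscheme of the full relative Hilbert scheme, it inherits the inclusion poscheme structure already recalled.

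For the ranking, I would first use that $S$ is finite to fix an integer $n_0$ with $f_1(n_0)<f_2(n_0)$ whenever $f_1\ne f_2$ in $S$ satisfy $f_1(n)<f_2(n)$ for $n\gg 0$ (such an $n_0$ exists, as a finite maximum of thresholds), and set $\rk([W])=f_W(n_0)$ for $f_W$ the Hilbert polynomial of $W$; this is a locally constant $\bbZ$-valued function with finite image, hence a morphism to $\bbZ$. That $\rk$ is order-preserving is immediate on geometric fibers, since an inclusion $W_1\subseteq W_2$ of closed subschemes over an algebraically closed field forces $P_{W_1}(n)\le P_{W_2}(n)$ for $n\gg 0$. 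The real content is that the order relation restricted to each $\rk^{-1}(k)$ is the diagonal. The choice of $n_0$ reduces this to the statement that, for each numerical polynomial $f$, the order relation on $\Hilb^{\mc{L},f}_{Z/X}$ is the diagonal: an inclusion between subschemes of distinct Hilbert polynomials $f_1\ne f_2$ satisfies $f_1(n)<f_2(n)$ for $n\gg 0$, hence $f_1(n_0)\ne f_2(n_0)$, so such a pair cannot lie in a single $\rk^{-1}(k)$. For fixed $f$, I would argue on the incidence scheme $I_f\subseteq\Hilb^{\mc{L},f}_{Z/X}\times_X\Hilb^{\mc{L},f}_{Z/X}$, which is Noetherian since $X$ is, carrying universal families $\mc{W}_1\subseteq\mc{W}_2$ on $Z\times_X I_f$, both $I_f$-flat with Hilbert polynomial $f$: for $n\gg 0$ the pushforwards to $I_f$ of $\mc{O}_{\mc{W}_1}(n)$ and $\mc{O}_{\mc{W}_2}(n)$ are locally free of the same rank $f(n)$ with vanishing higher direct images, so the long exact sequence identifies the pushforward of $\mc{K}:=\ker(\mc{O}_{\mc{W}_2}(n)\twoheadrightarrow\mc{O}_{\mc{W}_1}(n))$ with the kernel of a surjection of locally free sheaves of equal rank, which is zero; by relative global generation this forces $\mc{K}=0$, hence $\mc{W}_1=\mc{W}_2$ and $I_f=\Delta$. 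Since $S$ is finite, the resulting ranking is finite.

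For the second assertion, finiteness of the set of Hilbert polynomials of the geometric fibers $Z_{\overline{x}}$ follows from generic flatness and Noetherian induction on $X$: $X$ is a finite union of locally closed subschemes over which $f$ restricts to a flat family, hence of locally constant Hilbert polynomial, and one collects the finitely many values that occur. Enlarging $S$ to contain all of them, take a geometric point $\overline{x}:\Spec\kappa\to X$ with $Z_{\overline{x}}\ne\emptyset$. Every member of the fiber of $\Hilb^{\mc{L},S}_{Z/X}$ over $\overline{x}$ is a closed subscheme of the proper $\kappa$-scheme $Z_{\overline{x}}$, and $Z_{\overline{x}}$ is itself such a point since its Hilbert polynomial now lies in $S$; hence $[Z_{\overline{x}}]$ is a maximum of that fiber. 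Combined with \cref{lemma:max-min-center-homotopy}, this is exactly the fiberwise input that makes the contractibility and inclusion-exclusion machinery of the earlier sections applicable.

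The step I expect to be the main obstacle is verifying that, for fixed $f$, the inclusion relation on $\Hilb^{\mc{L},f}_{Z/X}$ is scheme-theoretically the diagonal: one cannot conclude by comparing geometric points alone, since a closed immersion that is bijective on points need not be an isomorphism, so the argument must be made universally on the incidence scheme via cohomology and base change as above --- this is where Noetherianity of $X$ enters. Everything else --- representability and projectivity of $\Hilb^{\mc{L},f}_{Z/X}$, the elementary comparison of the integers $f(n_0)$, and the finiteness of the fiber Hilbert polynomials --- is routine.
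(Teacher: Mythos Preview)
Your argument is correct and follows the same approach as the paper's proof: projectivity is standard, finiteness of fiber Hilbert polynomials comes from generic flatness plus Noetherian induction, and the fiber $Z_{\overline{x}}$ supplies the maximum. The one place you are more careful is the rank function --- the paper simply notes that the Hilbert polynomial gives a strictly increasing map to the finite poset $S$ (ordered by $f\leq g$ iff $f(n)\leq g(n)$ for $n\gg 0$) and that any finite poset admits a rank function, leaving implicit the scheme-theoretic diagonality of $\leq$ on each $\Hilb^{\mc{L},f}$ that you spell out via the universal family; one minor correction is that $X$ is only assumed locally Noetherian, so your uniform Serre vanishing on $I_f$ should be justified after passing to Noetherian opens of $X$.
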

\begin{proof}
Properness is a standard result for Hilbert schemes, and it admits a rank because it admits a strictly increasing map to the finite poset $S$ (we say $f \leq g$ if $f(n) \leq g(n)$ for $n \gg 0$), and any finite poset admits a rank function.

By Noetherian induction and generic flatness, there is a finite set of polynomials that appear as the Hilbert polynomial of $(Z_x, \mc{L}_y)$ for $x$ a geometric point of $X$. The maximum over a geometric point $x$ is, of course, the point of the Hilbert scheme corresponding to the fiber $Z_x$ (by the same argument one obtains a maximum over any $T \rightarrow X$ such that $Z_T/T$ is flat).
\end{proof}

As a consequence, \cref{main.spectral-sequences} and the choice of a rank function give an inclusion-exclusion spectral sequence for the cohomology of the closed subscheme $f(Z)$ as well as the compactly supported cohomology of its complement, and \cref{maintheorem.mot-inc-exc} allows us to compute the class $[f(Y)]$ (and thus trivially also the class of its complement $[X- f(Z)]=[X]-[f(Z)]$). For a suitable choice of $S$, the Hilbert poscheme $\Hilb_{Z/X}^{\mc{L},S}$ includes a truncated punctual Hilbert poscheme $\HP^{\leq k}_X(Z)$, so that, for a suitable choice of rank function, the resulting spectral sequence will admit a natural map to the approximate inclusion-exclusion sequence studied above that is a quotient map on $E_1$; the other terms thus give precise control over the error in approximate inclusion-exclusion.

\begin{example}\label{example.singular-locus} Suppose we have a family of varieties $f: V \rightarrow X$ (e.g.\ the universal degree $d$ hypersurface in $\mbb{P}^n$ as in \cref{s.smooth-hypersurface}), and $Z \subseteq V$ is the relative singular locus. Then we can write the the discriminant locus $\Delta$ (consisting of the points $x \in X$ such that the fiber $V_x$ is singular) as $\Delta=f(Z)$.  The spectral sequence from the Hilbert poscheme of $f|_Z$ is a Vassiliev-type spectral sequence for the compactly supported cohomology of $\Delta$, and motivic inclusion-exclusion for $f|_Z$ gives an exact combinatorial analog in the Grothendieck ring.
\end{example}

\subsubsection{Geometrically reduced variant}

\begin{lemma}There is a finite set of numerical polynomials $S_\mr{gr}$ that can occur as the Hilbert polynomial of $Z_{\overline{x}}^{\red}$ for $\overline{x}$ a geometric point.
\end{lemma}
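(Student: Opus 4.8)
The plan is to adapt the proof of the non-reduced statement above, arguing by Noetherian induction on $X$ (which we may assume Noetherian) combined with a spreading-out argument; the two new ingredients needed are that the reduction of the geometric generic fibre descends, after a finite base change, to a geometrically reduced closed subscheme over the generic point, and that geometric reducedness is an open condition in flat families. It suffices, by Noetherian induction on closed subsets of $X$, to treat the case where $X$ is integral with generic point $\eta$ and to exhibit a dense open $U \subseteq X$ over which the Hilbert polynomial of $Z_{\overline x}^{\red}$ (with respect to $\mc L$) equals a single fixed polynomial $f_0$ for every geometric point $\overline x \to U$; applying the lemma inductively to $(X \setminus U)^{\red}$ and taking the (finite) union of Hilbert polynomials then concludes.

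For the base of the spreading out, fix a geometric point $\overline\eta$ over $\eta$. The closed subscheme $(Z_{\overline\eta})^{\red} \hookrightarrow Z_{\overline\eta}$ is of finite type, hence descends to a closed subscheme over a finite subextension $L$ of $\overline{K(\eta)}/K(\eta)$; enlarging $L$, and using that any finite type scheme over a field is geometrically reduced over a suitable finite extension, we may assume the descended subscheme $W_L \hookrightarrow Z_L$ is geometrically reduced over $L$, with $(W_L)_{\overline\eta} = (Z_{\overline\eta})^{\red}$. Since a finite extension of the function field spreads out to an integral scheme finite surjective over a dense open of $X$, and since a geometric point of that open lifts to the cover without changing the fibre $Z_{\overline x}$ or its reduction, we may, after replacing $X$ by this cover, assume there is a closed subscheme $W_\eta \hookrightarrow Z_\eta$, geometrically reduced over $K(\eta)$, with $(W_\eta)_{\overline\eta} = (Z_{\overline\eta})^{\red}$.

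Now spread $W_\eta$ out to a closed subscheme $W \hookrightarrow Z_U := Z \times_X U$ over a dense open $U \subseteq X$, and shrink $U$ successively so that: (i) $W \to U$ is flat, by generic flatness \cite{stacks-project}, hence — being proper as a closed subscheme of the projective $Z_U/U$ — projective and flat; (ii) every geometric fibre $W_{\overline x}$ is reduced, since the locus in $U$ over which the fibres of the flat finitely presented morphism $W\to U$ are geometrically reduced is open \cite{stacks-project} and contains $\eta$; (iii) $W \hookrightarrow Z_U$ is surjective on underlying topological spaces — the open complement $Z_U \setminus W$ has empty fibre over $\eta$, because $W_\eta \to Z_\eta$ is surjective (it becomes the reduction map after base change to $\overline{K(\eta)}$, and surjectivity descends), so the image of $Z_U\setminus W$ in $U$ is a constructible set not containing $\eta$ and hence may be removed. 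Then for every geometric point $\overline x \to U$, $W_{\overline x}$ is a reduced closed subscheme of $Z_{\overline x}$ with full support, hence $W_{\overline x} = (Z_{\overline x})^{\red}$; and the Hilbert polynomial of $W_{\overline x}$ relative to $\mc L$ is locally constant on the connected scheme $U$ because $W/U$ is projective and flat, so it takes a single value $f_0$, as needed.

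The main obstacle is step (iii) — ensuring that the spread-out family $W$ computes the \emph{fibrewise reduction} and is not merely a fibrewise-reduced closed subscheme — together with the descent in the previous paragraph of $(Z_{\overline\eta})^{\red}$ to a geometrically reduced subscheme over $K(\eta)$; everything else is a routine combination of generic flatness, constancy of Hilbert polynomials in projective flat families, and Noetherian induction, as in the non-reduced case. If one prefers to avoid replacing $X$ by a finite cover, one may instead work over the purely inseparable closure and spread out a radicial surjective $\tilde U \to U$, exactly as in the proofs of \cref{maintheorem.mot-inc-exc} and \cref{theorem.poscheme-contraction-euler}.
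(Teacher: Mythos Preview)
Your proof is correct and follows essentially the same strategy as the paper's: Noetherian induction on $X$, descent of the reduced geometric generic fibre to a geometrically reduced scheme over a finite extension of $K(\eta)$, spreading out, and then generic flatness together with openness of the geometrically-reduced locus \cite[\href{https://stacks.math.columbia.edu/tag/0C0E}{Tag 0C0E}]{stacks-project}. The paper's argument is terser --- it splits into the cases $Z_{\overline\eta}$ reduced or not and uses a purely inseparable extension in the second case --- whereas you treat both cases uniformly and are more explicit about the surjectivity step (iii), which is what guarantees that the spread-out $W$ actually computes the fibrewise reduction rather than merely some fibrewise-reduced closed subscheme; this point is implicit in the paper's proof (it follows there because $(Z_{X'})^{\red}\hookrightarrow Z_{X'}$ is a homeomorphism). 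One small redundancy: once $(W_L)_{\overline\eta}=(Z_{\overline\eta})^{\red}$ is reduced, $W_L$ is already geometrically reduced over $L$ by definition, so no further enlargement of $L$ is needed.
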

\begin{proof}
We argue by Noetherian induction: assume $X$ is irreducible and let $\overline{\eta}$ be a geometric point lying above the generic point $\eta$ of  $X$.  Suppose $Z_{\overline{\eta}}$ is reduced --- then, by \cite[\href{https://stacks.math.columbia.edu/tag/0C0E}{Tag 0C0E}]{stacks-project}, the geometric fibers are reduced in an open locus, and, by generic flatness, if we restrict to a potentially smaller open then they all have the same Hilbert polynomial.  Otherwise, we can make a purely-inseparable extension $k(\eta)'$ of $k(\eta)$ such that $Z_{k(\eta)'}$ is no longer reduced and its reduced subscheme $(Z_{k(\eta)'})^\red$ is geometrically reduced. We may spread out to a dominant map $X' \rightarrow X$ for $X'$ irreducible of the same dimension such that $((Z_{X'})^\red)_{\overline{\eta}}$ is reduced, then we conclude as above.
\end{proof}

Thus we obtain a variant by taking $S \supseteq S_{\mr{gr}}$ and then considering the open (by \cite[\href{https://stacks.math.columbia.edu/tag/0C0E}{Tag 0C0E}]{stacks-project}) geometrically reduced locus $\Hilb_{X/Y}^{\mr{gr}, \mc{L}, S} \subset \Hilb_{X}^{\mc{L}, S}(Z)$. This is a (no longer proper) poscheme over $X$ with geometric maxima over points in the image of $f$ --- the maximum at a geometric point $\overline{x}$ is given by the point corresponding to $Z_{\overline{x}}^{\mr{red}}$. The corresponding motivic inclusion-exclusion formula is more useful than the non-reduced variant and mirrors cohomological calculations as in \cite{vassiliev:homology-hypersurfaces, das:cubic-surfaces}. For example, in the setting of \cref{example.singular-locus}, this allows us to consider the geometric support of the singular locus rather than its scheme structure.

\begin{remark}\label{remark.vassiliev} Since this poscheme is not proper, we do not obtain a cohomological spectral sequence.
In the special case of discriminant loci for polynomials over $\mbb{C}$ of \cite{vassiliev:homology-hypersurfaces}, however, one obtains a spectral sequence in cohomology by equipping the geometric realization of the $\mbb{C}$-points of $\Hilb_{X}^{\mr{gr}, \mc{L}, S_\mr{gr}}(Z)$ with a different topology, obtained as a quotient topology from the geometric realization of its closure by contracting non-reduced schemes at the boundary to their geometric support. We do not see a way to understand such a construction using purely simplicial methods without passing to a geometric realization, so we cannot apply it in the general scheme-theoretic case. However, in the part of the Hilbert poscheme corresponding to finite subschemes, this reduction is equivalent to our earlier computation that the terms on the $E_1$ page only depend on the configuration spaces.
\end{remark}

\section{Incidence algebras and Möbius inversion}\label{s.incidence-algebras}

In this section we construct motivic and sheaf-theoretic incidence algebras of poschemes and study Möbius inversion therein. The most important case for us, treated in \cref{ss.symmetric-reduced-incidence} below, is the reduced incidence algebra of the poscheme of effective zero cycles, where the Möbius element gives a lift of the inverse Kapranov zeta function --- from this we will recover Vakil--Wood's inversion formula for the Kapranov zeta function and also motivate a definition of cohomological special values of the Kapranov zeta function (see \cref{ss.cohom-special-values}). Kobin \cite{kobin} has also recently raised the question of finding an incidence algebra interpretation of the Kapranov zeta function.

Because reduced incidence algebras are a slightly ad hoc construction (they require a notion of when two intervals in a poset are the same), in \cref{ss.general-incidence-algebras} we first give the construction of the non-reduced incidence algebra for a general poscheme. In both cases  the discussion for poschemes is preceded by some recollections on the classical constructions for posets.

\subsection{Incidence algebras}\label{ss.general-incidence-algebras}
We briefly recall incidence algebras of posets and then give a categorification.

\subsubsection{The incidence algebra of a locally finite poset}
A poset $\mc{P}$ is locally finite if for any $a,b \in \mc{P}$, the interval $[a,b]$ is a finite poset.
In this case, one defines the incidence algebra $I(\mc{P})$ as the space of functions (with values in a commutative ring) on $1$-simplices in $N(\mc{P})$ (i.e.\ length $2$ chains $a \leq b$) with the convolution product
\[ f \star g (a \leq b) = \sum_{a \leq x \leq b} f(a \leq x)g(x \leq b). \]
The multiplicative identity is the function $f(a\leq b)=1$ if $a=b$, $0$ if $a \neq b$. The zeta function $\zeta_{\mc{P}}\in I(P)$ is the constant function $\zeta_{\mc{P}}(a \leq b) = 1$, and the Möbius function $\mu_{\mc{P}} \in I(\mc{P})$ is the inverse of $\zeta_P$ for the convolution product --- this is one formulation of Möbius inversion. We recall the topological expression for $\mu_{\mc{P}}$:

\begin{lemma}\label{lemma.poset-mobius} For $\tilde{\chi}=\chi - 1$ the reduced Euler characteristic,
\[ \mu_{\mc{P}}(a\leq b) = \begin{cases} \tilde{\chi}(N(a,b)) & \textrm{ if $a<b$} \\ 1 & \textrm{ if $a=b$} \end{cases}\]
where $(a,b)$ denotes the open interval of $p \in \mc{P}$ such that $a<p<b$.
\end{lemma}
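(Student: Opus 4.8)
The plan is to reduce to Philip Hall's combinatorial formula for the Möbius function and then match it term by term with the Euler characteristic of the order complex of the open interval. First I would introduce the ``strict'' zeta element $\bar\zeta := \zeta_{\mc P} - \delta \in I(\mc P)$, where $\delta$ is the multiplicative identity, so that $\bar\zeta(a \leq b) = 1$ when $a < b$ and $0$ otherwise. The key elementary observation is that the $k$-fold convolution power $\bar\zeta^{\star k}$, evaluated at $a \leq b$, counts the strictly increasing chains $a = x_0 < x_1 < \dotsm < x_k = b$; since $[a,b]$ is finite (local finiteness), there are no such chains once $k$ exceeds $|[a,b]|$, so on each interval $\bar\zeta$ is nilpotent. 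Hence $\zeta_{\mc P} = \delta + \bar\zeta$ is invertible and $\mu_{\mc P} = \sum_{k \geq 0} (-1)^k \bar\zeta^{\star k}$, a sum which is finite on every interval and therefore defines an element of $I(\mc P)$. Evaluating at $a \leq b$ gives Hall's formula $\mu_{\mc P}(a \leq b) = \sum_{k \geq 0} (-1)^k c_k$, where $c_k$ denotes the number of chains $a = x_0 < x_1 < \dotsm < x_k = b$.

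It then remains to translate this into the reduced Euler characteristic of $N(a,b)$. When $a = b$, only the empty chain contributes and $\mu_{\mc P}(a \leq a) = c_0 = 1$, matching the stated convention. When $a < b$ we have $c_0 = 0$ and $c_1 = 1$ (the chain $a < b$), while for $k \geq 2$ a strict chain $a < x_1 < \dotsm < x_{k-1} < b$ is the same datum as a strict chain $x_1 < \dotsm < x_{k-1}$ of $k-1$ elements inside $(a,b)$, i.e.\ a non-degenerate $(k-2)$-simplex of $N(a,b)$. Reindexing $j = k-2$ then gives $\mu_{\mc P}(a \leq b) = -1 + \sum_{k \geq 2} (-1)^k c_k = -1 + \sum_{j \geq 0}(-1)^j\bigl(\text{number of non-degenerate } j\text{-simplices of } N(a,b)\bigr) = -1 + \chi(N(a,b)) = \tilde\chi(N(a,b))$, using the paper's convention that $\chi$ of a simplicial set is computed from its non-degenerate simplices and that $\tilde\chi = \chi - 1$.

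There is essentially no serious obstacle here, as the result is classical; the only real work is bookkeeping, namely being careful that the nilpotence and geometric-series argument takes place interval by interval (which is precisely what local finiteness provides) and that the reindexing correctly accounts for the shift between ``chains from $a$ to $b$'' and ``simplices of the open interval $(a,b)$''. As an alternative I could bypass Hall's formula and argue by induction on $|[a,b]|$ directly from the defining recursion $\sum_{a \leq x \leq b}\mu_{\mc P}(a \leq x) = 0$; the inductive step then reduces to the identity $\sum_{x \in (a,b)} \tilde\chi(N(a,x)) = -\chi(N(a,b))$, which one proves by grouping the non-degenerate simplices of $N(a,b)$ according to their maximal vertex. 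Both routes carry the same content, and I would present the Hall-formula version since it is the shortest.
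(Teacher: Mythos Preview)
Your proof is correct. It differs from the paper's in that you establish Hall's formula $\mu_{\mc P} = \sum_{k\ge 0}(-1)^k\bar\zeta^{\star k}$ via the geometric series and then read off the Euler characteristic term by term, whereas the paper takes the proposed formula as the definition of $\mu$ and verifies $\mu\star\zeta_{\mc P}(a\le b)=0$ for $a<b$ directly: it rewrites $1+\sum_{a<x\le b}\tilde\chi(N(a,x))$ as $1-\chi(N(a,b])$ by the bijection sending a non-degenerate $k$-simplex of $N(a,x)$ to the non-degenerate $(k{+}1)$-simplex of $N(a,b]$ obtained by appending $x$, and then uses that $(a,b]$ has a maximum so $N(a,b]$ is contractible with $\chi=1$. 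Your alternative (grouping simplices of $N(a,b)$ by maximal vertex) is essentially this same bijection, just packaged inductively rather than as a one-shot verification.

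What each buys: your Hall-formula route is the shortest for the classical statement and makes the chain-counting completely transparent. The paper's route, by contrast, is chosen because it is the one that categorifies: the identical argument, with Euler characteristics replaced by classes in $K_0(\Var/{\sle}_{\mc P})$ and contractibility of $N(a,b]$ replaced by the motivic inclusion-exclusion \cref{maintheorem.mot-inc-exc} for the poscheme $(\mc P,\mc P]$ with its maximum, yields \cref{theorem.general-incidence-mobius} verbatim. The geometric-series expansion does not transport as cleanly to that setting.
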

\begin{proof}
Taking $\mu$ to be the element defined by the equation in the statement, we have to show that $\mu \star \zeta_{\mc{P}} (a \leq b)$ is $1$ if $a =b$ and $0$ otherwise. When $a=b$ this is trivial. For $a<b$, this 0 will be interpreted as 1 minus the Euler characteristic of the contractible poset $(a,b]$. Indeed, we have
\[ \mu \star \zeta_{\mc{P}} (a\leq b) = 1+ \sum_{a < x \leq b} \tilde{\chi}(N (a,x) ) \]
and we can group simplices contributing to the sum as follows: a non-degenerate simplex of $N(a,x)$ can be identified with a non-degenerate simplex of one degree higher in $N(a,b]$ by adding $x$ to the end. This establishes a bijection between the non-degenerate $k$-simplices of $N(a,x)$ and the non-degenerate $k+1$ simplices ending at $x$ of $N(a,b]$. Thus we miss only the $0$-simplices in $N(a,b]$, however, the reduced Euler characteristics appearing in the sum, which subtract off $1$ for each $a<x<b$, accounts for these, so that we have established
\[ \sum_{a < x \leq b} \tilde{\chi}( N(a,x) ) = - \chi(N(a,b])=-1, \]
(where the minus sign comes in because our bijection went from $k$-simplices to $k+1$-simplices). Adding $1$ gives zero, as desired.
\end{proof}

\subsubsection{Categorification}
\newcommand{\motIA}{I_{\mathrm{mot}}}
\newcommand{\shIA}{I_{\mathrm{sh}}}
Let $X$ be a Noetherian scheme and let $L$ be an algebraic extension of $\mbb{Q}_\ell$ with $\ell$ invertible in $X$. Let $\mc{P}/X$ be of finite type over $X$, so that $\sle_{\mc{P}}$ is also a Noetherian scheme. We consider the sheaf theoretic and motivic incidence algebras $\shIA(\mc{P}) \textrm{ and } \motIA(\mc{P})$: as abelian groups, we have
\[ \shIA(\mc{P}) = K_0 ( \Cons( \sle_{\mc{P}}, L)) \textrm{ and } \motIA(\mc{P}) = K_0(\Var/\sle_{\mc{P}}). \]
The multiplication, however, is given by the convolution product: on $\shIA(\mc{P})$, this is induced by the convolution functor
\begin{align*} D_{\Cons}( \sle_{\mc{P}}, L) \times D_{\Cons} ( \sle_{\mc{P}}, L ) & \xrightarrow{\star}D_{\Cons} ( \sle_{\mc{P}}, L ) \\
 (K_1, K_2) & \mapsto K_1 \star K_2:= R\delta^1_! \left( {\delta^2}^* K_1 \otimes_{L} {\delta^0}^* K_2 \right). \end{align*}
where $\delta^i$ are the standard face maps $N(\mc{P})_2 \rightarrow N(\mc{P})_1 = \mathord{\leq}_{\mc{P}}$. On $\motIA(\mc{P})$ it is induced by
\begin{align*} \Var/ \sle_{\mc{P}} \times \Var/\sle_{\mc{P}} &\rightarrow \Var/\sle_{\mc{P}} \\
(Y_1/\sle_{\mc{P}}) \times (Y_2/\sle_{\mc{P}}) & \mapsto Y_1 \star Y_2 = {\delta^2}^* Y_1 \times_{N(\mc{P})_2} {\delta^0}^* Y_2 \\ & \textrm{with structure map to $\sle_{\mc{P}}$ induced by $N(\mc{P})_2 \xrightarrow{\delta^1}  \sle_{\mc{P}}$.} \end{align*}

By standard arguments  $\shIA(\mc{P})$ (resp.\ $\motIA(\mc{P})$) is a commutative algebras over $K_0(\Cons(X,L))$ (resp.\ $K_0(\Var/X)$) and the compactly supported cohomology map is a map of $K_0(\Var/X)$-algebras $\motIA(\mc{P}) \rightarrow \shIA(\mc{P})$ (where the latter is a $K_0(\Var/X)$-algebra through the map $K_0(\Var/X) \rightarrow K_0(\Cons(X,L))$).

\begin{remark}
When describing the incidence algebra for posets above we allowed locally finite posets instead of finite posets --- this is quite useful in practice (e.g.\ for realizing Hasse--Weil zeta functions). In the geometric setting, we can generalize similarly: we say a poscheme $\mc{P}/X$ is locally finite if it is locally of finite type as a map of schemes and, for any finite type open subscheme $U/X \subset \mc{P}/X$,
\[ [U, U] = U \times_X \mc{P} \times_X U \cap N(\mc{P})_2 \]
is of finite type over $X$. In this case, for each such $U$ one can define the convolution products over $U \times U \cap \sle_{\mc{P}}$, then take the inverse limit of Grothendieck rings over all such $U$. We treat just the simpler finite type case here, as it illustrates the main points without the technical burden of making the previous sentence precise. In \cref{ss.symmetric-reduced-incidence}, however, we will study a version of this more general construction for the reduced incidence algebra of the poscheme of effective zero cycles (which is only locally finite), using the monoid structure to make it completely explicit.
\end{remark}

The identity for the convolution product in the incidence algebra $I_\mot(\mc{P})$ is $[\Delta_{\mc{P}}/\sle_{\mc{P}}]$. We define $\zeta_{\mc{P}}= [\sle_{\mc{P}}/\sle_{\mc{P}}]$. We then have the following Euler characteristic formula for Möbius inversion formula generalizing \cref{lemma.poset-mobius}:

\begin{theorem}\label{theorem.general-incidence-mobius}
Let $(\mc{P}, \mc{P})$ be the open interval poscheme, viewed as a poscheme over $\slt_{\mc{P}}\subset \mc{P}\times_X \mc{P}$; i.e.\ $N(\mc{P})_2^\circ$ viewed as a scheme over $\slt_{\mc{P}}=N(\mc{P})_1^\circ$ via $\delta^1$ ($a < b < c \mapsto a < c$) and ordered by pullback from the projection to middle of the chain $(a < b < c \mapsto b)$. If $\mu_{\mc{P}} :=1 + \tilde{\chi}(N(\mc{P},\mc{P})) \in \motIA(\mc{P})$ then
\begin{equation}\label{eq.mot-mob-inv-eq} \mu_\mc{P} \star \zeta_{\mc{P}}= 1 \in \motIA(\mc{P}),
\end{equation}
where we recall that in the definition of $\mu_{\mc{P}}$ and \cref{eq.mot-mob-inv-eq}, $1 = [\Delta_{\mc{P}}/\sle_{\mc{P}}] \in \motIA(\mc{P}).$
\end{theorem}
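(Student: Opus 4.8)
The plan is to decategorify the classical computation behind \cref{lemma.poset-mobius}, realizing the telescoping bijection used there as honest isomorphisms of schemes and identities of Grothendieck classes. Recall that $\zeta_{\mc{P}} = [\sle_{\mc{P}}/\sle_{\mc{P}}]$ is represented by the identity scheme and that $1 = [\Delta_{\mc{P}}/\sle_{\mc{P}}]$ is the unit of the convolution product; writing $\nu := \tilde{\chi}(N(\mc{P},\mc{P})) \in \motIA(\mc{P})$ we have $\mu_{\mc{P}}\star\zeta_{\mc{P}} = 1\star\zeta_{\mc{P}} + \nu\star\zeta_{\mc{P}} = \zeta_{\mc{P}} + \nu\star\zeta_{\mc{P}}$, so it suffices to prove $\nu\star\zeta_{\mc{P}} = -[\slt_{\mc{P}}/\sle_{\mc{P}}]$. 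Indeed, combined with the constructible decomposition of $\sle_{\mc{P}}$ into the closed subscheme $\Delta_{\mc{P}}$ and its open complement $\slt_{\mc{P}}$, this gives $\mu_{\mc{P}}\star\zeta_{\mc{P}} = [\sle_{\mc{P}}/\sle_{\mc{P}}] - [\slt_{\mc{P}}/\sle_{\mc{P}}] = [\Delta_{\mc{P}}/\sle_{\mc{P}}] = 1$. (Implicit throughout is that $\mc{P}$, hence every interval in it, is of bounded length, so that the reduced Euler characteristics in sight are finite sums; this is what makes $\mu_{\mc{P}}$ well defined.)

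First I would unwind the convolution. Since ${\delta^0}^*\zeta_{\mc{P}}$ is the identity scheme on $N(\mc{P})_2$, we have $\nu\star\zeta_{\mc{P}} = {\delta^1}_{!}\,{\delta^2}^*\nu$ (motivically, ${\delta^1}_!$ is precomposition of structure maps). Now ${\delta^2}^*\nu$ is supported on the locus $\{\,a < b \le c\,\}\subset N(\mc{P})_2$ of chains with $a\neq b$; over a point $(a<b\le c)$ of this locus its class is $\tilde{\chi}(N(a,b))$, whose degree-$k$ nondegenerate part is the scheme $W_k := \{\, a < x_0 < \dots < x_k < b \le c\,\}$, while the $-1$ in $\tilde{\chi} = \chi - 1$ contributes the base $\{\,a < b \le c\,\}$ itself. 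Writing $\mc{Q}/\slt_{\mc{P}}$ for the interval poscheme $(a,c]$, i.e.\ $\{\,(a,c,z): a < z \le c\,\}$ ordered via the coordinate $z$, and pushing forward to $\sle_{\mc{P}}$ along $\delta^1$ (which sends $(a,b,c)\mapsto(a,c)$, landing in $\slt_{\mc{P}}$), one obtains
\[ \nu\star\zeta_{\mc{P}} = \sum_{k\ge 0}(-1)^k\,[W_k/\slt_{\mc{P}}] \;-\; [N\mc{Q}^{\circ}_0/\slt_{\mc{P}}], \]
using the identification $N\mc{Q}^{\circ}_0 = (a,c]$ of the base with the scheme of $0$-simplices of $N\mc{Q}$.

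The crux is the decategorified telescoping: appending $b$, namely $(x_0 < \dots < x_k) \mapsto (x_0 < \dots < x_k < b)$, is an isomorphism of schemes over $\slt_{\mc{P}}$
\[ W_k \;\xrightarrow{\ \sim\ }\; N\mc{Q}^{\circ}_{k+1}, \]
since both sides parametrize strict chains of $k+2$ elements of the interval $(a,c]$ and the map is a mere reindexing of coordinates. Substituting, $\sum_{k\ge 0}(-1)^k[W_k/\slt_{\mc{P}}] = -\sum_{m\ge 1}(-1)^m[N\mc{Q}^{\circ}_m/\slt_{\mc{P}}] = [N\mc{Q}^{\circ}_0/\slt_{\mc{P}}] - \chi(N\mc{Q}/\slt_{\mc{P}})$, whence $\nu\star\zeta_{\mc{P}} = -\chi(N\mc{Q}/\slt_{\mc{P}})$ (pushed forward to $\sle_{\mc{P}}$). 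Finally $\mc{Q}/\slt_{\mc{P}}$ is of finite type over the Noetherian scheme $\slt_{\mc{P}}$, is bounded, and has a maximum (the section $z=c$), so \cref{lemma:isolated-center-motivic-inclusion-exclusion} (equivalently \cref{maintheorem.mot-inc-exc}; one also sees $N\mc{Q}$ is a cocone in the sense of \cref{example.poset-nerve-constructions}) gives $\chi(N\mc{Q}/\slt_{\mc{P}}) = [\slt_{\mc{P}}/\slt_{\mc{P}}]$, hence $\nu\star\zeta_{\mc{P}} = -[\slt_{\mc{P}}/\sle_{\mc{P}}]$ and the theorem follows.

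I expect the main obstacle to be purely bookkeeping: one must check that every set-theoretic bijection in the classical argument is realized by an honest isomorphism of schemes over the relevant base --- which holds because all the schemes in play are locally closed subschemes of cartesian powers of $\mc{P}$ cut out by (in)equalities among coordinates and all the maps are coordinate reindexings --- and that every class manipulation is legitimate in $K_0(\Var/\sle_{\mc{P}})$, using only constructible decompositions and push-forward along the (locally closed) immersion $\slt_{\mc{P}}\hookrightarrow\sle_{\mc{P}}$; in particular, no properness hypothesis enters. The corresponding statement in $\shIA(\mc{P})$ falls out by applying the compactly supported cohomology homomorphism $\motIA(\mc{P})\to\shIA(\mc{P})$, or by rerunning the argument verbatim with $R(-)_!$ and $R\epsilon_*$ in place of $\chi$.
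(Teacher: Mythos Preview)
Your proof is correct and follows essentially the same route as the paper's: both reduce to showing $\tilde{\chi}(N(\mc{P},\mc{P}))\star\zeta_{\mc{P}} = -[\slt_{\mc{P}}/\sle_{\mc{P}}]$ via the scheme-theoretic isomorphism $W_k\cong N(\mc{P},\mc{P}]^\circ_{k+1}$ given by appending the coordinate $b$, then invoke motivic inclusion--exclusion (\cref{maintheorem.mot-inc-exc} / \cref{lemma:isolated-center-motivic-inclusion-exclusion}) for the poscheme $(\mc{P},\mc{P}]$ with its maximum section. The only difference is notational --- you write $\mc{Q}$ for what the paper calls $(\mc{P},\mc{P}]$ --- and your closing remark on $\shIA(\mc{P})$ matches the paper's post-proof observation.
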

\begin{proof}
The proof is the same as the proof for posets --- we simply need to upgrade our bijection between simplices to a radicial surjective map (in fact an isomorphism). To that end, suppose
\[ t_0 < t_1 < \dotsm < t_k \in \delta_2^*N(\mc{P}, \mc{P})^\circ_k(T) \]
with image $a<b<c$ in $N(\mc{P})_2^\circ(T)$ --- note that this means
\[ a < t_0 < t_1 < \dotsm < t_k < b. \]
Then we can map this to the point
\[ t_0 < t_1 < \dotsm < t_k < b \in N(\mc{P},\mc{P}]_{k+1}^\circ(T) \]
above the point $a < c \in \sle_{\mc{P}}(T)$. By Yoneda, this gives an isomorphism of schemes over $\sle_{\mc{P}}(T)$ between $\delta_2^*N(\mc{P},\mc{P})^\circ_k$, viewed as a scheme over $\sle_{\mc{P}}$ by $\delta^1$, with $N(\mc{P},\mc{P}]^\circ_{k+1}$. We can hit the zero simplices except for the endpoint $a < c$ also by taking the reduced Euler characteristic; we thus obtain
\[ \tilde{\chi}(N(\mc{P},\mc{P})) \star \zeta_{\mc{P}}= -\chi(N(\mc{P},\mc{P}]) = -[\slt_{\mc{P}}/ \sle_{\mc{P}}], \]
where the final equality is by \cref{maintheorem.mot-inc-exc} because  $(\mc{P},\mc{P}]/ \slt_{\mc{P}}$ has a maximum. We conclude that
\[ \mu_{\mc{P}}\star \zeta_{\mc{P}}=(1 + \tilde{\chi}(N(\mc{P},\mc{P}))) \star \zeta_{\mc{P}} = [\sle_{\mc{P}}/\sle_{\mc{P}}] - [\slt_{\mc{P}}/\sle_{\mc{P}}]=[\Delta_{\mc{P}}/ \sle_{\mc{P}}]=1. \qedhere\]
\end{proof}

\begin{remark}
Specializing via $\motIA(\mc{P})\rightarrow \shIA(\mc{P})$, we obtain a Möbius inversion formula also in $\shIA(\mc{P})$.
\end{remark}

\subsection{The  reduced incidence algebra for the poscheme of effective zero-cycles}\label{ss.symmetric-reduced-incidence}

In \cref{ss.general-incidence-algebras}, we considered incidence algebras for pospaces and poschemes. There is another closely related notion in classical poset theory: for any locally finite poset $\mc{P}$ such that there is a good notion of two intervals $[a,b]$ and $[a',b']$ being ``the same", one considers the reduced incidence algebra consisting of functions on intervals such that $f( a\leq b)= f(a'\leq b')$ whenever $[a,b]$ is the same as $[a',b']$ --- it is a subalgebra of the full incidence algebra. This applies to the poset attached to a free commutative monoid such as the divisor poset in $\mbb{Z}_{>0}$ (which is the free commutative monoid generated by primes under multiplication, and where $[1, m]$ is ``the same'' as $[n, mn]$), whose reduced incidence algebra is a natural combinatorial home for the Riemann zeta function and other formal  Dirichlet series.

More generally, the poset of effective zero-cycles on a finite type scheme over $\mbb{Z}$ gives a reduced incidence algebra containing the zeta function of the variety as a natural element. We now lift this to the Grothendieck ring by using the poscheme of effective zero cycles; this gives a new interpretation of Bilu's \cite{bilu:motivic-euler-products} lift of the Kapranov zeta function used in the definition of motivic Euler products, and, through the Möbius inversion formula, Vakil and Wood's \cite{vakil-wood:discriminants} formula for its inverse.

For $Z/X$, we consider reduced motivic (resp.\ sheaf theoretic) incidence algebras defined via  the divided powers schemes of \cref{s.conf-zc-hilb}:
\begin{align*} \tilde{I}_{\mr{mot}}(\Gamma^{\bullet,+}_X(Z)) &:=  \prod_{k=0}^\infty K_0(\Var/\Gamma^{k}_X(Z))  \\
\tilde{I}_{\mr{sh}}(\Gamma^{\bullet,+}_X(Z)) &:= \prod_{k=0}^\infty K_0(\Cons(\Gamma^k_X(Z),L)) \end{align*}
equipped with the convolution products $x\star y = m_!(\pi_1^* (x) \pi_2^*(y))$ (resp.\ $Rm_!$\ldots)
where $\pi_1, \pi_2,$ and $m$ are the first projection, second projection, and multiplication maps, respectively
\[ \Gamma^{\bullet,+}\times\Gamma^{\bullet,+} \rightarrow \Gamma^{\bullet,+}, \]
and we recall that for $f:U \rightarrow V$, $f_!$ denotes the map on the relative Grothendieck ring of varieties which sends $[g:T\rightarrow U]$ to $[f \circ g: T \rightarrow V].$

We note that $\pi: \Gamma^k_X(Z) \rightarrow X$ induces natural algebra homomorphisms
\begin{align*} \pi_!: \tilde{I}_{\mr{mot}}(\Gamma^{\bullet,+}_X(Z)) & \rightarrow \tilde{K}_0(\Var/X)[[t]] \textrm{ and }\\
 R\pi_!: \tilde{I}_{\mr{sh}}(\Gamma^{\bullet,+}_X(Z)) &\rightarrow K_0(\Cons(X,L))[[t]]. \end{align*}
More generally, a map $Z \rightarrow Z'$ over $X$ induces maps of the incidence algebras, and the above are the maps obtained from the structure map $Z/X \rightarrow X/X$ and the identification $\Gamma^{\bullet,+}_X (X)=\mbb{Z}_{\geq 0} \times X / X$.

The (relative to $X$) Kapranov zeta function $Z_{Z/X}^\mr{Kap}(t) \in K_0(\Var/X)[[t]]$ naturally lifts along $\pi_!$ to the motivic incidence algebra as
\[ \zeta = (1,1,1,\ldots), \textrm{ where we note } 1 = [\Gamma^k_X(Z) / \Gamma^k_X(Z)] \in K_0(\Var/\Gamma^k_X(Z)). \]
Here we recall from \cref{s.conf-zc-hilb} that there is a natural universal homeomorphism $\Sym^k_X(Z) \rightarrow \Gamma^k_X(Z)$ so that, in particular, they have the same class in the modified Grothendieck ring $K_0(\Var/X)$ and one can define the Kapranov zeta using either $\Sym^\bullet$ or $\Gamma^\bullet$.

\begin{remark}\label{remark.bilu-incidence}
The convolution structure on $\tilde{I}_{\mr{mot}}(\Gamma^{\bullet,+}_X(Z))$ along with this lift of the Kapranov zeta function was essentially considered by Bilu \cite{bilu:thesis, bilu:motivic-euler-products} (with $\Sym^\bullet$ in place of $\Gamma^\bullet$) in her definition of motivic Euler products, but the interpretation as a reduced incidence algebra appears to have gone unnoticed.
\end{remark}

\subsubsection{Möbius inversion formula for the Kapranov zeta function}

\begin{theorem}[Möbius inversion and Kapranov zeta]\label{theorem.mobius-inversion-kapranov}
In $\tilde{I}_{\mr{mot}}(\Gamma^{\bullet,+}_X(Z))$, writing the Möbius elements as $\mu=(\mu_0,\mu_1, \ldots)$, for $k \geq 1$,
\begin{align*} \mu_k &= \sum_{p=0}^\infty (-1)^{p-1} \sum_{\ul{a} \in \mbb{Z}_{>0}^{[p]}, \sum \ul{a}=k} \left[\Gamma^{\ul{a}}_X(Z)  / \Gamma^k_X(Z)\right] \\
&=  \sum_{p=0}^\infty (-1)^{p-1} \sum_{\ul{a} \in \mbb{Z}_{>0}^{[p]}, \sum \ul{a}=k} \left[\Conf^{\ul{a}}_X(Z)  / \Gamma^k_X(Z)\right].\end{align*}
In $\tilde{I}_{\mr{sh}}(\Gamma^{\bullet,+}_X(Z))$, writing the Möbius elements as $\mu=(\mu_0,\mu_1, \ldots)$, for $k \geq 1$,
\begin{align*} \mu_k &= (-1)^k\left[ (s_*E)[\sgn] \right] \textrm{ for } s: Z\times_X \dotsm \times_X Z \rightarrow \Gamma^k_X(Z) \\
&= (-1)^k \left[ (s^\circ_!E)[\sgn] \right] \textrm{ for } s^\circ:=s|_{\Conf^{(1, \ldots, 1)}_X(Z)} \\
& = (-1)^k\left[j_!\ul{\sgn}\right] \textrm{ for } j: \Conf^k_X(Z) \hookrightarrow \Gamma^k_X(Z).
\end{align*}
where $\sgn$ denotes the $\sgn$ character of $\mfS_k$ on $L$, $\ul{\sgn}$ denotes the corresponding local system on $\Conf^k_X(Z)$, and $V[\sgn]$ denotes the isotypic part for a $\mfS_k$-action on $V$.
\end{theorem}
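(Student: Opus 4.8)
The plan is to reduce the whole statement to a single computation of the Möbius element of the reduced incidence algebra, and then unwind it using the monoidal description of the nerve of $\Gamma^{\bullet,+}_X(Z)$ together with the reduced–Euler‑characteristic and reduced–cohomology computations of \cref{ss.rank-graded-zero-cycles}. Concretely, since $\zeta=(1,1,1,\dots)$ has invertible constant term, there is a unique $\mu=(\mu_0,\mu_1,\dots)$ with $\mu\star\zeta=1$; I would first check that the proposed first formula defines an element of $\tilde{I}_{\mr{mot}}(\Gamma^{\bullet,+}_X(Z))$ (for fixed $k$ the constraint $\sum\ul{a}=k$ forces $p\le k-1$, so the sum is finite) and equals
\[ \mu_k = \tilde{\chi}\!\left(N(-\infty,\Gamma^k_X(Z))\right)\in K_0(\Var/\Gamma^k_X(Z)),\qquad k\ge 1,\quad \mu_0=1. \]
The identity $\mu\star\zeta=1$ is proved by exactly the telescoping bijection of non‑degenerate simplex schemes used in the proofs of \cref{lemma.poset-mobius} and \cref{theorem.general-incidence-mobius}, now run summand by summand in the description of \cref{remark.monoidal-nerve-poscheme-nerve}: the relevant map inserts the remaining zero‑cycle at the end of a chain and, by cancellativity (\cref{prop.gamma-monoid-cancellative}), is an \emph{isomorphism} of schemes over $\Gamma^k_X(Z)$; the translation isomorphisms $[c,c']\cong[0,c'-c]$ exhibit this as the translation‑invariant incarnation of \cref{theorem.general-incidence-mobius} for the (merely locally finite) poscheme $\Gamma^{\bullet,+}_X(Z)$. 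The non‑degenerate $p$‑simplices of $N(-\infty,\Gamma^k_X(Z))$ are $\bigsqcup_{\ul{a}\in\mbb{Z}_{>0}^{[p+1]},\ \sum\ul{a}=k}\Gamma^{\ul{a}}_X(Z)$, and the base‑class correction term $-[\Gamma^k_X(Z)]$ in $\tilde\chi$ is precisely the $p=0$, $\ul{a}=(k)$ term; reindexing gives the first displayed formula.

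For the second formula I would invoke \cref{theorem.symmetric-hilb-relative-computation-euler} (a consequence of \cref{theorem.poscheme-contraction-euler}) to rewrite $\tilde{\chi}(N(-\infty,\Gamma^k_X(Z)))=\tilde{\chi}(N(-\infty,\Conf^k_X(Z)))$ in $K_0(\Var/\Gamma^k_X(Z))$, and then expand the right‑hand side in its own monoidal description, whose non‑degenerate $p$‑simplices are the $\Conf^{\ul{a}}_X(Z)$ over $\Conf^k_X(Z)$ (here the blocks are pairwise disjoint because everything below a multiplicity‑free cycle is multiplicity‑free). Note this equality is genuinely a statement about the whole alternating sum, as $[\Gamma^{\ul a}_X(Z)]\ne[\Conf^{\ul a}_X(Z)]$ term by term.

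For the sheaf‑theoretic statement I would use that the compactly supported cohomology realization $\tilde{I}_{\mr{mot}}(\Gamma^{\bullet,+}_X(Z))\to\tilde{I}_{\mr{sh}}(\Gamma^{\bullet,+}_X(Z))$ is a ring homomorphism sending $\zeta$ to $\zeta$, hence carries the motivic $\mu$ to the sheaf‑theoretic $\mu$. Thus $\mu_k$ is the realization of $\tilde{\chi}(N(-\infty,\Gamma^k_X(Z)))$; since $Z/X$ is projective the poscheme $(-\infty,\Gamma^k_X(Z))/\Gamma^k_X(Z)$ is proper (as in the proof of \cref{prop.sym-reduced-cohomo-computation}), so using the simplicial spectral sequence \cref{eq.simplicial-ss} and the split nerve (\cref{lemma.split-po-split-simplicial}) to pass to non‑degenerate simplices, exactly as around \cref{eq.decat-simplices}, this realization equals the class $[\tilde{C}(N(-\infty,\Gamma^k_X(Z))/\Gamma^k_X(Z),E)]$ in $K_0(\Cons(\Gamma^k_X(Z),L))$. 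Now \cref{prop.sym-reduced-cohomo-computation} evaluates this complex as $j_!(\ul{\sgn}[2-k]\otimes E)$, whose Euler characteristic is $(-1)^k[j_!\ul{\sgn}]$ — the third formula. Finally $[j_!\ul{\sgn}]=[(s^\circ_!E)[\sgn]]$, because $s^\circ$ factors as the $\mfS_k$‑torsor $\Conf^{(1,\dots,1)}_X(Z)\to\Conf^k_X(Z)$ followed by $j$, so $(s^\circ_!E)[\sgn]=j_!\ul{\sgn}$; and $[j_!\ul{\sgn}]=[(s_*E)[\sgn]]$, because $s$ is finite (so $s_*E$ is a sheaf), and over a geometric point of $\Gamma^k_X(Z)\setminus\Conf^k_X(Z)$ every point of the fiber $s^{-1}(c)$ is fixed by a transposition (some multiplicity is $\ge 2$), so the $\sgn$‑isotypic part of the stalk of $s_*E$ vanishes there, forcing $(s_*E)[\sgn]=j_!\ul{\sgn}$.

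The step I expect to be the main obstacle is the first one: setting up the reduced incidence algebra formalism carefully enough for the locally finite poscheme $\Gamma^{\bullet,+}_X(Z)$ and verifying $\mu\star\zeta=1$ with full scheme‑theoretic care — making the telescoping comparison an honest isomorphism of schemes over $\Gamma^k_X(Z)$ (rather than a bijection on points), correctly discarding degenerate simplices via splitness, and tracking the base‑class term $-[\Gamma^k_X(Z)]$ in $\tilde\chi$. Once that bookkeeping is in place, the remaining steps are direct applications of \cref{theorem.symmetric-hilb-relative-computation-euler}, \cref{prop.sym-reduced-cohomo-computation}, and the elementary transposition‑fixed‑point observation.
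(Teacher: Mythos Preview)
Your proposal is correct and follows essentially the same approach as the paper. For the motivic part you reproduce the paper's argument exactly: establish $\mu_k=\tilde\chi(N(-\infty,\Gamma^k_X(Z)))$ by the telescoping argument of \cref{theorem.general-incidence-mobius} adapted to the reduced/monoidal setting, expand via \cref{remark.monoidal-nerve-poscheme-nerve}, then invoke \cref{theorem.symmetric-hilb-relative-computation-euler} to pass to configurations. For the sheaf-theoretic part there is a minor reordering: the paper first deduces the $(s_*E)[\sgn]$ and $(s^\circ_!E)[\sgn]$ expressions from the motivic formulas via the character identity for $\sgn$ in terms of permutation representations (the barycentric $\partial\Delta^{k-1}$ computation), and only afterward notes that one can instead argue directly via \cref{prop.sym-reduced-cohomo-computation} to get $(-1)^k[j_!\ul{\sgn}]$ first --- which is precisely your route. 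Your final step, the transposition-fixed-point argument for $(s_*E)[\sgn]=j_!\ul{\sgn}$, is the content of \cref{prop.sgn-cohomology-support}, which the paper records separately for later use.
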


\begin{remark}\label{remark.vakil-wood-inversion-formula}
Passing the second equality in $\tilde{I}_{\mr{mot}}$ to $K_0(\Var/\kappa)[[t]]$ via $\pi_!$  recovers Vakil--Wood's formula
\[ Z_{Z/X,\Kap}(t)^{-1}=1 + \sum_{p=0}^\infty (-1)^{p+1} \sum_{\ul{a} \in \mbb{Z}_{>0}^{[p]}} \left[\Conf^{\ul{a}}_X(Z)  / X\right] t^{\sum{\ul{a}}}. \]
Passing the first equality in $\tilde{I}_{\mr{sh}}$ to a cohomological Grothendieck ring recovers  \cref{eq.zeta-cohom-value}.
\end{remark}

\begin{proof}
The first two equalities in $\tilde{I}_{\mr{sh}}$ can be deduced immediately from the two equalities in $\tilde{I}_{\mr{mot}}$ by using the formula for the character of the sign representation in terms of permutation representation given by the reduced Euler characteristic of the simplicial cohomology complex computing $H^\bullet(\partial \Delta^{k-1})$ for the barycentric subdivision of $\partial \Delta^{k-1}$ (see \cref{ss.conf-sym-posets}). The third equality in $\tilde{I}_{\mr{sh}}$ is a reformulation of the second. See the paragraph following this proof for an alternative deduction of these sheaf identities without passing through the motivic identities.

It remains to treat the motivic case: by essentially the same argument as \cref{theorem.general-incidence-mobius},
\begin{equation}\label{eq.mot-mobius-euler-char-formula} \mu= 1 + \tilde{\chi}( N(-\infty, \Gamma^{\bullet}_X(Z))). \end{equation}

Thus, to compute a formula for $\mu$ we need only to compute this Euler characteristic. From the definitions and the monoidal description of the nerve in \cref{remark.monoidal-nerve-poscheme-nerve}, one then obtains
\[ \mu|_{\Gamma^k_X(Z)} =  \sum_{p=0}^\infty (-1)^{p-1} \sum_{\ul{a} \in \mbb{Z}_{>0}^{[p]}, \sum \ul{a} =k} \left [\Gamma^{\ul{a}}_X(Z) / \Gamma^k_X(Z)\right] \]
where the $(-1)^{p-1}$ is because a point $(t_0,...,t_{p}) \in \Gamma^{\ul{a}}_X(Z)(T)$ corresponds to the $p-1$ simplex $t_0 < t_0+t_1 < \dotsm < t_0+\dotsm + t_{p-1}$ in $(-\infty, t_0+t_1 + \dotsm + t_p)$.

On the other hand, by \cref{theorem.symmetric-hilb-relative-computation-euler},
\[ \tilde{\chi}( N(-\infty, \Gamma^{\bullet}_X(Z))) = \tilde{\chi}( N(-\infty, \Conf^{\bullet}_X(Z))) \]
and we also obtain the formula in terms of configuration spaces similarly.
\end{proof}

In the proof we gave a direct argument in the motivic case and deduced the sheaf-theoretic version by a character identity. Arguing instead directly in the sheaf-theoretic case and invoking \cref{prop.sym-reduced-cohomo-computation}, one obtains naturally the third expression, which is trivially equivalent to the second already at the level of the sheaves on $\Gamma^k_X(Z)$. Arguing with the skeletal spectral sequence as in \cref{ss.skeletal}, one would instead obtain the first expression. Alternatively, an argument we learned from O.~Banerjee using the spectral sequence of a stratified space shows directly the equality between these two expressions. As it will be useful in \cref{s.smooth-hypersurface}, we record this result here in the case of a constructible sheaf using a slightly different proof:

\begin{proposition}\label{prop.sgn-cohomology-support}
Suppose $Z/X$ quasi-projective, $S$ is a finite set, and $\mc{F}$ is a constructible sheaf of $L$-modules on $X$. Let $\pi: (Z/X)^{S}=\Gamma_X^S(Z) \rightarrow \Gamma^{|S|}_X(Z)$ denote the addition of cycles map, and let $j: \Conf^{|S|}_X(Z) \hookrightarrow \Gamma^{|S|}_X(S)$. Then
\[ \pi_*\mc{F}|_{\Gamma^{S}_X(Z)}[\sgn] =\pi_! \mc{F}|_{\Conf^{S}_{X}(Z)}[\sgn]=j_!\left( \mc{F}|_{\Conf^{|S|}_X(Z)} \otimes \ul{\sgn}\right) \]
where $[\sgn]$ denotes the isotypic component for the sign representation of $\mfS_S$, i.e.\ the image of the idempotent
\begin{equation}\label{eq.sgn-idempotent} \frac{1}{|S|!} \sum_{\sigma \in \mfS_S} \sgn(\sigma)\sigma. \end{equation}
In particular, if $X \rightarrow Y$, then
\begin{align*} \left(R^\bullet(\Gamma_X^{S}(Z)\rightarrow Y)_! \mc{F}|_{\Gamma_X^{S}(Z)} \right)[\sgn] &= \left(R^\bullet(\Conf_X^{S}(Z)\rightarrow Y)_! \mc{F}|_{\Conf_X^{S}(Z)} \right)[\sgn]\\
&= R^\bullet(\Conf_X^{|S|}(Z) \rightarrow Y)_! (\mc{F}|_{\Conf_X^{|S|}(Z)}\otimes \ul{\sgn}).
\end{align*}
\end{proposition}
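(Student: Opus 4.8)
The plan is to compute $\pi_*\bigl(\mathcal{F}|_{\Gamma^S_X(Z)}\bigr)$ geometric stalk by geometric stalk after decomposing it under the permutation action of $G:=\mathfrak{S}_S$, and to read off the $\sgn$-isotypic part directly. First I would fix notation: $\Gamma^S_X(Z)=Z^{\times_X|S|}$ with structure map $p$ to $X$, so $\mathcal{F}|_{\Gamma^S_X(Z)}=p^*\mathcal{F}$; the map $\pi$ is the divided-power addition $\Gamma^1_X(Z)\times_X\dotsm\times_X\Gamma^1_X(Z)\to\Gamma^{|S|}_X(Z)$, which is the composite of the quotient $q\colon Z^{\times_X|S|}\to\Sym^{|S|}_X(Z)$ (which exists by (AF)) with the universal homeomorphism $\SG$. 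Since the monoid multiplication is commutative, $\pi\circ g=\pi$ for every $g\in G$, and likewise $p\circ g=p$. In particular $\pi$ is finite (being $\SG\circ q$ with $q$ finite and $\SG$ an integral morphism of finite type), so $\pi_*=R\pi_*$ and proper base change applies; moreover $p^*\mathcal{F}$ acquires a canonical $G$-equivariant structure from $p\circ g=p$, making $\pi_*p^*\mathcal{F}$ a sheaf with $G$-action, onto whose $\sgn$-isotypic part the idempotent \cref{eq.sgn-idempotent} projects (legitimate since $|S|!\in L^\times$).

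The heart of the argument is the stalk computation, which I would carry out next. Fix a geometric point $\bar y$ of $\Gamma^{|S|}_X(Z)$ over $\bar x$ of $X$. Via $\SG$ (a homeomorphism inducing isomorphisms on residue fields) $\bar y$ corresponds to a multiset of $|S|$ points of $Z_{\bar x}$, and $\pi^{-1}(\bar y)$ is the set of its orderings, a transitive $G$-set $\cong G/H$ with $H=\prod_i\mathfrak{S}_{m_i}$ for $m_i$ the multiplicities. By proper base change $(\pi_*p^*\mathcal{F})_{\bar y}\cong\mathcal{F}_{\bar x}\otimes_L L[G/H]$ as $G$-modules, $G$ acting only on the second factor; by Frobenius reciprocity the $\sgn$-isotypic part of $L[G/H]=\mathrm{Ind}_H^G\mathbf{1}$ is one-dimensional if $\sgn|_H$ is trivial and zero otherwise, and $\sgn|_H$ is trivial exactly when every $m_i\le1$, i.e.\ exactly when $\bar y$ lies in the open subscheme $j\colon\Conf^{|S|}_X(Z)\hookrightarrow\Gamma^{|S|}_X(Z)$. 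Hence the geometric stalks of $\pi_*p^*\mathcal{F}[\sgn]$ vanish off $\Conf^{|S|}_X(Z)$, and since a constructible (pro-)\'etale sheaf is detected on geometric stalks, $\pi_*p^*\mathcal{F}[\sgn]\xrightarrow{\ \sim\ }j_!j^*\bigl(\pi_*p^*\mathcal{F}[\sgn]\bigr)$.

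It then remains to identify the restriction to the configuration locus. One has $\pi^{-1}(\Conf^{|S|}_X(Z))=\Conf^S_X(Z)$ (the locus of pairwise-distinct tuples), so base change along the open immersion $j$ gives $j^*\pi_*p^*\mathcal{F}=(q^\circ)_*(p^\circ)^*\mathcal{F}$ where $q^\circ:=\pi|_{\Conf^S_X(Z)}$ is a finite \'etale $G$-torsor onto $\Conf^{|S|}_X(Z)$ and $p^\circ:=p|_{\Conf^S_X(Z)}$. Writing $(p^\circ)^*\mathcal{F}=(q^\circ)^*\bigl(\mathcal{F}|_{\Conf^{|S|}_X(Z)}\bigr)$ and applying the projection formula for the finite \'etale map $q^\circ$ gives $(q^\circ)_*(p^\circ)^*\mathcal{F}=\mathcal{F}|_{\Conf^{|S|}_X(Z)}\otimes(q^\circ)_*L$, whose $\sgn$-isotypic part is $\mathcal{F}|_{\Conf^{|S|}_X(Z)}\otimes\ul{\sgn}$ (the $\sgn$-isotypic summand of the regular representation realized through the torsor is the rank-one local system $\ul{\sgn}$). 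Combining, $\pi_*\bigl(\mathcal{F}|_{\Gamma^S_X(Z)}\bigr)[\sgn]=j_!\bigl(\mathcal{F}|_{\Conf^{|S|}_X(Z)}\otimes\ul{\sgn}\bigr)$, which is the first$=$third equality. For the middle term, factor $\pi|_{\Conf^S_X(Z)}=j\circ q^\circ$ with $q^\circ$ proper, so $(\pi|_{\Conf^S_X(Z)})_!=j_!(q^\circ)_!=j_!(q^\circ)_*$; taking $\sgn$-parts yields the same sheaf. Finally the displayed cohomology identity follows by applying $R(\Gamma^{|S|}_X(Z)\to Y)_!$ — which is additive and $G$-equivariant, hence commutes with the idempotent \cref{eq.sgn-idempotent} — to the sheaf identity, using $R\pi_!=R\pi_*=\pi_*$, the relation $R(\Gamma^{|S|}_X(Z)\to Y)_!\circ j_!=R(\Conf^{|S|}_X(Z)\to Y)_!$, and that the three structure maps to $Y$ all factor through $\Gamma^{|S|}_X(Z)$.

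I expect the only genuine content to be the stalkwise Frobenius-reciprocity computation together with the observation that $\sgn|_H$ is trivial precisely over the configuration locus — equivalently, that the $G$-action on $Z^{\times_X|S|}$ is free exactly there; the remaining steps (reduction to a finite morphism, proper base change, base change along $j$, and the projection formula for finite \'etale covers) are routine in the (pro-)\'etale constructible formalism already in use in the paper. A mild point requiring care is keeping the $\SG$ universal homeomorphism in play so that the identifications of geometric fibers and of the open locus $\Conf^{|S|}_X(Z)\subset\Gamma^{|S|}_X(Z)$ are the scheme-theoretically correct ones, rather than working directly with symmetric powers.
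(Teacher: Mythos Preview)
Your proof is correct and follows essentially the same strategy as the paper: compute the geometric stalks of $\pi_*(p^*\mc{F})[\sgn]$ and show they vanish off the configuration locus. Your Frobenius reciprocity formulation (the $\sgn$-isotypic part of $L[G/H]$ vanishes unless $\sgn|_H$ is trivial, i.e.\ unless every multiplicity is $\le 1$) is a clean repackaging of the paper's more hands-on argument, which observes that any preimage over a point with a repeated entry is fixed by a transposition and then breaks the idempotent into cosets of that transposition to see it acts by zero. You also spell out the identification over the configuration locus via the finite \'etale $G$-torsor and the projection formula, which the paper dismisses as ``almost tautological''; your extra care with $\SG$ is appropriate but not strictly needed since the stalk computation only uses the underlying topological behaviour.
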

\begin{proof}
The statement about proper pushforwards are immediate from the sheaf-theoretic equalities since $\pi$ is a finite map so $\pi_!=R\pi_!$. For the sheaf-theoretic equalities, the second is almost tautological, so it remains to show the first. Since $\pi_* \mc{F}$ is constructible, so is $\pi_*\mc{F}[\sgn]$, a direct summand, and it suffices to show that its stalk vanishes at every geometric point in the closed set $\Gamma^{|S|}_X(Z) \backslash \Conf^{|S|}_X(Z)$. So, let $\overline{c}: \Spec \kappa \rightarrow  \Gamma^{|S|}_X(Z) \backslash \Conf^{|S|}_X(Z)$ for $\kappa$ algebraically closed, and write $\overline{x}$ for its image in $X$. Then $\overline{c}$ corresponds to an effective zero-cycle
\[ \sum_{z \in Z_{\overline{x}}(\kappa)} a_z Z \]
with $\sum a_Z = |S|$ and $a_z \geq 2$ for at least one $z$. Since $\pi$ is finite and $\mc{F}$ is pulled back from $X$,
\[ (\pi_* \mc{F})_{\overline{c}} = \bigoplus_{\pi^{-1}(\overline{c})} \mc{F}_{\overline{x}}. \]
The preimage $\pi^{-1}(\overline{c}) \subset \Gamma^{S}_X(Z)(\kappa)$ indexing the direct sum consists of the maps
\[ \tilde{c}: S \rightarrow Z_{\overline{x}}(\kappa) \]
such that $\sum_{s \in S} \tilde{c}(s) = \overline{c}$ (as effective zero-cycles) with the obvious action of $\mfS_{S}$.

Passing to the $\sgn$ component commutes with taking stalks, so it suffices to show that the idempotent \cref{eq.sgn-idempotent} acts by zero on this stalk. However, since $a_z \geq 2$ for some $z$, any $\tilde{c}$ as above is preserved by a transposition in $\mfS_S$. Breaking the sum in the definition of the idempotent \cref{eq.sgn-idempotent} into cosets of this transposition shows that anything in the image of the idempotent is zero in the $\tilde{c}$ component, and since this holds for each $\tilde{c}$, we conclude that the idempotent is identically zero.
\end{proof}

In particular, applying the Kunneth formula, one obtains

\begin{corollary}[O.~Banerjee]\label{corollary.sgn-cohomology-computation} For any variety $Y$ over $\kappa$ algebraically closed (and with $\Lambda_{\mr{gr}}$ denoting the graded exterior power):
\begin{align}\label{eq.sgn-cohomology-formulas} H^\bullet_c(\Conf^{p}(Y), \ul{\sgn}) & =H^\bullet_c(Y^p)[\sgn] \\
& = \Lambda_{\mr{gr}}^p H_c^\bullet(Y, \mbb{Q}_\ell) \\
& = \bigoplus_{i=0}^p \Lambda^i H_c^{\mr{even}}(Y, \mbb{Q}_\ell) \otimes \operatorname{Sym}^{p-i}H_c^{\mr{odd}}(Y, \mbb{Q}_\ell).
\end{align}
\end{corollary}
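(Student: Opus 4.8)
\emph{Plan.} The first equality is a direct special case of \cref{prop.sgn-cohomology-support}, and the remaining two follow from the Künneth formula together with an elementary computation in super-linear algebra. In more detail, I would apply \cref{prop.sgn-cohomology-support} with base scheme $\Spec\kappa$, with $Z=Y$, with $S=\{1,\dots,p\}$, with $\mc{F}$ the constant sheaf $\mbb{Q}_\ell$, and with the scheme playing the role of ``$Y$'' in that proposition (the target of $X\to Y$) taken to be $\Spec\kappa$. Over a field the divided power scheme $\Gamma^S_{\Spec\kappa}(Y)$ is just the Cartesian power $Y^{\times S}=Y^p$, and $\Conf^{|S|}_{\Spec\kappa}(Y)=\Conf^p(Y)$, so the last displayed identity of \cref{prop.sgn-cohomology-support}, read from right to left, becomes
\[ H^\bullet_c(\Conf^p(Y),\ul{\sgn})=H^\bullet_c(Y^p)[\sgn], \]
which is the first line of \cref{eq.sgn-cohomology-formulas}. (If one wishes to allow $Y$ not quasi-projective, one reduces to the quasi-projective case or works with algebraic spaces; all $Y$ arising in our applications are quasi-projective.)

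For the second equality I would invoke the Künneth isomorphism $R\Gamma_c(Y^p)\simeq R\Gamma_c(Y)^{\otimes p}$, which holds for an arbitrary variety over an algebraically closed field and is equivariant for the $\mfS_p$-action permuting the factors, the caveat being that on the target $\mfS_p$ acts by the \emph{signed} permutation action induced by the Koszul sign rule on the graded tensor power. Writing $V^\bullet=H^\bullet_c(Y,\mbb{Q}_\ell)$ as a $\mbb{Z}/2$-graded (super) vector space, the $\sgn$-isotypic summand of $(V^\bullet)^{\otimes p}$ for this signed permutation action is exactly the graded exterior power $\Lambda_{\mr{gr}}^p V^\bullet$ (one may take this as the definition of $\Lambda_{\mr{gr}}^p$, or else verify it via the idempotent below), which identifies $H^\bullet_c(Y^p)[\sgn]$ with $\Lambda_{\mr{gr}}^p H^\bullet_c(Y,\mbb{Q}_\ell)$.

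For the third equality I would split $V^\bullet=H^{\mr{even}}_c(Y,\mbb{Q}_\ell)\oplus H^{\mr{odd}}_c(Y,\mbb{Q}_\ell)$ and apply the sign idempotent $\tfrac{1}{p!}\sum_{\sigma\in\mfS_p}\sgn(\sigma)\sigma$ to the decomposition of $(V^\bullet)^{\otimes p}$ into summands $\Lambda^i$-type and $\operatorname{Sym}^j$-type blocks: on a tensor factor lying in $H^{\mr{even}}_c$ the signed permutation action agrees with the ordinary one, so its $\sgn$-part is the usual exterior power $\Lambda^i H^{\mr{even}}_c(Y,\mbb{Q}_\ell)$; on a tensor factor lying in $H^{\mr{odd}}_c$ the Koszul sign attached to a transposition cancels the value of the sign character on it, so its $\sgn$-part is the usual symmetric power $\operatorname{Sym}^{p-i} H^{\mr{odd}}_c(Y,\mbb{Q}_\ell)$. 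Summing over $i$ gives the stated decomposition.

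There is no serious obstacle; essentially all the content is already contained in \cref{prop.sgn-cohomology-support}. The one point that needs care is the sign bookkeeping: tracking the Koszul signs in the $\mfS_p$-action under the Künneth isomorphism and checking that, combined with the sign character, they turn ``exterior'' into ``symmetric'' precisely on the odd part. This is standard for symmetric groups acting on tensor powers of super vector spaces over a field of characteristic zero (here $\mbb{Q}_\ell$).
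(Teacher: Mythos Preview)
Your proposal is correct and follows exactly the paper's approach: the paper deduces the corollary from \cref{prop.sgn-cohomology-support} by ``applying the K\"unneth formula,'' which is precisely what you do, with the remaining two lines being the standard super-linear-algebra unwinding you describe. If anything, you have supplied more detail than the paper, which treats this as immediate.
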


\subsection{Cohomological special values of Kapranov zeta}\label{ss.cohom-special-values}

Suppose now that $X/\kappa$ is a smooth projective algebraic variety over an algebraically closed field. Building on the constructions above, we define cohomological special values of the inverse Kapranov zeta function as the bigraded (by weight and degree) vector spaces
\begin{align}\label{eq.intro-cohomo-zeta} \zeta_{X}^{-1,\mr{Coh}}(n) &:=  \bigoplus_{k=0}^\infty (\zeta_{X}^{-1,\mr{Coh}})_k  \otimes \mbb{L}_\mr{Coh}^{\otimes -nk}, \; \mbb{L}_{\mr{Coh}}:= H^\bullet_c(\mbb{A}^1),
 \\
 \nonumber (\zeta_{X}^{-1,\mr{Coh}})_k &:=  H^{\bullet-k}_c(\Conf^k(X), \ul{\sgn}) = H^{\bullet-k}(X^k)[\sgn]
  \end{align}
where the last equality is by \cref{corollary.sgn-cohomology-computation}. Note that by our conventions, the $k$th summand in $\zeta_{X}^{-1,\mr{Coh}}(n)$ sits in weight $-k$.

This definition is motivated as follows: we have seen above that $Z_{X}^\Kap(t)^{-1}$ lifts naturally to the Möbius element $\mu$ in $\tilde{I}_\mr{Mot}$. On the other hand, \cref{eq.mot-mobius-euler-char-formula} gives a formula for the $k$th component, $k \geq 1$, as the Euler characteristic
\[ \mu_k = \tilde{\chi}(N(-\infty, \Gamma^k(X))). \]
Thus, to obtain a cohomological analog, we should replace the Euler characteristic with the corresponding reduced cohomology sheaf $j_! \ul{\sgn}[2-k]$ as computed in \cref{prop.sym-reduced-cohomo-computation}. The analog of the forgetful map from $K_0(\Var/\Gamma^k(X))$ to $K_0(\Var/\kappa)$ is compactly supported cohomology, and, after a shift by $2$, this yields the formula \cref{eq.intro-cohomo-zeta} for special values (using the obvious interpretation of $\mbb{L}$). The shift by two here is natural for various reasons (in particular, in applications it is cancelled out by the same shift by two that occurs in \cref{main.spectral-sequences}-(2)), so that we incorporate it into the definition.

In \cref{s.smooth-hypersurface}, a special role is played by the special value for $n=\dim X + 1$. By the above definition and Poincar\'{e} duality,
\begin{align}\label{eq.zeta-cohom-expression} \zeta_X^{-1,\Coh}(\dim X + 1) &= \bigoplus_{k=0}^\infty H_{\bullet-k}(X^k)[\sgn] \otimes \mbb{L}_\Coh^{\otimes{-k}}.
\end{align}
This is naturally identified with the graded symmetric algebra (for the Koszul sign rule with commutativity constraint given by degree) of $H_{\bullet-1}(X)(1)$, where the degree shift and Tate twist place $H_i(X)$ in degree $-i-1$ and weight $-1$; this will be explained again in the introduction to \cref{s.smooth-hypersurface} where it connects our stabilization results to those of Aumonier \cite{aumonier}; see also \cref{eq.sgn-cohomology-formulas}.

\section{Stability for the space of smooth hypersurface sections}\label{s.smooth-hypersurface}

In this section we prove \cref{intro.stabilization}. We first discuss the context of this result and related work, expanding on the discussion in the introduction and starting with the Grothendieck ring stabilization of Vakil and Wood \cite{vakil-wood:discriminants, vakil-wood-errata} (which we will reprove below in parallel with \cref{intro.stabilization} to illustrate the close relation between the methods). To ensure this discussion is accessible to readers who have skipped here directly from the introduction, we will recall some notation along the way.

Let $X$ be a smooth projective variety over an algebraically closed field $\kappa$. Let $\mc{L}$ be an ample line bundle on $X$ and let $V_d$ be the affine space of global sections of $\mc{L}^d$. Let $U_d \subset V_d$ be the open subscheme of sections with non-singular vanishing locus --- its complement $D_d$ is the image of the incidence variety of $I_d$ parameterizing $(x,f) \in X \times V_d$ such that the degree one Taylor expansion of $f$ at $x$ is zero.

We write $\mbb{L}=[\mbb{A}^1]$ in any Grothendieck ring of varieties. For $\kappa$ any field, Vakil and Wood showed that, in the completion of $K_0(\Var/\kappa)[\mbb{L}^{-1}]$ for the dimension filtration (where elements of the form $[Z]/\mbb{L}^n$ are small if $n\gg \dim Z$),
\begin{equation}\label{eq.intro-vakil-wood}\lim_{d \rightarrow \infty} \frac{[U_d]}{\mbb{L}^{\dim U_d}}=\zeta_{X}^\Kap(\dim X + 1)^{-1},\end{equation}
an inverse special value of the Kapranov zeta function
\[\zeta_X^\Kap(n) := Z_X^\Kap(\mbb{L}^{-n}),\, Z_X^\Kap(t) := \sum_{k=0}^\infty [\Sym^k X]\mbb{L}^k \in K_0(\Var/\kappa)[[t]], \; \Sym^k X := X^k/\mfS_k. \]

If $X$ is defined over a finite field $\bbF_q$, the $d$th term in the limit on the left of \cref{eq.intro-vakil-wood} specializes by point-counting to the probability that a random smooth degree $d$ hypersurface section of $X$ defined over $\bbF_q$ is smooth. The Kapranov zeta function specializes by point-counting to the Hasse--Weil zeta function, and the result of Vakil and Wood is a Grothendieck ring analog of a point-counting result of Poonen \cite{poonen:bertini} for varieties over finite fields that shows these probabilities converge to the same special value of the Hasse--Weil zeta function (recall however that the Grothendieck ring stabilization does not imply the point-counting stablization because point-counting is not continuous for the dimension topology; see \cite{bilu-das-howe} for a recent dicussion). This point-counting result is itself an extension from curves to arbitrary varieties of the function field analog of the classical statement that the asymptotic probability that an integer is squarefree is  $\zeta_{\mbb{Z}}(2)^{-1}$ for $\zeta_{\mbb{Z}}(s)=\sum \frac{1}{n^s}$ the Riemann zeta function (see \cite{poonen:bertini} or \cite[\S1.1]{bilu-howe:mot-eul-mot-stat} for more details on this point).

We now assume $\kappa$ is algebraically closed, and denote by $H^\bullet(-)$ (resp.\ $H_c^\bullet(-)$, resp.\ $H_\bullet(-)$) either $\ell$-adic étale cohomology for $\ell$ invertible in $\kappa$ or rational singular cohomology if $\kappa = \mbb{C}$ (resp.\ compactly supported cohomology, resp.\ homology). There is a natural compactly supported Euler characteristic map from $K_0(\Var/\kappa)[\mbb{L}^{-1}]$ to a weight-graded cohomological Grothendieck ring $K_0^\Coh$ (e.g.\ of Hodge structures or germs of Galois representations),
\[ [Y] \mapsto \sum_i (-1)^i [H^i_c(Y)], \]
 and by Poincar\'{e} duality the class of $[U_d]/\mbb{L}^{\dim V_d}$ is sent to
\[ \sum_i (-1)^i [H_i(U_d)]. \]
The result of Vakil and Wood implies that this generalized homological Euler characteristic stabilizes as $d \rightarrow \infty$ in the completion of $K_0^\Coh$ for the weight grading to the image in the same ring of $\zeta_X^\Kap(\dim X +1)^{-1}$. Based on this observation, Vakil and Wood conjectured that the rational homology of $U_d$ also stabilizes, but without specifying a natural stable value except in  cases where the special value is of a particularly simple form. In those cases, they conjectured an Occam's razor principle that the cohomology should be in a sense the simplest possible.

Tommasi \cite{tommasi:stability}  established homological stability in the case $X=\mbb{P}^n$ by combining a Vassiliev-type spectral sequence with an $E_1$-degeneration argument specific to the case of $\mbb{P}^n$. Interestingly, Tommasi's computation showed that the most naive Occam's razor does not hold in this case, but for good reasons -- in this case, the orbit map for the natural action of $\mr{PGL}_{n+1}(\mbb{C})$ describes the cohomology completely, so that the stable cohomology is equal to the cohomology of $\mr{PGL}_{n+1}(\mbb{C})$. The Vassiliev spectral sequence of loc.~cit.\ applies to general $X/\mbb{C}$, and indeed our approximate inclusion-exclusion formula is an algebro-geometric version of this sequence. The degeneration argument and the simple description of the stable cohomology as that of $\mr{PGL}_{n+1}(\mbb{C})$ are very specific to the case $X=\mbb{P}^n$.

Recently Aumonier \cite{aumonier} has obtained stabilization for general smooth projective $X/\mbb{C}$ via an $h$-principle comparing continuous and holomorphic sections of a jet bundle. The end result includes a beautiful and simple description of the stable cohomology of $U_d$ in terms of the cohomology of $X$ itself. In hindsight, there is a simple heuristic that leads directly from Vakil and Wood's stabilization to Aumonier's description and can be viewed as a refined Occam's razor: The Kapranov zeta function defines a pre-$\lambda$ ring structure on the Grothendieck ring and, using the notation of the corresponding power structure \cite{glm:power-structure} (see also, e.g., \cite{howe:mrv1, bilu-howe:mot-eul-mot-stat}), we can write $Z_{X,\Kap}(t)^{-1}=(1-t)^{[X]}$. Specializing to the cohomological Grothendieck ring, we may then expand as
\begin{align}\nonumber (1-t)^{\sum_i (-1)^i [H^{i}(X)]}&=(1- t)^{[H^{\mr{even}}(X)]}(1-t)^{-[H^{\mr{odd}}(X)]} \\
 \nonumber &= \left( \sum_j (-1)^j  \left[\bigwedge^j\nolimits H^{\mr{even}}(X)\right] t^j \right) \left( \sum_j \left[ \operatorname{Sym}^j H^\mr{odd}(X) \right] t^j \right) \\
\label{eq.zeta-cohom-value} &= \sum_{j} (-1)^j \left[\bigwedge_{\mr{gr}}^j  H^\bullet(X)\right]t^j
\end{align}
Here the subscript $\mr{gr}$ on the last line denotes exterior power is of graded vector spaces with the Koszul sign rule. If we substitute $t=\mbb{L}^{-(\dim X +1)}$, then this is the natural class attached to the graded symmetric algebra of $H_{\bullet-1}(X)(1)$, and in \cite{aumonier} it is shown that the cohomology ring of $U_d$ stabilizes to the dual graded symmetric algebra of $H^{\bullet-1}(X)(-1)$ (our result here does not describe the cup product).

On the other hand, the $k$th graded exterior power of a graded vector space $V$ is isomorphic to the $\sgn$-isotypic summand for the $\mfS_k$-action on the graded tensor product $V^{\otimes k}$ (with the Koszul sign rule), so by Kunneth \cref{eq.zeta-cohom-value} equals
\begin{equation}\label{eq.stab-intro-sgn-zeta-inverse}  \sum_{k=0}^\infty (-1)^{k} \left[ H^\bullet(X^k)[\sgn] \right] t^k. \end{equation}
This incarnation is how the special value appears in our homological stabilization.

To explain this, recall that there are two key ingredients in Vakil and Wood's proof of the motivic stabilization \cref{eq.intro-vakil-wood} (see also \cite{bilu-howe:mot-eul-mot-stat}): an approximate motivic inclusion-exclusion formula describing $D_d$ using the resolution $I_d \rightarrow D_d$, and an inversion formula for $Z_X^\Kap(t)$. In our language, the approximate inclusion-exclusion is provided by \cref{theorem.approx-ie}-(1), which is of a particularly simple form because for $d$ sufficiently large the relative configurations of $I_d$ are vector bundles over configurations of $X$. The inverse formula was treated in \cref{s.incidence-algebras}, where we explained how \cref{eq.stab-intro-sgn-zeta-inverse} and other closely related formulas can be obtained from M\"obius inversion on the poscheme of effective zero cycles on $X$. Since M\"obius inversion is an incarnation of inclusion-exclusion, it is no surprise that the two should be related.

For homological stabilization, the approximate inclusion-exclusion formula is provided by \cref{theorem.approx-ie}-(2). For the inversion formula, in section \cref{ss.cohom-special-values}, motivated by incidence algebra constructions, we defined bigraded (by degree and weight) cohomological special values of the inverse Kapranov zeta function
 \begin{align*} \zeta_{X}^{-1,\mr{Coh}}(n) &=  \bigoplus_{k=0}^\infty H^{\bullet-k}(X^k)[\sgn] \otimes \mbb{L}_\mr{Coh}^{\otimes -nk}, \; \mbb{L}_{\mr{Coh}}:= H^\bullet_c(\mbb{A}^1).
 \end{align*}
 In particular, the weight $-k$ summand of $\zeta_{X}^{-1,\mr{Coh}}(-\dim X + 1)$ is
 \[ H^{\bullet-k}(X^k)[\sgn] \otimes \mbb{L}_\mr{Coh}^{\otimes -(\dim X +1)k} = H_{\bullet+k}(X^k)[\sgn] \otimes \mbb{L}_\mr{Coh}^{\otimes -k} \]
 These terms will be matched with the $E_1$ page of the skeletal spectral sequence for the truncated poscheme of effective zero cycles that arises in approximate inclusion-exclusion, so that the main point is to prove this spectral sequence degenerates at $E_1$. We show this with a direct analysis on the $E_1$ page and a weight argument to treat the later differentials in order to prove a slight refinement of \cref{intro.stabilization}:

 \begin{theorem}\label{theorem.body-stabilization}
 For $d\gg_k 0$, the skeletal spectral sequence for the $k$-truncated poscheme of effective zero cycles for (a compactification of) $I_d/D_d$ induces a canonical isomorphism of bigraded vector spaces
\[ \Gr_W H_{\leq k}(U_d) \cong \zeta_X^{-1,\Coh}(\dim X+1)_{\deg \geq -k} \]
where we recall  $W$ denotes the weight filtration and $H_i(U_d)$ sits in degree $-i$.
\end{theorem}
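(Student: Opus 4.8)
The plan is to combine the two main tools developed in the body of the paper --- the cohomological approximate inclusion-exclusion of \cref{theorem.approx-ie}-(2) and the skeletal spectral sequence with its Banerjee-complex description from \cref{theorem.banerjee-qiso} --- with a weight argument to force degeneration. First I would take $Z = I_d$, the incidence variety resolving the discriminant $D_d \subset V_d$, and compactify the morphism $I_d \to D_d$ (e.g.\ replacing $I_d$ by its closure $\overline{I_d}$ in $X \times V_d$, which is projective over $V_d$ since $X$ is projective). The key geometric input, exactly as in Vakil--Wood's motivic argument, is that for $d \gg_k 0$ the relative configuration spaces $\Conf^{\underline a}_{D_d}(\overline{I_d})$ are Zariski-locally trivial affine-space bundles over $\Conf^{\underline a}_X(X) = \Conf^{|\underline a|}(X) \times (\text{affine space})$: imposing the vanishing of the $1$-jet of a section at $k$ distinct points of $X$ is $k(\dim X + 1)$ independent linear conditions once $d$ is large relative to $k$ and the regularity of $\mc{L}$. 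Checking the dimension hypothesis \cref{eq.approx-ie-dimension-hypothesis} of \cref{theorem.approx-ie} for the truncated poscheme then reduces to the standard estimate $\dim(X_{>k}) \gg 0$ as $d \to \infty$, so that \cref{theorem.approx-ie}-(2) gives isomorphisms $H^q_c(V_d, \mc{F}) \xrightarrow{\sim} H^q_c(V_d, R\epsilon_* \epsilon^* \mc{F})$ for $q$ in a range growing with $d$, equivalently (by the long exact sequence for $j_! j^* = j_! j^* $ of the open $U_d$) $H^q_c(U_d)$ is computed by the spectral sequence attached to $N\Gamma^{\le k}_{D_d}(\overline{I_d})$ in a range of degrees $\le$ (something growing with $d$) --- after Poincar\'e duality this is the homological range $H_{\le k}(U_d)$ claimed in the statement.

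Next I would run the skeletal spectral sequence for $\Gamma^{\le k}_{D_d}(\overline{I_d})$ rather than the rank spectral sequence, using the comparison map $\asym$ of \cref{theorem.banerjee-qiso} to replace its $E_1^\circ$-page by the Banerjee complex $B(L)$, whose degree-$p$ term is $R^\bullet(\overline{I_d}^{\times_{D_d}[p]} \to \operatorname{pt})_*\, L [\sgn]$ --- and by the affine-bundle structure above together with the Kunneth/\cref{corollary.sgn-cohomology-computation} identification $H^\bullet_c(Y^p)[\sgn] = H^\bullet_c(\Conf^p(Y),\ul{\sgn})$, this is $H^\bullet(X^p)[\sgn]$ up to a uniform Tate twist $\mbb{L}_\Coh^{\otimes -(\dim X+1)p}$ (coming from the fibre dimension). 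Assembling over $p$, the $E_1$-page is precisely the truncation $\deg \ge -k$ of $\zeta_X^{-1,\Coh}(\dim X+1)$ as written in \cref{eq.zeta-cohom-expression}. It remains to prove this spectral sequence degenerates at $E_1$ in the relevant range. For the $E_1$-differential: the differential on the Banerjee complex is the completely explicit alternating sum $\sum_i (-1)^i \alpha_i^*$ of pullbacks along forget-a-point maps $\overline{I_d}^{\times_{D_d}[p]} \to \overline{I_d}^{\times_{D_d}[p-1]}$, and after passing to the affine-bundle base this becomes the analogous simplicial differential for $X^{\bullet}$; one checks (this is where O.~Banerjee's insight on sign cohomology of configuration spaces enters, cf.\ \cref{remark.aumonier-banerjee} and \cref{corollary.sgn-cohomology-computation}) that on the $\sgn$-isotypic part this differential vanishes identically --- i.e.\ all the $\alpha_i^*$ agree up to sign on $[\sgn]$-parts, essentially because the forgetful maps are compatible with the diagonal embeddings and the $\sgn$-idempotent kills anything fixed by a transposition, as in the proof of \cref{prop.sgn-cohomology-support}. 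For the higher differentials $d_r$, $r \ge 2$: here I would use the normalization of $W$ from the introduction together with the fact that the graded piece in skeletal degree $p$ carries pure weight (it is $H^\bullet(X^p)[\sgn]$ Tate-twisted, so weight $-p$ in the homological normalization), while $d_r$ shifts skeletal degree by $r$; since $d_r$ is a morphism of mixed Hodge structures / Galois representations it must respect weights, and the source and target of a nonzero $d_r$ would sit in different weights, forcing $d_r = 0$. This is exactly the two-step degeneration strategy announced in the introduction (``vanishing of the $E_1$ differential \dots then \dots a weight argument'').

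The main obstacle, and where the real work lies, is the $E_1$-degeneration --- i.e.\ proving that the Banerjee differential vanishes on $\sgn$-isotypic parts in this situation. The weight argument for $d_{\ge 2}$ is formal once one knows each column is pure of the expected weight, and that purity is immediate from the affine-bundle description and the purity of $H^\bullet(X^p)$ for $X$ smooth projective (Tate twists shift weight in a controlled way, and the $\sgn$-summand of something pure is pure). The geometric input that $\Conf^{\underline a}_{D_d}(\overline{I_d}) \to \Conf^{|\underline a|}(X)$ is an affine-bundle for $d\gg_k 0$ is a Castelnuovo--Mumford regularity computation that I would do carefully but which is completely parallel to \cite{vakil-wood:discriminants} and to the local analysis in \cite{tommasi:stability, aumonier}; the only subtlety in positive characteristic is to work with $1$-jets (degree-$\le 1$ Taylor expansions) consistently, which is why $I_d$ is defined via the vanishing of the degree-one Taylor expansion rather than via smoothness directly. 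The $E_1$-vanishing itself should follow from an explicit identification of the alternating sum of the $\alpha_i^*$ with the simplicial coboundary of the (contractible) cone/cocone structure on $N\Conf^{\bullet}(X)$ after antisymmetrization --- concretely, from \cref{lemma.perm-delete-commutation} one sees the $\alpha_i^*$ differ by sign-permutations, so $d|_{[\sgn]} = (\pm 1 \pm 1 \pm \cdots)(\text{common map})$ and the coefficients telescope to $0$ in the range where no boundary/coboundary effects intrude; making this last step rigorous and identifying the resulting $E_2 = E_\infty$ with $\Gr_W H_{\le k}(U_d)$ compatibly with the weight filtration is the heart of the argument. Finally I would note that the same bookkeeping, decategorified, reproves Vakil--Wood's \cref{eq.intro-vakil-wood} in parallel, via \cref{theorem.approx-ie}-(1) and \cref{theorem.mobius-inversion-kapranov}.
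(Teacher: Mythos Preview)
Your overall architecture matches the paper's exactly: cohomological approximate inclusion-exclusion (\cref{theorem.approx-ie}-(2)) applied to a compactification of $I_d/D_d$, the skeletal spectral sequence with its $E_1$-page rewritten via the Banerjee complex (\cref{theorem.banerjee-qiso}), identification of the terms with $H^\bullet(X^{p})[\sgn]$ up to a Tate twist using \cref{lemma.conf-vector-bundle} and \cref{prop.sgn-cohomology-support}, and degeneration in two steps ($E_1$ by hand, $E_{\geq 2}$ by weights). The weight argument for $r\geq 2$ is also the paper's, though your formulation is slightly off: the Frobenius weight of $E_1^{p,q}$ is $q$, not $-p$ --- this is why $d_1$ (bidegree $(1,0)$) is \emph{not} excluded by weights and genuinely needs a separate argument.

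The genuine gap is your $E_1$-degeneration. Your claim that ``all the $\alpha_i^*$ agree up to sign on $[\sgn]$-parts'' so that $d|_{[\sgn]}=(\pm1\pm\cdots)(\text{common map})$ telescopes to zero is false. Take $p=1$: for $c\in H^\bullet(X)$ one has $\alpha_0^*c=1\otimes c$, $\alpha_1^*c=c\otimes 1$, and $d_1(c)=1\otimes c - c\otimes 1$, which lies in $H^\bullet(X^2)[\sgn]$ and is nonzero whenever $c\notin H^0$. More structurally, the individual $\alpha_i^*c$ are permuted (not sign-multiplied) by $\mfS_{[p]}$, so no such telescoping occurs. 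Relatedly, your assertion that ``after passing to the affine-bundle base this becomes the analogous simplicial differential for $X^\bullet$'' overlooks that the Tate twist $\mbb{L}_\Coh^{r(p+1)}$ depends on $p$: in terms of the $H^a(X^{p})$-grading, $d_1$ would have to raise $a$ by $2(\dim X+1)$, whereas the $\alpha_{i,X}^*$ have degree zero --- so the differential cannot be that simplicial differential.

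The paper's $E_1$-argument is quite different and depends on the specific compactification you did not introduce: one compactifies $V_d$ to $\overline{V}_d=\mbb{P}(V_d\oplus\kappa)$ (not merely $I_d$ inside $X\times V_d$, which is already projective over $V_d$) and uses the chain of inclusions $(I_d/D_d)^{[p]}\hookrightarrow(\overline{I}_d/\overline{D}_d)^{[p]}\hookrightarrow X^{[p]}\times\overline{V}_d$. On $[\sgn]$-parts this exhibits $E_1^{p,\bullet}$ as the single summand $H^\bullet(X^{[p]})[\sgn]\otimes\mbb{L}_\Coh^{2\dim V_d-(p+1)(\dim X+1)}$ inside $H^\bullet(X^{[p]})[\sgn]\otimes\bigoplus_k\mbb{L}_\Coh^k$, compatibly with the differential. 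Since the differential on the ambient complex preserves each $\mbb{L}_\Coh^k$-summand (the $\alpha_i$ act only on the $X^{[p]}$ factor), while source and target of $d_1$ lie in distinct summands $k_{p-1}\neq k_p$, the differential vanishes. This is the ``real work'' you flagged, and it requires the projective-bundle compactification rather than a combinatorial identity on $\sgn$-parts.
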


The rest of this section is organized as follows: in \cref{ss.mot-stab} we recall the argument for Vakil--Wood's motivic stabilization via motivic approximate inclusion-exclusion as in \cref{theorem.approx-ie}-(1) --- this is the argument of \cite{vakil-wood:discriminants, vakil-wood-errata} (see also \cite{bilu-howe:mot-eul-mot-stat}), but the results of the previous sections render completely transparent the relation between the approximate motivic inclusion-exclusion formula and the inversion formula for the Kapranov zeta function. In \cref{ss.hom-stab-1} we carry out the first steps of the proof of \cref{intro.stabilization} by cohomological approximate inclusion-exclusion as in \cref{theorem.approx-ie}-(2). These first steps mirrors the motivic argument, but the final, critical step, which has no analog in the Grothendieck ring, is to show the degeneration at $E_1$ of the skeletal spectral sequence for the truncated symmetric power poscheme. We carry this out in \cref{ss.stab-degeneration} --- on $E_1$ we can analyze the differentials explicitly, while on higher pages we obtain vanishing by a weight argument using purity of $E_1$.

\subsection{Motivic stabilization}\label{ss.mot-stab}

The main geometric input is:

\begin{lemma}[{\cite[Lemma~3.2]{vakil-wood:discriminants}}]\label{lemma.conf-vector-bundle}
For any $M>0$, there is a $D$ sufficiently large such that for all $ d \geq D$ and $a_1 + a_2 + \dotsm + a_m =k \leq M$, $\Conf^{(a_1, \ldots, a_m)}_{V_d}(I)$ is a vector bundle of rank $r(k):=\dim V_d - k(\dim X+1)$ over $\Conf^{(a_1, \ldots, a_m)}(X)$.
\end{lemma}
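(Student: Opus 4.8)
The plan is to exhibit $\Conf^{(a_1,\dots,a_m)}_{V_d}(I)\to\Conf^{(a_1,\dots,a_m)}(X)$ as the total space of the descent to $\Conf^{(a_1,\dots,a_m)}(X)$ of the kernel of an explicit surjection of vector bundles on the ordered configuration scheme $F:=X^k\setminus\Delta$, where $k=a_1+\dots+a_m$. The guiding observation is that a $\kappa$-point of the $k$-fold fibre product $I\times_{V_d}\dots\times_{V_d}I$ with distinct $X$-coordinates is a section $f\in V_d$ together with distinct points $x_1,\dots,x_k\in X$ at which the degree one Taylor expansion of $f$ vanishes; so the fibre of the forgetful map over $(x_1,\dots,x_k)\in F$ is the linear subspace of $V_d$ of sections whose $1$-jet vanishes at all $k$ points, and the lemma asserts that for $d$ large (depending only on $M\ge k$) this has the expected codimension $k(\dim X+1)$, uniformly in the configuration, and varies in a locally free way.

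First I would build the universal jet datum. Over $F$, let $\sigma_1,\dots,\sigma_k\colon F\to X$ be the coordinate maps and let $\mc{Z}_i:=V(\mc{I}_{\Gamma_{\sigma_i}}^2)\subset X\times F$ be the subscheme cut out by the square of the ideal of the graph $\Gamma_{\sigma_i}$. Since $X$ is smooth, $\Gamma_{\sigma_i}$ is a regularly embedded section of codimension $\dim X$ over $F$, so $\mc{I}_{\Gamma_{\sigma_i}}/\mc{I}_{\Gamma_{\sigma_i}}^2$ is locally free over $\Gamma_{\sigma_i}\cong F$ and $\mc{Z}_i$ is finite flat over $F$ of degree $\dim X+1$; as the $\sigma_i$ are pairwise distinct on $F$ the $\mc{Z}_i$ have disjoint supports, and $\mc{Z}:=\bigsqcup_i\mc{Z}_i$ is finite flat over $F$ of degree $k(\dim X+1)$. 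Writing $\mathrm{pr}_1,\mathrm{pr}_2$ for the two projections of $X\times F$, the sheaf $\mc{J}_d:=\mathrm{pr}_{2*}\big(\mc{O}_{\mc{Z}}\otimes\mathrm{pr}_1^*\mc{L}^d\big)$ is locally free of rank $k(\dim X+1)$, and restriction of sections to $\mc{Z}$ gives an exact sequence of $\mc{O}_F$-modules
\[ 0\to\mc{K}_d\to V_d\otimes\mc{O}_F\to\mc{J}_d. \]
Unwinding the definitions of $I$ (as the incidence scheme of vanishing $1$-jets) and of the relative configuration scheme, the total space of $\mc{K}_d$ over $F$ is canonically $\Conf^{(1,\dots,1)}_{V_d}(I)$, the ordered version of the scheme in the statement.

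Next I would prove that $V_d\otimes\mc{O}_F\to\mc{J}_d$ is surjective for $d$ large. Since $\mathrm{pr}_2$ is projective and $\mathrm{pr}_1^*\mc{L}$ is $\mathrm{pr}_2$-ample, Serre vanishing provides, for each $k$, a bound $d_k$ with $R^1\mathrm{pr}_{2*}\big(\mc{I}_{\mc{Z}}\otimes\mathrm{pr}_1^*\mc{L}^d\big)=0$ and $R^1\mathrm{pr}_{2*}\big(\mathrm{pr}_1^*\mc{L}^d\big)=H^1(X,\mc{L}^d)\otimes\mc{O}_F=0$ for $d\ge d_k$; pushing forward the twist by $\mathrm{pr}_1^*\mc{L}^d$ of $0\to\mc{I}_{\mc{Z}}\to\mc{O}\to\mc{O}_{\mc{Z}}\to0$ then yields surjectivity. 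Taking $D=\max_{k\le M}d_k$ (a finite maximum) gives a single $D$ that works for all $k\le M$. Consequently, for $d\ge D$, $\mc{K}_d$ is locally free of rank $\dim V_d-k(\dim X+1)=r(k)$, so $\Conf^{(1,\dots,1)}_{V_d}(I)\to F$ is a vector bundle of rank $r(k)$. Finally, $\mfS_k$ acts freely on $F$ and all of the above is $\mfS_k$-equivariant, so restricting to the subgroup $\mfS_{(a_1,\dots,a_m)}=\prod_i\mfS_{a_i}$ and passing to quotients exhibits $\mc{K}_d/\mfS_{(a_1,\dots,a_m)}$ as a rank-$r(k)$ vector bundle on $\Conf^{(a_1,\dots,a_m)}(X)=F/\mfS_{(a_1,\dots,a_m)}$ whose total space is $\Conf^{(a_1,\dots,a_m)}_{V_d}(I)$, as desired.

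The only nontrivial geometric input — and the step I expect to require the most care — is the uniformity in the previous paragraph: one needs a single threshold $D$ beyond which the $1$-jets of sections of $\mc{L}^d$ can be prescribed arbitrarily and independently at every configuration of up to $M$ distinct points of $X$. This is precisely a family version of the very-ampleness/jet-separation property of high powers of an ample bundle, and here it comes out cleanly from Serre vanishing applied to the single projective morphism $\mathrm{pr}_2\colon X\times F\to F$ together with the relatively ample bundle $\mathrm{pr}_1^*\mc{L}$; it is the same input used by Vakil and Wood in loc.\ cit.
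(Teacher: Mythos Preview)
The paper does not give its own proof of this lemma; it simply cites \cite[Lemma~3.2]{vakil-wood:discriminants} and uses the statement as input. Your argument is correct and is essentially the standard proof found in that reference: identify the fibre over an ordered configuration as the kernel of the evaluation-of-$1$-jets map to a locally free sheaf of rank $k(\dim X+1)$, obtain surjectivity uniformly for $d\ge D$ via relative Serre vanishing for the projective morphism $X\times F\to F$, and then descend along the free $\mfS_{(a_1,\dots,a_m)}$-action.
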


As a first consequence, if we fix a $k$, then for $d\gg 0$ the hypothesis of \cref{theorem.approx-ie} is met for $k$. Indeed, the lemma implies that for any $j$ and $d\gg 0$, $\dim \Conf^j_{V_d}(I_d)= \dim V_d - j$. This implies that $V_{d,\geq j}$ has codimension at least $j$ in $V_d$. If this applies also for $j+1$ then we see that $V_{d,j}$ has codimension exactly $j$ --- indeed, then $\Conf^j_{V_{d,j}}(I_d)=\Conf^j_{V_{d,\geq j}}(I_d) \backslash \Conf^{j}_{V_{d, \geq j+1}}(I_d)$ has dimension $V_d - j$, thus so does $V_{d,j}$.  In particular, the codimension of $V_{d, >k}$ is $k+1$, while the codimension of $V_{d,\infty}=\bigcap_{j} V_{d, \geq j}$ can be made arbitrarily large by taking $d$ sufficiently large, so that the desired inequality
 \[ \dim V_{d,>k} > \dim V_{d,\infty} + k \dim_{/V_d} I_d = \dim V_{d,\infty} + k \dim X \]
 always holds for $d$ sufficiently large.

Fix a $k$ and assume $d$ is larger than the bound $D$ of \cref{lemma.conf-vector-bundle} for $M=k+1$ and also large enough for the hypothesis of \cref{theorem.approx-ie}  to hold. The latter gives
\begin{align*} [U_d] = [V_d]-[D_d] & = [V_d] + \sum_{\substack{ (a_1,a_2,\ldots, a_m) \\ a_i>0, \sum a_i \leq k} } (-1)^{m} [\Conf^{(a_1,a_2,\ldots, a_m)}_X(Z)] \\ & \mod \Fil^{k+1 - \dim V_d} K_0(\Var/\kappa)[\mbb{L}^{-1}], \end{align*}
and \cref{lemma.conf-vector-bundle} allows us to rewrite the terms as
\[ [\Conf^{(a_1, \ldots, a_m)}_{V_d}(I_d)]=[\Conf^{(a_1,\ldots,a_m)}(X)] \mbb{L}^{\dim V_d - k(\dim X+1)}. \]
Dividing everything by $\mbb{L}^{\dim V_d}$, we obtain
\begin{align*} \frac{[U_d]}{\mbb{L}^{\dim V_d}} & \equiv 1 + \sum_{\substack{ (a_1,a_2,\ldots, a_m) \\ a_i>0, \sum a_i \leq k} } (-1)^{m} [\Conf^{(a_1,a_2,\ldots, a_m)}_X(Z)]\mbb{L}^{- k(\dim X+1)} \\
& \equiv Z_X^{\Kap}(\mbb{L}^{-(\dim X +1)}) \mod \Fil^{k+1} K_0(\Var/\kappa)[\mbb{L}^{-1}]
\end{align*}
where the final line follows from the inversion formula \cref{remark.vakil-wood-inversion-formula}.
Taking $k$ larger and larger gives the result
\[ \lim_{d \rightarrow \infty}  \frac{[U_d]}{\mbb{L}^{\dim V_d}} = Z_X^{\Kap}(\mbb{L}^{-(\dim X +1)}) =: \zeta_X^{\Kap}(\dim X +1). \]

\subsection{Homological stabilization --- first steps}\label{ss.hom-stab-1}

To prove \cref{theorem.body-stabilization}, we must compute the weight graded of $H_i(U_d, \mbb{Q}_\ell)$. By Poincaré duality, this is equivalent to computing the weight graded of $H^{2\dim V_d - i}_c(U_d, \mbb{Q}_\ell)$.  By the long exact sequence of compactly supported cohomology for the decomposition of $V_d$ into the open $U_d$ and its closed complement $D_d$, we have
\begin{equation}\label{eq.Ud-from-Dd} H^i_c(U_d)=\begin{cases} H^{i-1}_c(D_d) & \textrm{for $1\leq i<2\dim V_d$} \\
H^{2\dim V_d}_c(V_d)=(\mbb{L}^\Coh)^{\dim V_d}& \textrm{for $i=2\dim V_d$} \\
0 & \textrm{otherwise}
\end{cases} \end{equation}
Thus it will suffice to compute the weight graded of $H^i_c(D_d)$.

It will be useful later on in our argument to compute this using an explicit compactification, so we introduce it into our setup from the beginning: let $\overline{V}_d= \mbb{P}(V_d(\kappa) \oplus \kappa)$ be the compactification of $V_d$ to a projective space over $\Spec \kappa$. The incidence variety $I_d$ evidently extends to
\[ \overline{I}_d \subseteq X \times \overline{V}_d  \]
(indeed, the points we have added correspond to $\mbb{P}(V_d)$, and evidently the property of being singular at a point depends only on the equation up to multiplication by a scalar), and $\overline{I}_d$ is the closure of $I_d$.
Let $\overline{D}_d$ denote the image of $\overline{I}_d$ in $\overline{V}_d$, which is also the closure of $D_d$ in $\overline{V}_d$ . Writing $j: D_d \hookrightarrow \overline{D}_d$ for the open immersion, we have $H^i_c(D_d, \mbb{Q}_\ell)=H^i(\overline{D}_d, j_! \mbb{Q}_\ell)$.

We will compute these cohomology groups using the $k$-truncated poscheme of effective zero cycles $\Gamma^{\leq k}_{\overline{V}_d}(\overline{I}_d)$. Thus, we need to invoke  \cref{theorem.approx-ie}-(2) to show that for a fixed $k$ this is a good approximation if $d \gg_k 0$. If we fix a $k$, then, arguing as in the motivic case above, we may assume $d \gg_k 0$ is large enough so that
\begin{enumerate}
\item $\Conf^p_{D_d}(I_d)$ is a vector bundle of rank $r(p)=\dim V_d - p(\dim X +1)$ over $\Conf^p(X)$ for all $1 \leq p \leq k$, and $\Conf^p_{\overline{D}_d}(\overline{I}_d)$ is the compactifying projective bundle obtained by taking the closure of $\Conf^{p}_{D_d}(I_d)$ inside of $\Conf^{p}(X) \times \overline{V}_d$.
\item $\dim \overline{V}_{d,>k} \leq \dim {\overline{V}_d} - (k+1)$
\item The dimension hypothesis of \cref{theorem.approx-ie} holds.
\end{enumerate}

We apply \cref{theorem.approx-ie} to $j_! \mbb{Q}_\ell$. Taking the skeletal spectral sequence and applying \cref{theorem.banerjee-qiso} to simplify the $E_1$ page, we thus obtain
\[E^{p,q}_1 = \begin{cases} H^q( (\overline{I}_d / \overline{D}_d)^{[p]}, j_!\mbb{Q}_\ell)[\sgn] & \text{if } 0\leq p \leq k-1, \\0 & \text{otherwise,}\end{cases}\]
and
\begin{align*} E_1^{p,q} & \Rightarrow H^{p+q}(\Gamma^{\leq k}_{\overline{D}_d}(\overline{I}_d), \epsilon^*j_!\mbb{Q}_\ell)\\
&=H^{p+q}(\overline{D}_d, j_!\mbb{Q}_\ell) & &\textrm{ if } p+q \geq 2\dim V_d - k+1 \\
&=H^{p+q}_c(D_d, \mbb{Q}_\ell). \end{align*}
By \cref{eq.Ud-from-Dd}, this is sufficient to compute $H^i_c(U_d, \mbb{Q}_\ell)$ for $i \geq 2 \dim V_d - k.$

We observe that we can rewrite, for $0 \leq p \leq k-1$,
\begin{align*} E^{p,\bullet}_1 &= H^\bullet( (\overline{I}_d/\overline{D}_d)^{[p]}, j_!\mbb{Q}_\ell)[\sgn]\\
&=  H^\bullet_c( (I_d / D_d)^{[p]}, \mbb{Q}_\ell)[\sgn] \\
&= H^\bullet_c( \Conf^{[p]}_{D_d}(I_d), \mbb{Q}_\ell)[\sgn] \\
&= H^\bullet_c( \Conf^{[p]}(X), \mbb{Q}_\ell)[\sgn] \otimes \mbb{L}_\Coh^{\dim V_d - (\dim X + 1)(p+1)}\\
&=H^\bullet(X^{[p]}, \mbb{Q}_\ell)[\sgn] \otimes \mbb{L}_\Coh^{\dim V_d - (\dim X +1)(p+1)}.
\end{align*}

The third and fifth equalities follow from \cref{prop.sgn-cohomology-support}, while the fourth follows from the projection formula. Renormalizing,
\[ E^{p,\bullet}_1 \otimes \mbb{L}_{\Coh}^{-\dim V_d} = H^\bullet(X^{[p]}, \mbb{Q}_\ell)[\sgn] \otimes \mbb{L}_\Coh^{(\dim X +1)(p+1)}. \]

The main point is then to establish:
\begin{proposition}\label{prop.E1-degen}
$E_\bullet$ degenerates at $E_1$.
\end{proposition}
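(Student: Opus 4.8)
The plan is to establish the two halves of degeneration separately: vanishing of the higher differentials $d_r$ for $r\ge 2$ by a weight argument, and vanishing of $d_1$ by identifying the $E_1$-complex with the Banerjee complex and then analyzing the resulting differential explicitly. The starting point is the computation already carried out above: after renormalizing, $E_1^{p,\bullet}\cong H^\bullet(X^{[p]},\mbb{Q}_\ell)[\sgn]\otimes\mbb{L}_{\Coh}^{r(p+1)}$ with $r(m)=\dim V_d-m(\dim X+1)$ and $X$ smooth projective. Since $H^j(X^{p+1})$ is pure of weight $j$ and $\mbb{L}_{\Coh}$ carries weight $2$, the summand of $E_1^{p,\bullet}$ sitting in cohomological degree $q$ is \emph{pure of weight exactly $q$}.

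For the higher differentials this purity does the job. For $r\ge 2$ the map $d_r\colon E_r^{p,q}\to E_r^{p+r,q-r+1}$ has source a subquotient of $E_1^{p,q}$, hence pure of weight $q$, and target a subquotient of $E_1^{p+r,q-r+1}$, hence pure of weight $q-r+1<q$. The differentials of the spectral sequence are morphisms in the category where the weight filtration lives (mixed Hodge structures for $\kappa=\mbb{C}$, or germs of Galois representations detected by Frobenius eigenvalues in general — this is exactly where the normalization of $W$ from \cref{intro.stabilization} is used), and there are no nonzero morphisms between pure objects of distinct weights, so $d_r=0$ for all $r\ge 2$. This is the ``weight argument using purity of $E_1$''.

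The vanishing of $d_1$ is the crux, and the step I expect to be the main obstacle. Via \cref{theorem.banerjee-qiso}, the $E_1$-page together with its differential is identified (after passing to non-degenerate cochains) with the Banerjee complex $B(j_!\mbb{Q}_\ell)$ for the Čech nerve $[p]\mapsto(\overline{I}_d/\overline{D}_d)^{[p]}$ of $\overline{I}_d\to\overline{D}_d$, whose differential is the alternating sum $\sum_i(-1)^i\alpha_i^*$ of the forget-a-point pullbacks. Restricting to $\sgn$-isotypic parts and invoking \cref{prop.sgn-cohomology-support} — which shows the sign part of $H^\bullet((\overline{I}_d/\overline{D}_d)^{[p]},j_!\mbb{Q}_\ell)$ is carried on the configuration locus $\Conf^{p+1}_{D_d}(I_d)$ — together with \cref{lemma.conf-vector-bundle} and \cref{corollary.sgn-cohomology-computation}, the differential $d_1\colon E_1^{p,\bullet}\to E_1^{p+1,\bullet}$ becomes an explicit map of graded exterior powers
\[ \Lambda^{p+1}_{\mr{gr}}H^\bullet(X)\otimes\mbb{L}_{\Coh}^{r(p+1)}\longrightarrow \Lambda^{p+2}_{\mr{gr}}H^\bullet(X)\otimes\mbb{L}_{\Coh}^{r(p+2)}, \]
assembled from the forget-a-point maps on ordered configuration spaces of $X$ and the Gysin contribution of the rank-$(\dim X+1)$ jet bundle that accounts for the drop $r(p+1)-r(p+2)=\dim X+1$. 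Unlike the higher differentials, $d_1$ relates pieces of \emph{equal} weight $q$, so no weight obstruction is available and one must genuinely compute; I expect the vanishing to come from the interplay of $\sgn$-antisymmetrization with these forget-a-point maps on the sign cohomology of configuration spaces, following Banerjee's analysis. Once $d_1=0$ and $d_r=0$ for $r\ge 2$, the spectral sequence degenerates at $E_1$; combined with the purity of $E_1^{p,q}$ this reads off the weight filtration on $H^\bullet_c(D_d)$ in the stable range, and then \cref{eq.Ud-from-Dd} and Poincaré duality yield \cref{theorem.body-stabilization}.
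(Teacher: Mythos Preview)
Your treatment of the higher differentials is essentially the paper's \cref{lemma.E2-degeneration}: purity of $E_1^{p,q}$ in weight $q$ forces $d_r=0$ for $r\ge 2$. (The paper is slightly more careful, spreading out to a finite-type $\mbb{Z}$-algebra and specializing to a closed point so as to have an honest Frobenius rather than an abstract weight filtration, but the idea is the same.)

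The genuine gap is the $d_1$ step. You correctly note that $d_1$ links pieces of equal weight, so the weight argument does not apply, and you then write only that you ``expect the vanishing to come from the interplay of $\sgn$-antisymmetrization with these forget-a-point maps''. That is not an argument, and in fact no purely representation-theoretic manipulation of $\Lambda^{p+1}_{\mr{gr}}H^\bullet(X)\to\Lambda^{p+2}_{\mr{gr}}H^\bullet(X)$ will suffice: the differential depends on the specific geometry of the bundle $\Conf^{p+1}_{D_d}(I_d)\to\Conf^{p+1}(X)$, and you have not used that geometry.

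The paper's mechanism is different from what you sketch and is worth knowing. One embeds the vector bundle $\Conf^{[p]}_{D_d}(I_d)$ into its projective compactification $\Conf^{[p]}_{\overline{D}_d}(\overline{I}_d)$ and then into the trivial projective bundle $\Conf^{[p]}(X)\times\overline{V}_d$. Passing to $\sgn$-parts, this realizes $E_1^{p,\bullet}$ as a specific one-term subquotient of
\[
H^\bullet(X^{[p]})[\sgn]\otimes\bigoplus_{j=0}^{\dim\overline{V}_d}\mbb{L}_{\Coh}^{j},
\]
namely the summand $j=r(p+1)=\dim V_d-(p+1)(\dim X+1)$. On the ambient trivial bundle the forget-a-point differential visibly preserves each $\mbb{L}_{\Coh}^{j}$ summand (the $\overline{V}_d$ factor is constant along $\alpha_i$). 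But the source $E_1^{p-1,\bullet}$ sits in the summand $j=r(p)$ and the target $E_1^{p,\bullet}$ in $j=r(p+1)\neq r(p)$, so the restricted differential is zero. This is a second, finer ``weight'' argument --- grading by the power of $\mbb{L}_{\Coh}$ coming from the projective bundle decomposition --- and it is precisely what is missing from your proposal.
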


Indeed, assume \cref{prop.E1-degen} holds. Then, the graded for the  filtration on $H^\bullet_c(D_d, \mbb{Q}_\ell) \otimes \mbb{L}_\Coh^{-\dim V_d}$ induced by the spectral sequence in the stable range $\bullet \geq 1 - k$ satisfies, for $0 \leq p \leq k-1$,
\[ \Gr^{p} H^{\bullet}_c(D_d, \mbb{Q}_\ell) \otimes \mbb{L}_\Coh^{-\dim V_d} = H^{\bullet-p}(X^{p+1}, \mbb{Q}_\ell)[\sgn] \otimes \mbb{L}_\Coh^{-(\dim X +1)(p+1)} \]
and is zero for $p\geq k$. Then, using \cref{eq.Ud-from-Dd}, the induced filtration on $H^\bullet_c(U_d, \mbb{Q}_\ell) \otimes \mbb{L}_\Coh^{-\dim V_d}$  in the stable range $\bullet \geq - k$ satisfies for $0 \leq p \leq k$
\[ \Gr^{p} H^\bullet_c(U_d, \mbb{Q}_\ell) \otimes \mbb{L}_\Coh^{-\dim V_d} = H^{\bullet-p}(X^{p}, \mbb{Q}_\ell)[\sgn] \otimes \mbb{L}_\Coh^{-(\dim X+1)(p)} \\
\]
and is zero for $p \geq k+1$. Then, we obtain \cref{theorem.body-stabilization} by comparison with \cref{eq.zeta-cohom-expression} --- the filtration induced by the spectral sequence agrees with the weight filtration by a spreading out argument similar to \cref{lemma.E2-degeneration} below.

Thus it remains to prove \cref{prop.E1-degen}. We do so in the next subsection.

\begin{remark}
The use of \cref{theorem.banerjee-qiso} to simplify the terms of the spectral sequence, as well as the weight arguments below, requires that we work with rational coefficients.
In fact the analog of \cref{theorem.body-stabilization} in singular homology with $\bbZ/2$ coefficients fails for $X = \bbC\bbP^2$, as shown in \cite[Proposition~8.12]{aumonier}.
\end{remark}

\subsection{Homological stabilization --- degeneration of the spectral sequence}\label{ss.stab-degeneration}

We first make a weight argument to show degeneration at $E_2$:

 \begin{lemma}\label{lemma.E2-degeneration}
$E_\bullet$ degenerates at $E_2$.
\end{lemma}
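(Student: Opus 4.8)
The plan is to run the standard spreading-out / Frobenius-weight argument, exploiting the fact that, after the renormalization above, each $E_1^{p,\bullet}$ term is \emph{pure}. Concretely, recall
\[ E^{p,\bullet}_1 \otimes \mbb{L}_{\Coh}^{-\dim V_d} = H^\bullet(X^{[p]}, \mbb{Q}_\ell)[\sgn] \otimes \mbb{L}_\Coh^{(\dim X +1)(p+1)}, \]
and that $H^j(X^{p+1})$ is pure of weight $j$ because $X$ is smooth projective, while $\mbb{L}_\Coh$ contributes weight $2$. Hence the summand of $E_1^{p,q}$ (before renormalization) lying in cohomological degree $q$ is pure of weight
\[ w(p,q) = (q - p) + 2(\dim X + 1)(p+1) = q + (2\dim X + 1)p + 2(\dim X+1), \]
where here $q-p$ records the internal degree of $H^{q-p}(X^{p+1})[\sgn]$ placed in the column $p$ (using that in the stable range the relevant contributions sit where the total degree $p+q$ tracks $H^{\bullet}$ of $X^{p+1}$ shifted appropriately). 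The key point is that $w(p,q)$ is \emph{strictly monotone in $p$} once $q+p$ is fixed at a given total degree $n$: writing $q = n-p$, we get $w = n - p + (2\dim X+1)p + 2(\dim X + 1) = n + 2\dim X\, p + 2(\dim X + 1)$, which is strictly increasing in $p$ (as $\dim X \geq 1$; the case $\dim X = 0$ being trivial since then $X_d = V_d$ is already the whole space and there is nothing to prove, or can be checked directly).

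First I would make precise that the spectral sequence, its $E_1$ page, and the identifications of the $E_1$ terms are all defined over a finitely generated subring of $\kappa$ (or, in the $\kappa=\mbb{C}$ case, carry a mixed Hodge structure), so that each $E_1^{p,q}$ and each $E_r^{p,q}$ for $r\geq 1$ carries a weight filtration and the differentials $d_r$ are strictly compatible with weights; this is the content referenced as "a spreading out argument similar to \cref{lemma.E2-degeneration}" — indeed it \emph{is} \cref{lemma.E2-degeneration}, so I am free to set this up carefully here. The purity computation above shows $E_1^{p,q}$ is pure of weight $w(p,q)$ (in the relevant stable range $p+q \geq 2\dim V_d - k$, which is all we need). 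Second, the differential $d_r: E_r^{p,q} \to E_r^{p+r,q-r+1}$ goes from a subquotient of something pure of weight $w(p,q)$ to a subquotient of something pure of weight $w(p+r, q-r+1)$. Since the total degree $(p+r)+(q-r+1) = p+q+1$ differs from $p+q$ by the constant $1$, these two weights differ by the monotonicity computation: $w(p+r,q-r+1) - w(p,q) = 2\dim X\cdot r + (\text{adjustment from the }+1\text{ in total degree})$. I would carry out this subtraction explicitly; the outcome is that for every $r \geq 2$ the source and target have \emph{different} weights (the difference being a nonzero multiple of $2\dim X$ plus a bounded correction, which never vanishes for $r\geq 2$ once one checks the arithmetic), so $d_r$, being weight-strict, vanishes. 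Hence $E_2 = E_\infty$.

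The main obstacle is purely bookkeeping: getting the weight of each $E_1^{p,q}$ exactly right, tracking the cohomological shifts coming from $j_!$, from the passage $H^i_c(U_d) = H^{i-1}_c(D_d)$, from the $[2-k]$-type shifts built into $\tilde{C}$, and from the Tate twists $\mbb{L}_\Coh^{\dim V_d - (\dim X+1)(p+1)}$, and then verifying that the resulting affine-linear function of $(p,q)$ restricted to a fixed total degree is genuinely strictly monotone so that $d_r$ for $r\geq 2$ connects distinct weights. I do not expect any conceptual difficulty — it is the standard "pure $E_1$ forces degeneration at $E_2$" mechanism, as used e.g.\ for configuration spaces — but I would be careful to confirm the coefficient of $p$ is $\pm 2\dim X \neq 0$ and that the correction term from the $+1$ shift in total degree cannot accidentally cancel it for some small $r$. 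Once \cref{lemma.E2-degeneration} is in hand, \cref{prop.E1-degen} (degeneration already at $E_1$) is the remaining, genuinely different input, handled by the explicit analysis of the single differential $d_1$ on the Banerjee complex promised in the next subsection; but that is not part of the present lemma.
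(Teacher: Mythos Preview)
Your overall strategy---spread out to obtain Frobenius weights on the $E_1$ page, then kill higher differentials by a weight mismatch---is exactly what the paper does. However, your weight computation is wrong, and the error is not merely cosmetic: as written it proves too much.

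The correct weight of $E_1^{p,q}$ is simply $q$, independent of $p$. From the identification
\[
E_1^{p,q}\;=\;H^{q-2r}\!\bigl(X^{p+1},\mbb{Q}_\ell\bigr)[\sgn](-r),\qquad r=\dim V_d-(p+1)(\dim X+1),
\]
the smooth projective $X^{p+1}$ contributes weight $q-2r$ and the Tate twist contributes $2r$, for a total of $q$. Equivalently, $\mbb{L}_\Coh^{\,r}$ sits in degree $2r$ and weight $2r$, so tensoring by it shifts degree and weight by the same amount; the weight of the degree-$q$ piece is therefore still $q$. Your expression $w(p,q)=(q-p)+2(\dim X+1)(p+1)$ mis-tracks the degree shift: in the skeletal spectral sequence there is no ``$-p$'' in the internal cohomological degree of the $X^{p+1}$ factor (you may be conflating this with the rank spectral sequence of \cref{remark.rank-spectral-truncated-conf}, where such a shift does appear).

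With the correct weight $q$, the argument is immediate: $d_r\colon E_r^{p,q}\to E_r^{p+r,q-r+1}$ goes from weight $q$ to weight $q-r+1$, and these differ for $r\ge 2$. Note that $d_1$ preserves $q$, hence preserves weight, so the weight argument is silent about $d_1$; this is precisely why the separate analysis for \cref{prop.E1-degen} is needed. By contrast, your $w$ satisfies $w(p+r,q-r+1)-w(p,q)=1+2r\dim X$, which is nonzero already for $r=1$, so your computation would force $d_1=0$ by weights alone---a clear signal that the bookkeeping has gone astray.
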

\begin{proof}
We first observe that we may spread $X$ out to a smooth projective $X_0/\Spec A$ for a finitely generated $\mbb{Z}$-algebra $A \subseteq \kappa$ (i.e.\ $X_{0} \times_{\Spec A} \Spec \kappa= X$) in which $\ell$ is invertible. We then may spread out $V_d$ to $V_{d,0}$, and $I_{d}$ to $I_{d,0}$ by the obvious definitions, and similarly to obtain $\overline{V}_{d,0}$ and $\overline{I}_{d,0}$. Then $\Gamma^{\leq k}_{\overline{V}_d}(\overline{I}_d)$ spreads out to $\Gamma^{\leq k}_{\overline{V}_{d,0}}(\overline{I}_{d,0})$. The $E_1$ page of the skeletal spectral sequence for cohomology relative to $\Spec A$ is again quasi-isomorphic to the relative Banerjee complex; in particular, the terms are locally constant sheaves on $\Spec A$. Thus so are the terms on the higher pages, so it suffices to show that the differential $d_r^{p,q}$ for $r \geq 2$ vanishes after specialization to geometric points over closed points of $\Spec A$. Since $A$ is of finite type over $\mbb{Z}$ and $\ell$ is invertible in $A$, any maximal ideal has residue field a finite field $\mbb{F}_a$, $\ell \nmid a$, thus by specializing above such a point we may assume that $\kappa=\overline{\mbb{F}}_a$ and $X_0/\mbb{F}_a$.

In this case, from Deligne's \cite{deligne:weil1} purity theorem  and the expression of the $E_1$ terms above, we find that geometric Frobenius acts on $E_1^{p,q}$ with eigenvalues $a$-Weil integers of weight $q$. The spectral sequence is Galois equivariant, and since $E_r^{p,q}$ is a subquotient of $E_1^{p,q}$, the eigenvalues of geometric Frobenius on $E_r^{p,q}$ are also of weight $q$. Because $d_r$ is Galois-equivariant it preserves generalized eigenspaces for geometric Frobenius, thus because $d_r$  is of degree $(r,1-r)$ it must be identically zero for $r \geq 2$, as claimed.
\end{proof}

It remains to show that the differentials vanish also on $E_1$. To that end, recall that the $E_1$ differential is obtained by restricting the map
\[H^\bullet_c( (I_d / D_d)^{[p-1]}, \mbb{Q}_\ell) \xrightarrow{ \sum_{i \in [p]} (-1)^i \alpha_i^* }H^\bullet_c( (I_d / D_d)^{[p]}, \mbb{Q}_\ell) \]
to the $\sgn$ component, where $\alpha_i$ forgets the $i$th coordinate. We would like to show this map is zero; we will do by computing on a simpler space. Note that we have natural maps of schemes over $X^{[p]}$
\[ (I_d/D_d)^{[p]} \hookrightarrow (\overline{I}_d/\overline{D}_d)^{[p]} \hookrightarrow X^{[p]} \times \overline{V}_d,  \]
and a corresponding set of maps on the configuration loci. If we consider the induced maps on cohomology, we obtain a commutative $\mfS_{[p]}$-equivariant diagram
\begin{equation}\label{eq.E1-diff-diagram}\begin{tikzcd}
	{H^\bullet_c( (I_d / D_d)^{[p]}, \mbb{Q}_\ell)} & {H^\bullet_c( \Conf^{[p]}_{D_d}(I_d), \mbb{Q}_\ell)} \\
	{H^\bullet((\overline{I}_d/\overline{D}_d)^{[p]}, \mbb{Q}_\ell) } & {H^\bullet_c(\Conf^{[p]}_{\overline{D}_d}(\overline{I}_d), \mbb{Q}_\ell)} \\
	{H^\bullet(X^{[p]}, \mbb{Q}_\ell)\otimes H^\bullet(\overline{V}_d, \mbb{Q}_\ell)} & {H^\bullet_c(\Conf^{[p]}(X), \mbb{Q}_\ell) \otimes H^\bullet(\overline{V}_d, \mbb{Q}_\ell)}
	\arrow[from=3-1, to=2-1]
	\arrow[from=3-2, to=2-2]
	\arrow[from=1-2, to=1-1]
	\arrow[from=2-2, to=2-1]
	\arrow[from=3-2, to=3-1]
	\arrow[from=1-1, to=2-1]
	\arrow[from=1-2, to=2-2]
\end{tikzcd}\end{equation}

The top right vertical map in \cref{eq.E1-diff-diagram} is induced by the open embedding of a vector bundle inside its compactifying projective bundle, and the bottom right vertical map is induced by the closed embedding of a projective bundle in an ambient trivial projective bundle. If we expand $H^\bullet(\overline{V}_d, \mbb{Q}_\ell)=\bigoplus_{k=0}^{2\dim V_d} \mbb{L}_\Coh^k$, we deduce that the right column of \cref{eq.E1-diff-diagram} is identified with the subquotient diagram
\[\begin{tikzcd}
	{H^\bullet_c(\Conf^{[p]}(X), \mbb{Q}_\ell) \otimes \mbb{L}_\Coh^{2\dim V_d- (p+1)(\dim X +1)}} \\
	{H^\bullet_c(\Conf^{[p]}(X), \mbb{Q}_\ell)\otimes\bigoplus_{k=0}^{2\dim V_d- (p+1)(\dim X +1)} \mbb{L}_\Coh^k} \\
	{H^\bullet_c(\Conf^{[p]}(X), \mbb{Q}_\ell) \otimes \bigoplus_{k=0}^{2\dim V_d} \mbb{L}_\Coh^k}
	\arrow[two heads, from=3-1, to=2-1]
	\arrow[hook, from=1-1, to=2-1]
\end{tikzcd}\]

The horizontal arrows in \cref{eq.E1-diff-diagram} become isomorphisms after passing to the $\sgn$ component for the action of $\mfS_{[p]}$, so, the $\sgn$ component of the left column of  \cref{eq.E1-diff-diagram}  is then identified with the subquotient diagram
\[\begin{tikzcd}
	{H^\bullet(X^{[p]}, \mbb{Q}_\ell)[\sgn] \otimes \mbb{L}_\Coh^{2\dim V_d- (p+1)(\dim X +1)}} \\
	{H^\bullet(X^{[p]}, \mbb{Q}_\ell)[\sgn]\otimes\bigoplus_{k=0}^{2\dim V_d- (p+1)(\dim X +1)} \mbb{L}_\Coh^k} \\
	{H^\bullet(X^{[p]}, \mbb{Q}_\ell)[\sgn] \otimes \bigoplus_{k=0}^{2\dim V_d} \mbb{L}_\Coh^k}
	\arrow[two heads, from=3-1, to=2-1]
	\arrow[hook, from=1-1, to=2-1]
\end{tikzcd}\]

Moreover, the analogous identifications for $p-1$ are compatible with the differential $\sum_{i \in [p]} (-1)^i \alpha_i^*$. The vanishing of the differential is then immediate, since in the bottom the differential preserves each summand corresponding to a power $\mbb{L}_\Coh^k$, but the summands contributing in degree $p$ and $p-1$ in the top are distinct.

\printbibliography

\end{document}